\begin{document}
\theoremstyle{plain}
\newtheorem{deftn}{Definition}[section]
\newtheorem{lem}[deftn]{Lemma}
\newtheorem{prop}[deftn]{Proposition}
\newtheorem{thm}[deftn]{Theorem}
\newtheorem{cor}[deftn]{Corollary}
\newtheorem{conj}[deftn]{Conjecture}
\newtheorem{assump}{Assumption}[section]
\renewcommand{\theassump}{\Alph{assump}}

\theoremstyle{definition}
\newtheorem{ex}[deftn]{Example}
\newtheorem{rk}[deftn]{Remark}

\newcommand{\A}{\mathcal{A}}
\newcommand{\C}{\mathcal{C}}
\newcommand{\CQ}{\mathcal{C}_Q}
\newcommand{\Cw}{\mathcal{C}_w}
\newcommand{\yjh}{\hat{y_j}}
\newcommand{\mjh}{\hat{\mu_j}}
\newcommand{\p}{\mathbb{P}}
\newcommand{\ylh}{\hat{y_1}}
\newcommand{\ynh}{\hat{y_n}}
\newcommand{\W}{{\bf W}}
\newcommand{\Aqnw}{\mathcal{A}_q (\mathfrak{n}(w))}
\newcommand{\Yia}{Y_{i,a}}
\newcommand{\M}{{\bf M}}
\newcommand{\G}{{\bf G}}
\newcommand{\s}{\mathcal{S}}

\newcommand{\Address}{ \textsc{ \\ Universit\'e Paris-Diderot, \\ CNRS Institut de Math\'ematiques de Jussieu-Paris Rive Gauche, UMR 7586,}
 \\ B\^atiment Sophie Germain, Bo\^\i te Courrier 7012, \\ 8 Place Aur\'elie Nemours - 75205 PARIS Cedex 13, \\
 E-mail: \texttt{elie.casbi@imj-prg.fr}}

\title{\Large{ {\bf DOMINANCE ORDER AND MONOIDAL CATEGORIFICATION OF CLUSTER ALGEBRAS}}}
  \author{\large{ELIE CASBI}}

     \date{}

\maketitle
    
     \begin{abstract}
We study a compatibility relationship between Qin's dominance order on a cluster algebra $\A$ and partial orderings arising from classifications of simple objects in  a monoidal categorification $\C$ of $\A$.  Our motivating example is Hernandez-Leclerc's monoidal categorification using representations of quantum affine algebras. In the framework of Kang-Kashiwara-Kim-Oh's monoidal categorification via representations of quiver Hecke algebras, we focus on the case of the category $R-gmod$ for a symmetric finite type $A_n$ quiver Hecke algebra using Kleshchev-Ram's classification of irreducible finite-dimensional representations. 
      \end{abstract}
 
 \bigskip

\setcounter{tocdepth}{1}
\tableofcontents

\section{Introduction}

     Cluster algebras were introduced in \cite{FZ1} by Fomin and Zelevinsky in the 2000's to study  total positivity and  canonical bases of quantum groups. Cluster algebras are commutative $\mathbb{Z}$-subalgebras of fields of rational functions over $\mathbb{Q}$ generated by certain distinguished generators, called \textit{cluster variables}, satisfying relations called exchange relations. These cluster variables are grouped into overlapping finite sets of fixed cardinality (the rank of the cluster algebra)  called clusters. A monomial in cluster variables of the same cluster is called a \textit{cluster monomial}.  Applying exchange relations to one variable of a cluster leads to another cluster and this procedure is called \textit{mutation}. Any cluster can be reached from a given initial cluster by a finite sequence of mutations. Moreover, it is shown in \cite{FZ4} that once an initial seed  $((x_1, \ldots, x_n),B)$ has been fixed, any cluster variable of any other seed $((x_1^t, \ldots , x_n^t),B^t)$ can be related to the initial cluster variables in the following way:
   $$   x_l^t = \frac{F^{l,t}  \left( \ylh , \ldots , \ynh \right)}{{F^{l,t}}_{|{\p}} \left( y_1, \ldots, y_n \right)} x_1^{g_1^{l,t}} \cdots x_n^{g_n^{l,t}} $$
   where $ (g_1^{l,t}, \ldots , g_n^{l,t})$ is an $n$-tuple of integers called the $g$-vector of the cluster variable $x_l^t$ and $F^{l,t}$ is a polynomial called the $F$-polynomial of $x_l^t$. The variables $ y_j, \yjh$, $1 \leq j \leq n$ are Laurent monomials in the initial cluster variables $x_1, \ldots , x_n$ which do not depend on $l$ and $t$. Thus the combinatorics of a cluster algebra is entirely contained in the behaviour of $g$-vectors and $F$-polynomials.
     
      In \cite{BZ}, Berenstein and Zelevinsky defined quantum cluster algebras as quantizations of cluster algebras; these algebras  are not commutative: cluster variables belonging to the same cluster $q$-commute, i.e.  satisfy  relations of the form  $ x_i x_j = q^{\lambda_{ij}} x_j x_i $ for some integers $ \lambda_{ij} $.

 The notion of monoidal categorification of a cluster algebra has been introduced by Hernandez and Leclerc in \cite{HL}. The idea is to identify a given cluster algebra $\A$ with the  Grothendieck ring of a monoidal category; more specifically, the categories involved in this procedure will be categories of modules over algebras such as  quantum affine algebras. Monoidal categorification requires a correspondence between cluster monomials and \textit{real} simple objects in the category (i.e. simple objects $S$ such that $S \otimes_{\C} S$ is again simple) as well as between cluster variables and \textit{prime} real simple objects (real simple objects that cannot be decomposed as tensor products of  several non trivial objects). Given a cluster algebra $\A$, the existence of a monoidal categorification of $\A$ can give some fruitful information about the category itself, such as the existence of decompositions of simple objects into tensor products of prime simple objects. 
  In \cite{HL}, Hernandez and Leclerc define a sequence $ \{ {\C}_l , l \in \mathbb{N} \}$   of subcategories of modules over quantum affine algebras and conjecture that the category $\C_1$ is a monoidal categorification of a cluster algebra. They prove this conjecture in types $A_n$ and $D_4$ and exhibit a remarkable link between the $g$-vector (resp. $F$-polynomial) of each cluster variable and the dominant monomial (resp. the (truncated) $q$-character) of the corresponding simple module. In \cite{HL2}, Hernandez-Leclerc introduce other subcategories of finite dimensional representations of quantum affine algebras and prove several monoidal categorification statements for these categories in types $A_n$ and $D_n$.

   Other examples of monoidal categorification of cluster algebras appeared in various contexts. For instance, in the study of categories of sheaves on the Nakajima varieties: these varieties were constructed by Nakajima in \cite{Naka1} for the purpose of providing a geometric realization of quantum groups as well as their highest weight representations (\cite{Naka1,N}). In \cite{Naka}, certain categories of perverse sheaves on these varieties are shown to be monoidal categorifications of cluster algebras. In particular, Nakajima used these constructions to study the category $\C_1$ and proved Hernandez-Leclerc's conjecture in $ADE$-types. Recently, Cautis and Williams exhibited in \cite{CW} a new example of monoidal categorification of cluster algebras using the $\mathbb{G}_m$-equivariant coherent Satake category, i.e. the category of $G(\mathcal{O})$-equivariant perverse coherent sheaves on the affine Grassmannian $Gr_G$. In the case of  the general linear group $GL_n$, they show that this category is a monoidal categorification of a quantum cluster algebra and construct explicitly an initial seed.

 A large proportion of this paper will be devoted to another situation of monoidal categorification of (quantum) cluster algebras that came out of the works of Kang-Kashiwara-Kim-Oh \cite{KKK,KKKO3,KKKO}, involving categories of representations of quiver Hecke algebras. 
 Introduced by  Khovanov and Lauda in  \cite{KL} and independently by Rouquier in \cite{R}, quiver Hecke algebras (or KLR algebras) are  $\mathbb{Z}$-graded algebras which  categorify the negative part $U_q(\mathfrak{n})$ of the quantum group  $U_q(\mathfrak{g})$, where $\mathfrak{g}$ is a symmetric Kac-Moody algebra and $\mathfrak{n}$ the nilpotent subalgebra arising from a triangular decomposition.  
 Khovanov-Lauda conjectured that this categorification provides a bijection between  the canonical basis of $U_q(\mathfrak{n})$ and the set of indecomposable projective modules over the quiver Hecke algebra corresponding to $\mathfrak{g}$. 
   This was proved by Rouquier in \cite{R} and independently by  Varagnolo-Vasserot in \cite{VV} using a geometric realization of quiver Hecke algebras. 
     The category of finite-dimensional modules over quiver Hecke algebras can be given a monoidal structure using parabolic induction. In \cite{KR}, Kleshchev and Ram give a combinatorial  classification of simple  finite-dimensional modules over quiver Hecke algebras of finite types (i.e. associated with a finite type Lie algebra $\mathfrak{g}$) using Lyndon bases. More precisely, they introduce a certain class of simple modules, called \textit{cuspidal modules}, which are in bijection with the set of positive roots of $\mathfrak{g}$. Then the simple modules are realized as quotients of products of cuspidal modules, and are parametrized by \textit{dominant words} or \textit{root partitions} (see Definition~\ref{domiword}).

     Fix a total order on the set of vertices of the Dynkin diagram of $\mathfrak{g}$. The corresponding lexicographic order is a total ordering on the set of dominant words.  In Section~\ref{mutrule}, we use this framework for quiver Hecke algebras of finite type $A_n$ and exhibit an easy combinatorial way to compute the highest dominant word appearing in the decomposition of the product of classes of two simples into a sum of classes of simples. For $n \geq 1$, consider $\mathfrak{g}$ a Lie algebra of finite type $A_n$. Denote by $r_n=n(n+1)/2$ the number of positive roots for the root system associated to $\mathfrak{g}$. Let $R-gmod$ denote the category of finite-dimensional representations of the quiver Hecke algebra arising from $\mathfrak{g}$ and let $\M$  be the set of dominant words. For every $\mu \in \M$ we let $L(\mu)$ denote the unique (up to isomorphism) simple object in $R-gmod$ corresponding to $\mu$. For any $\mu,\mu' \in \M$, we define $\mu \odot \mu'$ as the highest word appearing in the decomposition of the product $[L(\mu)] \cdot [L(\mu')]$ as a sum of classes of simple objects in $R-gmod$. This is well-defined as $\M$ is totally ordered. 
 
 \begin{thm}[cf. Theorem~\ref{thmonoid}]
  The law $\odot$ provides $\M$ with a monoid structure and there is an isomorphism of monoids 
  $$(\M,\odot) \simeq (\mathbb{Z}_{\geq 0}^{r_n},+).$$
  Moreover this isomorphism is explicitly constructed.
 \end{thm}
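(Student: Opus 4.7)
The plan is to identify $\M$ set-theoretically with the additive monoid $\mathbb{Z}_{\geq 0}^{r_n}$ via Kleshchev-Ram's root partition parametrization, and then to show that $\odot$ becomes coordinatewise addition under this identification. The fixed total order on the Dynkin vertices determines a convex ordering $\beta_1 > \cdots > \beta_{r_n}$ of the positive roots of $\mathfrak{g}$ together with an associated family $\ell(\beta_i)$ of good Lyndon words. Each dominant word $\mu \in \M$ admits a unique PBW-type factorization $\mu = \ell(\beta_1)^{m_1(\mu)} \cdots \ell(\beta_{r_n})^{m_{r_n}(\mu)}$, defining a bijection of sets $\Psi: \M \to \mathbb{Z}_{\geq 0}^{r_n}$, $\mu \mapsto (m_i(\mu))_{1 \leq i \leq r_n}$. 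This is my candidate monoid isomorphism.

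The main analytic input is a unitriangularity statement in the Grothendieck ring of $R$-$gmod$: for every $\mu \in \M$,
\[
[L(\beta_1)]^{m_1(\mu)} \cdots [L(\beta_{r_n})]^{m_{r_n}(\mu)} \;=\; [L(\mu)] \;+\; \sum_{\nu <_{\mathrm{lex}} \mu} c_{\mu,\nu}\,[L(\nu)],
\]
where the cuspidal convolution is taken in the fixed decreasing convex order and $c_{\mu,\nu} \in \mathbb{Z}_{\geq 0}$. This is essentially Kleshchev-Ram's classification theorem: $L(\mu)$ is the head of the standard module built by iterated convolution of cuspidals, and every other composition factor corresponds to a strictly lex-smaller dominant word.

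Granted this unitriangularity, the theorem follows by a short formal argument. Inverting the relation by descending induction, each simple class $[L(\mu)]$ equals the cuspidal product $\prod_i [L(\beta_i)]^{m_i(\mu)}$ plus a $\mathbb{Z}$-linear combination of strictly lex-smaller simples. Multiplying, the leading cuspidal product in $[L(\mu)] \cdot [L(\mu')]$ is $\prod_i [L(\beta_i)]^{m_i(\mu)+m_i(\mu')}$, and a second application of unitriangularity identifies the highest simple appearing in the decomposition of $[L(\mu)] \cdot [L(\mu')]$ as $[L(\Psi^{-1}(\Psi(\mu) + \Psi(\mu')))]$. Hence $\mu \odot \mu' = \Psi^{-1}(\Psi(\mu) + \Psi(\mu'))$, which simultaneously shows that $\odot$ is well-defined, that $\Psi$ intertwines $\odot$ with coordinatewise addition, and that associativity, commutativity, and the neutral element (the empty word) hold automatically.

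The main obstacle is to make the unitriangularity precise with respect to the lexicographic order on $\M$ used to define $\odot$, rather than with respect to the convex bidominance order on root partitions that is native to the KLR literature. The two orders need to be shown compatible in type $A_n$. I would handle this via a combinatorial lemma comparing the first letter at which two dominant words disagree with the highest Lyndon factor at which their PBW-type factorizations disagree, which in type $A_n$ with the Lyndon convex order should follow from the explicit form of good Lyndon words. This combinatorial step carries most of the technical weight of the proof, after which the monoid statement is a purely formal consequence.
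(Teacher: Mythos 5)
Your argument is correct in outline, but it takes a genuinely different route from the paper's. The paper never inverts a triangular system in the Grothendieck ring: it first shows that $\mu \odot \mu' = \max(\mu \circ \mu')$ (Proposition~\ref{maxshuffle}), and then computes this maximum by a direct analysis of shuffle permutations (Proposition~\ref{prod}) --- a three-case induction on the leading Lyndon blocks ${\bf i}^{(1)}$ versus ${\bf i'}^{(1)}$, exploiting the interval form $(k,k+1,\ldots,l)$ of dominant Lyndon words in type $A_n$; that case analysis is where the paper's technical weight sits. Your PBW-inversion scheme replaces it, and it does close, with two points tightened. First, the ``main obstacle'' you flag largely dissolves: lex-unitriangularity of the standard modules needs no reconciliation with the convex/bidominance orders, since it follows in one line from facts the paper already quotes --- $\max(ch_q L(\nu))=\nu$ (Theorem~\ref{KR}(ii)), $\max(ch_q \Delta(\mu))=\mu$ (Proposition~\ref{KRprod}), additivity of $ch_q$ on exact sequences and its injectivity --- so every composition factor of $\Delta(\mu)$ other than the head has strictly lex-smaller parameter. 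Second, the combinatorial lemma you defer is precisely the paper's Remark~\ref{vector}(2) (lex order on same-weight dominant words equals the first-nonzero-coordinate order on multiplicity vectors); your sketch works in type $A_n$, e.g.\ if $f=(k\ldots m)$ is a proper prefix of $g=(k\ldots l)$ then the leftover suffix of $g$ starts with $m+1$ while the competing next factor starts with a letter $\le k$. The payoff of proving it is that the vector order is translation-invariant, which simultaneously gives your monotonicity of merging and rules out cancellation of the top simple (and note you only need the diagonal multiplicities $[\Delta(\lambda):L(\lambda)]$ to be nonzero, not $1$, to identify the maximum). One caveat: carry out the inversion in the ungraded Grothendieck group, or work up to grading shift, since the graded $K_0(R\text{-}gmod)\simeq \A_q(\mathfrak{n})$ is noncommutative and you freely reorder cuspidal factors. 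In exchange for invoking more of the machinery of \cite{KR}, your route is shorter than the paper's shuffle analysis and transports verbatim to other finite types equipped with a Lyndon convex order, whereas the paper's proof is self-contained at the character level but tied to the type-$A_n$ combinatorics.
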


     In \cite{KKKO}, Kang, Kashiwara,  Kim and Oh adapt the notion of monoidal categorification to the quantum setting  (in particular they define a notion of quantum monoidal seed) and prove that the category $R-gmod$ gives a  monoidal categorification of the quantum cluster algebra structure on $U_q(\mathfrak{n})$.  For this purpose, they introduce  in \cite{KKK} some R-matrices for categories of finite dimensional representations of quiver Hecke algebras, which give rise to exact sequences corresponding to cluster mutations in the Grothendieck ring. The notion of admissible pair is introduced in \cite{KKKO} as a sufficient condition for a quantum monoidal seed to admit mutations in every exchange direction (see Definition~\ref{KKKOdef6.1}). The main result of \cite{KKKO} consists in proving that given an initial seed coming from an admissible pair, the seeds obtained after any mutation again come from admissible pairs. Hence the existence of an admissible pair implies monoidal categorification statements. The main results of \cite{KKKO} (\cite[Theorems 11.2.2,11.2.3]{KKKO}) consist in constructing admissible pairs for certain subcategories $\Cw$ (for each $w$ in the Weyl group of $\mathfrak{g}$) of finite dimensional representations of quiver Hecke algebras. These categories thus provide monoidal categorifications of cluster algebra structures on the quantum coordinate rings $\Aqnw$ introduced by  Geiss, Leclerc and Schr\"oer in \cite{GLS}. We refer to Kashiwara's 2018 ICM talk \cite{KICM} for a survey on monoidal categorifications of cluster algebras and connections with the theory of crystal bases and in particular global bases of quantum coordinate rings.

    In this framework, it is natural to consider the dominant word of the simple module corresponding to a cluster variable $x_l^t$ and try to relate it to the dominant words of the simple modules belonging to an initial monoidal seed, for instance the seed arising from the construction of \cite[Theorem 11.2.2]{KKKO}. As mentioned above, the theory of cluster algebras encourages us to look at the variables $\yjh$ defined in \cite{FZ4}. Using the above result on dominant words, we can associate in a natural way analogs of  dominant words that we call \textit{generalized parameters} to each of these variables $\yjh$ (with respect to the initial seed constructed in \cite{KKKO}). It turns out that in the case of the category $R-gmod = {\C}_{w_0}$ (where $w_0$ stands for the longest element of the Weyl group of $\mathfrak{g}$), these generalized parameters share some remarkable properties. In particular, it implies some correspondence between the lexicographic ordering on dominant words and the following ordering on Laurent monomials in the initial cluster variables: for any two such Laurent monomials $ {\bf x}^{\boldsymbol{\alpha}} = \prod_i x_i^{\alpha_i}$ and $ {\bf x}^{\boldsymbol{\beta}} = \prod_i x_i^{\beta_i}$, one sets
    $$  {\bf x}^{ \boldsymbol{\alpha}} \preccurlyeq {\bf x}^{\boldsymbol{\beta}} 
      \Leftrightarrow \exists (\gamma_1 , \ldots , \gamma_n) \in \mathbb{Z}_{\geq 0}^n \quad \text{such that} \quad 
       {\bf x}^{\boldsymbol{\beta}} = {\bf x}^{\boldsymbol{\alpha}} . \prod_{j} {\yjh}^{\gamma_j}.$$ 
  This ordering can be easily seen to coincide with the \textit{dominance order} for the considered initial seed, introduced by Qin in \cite{Qin} as an ordering on multidegrees. As proven by Hernandez-Leclerc in \cite{HL}, this dominance order corresponds to the Nakajima order on monomials in the case of the monoidal categorification by the category $\C_1$ (see Proposition~\ref{HLcompat}). Geometrically, it is related to the partial ordering on subvarieties of Nakajima varieties, defined as inclusions of subvarieties into the closures of others (see \cite{Naka}). Qin uses this order to introduce the notions of pointed elements and pointed sets (see Definition~\ref{pointed}) and define triangular bases in a (quantum) cluster algebra. As an application, he proves that in the context of monoidal categorification of cluster algebras using representations of quantum affine algebras, cluster monomials  correspond to classes of simple modules, which partly proves Hernandez-Leclerc's conjecture \cite{HL}. 
 
    In this paper we study this relationship between orderings in a more general setting; considering the data of a cluster algebra $\A$ together with a monoidal categorification $\C$ of $\A$, we assume the simple objects in $\C$ are parametrized by elements of a partially ordered set $\M$. Given a seed $\s$  in $\A$, we define a notion of compatibility between the ordering on $\M$ and the  dominance order $\preccurlyeq$ associated to $\s$ and we say that $\s$ is a \textit{compatible seed} if this compatibility holds (see Definition~\ref{compat}). Not all seeds are compatible and it even seems that most seeds are not. We conjecture that under some technical assumptions on the category $\C$, there exists a compatible seed (Conjecture~\ref{conjecture}). The existence of such a seed $\s$ implies some combinatorial relationships between the $g$-vector with respect to $\s$ of any cluster variable on the one hand, and the parameter of the corresponding simple object in $\C$ on the other hand. The results of Hernandez-Leclerc in \cite{HL} provide a beautiful example of such a relationship.

   We then focus on the case of monoidal categorifications of cluster algebras via representations of quiver Hecke algebras of finite type $A_n$. More precisely, we consider the category $R-gmod$ of finite-dimensional representations of a type $A_n$ quiver Hecke algebra. One of the main results of this paper consists in the explicit computation of the dominant words of the simple modules of the initial (quantum) monoidal seed constructed in \cite{KKKO} for this category.

   \begin{thm}[cf. Theorem~\ref{initpara}]
 Let ${\s}_0^n$  be the initial seed constructed in \cite{KKKO}  for the category $R-gmod$ associated with a Lie algebra of type $A_n$.
 Then  the cluster variables of the seed ${\s}_0^n$  can be explicitly described in terms of dominant words as follows:

 $ \begin{array}{ccccc}
  [L(1)]  & {} & {} & {} & {} \\\relax
  [L(12)] & [L((2)(1))] & {} & {} & {} \\\relax
  [L(123)] & [L((23)(12))] & [L((3)(2)(1))] & {} & {} \\\relax
  \vdots & \vdots & {} & {} & {} \\\relax
  [L(1 \ldots k)] & [L \left( (2 \ldots k)(1 \ldots k-1) \right)] & \cdots & [L((k) \cdots (1))] & {} \\\relax
  \vdots  & \vdots & {} & {} & {} \\\relax
  [L(1 \ldots n)] & [L \left( (2 \ldots n)(1 \ldots n-1) \right)]  & \cdots & \cdots &  [L((n) \cdots (1))].  
\end{array}$

 The set of frozen variables corresponds to the last line and the set of unfrozen variables consists in the union of lines $1, \ldots , n-1$.
 \end{thm}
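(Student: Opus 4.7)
The plan is to derive both the seed and its dominant words from a single combinatorial choice: a well-adapted reduced expression of the longest element of $W(A_n) = S_{n+1}$. I would take
\begin{equation*}
\underline{w}_0 = (s_1)(s_2 s_1)(s_3 s_2 s_1) \cdots (s_n s_{n-1} \cdots s_1),
\end{equation*}
a reduced word of length $n(n+1)/2 = r_n$. By \cite[Theorem~11.2.2]{KKKO} the associated KKKO initial seed ${\s}_0^n$ has cluster variables $[M_k] := [M(w_{\leq k}\varpi_{i_k}, \varpi_{i_k})]$ for $1 \leq k \leq r_n$, with the frozen ones indexed by the last occurrence of each simple root in $\underline{w}_0$. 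For this choice, those last occurrences fill precisely the terminal block $s_n \cdots s_1$, giving $n$ frozen modules along the bottom row and $n(n-1)/2$ unfrozen ones filling the rows above --- exactly the triangular arrangement announced in the statement.

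To attach a dominant word to each $[M_k]$ I would use the Kleshchev--Ram dictionary. For type $A_n$ with the order $1 < 2 < \cdots < n$ on the Dynkin vertices, the positive roots are $\alpha_{i,j} = \alpha_i + \cdots + \alpha_j$ and the good Lyndon word attached to $\alpha_{i,j}$ is $i(i{+}1) \cdots j$. By \cite{KR} the cuspidal $L(\alpha_{i,j})$ has dominant word $i \cdots j$, and every simple in $R-gmod$ is the unique head of the convolution product of its cuspidal constituents listed in decreasing Lyndon order, its dominant word being the concatenation of those of the factors. Consequently, each entry of the triangular table corresponds to the simple whose root partition one reads off from its displayed dominant word: row $k$, column $j$ corresponds to the root partition $\alpha_{j,k} + \alpha_{j-1,k-1} + \cdots + \alpha_{1,k-j+1}$ with $j$ parts of shrinking length.

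The identification of the KKKO determinantial modules with these simples I would then carry out by induction on $n$. The initial segment $(s_1)(s_2 s_1) \cdots (s_{n-1} \cdots s_1)$ is a reduced expression for the longest element of the $A_{n-1}$ sub-Weyl group supported on $\{1, \ldots, n-1\}$; by the induction hypothesis it produces the KKKO seed realising the first $n-1$ rows of the triangle. It then remains to handle the terminal block $s_n s_{n-1} \cdots s_1$: for $j = 1, \ldots, n$ the highest weight of $M_{r_n - n + j}$, namely $\varpi_{n-j+1} - w_{\leq r_n - n + j}\varpi_{n-j+1}$, can be computed in $S_{n+1}$ and turns out to equal $\alpha_{j,n} + \alpha_{j-1,n-1} + \cdots + \alpha_{1,n-j+1}$, matching the weight of the root partition assigned to the $j$-th bottom-row entry.

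The main obstacle is this last matching: two simples can share the same weight but arise from different root partitions, so one must show that $M_{r_n - n + j}$ actually has the claimed root partition and not merely the correct weight. To settle this I would either compute the dominant word of the unipotent quantum minor $D(w_{\leq k}\varpi_{i_k}, \varpi_{i_k})$ directly via Kashiwara's crystal formulas, or exploit the fact that the chosen $\underline{w}_0$ corresponds under the Leclerc--Lalonde--Ram bijection to the convex order on positive roots whose PBW factorisations read off exactly the displayed concatenations; the uniqueness of the dominant-word parametrisation in \cite{KR} then forces the identification, and the triangular table is obtained.
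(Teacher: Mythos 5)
Your scaffolding agrees with the paper's: the reduced word $(s_1)(s_2s_1)\cdots(s_n\cdots s_1)$ is indeed the one underlying ${\s}_0^n$, your identification of the frozen variables with the terminal block and your induction on $n$ reproduce Proposition~\ref{Aseed}, and your weight computation for the bottom row is Proposition~\ref{occurrences}. But the step you defer is the entire content of the theorem, and neither of your proposed fixes closes it. The weight $\alpha_n + 2\alpha_{n-1}+\cdots+k\alpha_{n-k+1}+\cdots+k\alpha_k+\cdots+2\alpha_2+\alpha_1$ is shared by many root partitions; in particular it is the common weight of \emph{both} candidates $(k\ldots n)(k-1\ldots n-1)\cdots(1\ldots n-k+1)$ and $(n-k+1\ldots n)\cdots(1\ldots k)$, and indeed of both modules $M_k$ and $M_{n-k+1}$, so computing $\varpi_{i_k}-w_{\leq k}\varpi_{i_k}$ in $S_{n+1}$ pins down nothing about dominant words. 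The paper's actual proof spends Sections~\ref{weights}--\ref{frozenpara} eliminating every other dominant word of this weight: the key input is that each frozen module commutes with all simples, hence with the cuspidals $L(i)$; Lemma~\ref{comutlambda} converts this into exact valuation constraints on the renormalized R-matrices (Corollary~\ref{Lambda}), which are then exploited through explicit operator computations with $\varphi_{w[m,n]}$ acting on $u\otimes (v_i)_z$ (Lemma~\ref{lem4}, Lemma~\ref{endLyndon}, Propositions~\ref{lem5} and~\ref{lem6}, Corollaries~\ref{cor1}--\ref{props}). Even after all that, a two-fold ambiguity remains inside each pair $\{M_k,M_{n-k+1}\}$, and Section~\ref{proofs} resolves it by the mutation rule of Proposition~\ref{mutpara} applied to the exchange sequences, tracking which Lyndon words can occur on each side. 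None of this machinery, nor any substitute for it, appears in your proposal.

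Your fallback (b) moreover rests on a false premise. The convex order induced by your $\underline{w}_0$, i.e.\ $\beta_k = s_{i_1}\cdots s_{i_{k-1}}(\alpha_{i_k})$, begins (in type $A_3$) with $\alpha_1,\ \alpha_1+\alpha_2,\ \alpha_2,\ \alpha_1+\alpha_2+\alpha_3,\ \alpha_2+\alpha_3,\ \alpha_3$, whereas the lexicographic order on good Lyndon words is $1<12<123<2<23<3$, i.e.\ $\alpha_1,\ \alpha_1+\alpha_2,\ \alpha_1+\alpha_2+\alpha_3,\ \alpha_2,\ \alpha_2+\alpha_3,\ \alpha_3$; these differ (in either orientation), and the Lyndon order corresponds instead to the reduced word $(s_1\cdots s_n)(s_1\cdots s_{n-1})\cdots(s_1)$. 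So the Kleshchev--Ram cuspidal system is \emph{not} the one attached to your $\underline{w}_0$, and even if it were, the passage from ``$M(w_{\leq k}\varpi_{i_k},\varpi_{i_k})$ is a determinantial module'' to ``its dominant word is the displayed concatenation'' is precisely the assertion requiring proof --- it does not follow formally from the uniqueness of the parametrisation in \cite{KR}. Option (a), computing the dominant word of the quantum minor ``via Kashiwara's crystal formulas,'' is a promissory note rather than an argument; the paper shows that this computation is the genuine difficulty and carries it out by the R-matrix valuation analysis above. As written, your proposal establishes the shape of the seed (the frozen/unfrozen split and the inductive row structure) but not the identification of the cluster variables with the stated simple modules.
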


  Using this description, we can deduce the main result of this paper: 
  
   \begin{thm}[cf. Theorem~\ref{mainthm}]
   The seed ${\s}_0^n$ is compatible in the sense of Definition~\ref{compat}.
 \end{thm}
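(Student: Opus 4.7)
The plan is to combine the two previously stated theorems: the explicit description of the cluster variables of $\s_0^n$ in terms of dominant words (cf.~Theorem~\ref{initpara}) and the monoid isomorphism $(\M,\odot) \simeq (\mathbb{Z}_{\geq 0}^{r_n},+)$ (cf.~Theorem~\ref{thmonoid}). Together these give an explicit combinatorial model for both sides of the compatibility relation.

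First, I would unpack Definition~\ref{compat} in the present setting: it amounts to checking that the natural map sending a Laurent monomial in the initial cluster variables of $\s_0^n$ to the dominant word of the corresponding product of simple modules intertwines the dominance order $\preccurlyeq$ with the lexicographic order on $\M$. Since every $\hat{y}_j$ is itself (up to the frozen part) a ratio of monomials in initial cluster variables, verifying compatibility reduces to two concrete tasks: (i) write down the exchange matrix of $\s_0^n$ explicitly from the Kang--Kashiwara--Kim--Oh construction; (ii) for each non-frozen index $j$, compute the dominant word (equivalently, the image under the isomorphism of Theorem~\ref{thmonoid}) of the numerator and denominator of $\hat{y}_j$.

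Next, using Theorem~\ref{initpara} I would read off the cluster variables indexed by the triangular array, compute the arrows of the quiver between adjacent entries (horizontally, vertically, and diagonally within the triangle), and thereby produce closed-form expressions for the $\hat{y}_j$. The monoid law $\odot$ on $\M$ then lets me translate the products appearing in the numerator and denominator of each $\hat{y}_j$ into a single dominant word, and Theorem~\ref{thmonoid} translates this into an element of $\mathbb{Z}_{\geq 0}^{r_n}$. The crucial step is then to check a positivity statement: for every non-frozen $j$, the dominant word of the numerator of $\hat{y}_j$ is strictly larger (in lex order) than that of its denominator. If this positivity holds, then multiplying by $\hat{y}_j^{\gamma_j}$ for $\gamma_j \geq 0$ can only strictly increase the dominant word, so the order induced by $\preccurlyeq$ is refined by the lex order on dominant words, which is precisely compatibility.

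The main obstacle I expect is the bookkeeping in step~(ii): the dominant words of the numerators involve products $[L(\mu)] \cdot [L(\nu)]$ of simples taken from different rows/columns of the triangle in Theorem~\ref{initpara}, and one must identify the highest term of this product via the monoid law $\odot$. A natural way to organise this is by double induction on $n$ and on the position of $j$ in the triangle, using the nested structure (the triangle for $A_{n-1}$ sits naturally inside the one for $A_n$) so that the base cases reduce to very small rank computations. Once the positivity of $\hat{y}_j$ is established in each of the (essentially two) shapes of exchange relations that appear, the compatibility of $\s_0^n$ follows immediately from Definition~\ref{compat}.
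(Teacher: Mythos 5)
Your proposal is correct and takes essentially the same route as the paper: the actual proof of Theorem~\ref{mainthm} likewise writes out the exchange matrix of ${\s}_0^n$ explicitly in block form, computes each generalized parameter $\hat{\mu}_j = \tilde{\Psi}(\hat{y}_j)$ via the monoid law of Theorem~\ref{thmonoid}, simplifies it to a ratio of two dominant words whose ``numerator'' is lexicographically strictly larger than its ``denominator'' (e.g.\ $\hat{\mu}_{r_{n-2}+j} = \left( (j+1 \ldots n)(j \ldots n-1) \right) \odot \left( (j+1 \ldots n-1)(j \ldots n) \right)^{\odot -1}$), so that $\hat{\mu}_j \odot \mu > \mu$ for every $\mu \in \M$ and compatibility follows from Remark~\ref{rkpsi} — exactly your positivity step, justified by the translation-invariance of the total order under the isomorphism $\G \simeq \mathbb{Z}^{r_n}$. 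The only difference is organizational: where you propose a double induction on $n$ and the position in the triangle, the paper verifies the positivity by direct computation of one representative family of indices and observes the remaining cases are similar.
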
 
 
 In particular, Conjecture~\ref{conjecture} holds for the category of finite dimensional representations of  a quiver Hecke algebra arising from a Lie algebra of type $A_n$.
 
 \bigskip
    
 This paper is organized as follows: in Section~\ref{reminders} we recall the definitions and main results of the theory of cluster algebras from \cite{FZ4}, as well as the notion of monoidal categorification of cluster algebras from \cite{HL}. Section~\ref{sectionqha} is devoted to the representation theory of quiver Hecke algebras. After some reminders of their definitions and main properties, we recall the constructions of Kang-Kashiwara-Kim-Oh (\cite{KKK,KKKO3,KKKO})  of renormalized R-matrices for modules over quiver Hecke algebras as well as their results on monoidal categorification of quantum cluster algebras. We also recall the construction of admissible pairs for the categories $\Cw$ from \cite{KKKO}. We end the section with the results of Kleshchev-Ram \cite{KR} about the classification of irreducible finite-dimensional representations  of finite type quiver Hecke algebras. In Section~\ref{dom-compat}, we consider the general situation of monoidal categorifications of cluster algebras. We introduce a partial ordering on Laurent monomials in the cluster variables of a given seed and show that it coincides with the dominance order introduced by Qin in \cite{Qin}. Then we define the notion of admissible seed and state our main conjecture (Conjecture~\ref{conjecture}). We show that the results of Hernandez-Leclerc \cite{HL} in the context of monoidal categorification of cluster algebras via quantum affine algebras provide an example where this conjecture holds. In Section~\ref{mutrule} we focus on the case of monoidal categorifications of cluster algebras via finite type $A_n$ quiver Hecke algebras. Section~\ref{KLRtechnique} is devoted to the proof of Theorem~\ref{thmonoid}. We use it in the framework of \cite{KKKO} to obtain a combinatorial rule of transformation of dominant words under cluster mutation. Then we study in detail the example of a quiver Hecke algebra of type $A_3$ and exhibit examples of compatible and non-compatible seeds in the corresponding category $R-gmod$. In Section~\ref{initseed} we state and prove the two main results of this paper (Theorems~\ref{initpara} and ~\ref{mainthm}). We conclude with some possible further developments. 

\bigskip

 \textit{The author is supported by the European Research Council under the European Union's Framework Programme H2020 with ERC Grant Agreement number 647353 Qaffine.}

\smallskip

\section*{Acknowledgements}

 I would like to warmly thank my advisor Professor David Hernandez for his constant support, encouragements, kindness, and very helpful discussions. I also thank Fan Qin for answering my questions about his work \cite{Qin} and Myungho Kim for pointing out a need of further explanations about the monoid structure on $\M$ in Section~\ref{genepara}. I am also very grateful to Hironori Oya for carefully reading this paper and making many valuable suggestions.

\section{Cluster algebras and their monoidal categorifications}
\label{reminders}
  
   In this section, we recall the main results of the theory of cluster algebras from \cite{FZ1}, \cite{FZ2}, \cite{FZ4}, as well as the notion of monoidal categorification from \cite{HL}. 
  
 \subsection{Cluster algebras}
     \label{algclust}
 
	 Cluster algebras were introduced in \cite{FZ1} by Fomin and Zelevinsky. They are commutative $\mathbb{Z}$-subalgebras of the field of rational functions over $\mathbb{Q}$ in a finite number of algebraically independent variables. They are defined as follows.
	   
	Let $1 \leq n < m $  be two nonnegative integers and let $\mathcal{F}$ be the field of rational functions over $\mathbb{Q}$ in $m$ independent variables. 
	The initial data is a couple  $((x_1, \ldots ,x_m),B)$ called an initial seed and made out of a \textit{cluster} i.e.  $m$ algebraically independent variables $x_1, \ldots ,x_m$ generating  $\mathcal{F}$ and a $ m \times n $ matrix $B := (b_{ij})$  called the \textit{exchange matrix} whose principal part  (i.e. the  square submatrix $(b_{ij})_{1 \leq i,j \leq n} $) is skew-symmetric.

  To the exchange matrix $B$ one can associate a quiver, whose index set is $\{1, \ldots , m \} $ with $b_{ij}$ arrows from $i$ to $j$ if $b_{ij} \geq 0 $, and $-b_{ij}$ arrows from $j$ to $i$ if $b_{ij} \leq 0 $. 
  
  By construction, one can recover the exchange matrix from the data of a quiver without loops and $2$-cycles in the following way:
    $$ b_{ij} = (\text{number of arrows from $i$ to $j$}) - (\text{number of arrows from $j$ to $i$}). $$
   
\bigskip

 For any  $k \in \{1, \ldots, n \}$ one defines new variables  :
 \begin{equation}   \label{mut} 
  x'_j := \begin{cases} 
   \frac{1}{x_k} \left( \prod\limits_{\substack{b_{lk}>0}}   x_l^{b_{lk}}  +  
   \prod\limits_{\substack{b_{lk}<0}}  x_l^{-b_{lk}} \right) 
    &\text{ if  $j=k$, } \\
        x_j & \text{ if  $j \neq k$,  } 
   \end{cases} 
\end{equation} 

  as well as a new matrix $B'$ :
 $$ \forall i,j  \quad (B')_{ij} := \begin{cases}  - b_{ij} & \text{ if $i=k$ or $j=k$, } \\ b_{ij} + \frac{ \lvert b_{ik} \rvert b_{kj} + b_{ik} \lvert b_{kj} \rvert } {2} & \text{ if $ i \neq k $ and $j \neq k$. }  \end{cases} $$
 Note that the principal part of the  matrix  $B'$ is again skew-symmetric.
 
  \bigskip
 
 The procedure producing the seed $((x'_1, \ldots ,x'_m),B')$ out of the initial seed $((x_1, \ldots ,x_m),B)$ is called the \textit{mutation} in the direction $k$ of the initial seed $((x_1, \ldots ,x_m),B)$. This procedure is involutive, i.e. the mutation of the seed  $(x'_1, \ldots, x'_m),B')$ in the same  direction $k$ gives back the initial seed $((x_1, \ldots , x_m),B)$.
  Any seed can give rise to $n$ new seeds, each of them obtained by a mutation in the direction  $k$ for  $1 \leq k \leq n$.  Let $\mathbb{T}$ be the tree whose vertices correspond to the seeds and edges to mutations. There are exactly $n$ edges adjacent to each vertex.  This tree can have a finite or infinite number of vertices depending on the initial seed.  Let $((x_1, \ldots ,x_m),B)$ be a fixed seed and $t_0$ the corresponding vertex in $\mathbb{T}$. For any vertex  $t \in \mathbb{T} $ one denotes by $((x_1^t, \ldots ,x_m^t),B^t)$ the seed corresponding to the vertex $t$. It is obtained from the initial seed $((x_1, \ldots ,x_m),B)$ by applying a sequence of mutations following a path starting at $t_0$ and ending at $t$.

\begin{deftn} 
The cluster algebra generated by the initial seed 
$((x_1, \ldots ,x_m), \allowbreak B)$ is the  $\mathbb{Z}[x_{n+1}^{\pm 1}, \ldots ,x_m^{\pm 1}]$-subalgebra of  $\mathcal{F}$ generated by  all the variables 
$ x_1^t, \allowbreak \ldots , \allowbreak x_n^t$ for all the vertices $t \in \mathbb{T}$.
\end{deftn}

 For any seed $((x_1, \ldots ,x_m),B)$, the variables $x_1, \ldots , x_m$  are called the cluster variables, $x_1, \ldots , x_n$ are  the unfrozen variables and $x_{n+1}, \ldots , x_m$ are the frozen variables. These last variables do not mutate and are present in every  cluster.

The first main result of the theory of cluster algebras is the \textit{Laurent phenomenon} :

   \begin{thm}[{{\cite[Theorem 3.1]{FZ1}}}]
  Let $((x_1, \ldots ,x_m),B)$ be a fixed seed in a cluster algebra $\A$. Then for any seed  
   $((x_1^t,  \allowbreak  \ldots  \allowbreak ,x_m^t),B^t)$ in $\A$ and any $1 \leq j \leq n$,  the cluster variable  $x_j^t$ is a Laurent polynomial in the variables $x_1, \ldots , x_m$. 
  \end{thm}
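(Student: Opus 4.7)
The plan is to prove, by induction on the distance $d(t,t_0)$ in the tree $\mathbb{T}$, that each cluster variable $x_j^t$ belongs to the Laurent polynomial ring $R = \mathbb{Z}[x_1^{\pm 1}, \ldots, x_m^{\pm 1}]$. The cases $d=0$ and $d=1$ are immediate: the initial cluster variables are in $R$ by definition, and after a single mutation the formula (\ref{mut}) exhibits each new variable explicitly as an element of $R$.

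For the inductive step, the naive approach of substituting Laurent expressions from one cluster into the Laurent expressions of a neighbouring cluster fails because cascading denominators could in principle appear. To control this, I would follow the strategy of Fomin--Zelevinsky and exploit the fact that $R$ is a unique factorisation domain. The key local statement is: if an element $y \in \mathcal{F}$ belongs to the Laurent polynomial rings of two clusters differing by a single mutation in direction $k$, then it already belongs to the ring built over the common variables $\{x_l\}_{l \neq k}$ extended by $x_k$ alone. This rests on the observation that the two monomials $\prod_{b_{lk}>0} x_l^{b_{lk}}$ and $\prod_{b_{lk}<0} x_l^{-b_{lk}}$ appearing in (\ref{mut}) are coprime in $R$, so that $x_k$ and $x_k'$ are coprime there; dividing by appropriate powers of $x_k$ on both sides forces the positive‐power part to vanish in the quotient, which is the required conclusion.

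Combining this local UFD argument with the inductive hypothesis along the path in $\mathbb{T}$ from $t_0$ to $t$ (the bookkeeping being organised by the Caterpillar Lemma of Fomin--Zelevinsky, which arranges intermediate clusters so that the above dichotomy can be applied repeatedly) yields the theorem. The main obstacle is ensuring that the hypotheses of the local UFD statement remain available at each step, i.e.\ that coprimality of the relevant binomial factors persists after sequences of mutations and that the auxiliary clusters chosen along the induction path interact correctly with the exchange matrices. Once this combinatorial backbone is set up, the remainder reduces to standard manipulations in a unique factorisation domain.
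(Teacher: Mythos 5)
This theorem is quoted in the paper as background, verbatim from \cite[Theorem 3.1]{FZ1}, with no proof given; so the only meaningful comparison is with Fomin--Zelevinsky's original argument, which is indeed the one you are outlining (induction organised by the Caterpillar Lemma, together with coprimality arguments in a unique factorisation domain). As a roadmap your plan is faithful to that proof, but as a proof it has two concrete defects.

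First, your central local statement is false as written. You claim that if $y$ lies in the Laurent rings of two clusters related by a single mutation in direction $k$, then $y$ lies in the ring generated over $\mathbb{Z}[x_l^{\pm 1},\, l \neq k]$ by $x_k$ alone. The new variable $x'_k$ itself is a counterexample: it lies in both Laurent rings (in the old one because $x'_k = (M^+ + M^-)/x_k$, where $M^{\pm}$ denote the two exchange monomials), yet its expansion involves $x_k^{-1}$. The correct statement adjoins both variables: the intersection of the two Laurent rings is $\mathbb{Z}[x_l^{\pm 1}\,(l \neq k)][x_k, x'_k]$. Relatedly, your coprimality remark is garbled: the monomials $\prod_{b_{lk}>0} x_l^{b_{lk}}$ and $\prod_{b_{lk}<0} x_l^{-b_{lk}}$ are \emph{units} in the Laurent ring $R$, so "coprime in $R$" is vacuous; the facts actually used are that the binomial $M^+ + M^-$ is divisible by no cluster variable, and that $x'_k$ is coprime to every $x_l$ in the appropriate polynomial subring. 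Second, and more seriously, the pairwise statement --- even corrected --- cannot drive an induction on $d(t_0,t)$: the inductive hypothesis along the path gives you membership of $x_j^t$ in $\mathcal{L}(t_1)$ for a single neighbour $t_1$ of $t_0$, which is only one of the memberships your local lemma requires. Fomin--Zelevinsky's actual engine is a three-step intersection statement along the caterpillar spine, of the shape $\mathcal{L}(t_0) \supseteq \mathcal{L}(t_1) \cap \mathcal{L}(t_2) \cap \mathcal{L}(t_3)$ for vertices $t_0, t_1, t_2, t_3$ with mutation directions $k, l, k$, and its proof needs not merely the coprimality of the two monomials within one exchange binomial but coprimality of \emph{consecutive} exchange binomials after mutation --- and the verification that this coprimality propagates under mutation (automatic in the normalized setting of \cite{FZ1}, but a lemma requiring proof, not an observation) is precisely where the weight of the argument sits. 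Your proposal relegates exactly these points to "bookkeeping" and "standard manipulations"; they are the substance, so what you have is a correct outline of the known proof rather than a proof.
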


\bigskip
 
  Let $\p$ be the multiplicative group of all Laurent monomials in the frozen variables $x_{n+1}, \ldots , x_{m}$. One can endow it with an additional structure given by 
    $$ \prod_{i} x_i^{\alpha_i} \oplus \prod_{i} x_i^{\beta_i} := \prod_{i} x_i^{min(\alpha_i,\beta_i)}$$
  making $\p$ a semifield. 
Any substraction-free rational expression $F(u_1, \allowbreak \ldots , \allowbreak u_k) $ with integer coefficients in some variables $u_1, \ldots , u_k $ can be specialized on some elements $p_1, \ldots, p_k$ in $\p$. This will be denoted by $F_{|{\p}}(p_1, \ldots, p_k)$.

 The mutation relation~\eqref{mut} can be rewritten as 
$$ x_k x'_k = p_k^{+} \prod_{1 \leq i \leq n} x_i^{ [b_{ik}]_{+}}  +  p_k^{-} \prod_{1 \leq i \leq n} x_i^{[-b_{ik}]_{+}} $$
where $$p_k^{+} := \prod_{n+1 \leq i \leq m} x_i^{ [b_{ik}]_{+}} \quad  \text{and} \quad  p_k^{-} := \prod_{n+1 \leq i \leq m} x_i^{[-b_{ik}]_{+}}$$ belong to the semifield $\p$.
 
 Thus the frozen variables $x_{n+1} , \ldots , x_m$ play the role of coefficients and the cluster algebra $\A$ can be viewed as the $\mathbb{Z} \p$ algebra generated by the (exchange) variables $x_1^t , \ldots , x_n^t$ for all the vertices $t$ of the tree $\mathbb{T}$. Here $\mathbb{Z} \p$ denotes the group ring of the multiplicative group of the semifield $\p$. This group is always torsion-free and hence the ring $\mathbb{Z} \p$ is a domain. 
 
\bigskip

 The notion of isomorphism of cluster algebras is introduced in \cite{FZ2}: two cluster algebras $\A \subset \mathcal{F}$ and $\mathcal{A'} \subset \mathcal{F'}$ with the same coefficient part $\p$  are said to be isomorphic if there exists a $\mathbb{Z} \p$ algebras isomorphism $\mathcal{F} \rightarrow \mathcal{F'}$ sending a seed in $\A$ onto a seed in $\mathcal{A'}$. In particular the set of seeds of $\A$ is in bijection with the set of seeds of $\mathcal{A'}$ and $\A$ and $\mathcal{A'}$ are isomorphic as algebras. 

  The second important result is the classification of finite type cluster algebras, i.e. the ones with a finite number of seeds. 

\begin{thm}[{{\cite[Theorem 1.4]{FZ2}}}] 
There is a canonical bijection between isomorphism classes of cluster algebras of finite type and Cartan matrices of finite type.  
\end{thm}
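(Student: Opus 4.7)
The plan is to produce the bijection from an intrinsically defined Cartan-type invariant of a cluster algebra, and to build inverse constructions on either side. Given any exchange matrix $B$ occurring in some seed of a cluster algebra $\A$, I associate its \emph{Cartan counterpart} $A(B)$, the integer matrix with diagonal entries $2$ and off-diagonal entries $a_{ij} = -|b_{ij}|$. A preliminary lemma is that if $A(B)$ is a (generalized) Cartan matrix of finite type for some seed of $\A$, then the Dynkin class of $A(B)$ depends only on the mutation equivalence class of $B$; this is checked by reducing to the rank-$2$ case at each individual mutation and then invoking a connectedness argument in the exchange graph.

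For the necessity direction, I would assume that $\A$ has only finitely many seeds and extract a Cartan matrix of finite type from some seed. The rank-$2$ analysis is the lever: a rank-$2$ cluster algebra with exchange matrix $\bigl(\begin{smallmatrix}0 & b\\-c & 0\end{smallmatrix}\bigr)$ is of finite type if and only if $bc \leq 3$, corresponding precisely to types $A_1 \times A_1$, $A_2$, $B_2/C_2$, $G_2$. Since every $2\times 2$ principal submatrix of some $B^t$ must be of finite type (otherwise a two-index subsequence of mutations already generates infinitely many distinct seeds), one can select a seed in which all entries $|b_{ij} b_{ji}|$ are simultaneously minimized; a sign-normalization via reorientation of the quiver then turns $B$ into an acyclic (and in the skew-symmetrizable case, suitably oriented) representative whose Cartan counterpart $A(B)$ is a genuine finite-type Cartan matrix.

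For the sufficiency direction, I construct, for each finite-type Cartan matrix $A$, an explicit cluster algebra of finite type whose invariant recovers $A$. The combinatorial skeleton is the set $\Phi_{\geq -1} = \Phi_{>0} \sqcup (-\Pi)$ of \emph{almost positive roots} attached to $A$, together with a symmetric ``compatibility degree'' defined via the piecewise-linear action of the Coxeter element; clusters are then the maximal sets of pairwise compatible elements of $\Phi_{\geq -1}$. Three verifications are required: (i) the resulting abstract simplicial complex, the \emph{cluster complex}, is finite, pure of dimension $n-1$, and flag, giving only finitely many clusters; (ii) assigning to each almost positive root a Laurent monomial (by explicit formula for negative simple roots and by recursion through mutations otherwise) yields cluster variables with pairwise distinct denominators indexed by $\Phi_{\geq -1}$; (iii) the exchange matrix read off at any cluster has Cartan counterpart mutation-equivalent to $A$. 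This produces a cluster algebra $\A(A)$ of finite type with the prescribed invariant, and distinctness of Dynkin types of $A$ translates into non-isomorphism of the $\A(A)$ by the invariance lemma above.

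The main obstacle is the sufficiency direction, specifically proving that the model built from $\Phi_{\geq -1}$ really defines a cluster algebra with the correct exchange combinatorics. Two points are delicate: checking that the denominator map $x_\beta \mapsto \beta$ is well defined under mutation (this reduces to a sign-coherence statement for the exchanges, verifiable by induction on distance from an acyclic seed and reliant on the rank-$2$ recursions), and showing that the exchange matrix obtained by flipping a root in a compatible cluster is again the Cartan counterpart of a finite-type Dynkin diagram, which is a finite but non-trivial case analysis in the exceptional types. Once these are in place, the bijection is canonical: one direction sends $\A$ to the Dynkin type of the Cartan counterpart of any ``minimal'' seed, and the inverse sends a finite-type Cartan matrix $A$ to the isomorphism class of $\A(A)$, with the two constructions shown to be mutually inverse by comparing invariants on both sides.
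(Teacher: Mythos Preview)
The paper does not give its own proof of this theorem: it is quoted verbatim as a background result from \cite{FZ2} in the reminders section, with no argument supplied. There is therefore nothing in the present paper to compare your proposal against.

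For what it is worth, your sketch follows the actual strategy of \cite{FZ2}: defining the Cartan counterpart $A(B)$ of an exchange matrix, establishing its mutation-invariance on the level of Dynkin type, using the rank-$2$ analysis to force a finite-type Cartan matrix to appear in some seed when $\A$ is of finite type, and conversely building a cluster algebra of finite type from the combinatorics of almost positive roots $\Phi_{\geq -1}$ and their compatibility degrees. The delicate points you flag (well-definedness of the denominator/root correspondence and the exchange-matrix behaviour under flips, especially in exceptional types) are exactly where the bulk of the work in \cite{FZ2} lies. But since the paper under review only \emph{cites} this classification, no proof was expected here.
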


\bigskip

Let $\A$ be a cluster algebra and let us fix $((x_1, \ldots , x_m), B)$ an initial seed. 
  In \cite{FZ4}, Fomin and Zelevinsky define, for any $1 \leq j  \leq n$:
  $$ 
   y_j   :=  \prod_{ n+1 \leq i \leq m} x_i^{b_{ij}}   \quad and \quad 
   \yjh := \prod_{ 1 \leq i \leq m} x_i^{b_{ij}}.
   $$
   
  \begin{thm}[{{\cite[Corollary 6.3]{FZ4}}}] \label{thmFpoly}
  Let $((x_1^t, \ldots , x_n^t , x_{n+1}, \ldots,  x_m) , B^t)$ be any seed in $\A$. Then for any $1 \leq l \leq n$, the cluster variable $x_l^t$ can be expressed in terms of the initial cluster variables $x_1 , \ldots , x_m$ in the following way:
\begin{equation}  \label{Fpoly}  x_l^t = \frac{F^{l,t}  \left( \ylh , \ldots , \ynh \right)}{{F^{l,t}}_{|{\p}} \left( y_1, \ldots, y_n \right)} x_1^{g_1^{l,t}} \cdots x_n^{g_n^{l,t}}.
\end{equation}

 \end{thm}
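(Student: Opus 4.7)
The plan is to reduce to the case of a cluster algebra with \emph{principal coefficients} at the initial seed, following Fomin and Zelevinsky. Given the initial exchange matrix $B$, one introduces a companion cluster algebra $\A_{\mathrm{prin}}$ whose initial seed has $n$ extra frozen variables $y_1, \ldots, y_n$ and whose initial coefficient part is the identity matrix. In $\A_{\mathrm{prin}}$ the initial exchange relations take the clean form $X_k X_k' = y_k \prod_i x_i^{[b_{ik}]_+} + \prod_i x_i^{[-b_{ik}]_+}$, and more generally every exchange relation has an analogous monomial shape that drives the argument.

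I would first prove by induction on the length of a mutation sequence from $t_0$ to $t$ that every cluster variable $X_l^t$ of $\A_{\mathrm{prin}}$ admits a factorization
\[
X_l^t = F^{l,t}(y_1, \ldots, y_n) \, x_1^{g_1^{l,t}} \cdots x_n^{g_n^{l,t}},
\]
with $F^{l,t} \in \mathbb{Z}[y_1, \ldots, y_n]$ having nonzero constant term and $(g_1^{l,t}, \ldots, g_n^{l,t}) \in \mathbb{Z}^n$. The base case $t = t_0$ is immediate with $F^{l,t_0} = 1$ and $g_j^{l,t_0} = \delta_{jl}$. For the inductive step, one writes out the mutation rule in $\A_{\mathrm{prin}}$ and checks that the right-hand side of the exchange relation, after division by $X_k^t$, retains the shape of a polynomial in the $y_j$'s times a Laurent monomial in the $x_i$'s. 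The conceptual input is a grading argument: equip $\mathbb{Z}[x_1^{\pm 1}, \ldots, x_n^{\pm 1}][y_1, \ldots, y_n]$ with the $\mathbb{Z}^n$-grading $\deg(x_i) = e_i$, $\deg(y_j) = -B e_j$; every cluster variable is then homogeneous and its homogeneous degree is precisely its $g$-vector.

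Next I would deduce the general statement from the principal-coefficients version by a specialization argument. Any cluster algebra $\A$ can be obtained from $\A_{\mathrm{prin}}$ by specializing each $y_j$ to the Laurent monomial $\prod_{n+1 \leq i \leq m} x_i^{b_{ij}}$ that defines $y_j$ in the semifield $\p$, but this specialization must be followed by a normalization in $\p$ to account for the semifield structure on the coefficients of $\A$ that $\A_{\mathrm{prin}}$ does not see. The normalization contributes the denominator ${F^{l,t}}_{|{\p}}(y_1, \ldots, y_n)$ of~\eqref{Fpoly}, while the identity $\yjh = y_j \cdot \prod_{1 \leq i \leq n} x_i^{b_{ij}}$ absorbs the monomial factor $x_1^{g_1^{l,t}} \cdots x_n^{g_n^{l,t}}$ coming from the principal case into the hat variables in the numerator.

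The main obstacle is the inductive step in $\A_{\mathrm{prin}}$: one must verify that $F^{l,t}$ remains a genuine polynomial — not merely a Laurent expression — in $y_1, \ldots, y_n$ after an arbitrary sequence of mutations, and that the exponents $g_j^{l,t}$ stay integral. This is where the combinatorial rigidity of cluster algebras really enters; the simultaneous recursions for $F$-polynomials and $g$-vectors, together with homogeneity with respect to the $\mathbb{Z}^n$-grading, are what rule out would-be negative powers of the $y_j$'s.
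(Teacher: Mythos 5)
Your overall architecture is exactly that of Fomin--Zelevinsky's own proof of this result (the paper offers no proof of Theorem~\ref{thmFpoly}; it is quoted as \cite[Corollary 6.3]{FZ4}, whose proof runs through principal coefficients, the $\mathbb{Z}^n$-grading, and separation of additions, just as you outline). However, your key inductive claim is false as stated: in $\A_{\mathrm{prin}}$ a cluster variable does \emph{not} factor as $F^{l,t}(y_1,\ldots,y_n)\,x_1^{g_1}\cdots x_n^{g_n}$ with a polynomial in the plain variables $y_j$. Already in type $A_2$ the first mutation gives $X_1' = (x_2+y_1)/x_1$, which is not a polynomial in $y_1,y_2$ times a Laurent monomial in $x_1,x_2$. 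The correct statement --- and the one your own grading remark forces --- is $X_l^t = F^{l,t}(\hat{y}_1,\ldots,\hat{y}_n)\,x_1^{g_1}\cdots x_n^{g_n}$ with $\hat{y}_j = y_j\prod_{1\leq i\leq n} x_i^{b_{ij}}$: since $\deg \hat{y}_j = 0$ while $\deg y_j = -Be_j \neq 0$ in general, a homogeneous element can only be a Laurent monomial in the $x_i$ times a polynomial in the degree-zero elements $\hat{y}_j$; your displayed factorization is incompatible with the homogeneity you invoke. What one actually proves by induction (\cite[Proposition 3.6]{FZ4}) is that $X_l^t$ lies in $\mathbb{Z}[x_1^{\pm 1},\ldots,x_n^{\pm 1};y_1,\ldots,y_n]$, i.e.\ Laurent in $x$ and polynomial in $y$; the $F$-polynomial is then \emph{defined} by $F^{l,t}(y):=X_l^t(1,\ldots,1;y)$, and homogeneity converts this into the factorization through the $\hat{y}_j$ (when $B$ has full rank, any degree-zero monomial $x^{a}y^{c}$ satisfies $a=Bc$ and is literally $\hat{y}^{c}$).

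Relatedly, your closing manipulation --- that the identity $\hat{y}_j = y_j\prod_i x_i^{b_{ij}}$ ``absorbs the monomial factor from the principal case into the hat variables'' --- is not a valid step: substituting $\hat{y}_j$ for $y_j$ in $F^{l,t}$ multiplies each monomial of $F^{l,t}$ by a \emph{different} $x$-monomial, so unless $F^{l,t}$ is itself a monomial nothing can be absorbed into the single factor $x_1^{g_1}\cdots x_n^{g_n}$. Once the inductive claim is corrected as above, the rest of your sketch (specializing $y_j\mapsto \prod_{n+1\leq i\leq m}x_i^{b_{ij}}$, with the normalization in the semifield $\p$ producing the denominator ${F^{l,t}}_{|{\p}}(y_1,\ldots,y_n)$) is precisely the separation-of-additions argument of \cite[Theorem 3.7]{FZ4}, and the proof goes through. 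One smaller caution: do not build ``nonzero constant term'' into the induction --- in \cite{FZ4} this property was conjectural and was only later established by Derksen--Weyman--Zelevinsky (quoted in this paper as Theorem~\ref{FpolyDWZ}); it is not needed to prove the formula~\eqref{Fpoly} itself.
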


     In this formula $F^{l,t}$ is a polynomial called the $F$-polynomial associated to the variable $x_l^{t}$ and the $g_i^{l,t}$ are integers. We write for short 
 $ {\bf x}^{{\bf g}^{l,t}} $ for $x_1^{g_1^{l,t}} \cdots x_n^{g_n^{l,t}}$ and ${\bf g}^{l,t} = (g_1^{l,t} , \cdots , g_n^{l,t})$ is called the $g$-vector associated to the variable $x_l^t$. 
 
    The $F$-polynomial associated to any cluster variable satisfies several important and useful properties, which have been conjectured by Fomin-Zelevinsky  in \cite{FZ4} and proved by Derksen-Weyman-Zelevinsky in \cite{DWZ} using the theory of quivers with potentials. We recall here some of these results, which we will use in Section~\ref{dom-compat} in the study of compatible seeds. 
    
    \begin{thm}[{{\cite[Theorem 1.7]{DWZ}}}] \label{FpolyDWZ}
   
     Let $((x_1^t, \ldots , x_n^t , x_{n+1}, \ldots,  x_m) , B^t)$ be any seed in $\A$. Let  $1 \leq l \leq n$, and $F^{l,t}$  be the $F$-polynomial associated to the cluster variable $x_l^t$. Then 
     \begin{enumerate}[(i)]
      \item There is a unique monomial in $F^{l,t}$ that is strictly divisible by any other monomial in $F^{l,t}$. This monomial has coefficient $1$. 
      \item The polynomial $F^{l,t}$ has constant term $1$. 
      \end{enumerate}
 
  \end{thm}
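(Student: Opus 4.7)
The plan is to follow the Derksen--Weyman--Zelevinsky approach via quivers with potentials and their decorated representations. As a preliminary reduction, I would invoke the \emph{separation of additions} principle from \cite{FZ4}: $F$-polynomials in the general coefficient setting are obtained from $F$-polynomials with principal coefficients by a monomial substitution that preserves both the set of monomials appearing and the divisibility order among them. Hence it is enough to prove (i) and (ii) in the principal-coefficients case, where the combinatorics simplifies considerably.

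Next, to the initial exchange matrix $B$ I would attach a generic quiver with potential $(Q,W)$ and work in the category of decorated representations of the Jacobian algebra $J(Q,W)$. The central technical input is DWZ's mutation theory: every seed reached from the initial one by a mutation sequence gives rise to a decorated representation $\mathcal{M}(l,t) = (M(l,t),V(l,t))$, obtained by iterating the combinatorial mutation of decorated representations along the same sequence. This mutation is defined on the underlying quiver representations by a triangle construction at the mutated vertex; one has to check that it is compatible with right-equivalence of quivers with potentials and hence is independent of ambiguities in the choice of $W$.

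The core step is then to establish the generating-function formula
\begin{equation*}
F^{l,t}(y_1,\ldots,y_n) \;=\; \sum_{\mathbf{e}} \chi\bigl(\mathrm{Gr}_{\mathbf{e}}(M(l,t))\bigr)\,\prod_{i=1}^{n} y_i^{e_i},
\end{equation*}
where the sum runs over dimension vectors $\mathbf{e}$ of subrepresentations of $M(l,t)$, $\mathrm{Gr}_{\mathbf{e}}$ denotes the quiver Grassmannian of subrepresentations of dimension $\mathbf{e}$, and $\chi$ is the topological Euler characteristic. This identity is proved by induction on the length of the mutation sequence. The base case is immediate because for initial cluster variables the corresponding decorated representation has $M = 0$, forcing $F = 1$. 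For the inductive step one must compare subrepresentations of $M(l,t)$ before and after a single mutation at a vertex $k$; the technical heart here is showing that the Euler characteristics of the relevant Grassmannians satisfy exactly the recursion that the $F$-polynomials satisfy under cluster mutation, which follows from a long exact sequence associated to the mutation triangle together with the additivity of $\chi$ on constructible partitions.

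Once the Grassmannian formula is in hand, both parts of the theorem fall out with essentially no extra work. For (ii), the dimension vector $\mathbf{e} = 0$ corresponds to the unique zero subrepresentation, so $\mathrm{Gr}_0(M(l,t))$ is a point and its contribution is the constant term $\chi(\mathrm{pt}) = 1$. For (i), the dimension vector $\mathbf{e} = \underline{\dim}\, M(l,t)$ corresponds to the unique full subrepresentation $M(l,t) \subseteq M(l,t)$, again contributing coefficient $1$; and every other dimension vector appearing in the sum satisfies $\mathbf{e} < \underline{\dim}\, M(l,t)$ componentwise with strict inequality in at least one coordinate, so every other monomial in $F^{l,t}$ strictly divides $\prod_i y_i^{\dim M_i(l,t)}$. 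The main obstacle in the whole argument is of course the inductive identification of the $F$-polynomial with the Grassmannian generating series; by contrast, once this identification is established, the two divisibility assertions become tautological consequences of the fact that the zero and full subrepresentations are extremal in the inclusion order.
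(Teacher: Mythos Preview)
The paper does not give its own proof of this statement: it is quoted verbatim from \cite{DWZ} as background and used as a black box. Your sketch is a faithful outline of the Derksen--Weyman--Zelevinsky argument (mutation of decorated representations, the quiver-Grassmannian formula for $F$-polynomials, then reading off (i) and (ii) from the extremal dimension vectors $\mathbf{e}=0$ and $\mathbf{e}=\underline{\dim}\,M(l,t)$), so there is nothing in the paper to compare against.
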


 \subsection{Monoidal categorification of cluster algebras}
  \label{moncat}
 
     The notion of monoidal categorification of a cluster algebra was introduced by Hernandez and Leclerc in \cite{HL}. Recall that, if $\C$ is a monoidal category, a simple object $M$ in $\C$ is said to be \textit{real} if the tensor product $ M \otimes_{\C} M$ is simple. It is said to be \textit{prime} if it is not invertible in $\C$ cannot be decomposed as $ M = M_1 \otimes_{\C} M_2$ with  $M_1$ and $M_2$ two simple non invertible modules neither trivial nor equal to $M$ itself. We denote by $K_0(\C)$ the Grothendieck ring of the category $\C$.   Recall that for any objects $A,B,C$ in $\C$, the relation $[B]=[A]+[C]$ holds in $K_0(\C)$ if there is a short exact sequence $0 \rightarrow A \rightarrow B \rightarrow C \rightarrow 0$ in $\C$. The ring structure on $K_0(\C)$ is directly inherited from the monoidal structure of $\C$: $[M] \cdot [M'] = [M \otimes_{\C} M']$ for any objects $M,M'$ in $\C$. 
     
  \begin{rk}
      In the category $R-gmod$ that we will mostly be studying in this paper, all the  simple objects are non-invertible. However in other categories, simple objects may be invertible. This happens for instance in categories of modules over Borel subalgebras of quantum affine algerbras in the work of Hernandez-Leclerc  \cite{HL4}.
      \end{rk}

\begin{deftn}[Monoidal categorification of a cluster algebra]
A monoidal category $\C$ is a monoidal categorification of a cluster algebra $\A$ if the following conditions simultaneously hold:
     
       \begin{enumerate}[(i)]
        \item There is a ring isomorphism 
          $$ K_0 (\C) \simeq \A. $$     
         \item Under this isomorphism, classes of simple real objects in $\C$ correspond to cluster monomials in $\A$ and classes of simple real prime objects in $\C$ correspond to cluster variables in $\A$. 
        \end{enumerate}
\end{deftn}

 \smallskip
 
   Several examples of monoidal categorifications of cluster algebras appeared more recently in various contexts: on the one hand using categories of (finite dimensional) representations of quiver Hecke algebras through the works of Kang-Kashiwara-Kim-Oh \cite{KKK,KKKO3,KKKO}; on the other hand, via the coherent Satake category studied by Cautis-Williams in \cite{CW}. Let us point out that these examples use a slightly different notion of monoidal categorification:
   
    \begin{deftn}[Monoidal categorification of a cluster algebra in the sense of \cite{KKKO,CW}]
    A monoidal category $\C$ is a monoidal categorification of a cluster algebra $\A$ if:
    
   \begin{enumerate}[(i)]
        \item There is a ring isomorphism 
          $$ K_0 (\C) \simeq \A. $$     
         \item Under this isomorphism, any cluster monomial in $\A$ is the class of a simple real object in $\C$. 
        \end{enumerate}
\end{deftn}  

 See for instance Definition~\ref{defmoncatKKKO} for a precise definition in the context of quiver Hecke algebras.

\subsection{Example: representations of quantum affine algebras}
  \label{qaa}

 Let $\mathfrak{g}$ be a finite-dimensional semisimple Lie algebra of type $A_n, D_n,$ or $E_n$ and $\hat{\mathfrak{g}}$ the corresponding Kac-Moody algebra. The quantum affine algebra  $U_q (\hat{\mathfrak{g}})$ can be defined as a quantization of the universal enveloping algebra of $\hat{\mathfrak{g}}$ (see \cite{Drin} or \cite{CharPres} for precise definitions). Consider the category $\C$ of finite dimensional $U_q (\hat{\mathfrak{g}})$-modules. In \cite{CharPres}, Chari and Pressley proved that simple objects in this category are parametrized by their highest weights.  More precisely, let $I$ be the set vertices of the Dynkin diagram of $\mathfrak{g}$ and, for each $i \in I$ and $a \in \mathbb{C}^{*}$, let $Y_{i,a}$ be some indeterminate. The notion of $q$-character of a  finite dimensional $U_q ( \hat{\mathfrak{g}})$-module was introduced  by Frenkel and Reshetikhin in \cite{FR1} as a  an injective ring homomorphism 
$$ \chi_q : K_0 (\C) \rightarrow \mathbb{Z}[Y_{i,a}^{\pm 1} , i \in I , a \in \mathbb{C}^{*} ].$$
Let $\mathcal{M}$ be the set of Laurent monomials in the variables $\Yia$.  
 For any $i \in I$ and $a \in \mathbb{C}^{*}$, set
$$ A_{i,a} := Y_{i,aq} Y_{i,aq^{-1}} \prod_{j \neq i} Y_{j,a}^{a_{ji}}  \in \mathcal{M} . $$
One defines a  partial ordering (the Nakajima order) on $\mathcal{M}$ in the following way:
$$  \mathfrak{m} \leq \mathfrak{m}' \Leftrightarrow \frac{\mathfrak{m}'}{\mathfrak{m}} \text{ is a monomial in the $A_{i,a}$ } $$
for any monomials  $\mathfrak{m}, \mathfrak{m}' \in  \mathcal{M}$. 

A monomial $\mathfrak{m} \in \mathcal{M}$ is called dominant if it does not contain negative powers of the variables $\Yia$. Let ${\mathcal{M}}^{+}$ denote the subset of $\mathcal{M}$ of all dominant monomials. For any simple object $V$ of $\C$,  the set of monomials occurring in the $q$-character of $V$ has a unique maximal element $\mu_V$ for the above order, and this monomial is always dominant. Conversely, it is possible to associate a simple finite dimensional  $U_q ( \hat{\mathfrak{g}})$-module to any dominant monomial in the variables $\Yia$, providing a bijection between the set of simple objects in $\C$ and $\mathcal{M}^{+}$. 
For any dominant monomial $\mathfrak{m}$, we let $L(\mathfrak{m})$ denote the unique (up to isomorphism) simple object in $\C$  corresponding to $\mathfrak{m}$ via this bijection. In the case where $\mathfrak{m}$ is reduced to a single variable $Y_{i,a}$ for some $i \in I$ and $a \in \mathbb{C}^{*}$, the simple module $L(\mathfrak{m})=L(Y_{i,a})$ is called a \textit{fundamental representation}.

 \bigskip

 The Dynkin diagram of $\mathfrak{g}$ is a bipartite graph hence its vertex set $I$ can be decomposed as $I = I_0 \sqcup I_1$ such that every edge connects a vertex of $I_0$ with one of $I_1$. Then for any $i \in I$ set:
$$ \xi_i :=  \begin{cases}  0 &\text{if $i \in I_0$,} \\ 1 &\text{ if $i \in I_1$.}  \end{cases} $$
Hernandez and Leclerc introduce a subcategory $\mathcal{C}_1$ of $\C$ whose Grothendieck ring is generated (as a ring) by the classes of the  fundamental representations $L(Y_{i,q^{\xi_i}}) , L(Y_{i,q^{ \xi_i +2}}) (i \in I)$. 
One of the main results of \cite{HL} can be stated as follows: 

  \begin{thm}[{{\cite[Conjecture 4.6]{HL}}}]
The category $\C_1$ is a monoidal categorification of a (finite type) cluster algebra of the same Lie type as the Lie algebra $\mathfrak{g}$.
 \end{thm}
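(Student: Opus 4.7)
The plan is to identify $K_0(\C_1)$ with an abstract cluster algebra $\A$ of the same Lie type as $\mathfrak{g}$ by writing down an explicit initial seed built from Kirillov-Reshetikhin-type simples in $\C_1$, matching the initial exchange relations to known short exact sequences of $U_q(\hat{\mathfrak g})$-modules, and finally showing that every cluster variable obtained by iterated mutation is the class of a prime real simple object. The ingredients I would use are (a) the Frenkel-Reshetikhin injective $q$-character $\chi_q$, (b) T-system type short exact sequences between tensor products of fundamental representations, and (c) the dominance/pointed-basis machinery recalled in this paper to control iterated mutation.

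First I would fix the bipartition $I = I_0 \sqcup I_1$ and build a candidate initial seed. Take as frozen variables the classes $[L(Y_{i,q^{\xi_i}} Y_{i,q^{\xi_i+2}})]$, $i \in I$, and as exchange variables the classes $[L(Y_{i,q^{\xi_i+2}})]$, $i \in I$, with exchange matrix given by (a suitably signed version of) the Cartan matrix of $\mathfrak g$ read off from the bipartition. The corresponding quiver has a vertex for every $i \in I$ together with a frozen companion, and its full subquiver on the exchange vertices is a Dynkin quiver of the same type as $\mathfrak g$. This identifies the would-be cluster algebra $\A$ as the finite type cluster algebra attached to the Cartan matrix of $\mathfrak g$, by the Fomin-Zelevinsky classification.

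Second, I would verify the ring isomorphism $K_0(\C_1) \simeq \A$. Since $\chi_q$ is an injective ring homomorphism and its image on $\C_1$ is generated, as an algebra, by the classes of the fundamental representations $L(Y_{i,q^{\xi_i}})$ and $L(Y_{i,q^{\xi_i+2}})$, one first checks that the initial seed above generates $K_0(\C_1)$. The initial exchange relations then follow from a T-system short exact sequence of the shape
\begin{equation*}
0 \to \bigotimes_{j \sim i} L(Y_{j,q^{\xi_j+2}}) \to L(Y_{i,q^{\xi_i}}) \otimes L(Y_{i,q^{\xi_i+2}}) \to L(Y_{i,q^{\xi_i}} Y_{i,q^{\xi_i+2}}) \to 0
\end{equation*}
(up to orientation of arrows), which on taking classes in $K_0(\C_1)$ matches exactly the mutation rule \eqref{mut} at vertex $i$. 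Injectivity of $\chi_q$ reduces the verification of this identity to a computation in $\mathbb Z[Y_{i,a}^{\pm 1}]$, which is routine via the explicit $q$-characters of fundamentals in $\C_1$.

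Third, and this is the main obstacle, one must show that every cluster variable $x_l^t$ produced by an arbitrary sequence of mutations is the class of a prime real simple module in $\C_1$, and that cluster monomials correspond to real simple objects. My approach would be to run the dominance-order/pointed-set technology of Qin: attach to the conjectured simple a dominant monomial and use the Nakajima order on $\mathcal M^+$ to match it with the $g$-vector of $x_l^t$ via the compatibility result alluded to in Section~\ref{dom-compat}. The combinatorial input from Theorem~\ref{thmFpoly} and Theorem~\ref{FpolyDWZ} on $F$-polynomials then forces the multiplicative decomposition to be that of a single simple module rather than a nontrivial sum. In types $A_n$ and $D_4$ this can be done by explicit computation using the known structure of $\C_1$; for arbitrary $ADE$, one invokes Nakajima's geometric model in which simple objects are realized as simple perverse sheaves on graded quiver varieties and cluster exchange relations arise from geometric restriction-induction sequences. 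The crux in all cases is that iterated mutation never produces a reducible class, and controlling this simultaneously for every seed is what forces the use of either delicate combinatorics (small rank) or sheaf-theoretic geometry (general $ADE$).
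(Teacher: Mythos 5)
A preliminary remark: the paper you were given does not prove this statement. It is quoted from \cite[Conjecture 4.6]{HL}, with the proof attributed to Hernandez--Leclerc in types $A_n$ and $D_4$ (\cite[Sections 10, 11]{HL}) and to Nakajima in all $ADE$ types \cite{Naka}. Judged against that literature proof, your first two steps reconstruct the actual initial data faithfully: your candidate seed is exactly the one recalled in Theorem~\ref{HLseed} (exchange variables $[L(Y_{i,q^{\xi_i+2}})]$, frozen variables $[L(Y_{i,q^{\xi_i}}Y_{i,q^{\xi_i+2}})]$, bipartite Dynkin quiver on the exchange part), and the first round of exchange relations is indeed furnished by T-system short exact sequences, whose class identities can be verified through the injective homomorphism $\chi_q$. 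Note only that the neighbor factors in the T-system at vertex $i$ are the fundamentals $L(Y_{j,q^{\xi_i+1}})$, so whether these are the seed variables $L(Y_{j,q^{\xi_j+2}})$ or the once-mutated variables $L(Y_{j,q^{\xi_j}})$ depends on the parity of $i$; your ``up to orientation'' hedge is doing real work there.

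The genuine gap is in your third step. Using the dominance-order/compatible-seed machinery of Section~\ref{dom-compat} as an ingredient of the proof is circular within this paper's logic: Proposition~\ref{HLcompat}, and hence the compatibility of the Hernandez--Leclerc seed, is itself deduced from \cite[Lemma 7.2]{HL}, i.e.\ from results established alongside the categorification theorem; this paper presents compatibility as a \emph{consequence} of the theorem, not a route to it. Moreover, Qin's pointed-set technology \cite{Qin} delivers only one inclusion --- cluster monomials are classes of simple modules --- which the paper explicitly describes as ``partly'' proving the conjecture. The definition of monoidal categorification in force here requires substantially more: the simples so obtained must be real, cluster variables must correspond to \emph{prime} real simples, and conversely every real simple (resp.\ prime real simple) class must be a cluster monomial (resp.\ cluster variable). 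None of this follows from $F$-polynomial or dominance combinatorics alone; in \cite{HL} it rests on a complete description of the simple objects of $\C_1$ via truncated $q$-characters in types $A_n$ and $D_4$, and in \cite{Naka} on the perverse-sheaf analysis on graded quiver varieties. Your closing sentences do defer to exactly these sources, and with that deferral your outline coincides with the proof the paper cites --- but then the Qin/compatibility detour should be excised, since it presupposes what is to be proved and in any case establishes less than what is needed.
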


They prove this conjecture for $\mathfrak{g}$ of types $A_n$ ($n \geq 1$) and $D_4$ (\cite[Sections 10,11]{HL}). In \cite{Naka}, Nakajima proved this conjecture in types $ADE$ using geometric  methods involving graded quiver varieties. Note that this geometric construction is valid for any orientation of the Dynkin graph of $\mathfrak{g}$. In \cite{HL2}, Hernandez-Leclerc exhibit other examples of monoidal categorifications of cluster algebras via categories of representations of quantum affine algebras in types $A_n$ and $D_n$ (\cite[Theorem 4.2, Theorem 5.6]{HL2}).

 \section{Quiver Hecke algebras}
  \label{sectionqha}

 The works of Kang-Kashiwara-Kim-Oh \cite{KKK,KKKO3,KKKO} provide many examples of monoidal categorifications of cluster algebras arising from certain categories of modules over quiver Hecke algebras. In this section, we recall the main definitions and properties of quiver Hecke algebras; then we recall the constructions of renormalized $R$-matrices from \cite{KKK} as well as the statements of monoidal categorification from \cite{KKKO}. We also recall the classification of simple finite dimensional representations of quiver Hecke algebras  of finite type using combinatorics of Lyndon words from \cite{KR}. 

 \subsection{Definition and main properties} 
   \label{qha}

In this subsection we recall the definitions and main properties of quiver Hecke algebras, as defined in \cite{KL} and \cite{R}.
 
 Let $\mathfrak{g}$ be a Kac-Moody algebra, $P$ the  associated weight lattice and $\Pi = \{ \alpha_i , i \in  I \} $ the set of simple roots.  We also define the coweight lattice as  $P^{\vee} = Hom(P, \mathbb{Z}) $ and we let $\Pi^{\vee}$ denote the set of simple coroots. We also denote by $A$ the generalized Cartan matrix, $W$ the Weyl group of $\mathfrak{g}$, and $(\cdot , \cdot)$ a $W$-invariant symmetric bilinear form on $P$. Let ${\bf k}$ be a base field.

 The root lattice is defined as  $ Q := \bigoplus_i \mathbb{Z}\alpha_i $. We also set $ Q_{+} := \bigoplus_i \mathbb{Z}_{\geq 0}\alpha_i $ and $ Q_{-} := \bigoplus_i \mathbb{Z}_{\leq 0}\alpha_i $.
For any $\beta \in Q $ that we write  $ \sum_i m_i \alpha_i $, its length is defined as  $ \sum_i |m_i| $. When $\beta \in Q_{+}$, we denote as in \cite{KL} by $ Seq(\beta) $ the set of all finite sequences (called words) of the form  $ i_1, \ldots, i_n $ (where $n$ is the length of $\beta$) with $m_i$ occurrences of the integer $i$ for all $i$. For the sake of simplicity, we identify letters with simple roots. In particular, for any $i,j \in \{1, \ldots, n \}$, $(i,j)$ stands for $(\alpha_i,\alpha_j)$ and if $\mu=i_1, \ldots, i_n$ and $\nu=j_1, \ldots,j_m$ are two words in $Seq(\beta)$, $(\mu,\nu)$ stands for $\sum_{p,q} (i_p,j_q)$.
 
 \bigskip
 
  To define quiver Hecke algebras, we fix a nonnegative integer $n$ and a family $\{Q_{i,j} , 1 \leq i,j \leq n \}$ of two-variables polynomials with coefficients in ${\bf k}$. These polynomials are required to satisfy certain properties, in particular $Q_{i,j}=0$ if $i=j$ and $Q_{i,j}(u,v)=Q_{j,i}(v,u)$ for any $i,j$ (see for instance \cite[Section 2.1]{KKKO} for more details). 
  In the case of finite type $A_n$ symmetric quiver Hecke algebras (which we will focus on in Sections~\ref{mutrule} and ~\ref{initseed}), the polynomials $Q_{i,j}$ are the following (see \cite{KR}):
$$ Q_{i,j}(u,v) = \begin{cases}  (u-v) &\text{if $j=i+1$,} \\  (v-u) &\text{ if $j=i-1$, } \\ 0 & \text{ if $i=j$, } \\ 1 &\text{otherwise.}   \end{cases} $$

  \begin{deftn}
  
 For any $\beta$ in $ Q_{+}$ of length $n$, the quiver Hecke algebra  $R(\beta)$ at $\beta$ associated to the Kac-Moody algebra $\mathfrak{g}$ and the family $\{Q_{i,j} , 1 \leq i,j \leq n \}$ is the ${\bf k}$-algebra generated by operators $ \{e(\nu)\}_{\nu \in Seq(\beta)} , \{x_i\}_{i \in \{1, \ldots ,n\} } $ , and $ \{\tau_k\}_{k \in \{1, \ldots ,n-1 \} } $ satisfying the following relations :
 
  \begin{equation*}
\begin{gathered}
 e(\nu)e(\nu') = \delta_{\nu,\nu'} e(\nu),   \\  \sum_{\nu \in Seq(\beta)} e(\nu) = 1, \\  x_i x_j = x_j x_i , \\  x_i e(\nu) = e(\nu) x_i,  \\  \tau_k e(\nu) = e(s_k(\nu)) \tau_k , \\ 
\tau_k \tau_l = \tau_l \tau_k  \text{ if $ \lvert k-l \rvert > 1$} , \\  \tau_k^2 e(\nu) =  Q_{\nu_k,\nu_{k+1}}(x_k,x_{k+1}) e(\nu) , \\ 
(\tau_k x_i - x_{s_k(i)} \tau_k ) e(\nu) = { \begin{cases} -e(\nu) & \text{ if $ i=k , \nu_k=\nu_{k+1} $, } \\  e(\nu) & \text{ if $ i=k+1 , \nu_k=\nu_{k+1} $ } \\ 0 & \text{ otherwise} \end{cases} },  \\ 
(\tau_{k+1}\tau_{k}\tau_{k+1} - \tau_{k}\tau_{k+1}\tau_{k}) e(\nu) = 
{ \begin{cases} \frac{Q_{\nu_k,\nu_{k+1}}(x_k,x_{k+1}) - Q_{\nu_k,\nu_{k+1}}(x_{k+2},x_{k+1})}{x_k - x_{k+2}} e(\nu) & \text{ if $ \nu_k = \nu_{k+2} $, } \\ 0 & \text{otherwise,} \end{cases} } 
\end{gathered}
\end{equation*}
where for any $\nu \in Seq(\beta)$, $\nu_k$ stands for the $k$th letter of the word $\nu$. 
  \end{deftn}
 
  The quiver Hecke algebra  $R(\beta)$ is called symmetric when the polynomials $Q_{i,j}$ are polynomials in $u-v$. 
 
The first main property of quiver Hecke algebras is that they naturally come with a  $\mathbb{Z}$-grading by setting 
$$ \deg e(\nu) = 0, \quad \deg x_k e(\nu) = 2 , \quad \deg \tau_i e(\nu) = -(\nu_i , \nu_{i+1}). $$
 
\bigskip
 
 For any   $\beta$ and $\gamma$ in $Q_{+}$ of respective lengths $m$ and $n$, let $M$ be a $R(\beta)$ module  and $N$ a  $R(\gamma)$ module. One defines the  convolution product of $M$ and $N$  via parabolic induction (see \cite{KL,KKK}).
 
 Set  
$$e(\beta,\gamma):= \sum_{\nu \in Seq(\beta) \atop \lambda \in Seq(\gamma)} e(\nu \lambda)  \in R(\beta+\gamma) .$$ 
It is an  idempotent in $R(\beta+\gamma)$.
Consider the  homomorphism of  $\mathbf{k}$-algebras 
 $$ R(\beta) \otimes R(\gamma) \rightarrow e(\beta,\gamma)R(\beta+\gamma)e(\beta,\gamma)$$ 
 given by   
 $$e(\nu) \otimes e(\lambda) \mapsto e(\nu \lambda), \nu \in Seq(\beta), \lambda \in Seq(\gamma) $$
 $$ x_k \otimes 1 \mapsto x_k e(\beta,\gamma), 1 \leq k \leq m , 1 \otimes x_l \mapsto x_{m+l} e(\beta,\gamma), 1 \leq l \leq n $$
 $$ \tau_k \otimes 1 \mapsto \tau_k e(\beta,\gamma), 1 \leq k < m , 1 \otimes \tau_l \mapsto \tau_{m+l} e(\beta,\gamma), 1 \leq l < n .$$ 
 Then one defines 
 $$ M \circ N := R(\beta + \gamma) \otimes_{ R(\beta) \otimes R(\gamma) } {M \otimes N }. $$
For any $\beta \in Q_{+}$, let $R(\beta)-pmod$ be the category of (left) graded finite type projective  $R(\beta)$ modules,  $R(\beta)-gmod$ the category of left finite dimensional graded $R(\beta)$-modules, and also    
$$ R-pmod := \bigoplus_{\beta \in Q_{+}} R(\beta)-pmod , \quad  R-gmod := \bigoplus_{\beta \in Q_{+}} R(\beta)-gmod .$$
Convolution product induces a monoidal structure on the categories $R-gmod $ and $R-pmod$.
The grading on quiver Hecke algebras also yields  shift functors  for these categories: decompose any object  $M$  as 
$$ M = \bigoplus_{n \in \mathbb{Z}} M_n $$ 
and define $qM$ as
 $$ qM = \bigoplus_{n \in \mathbb{Z}} M_{n-1}. $$ 
 The natural  $\mathbb{Z}[q^{\pm 1}]$  action 
 $$ q \cdot [M] := [qM]  $$ 
gives rise to   $\mathbb{Z}[q^{\pm 1}]$-algebras structures on the Grothendieck rings of the categories $R-pmod$ and $R-gmod$.

\bigskip

 The following definition introduces a notion of graded character for representations of quiver Hecke algebras. 

     \begin{deftn}[\cite{KL,KR}] \label{caract}
Let $M$ be a finite dimensional graded $R(\beta)$-module. For any $\nu \in Seq(\beta)$, set $M_{\nu} := e(\nu) \cdot M$. The module $M$ can be decomposed as
$$M = \bigoplus_{\nu} M_{\nu} .$$ 
Define 
$$ ch_q (M) := \sum_{\nu} (dim_q M_{\nu}) . \nu $$
where for any graded vector space $ V = \bigoplus_{n \in \mathbb{Z}}  V_n $, $ dim_q (V) := \sum_{n \in \mathbb{Z}} q^n dim V_n .$
 This is a formal series in words belonging to $Seq(\beta)$ with coefficients in $\mathbb{Z}[q,q^{-1}]$. 

 \end{deftn}

One can put a ring structure on the image set of $ch_q$ by defining a "product" of two words called  \textit{quantum shuffle product}.
  For any nonnegative integer $n$, let $\mathfrak{S}_n$ denote the symmetric group of rank $n$. 

     \begin{deftn}[Quantum shuffle product] \label{shuffle}
 Let $ {\bf i} = i_1, \ldots, i_r $ and $ {\bf j} = j_1, \ldots, j_s $ be two words. 
 We set $ i_{r+1} := j_1 $, $ i_{r+s} := j_s $ so that we can consider the concatenation  $ {\bf ij} = i_1 \cdots i_{r+s}$. 

Define the quantum shuffle product of $ {\bf i}$ and ${\bf j}$ :
$$ {\bf i} \circ {\bf j} := \sum_{\sigma \in \mathfrak{S}_{r,s} } q^{-e(\sigma)} ( i_{ \sigma^{-1}(1)}, \ldots, i_{\sigma^{-1}(r+s)} ) $$
 where $ \mathfrak{S}_{r,s} $ denotes the subset of  $\mathfrak{S}_{r+s}$ defined as:
 $$\mathfrak{S}_{r,s} := \{ \sigma \in \mathfrak{S}_{r+s} \mid \sigma(1) < \cdots < \sigma(r)  \quad \text{and} \quad   \sigma(r+1) < \cdots < \sigma(r+s) \}$$
  and, for any element $\sigma \in \mathfrak{S}_{r,s}$, 
$$ e(\sigma) :=  \sum_{ 1 \leq k\leq r< l \leq r+s \atop \sigma(k) > \sigma(l) }  (i_k , i_l) . $$ 

 \end{deftn}

By linearity one can also define quantum shuffle products of two formal series in elements of $Seq(\beta)$ with coefficients in $\mathbb{Z}[q,q^{-1}]$ (for any $\beta \in Q_{+}$). 

     \begin{prop}[{{\cite[Lemma 2.20]{KL}}}]
 For any $ \beta,  \gamma  \in Q_{+} $, and any $M \in R(\beta)-gmod$ and $N \in R( \gamma)-gmod$, we have:
$$ ch_q (M \circ N) = ch_q (M) \circ ch_q (N). $$
  \end{prop}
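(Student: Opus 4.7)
The plan is to unfold both sides into sums indexed by words and match them term-by-term, using the ``Mackey-type'' decomposition of $R(\beta+\gamma)e(\beta,\gamma)$ as a free right $R(\beta)\otimes R(\gamma)$-module. Writing $m$ and $n$ for the lengths of $\beta$ and $\gamma$, this decomposition says that $\{\tau_\sigma : \sigma \in \mathfrak{S}_{m,n}\}$ is a basis, where $\tau_\sigma$ denotes the product of the simple $\tau_k$'s along any reduced expression of the shuffle permutation $\sigma$. The independence of $\tau_\sigma e(\beta,\gamma)$ from the chosen reduced expression is crucial; it holds because $(m,n)$-shuffles are minimal-length representatives in $\mathfrak{S}_m\times\mathfrak{S}_n \backslash \mathfrak{S}_{m+n}$, so no braid or quadratic correction intervenes. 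This is essentially the content of \cite{KL}.

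Granting this, I would write $M\circ N = \bigoplus_{\sigma \in \mathfrak{S}_{m,n}} \tau_\sigma\otimes (M\otimes N)$ as graded vector spaces. Iterating $\tau_k e(\nu)=e(s_k\nu)\tau_k$ shows that, for $x\otimes y \in e(\mu)M\otimes e(\lambda)N$, the vector $\tau_\sigma(x\otimes y)$ lies in the weight space of $M\circ N$ indexed by the word obtained from the concatenation $\mu\lambda$ via the positional action $(\sigma\cdot(i_1\cdots i_{m+n}))_k = i_{\sigma^{-1}(k)}$. For each fixed $\nu \in Seq(\beta+\gamma)$, one therefore obtains
\[ e(\nu)(M\circ N) = \bigoplus_{\sigma\in \mathfrak{S}_{m,n}} \tau_\sigma\cdot e(\mu_\sigma\lambda_\sigma)(M\otimes N), \]
where $\mu_\sigma\in Seq(\beta)$ and $\lambda_\sigma\in Seq(\gamma)$ are the unique factorization with $\sigma\cdot(\mu_\sigma\lambda_\sigma)=\nu$.

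Next I would track the grading. Applying $\deg(\tau_k e(\nu))=-(\nu_k,\nu_{k+1})$ at each step of a reduced expression for $\sigma$, one finds that $\tau_\sigma$ shifts degrees by $-\sum_{(k,l)}((\mu_\sigma\lambda_\sigma)_k,(\mu_\sigma\lambda_\sigma)_l)$, the sum being over the inversions $(k,l)$ of $\sigma$. Because $\sigma$ is an $(m,n)$-shuffle, its inversions are exactly the pairs with $1\leq k\leq m < l\leq m+n$ and $\sigma(k)>\sigma(l)$, so the degree shift equals $-e(\sigma)$ in the notation of Definition~\ref{shuffle}. Taking $\dim_q$ termwise and summing over $\nu$ yields
\[ ch_q(M\circ N) = \sum_{\nu}\sum_{\sigma\in\mathfrak{S}_{m,n}} q^{-e(\sigma)} \bigl(\dim_q e(\mu_\sigma)M\bigr)\bigl(\dim_q e(\lambda_\sigma)N\bigr)\cdot\nu, \]
which is precisely $ch_q(M)\circ ch_q(N)$ by Definition~\ref{shuffle}. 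The principal obstacle is the Mackey decomposition together with the verification that $\tau_\sigma$ produces the stated degree shift independently of the reduced expression; once this is granted, the remainder of the argument is purely combinatorial matching of weights and degrees.
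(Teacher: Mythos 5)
The paper states this proposition without proof, quoting it from \cite[Lemma 2.20]{KL}, and your argument is essentially the Khovanov--Lauda proof itself: the shuffle basis $\{\tau_\sigma\}_{\sigma\in\mathfrak{S}_{m,n}}$ of $M\circ N$ supplied by the basis theorem for $R(\beta+\gamma)$ as a free right $R(\beta)\otimes R(\gamma)$-module, followed by the weight and degree bookkeeping identifying the shift on each summand with $-e(\sigma)$. Your proof is correct as written (with the small remarks that the shuffles are minimal representatives of the \emph{left} cosets $\mathfrak{S}_{m+n}/(\mathfrak{S}_m\times\mathfrak{S}_n)$ rather than right cosets, and that the reduced-expression independence of $\tau_\sigma$ holds because two-block shuffles are $321$-avoiding, hence fully commutative, so only the exact relations $\tau_k\tau_l=\tau_l\tau_k$ for $|k-l|>1$ are ever used --- mere minimality would not suffice for a parabolic with more than two blocks, though it is also not actually needed, since fixing one reduced expression per $\sigma$ already yields the basis).
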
 
  
One can now state  the main property of quiver Hecke algebras, which is to categorify the negative part of the quantum group $U_q(\mathfrak{g})$ in a way that makes correspond the basis of indecomposable objects in $R-pmod$ with the canonical basis of $U_q(\mathfrak{n})$. In the following we will mostly consider the category $R-gmod$ hence we give here the dual statements, involving the category $R-gmod$ and the quantum coordinate ring ${\A}_q(\mathfrak{n})$ (the precise definition of which can be found in \cite{GLS} or \cite{KKKO}). The first theorem was proved by Khovanov-Lauda \cite{KL} and Rouquier \cite{R}. The second was conjectured by Khovanov-Lauda, and proved by Rouquier \cite{R} and Varagnolo-Vasserot \cite{VV} using geometric methods.

 \begin{thm}[Khovanov-Lauda, Rouquier] \label{KL-R}
The map $ch_q$ induces a $\mathbb{Z}[q,q^{-1}]$-algebra isomorphism 
$$  K_{0}(R-gmod) \simeq {\A}_q(\mathfrak{n}) . $$
 \end{thm}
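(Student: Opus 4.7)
The plan is to produce a $\mathbb{Z}[q,q^{-1}]$-algebra homomorphism from $K_0(R-gmod)$ into the quantum shuffle algebra via $ch_q$, identify its image with the Leclerc-style shuffle embedding of $\A_q(\mathfrak{n})$, and then prove bijectivity by dualizing the Khovanov-Lauda-Rouquier categorification result on the projective side.

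First I would verify that $ch_q$ factors through the Grothendieck ring: additivity on short exact sequences is immediate from the orthogonal weight decomposition $M=\bigoplus_\nu e(\nu)M$, and the proposition $ch_q(M\circ N)=ch_q(M)\circ ch_q(N)$ recalled above upgrades $ch_q$ to a $\mathbb{Z}[q,q^{-1}]$-algebra map $\Phi$ from $K_0(R-gmod)$ to the shuffle algebra $\bigoplus_\beta \mathbb{Z}[q,q^{-1}]\langle Seq(\beta)\rangle$. Then I would identify the image. Following Green/Rosso/Leclerc, $\A_q(\mathfrak{n})$ embeds into the quantum shuffle algebra by sending the dual Chevalley generators $f_i^\ast$ to the length-one words $(i)$. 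For each $i\in I$, the algebra $R(\alpha_i)\simeq {\bf k}[x_1]$ has a unique graded simple $L(i)$ with $ch_q(L(i))=(i)$, so $\Phi$ sends $[L(i)]$ to the corresponding generators of $\A_q(\mathfrak{n})$. The fact that the image lies inside this shuffle subalgebra (and not a larger one) is controlled by the quadratic Serre-type relations satisfied by the $[L(i)]$, which I would verify by a direct rank-two convolution computation governed by the polynomials $Q_{i,j}$.

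For bijectivity I would dualize from the projective side. Khovanov-Lauda and Rouquier proved $K_0(R-pmod)\simeq U_q(\mathfrak{n})$, and there is a natural graded pairing
$$K_0(R-pmod)\otimes_{\mathbb{Z}[q,q^{-1}]} K_0(R-gmod) \longrightarrow \mathbb{Z}[q,q^{-1}], \qquad [P]\otimes[M] \longmapsto \dim_q \operatorname{Hom}_R(P,M),$$
which is perfect on each weight component $R(\beta)$ because simples and indecomposable projectives form dual bases. Under $ch_q$ on one side and the Khovanov-Lauda map on the other, this matches Lusztig's non-degenerate pairing between $U_q(\mathfrak{n})$ and $\A_q(\mathfrak{n})$; hence $\Phi$ must be bijective.

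The main obstacle is the projective-side theorem $K_0(R-pmod)\simeq U_q(\mathfrak{n})$ itself: one must construct enough projective $R(\beta)$-modules (via parabolic induction of sign-like representations along divided-power compositions) to produce a PBW-type free $\mathbb{Z}[q,q^{-1}]$-basis in every weight, and then argue that their indecomposable summands realize the canonical basis. This is the technical heart of the result, requiring either Khovanov-Lauda's explicit algebraic analysis of the polynomial action together with a Mackey-type filtration on restrictions, or Rouquier's cyclotomic quotient and $2$-representation-theoretic machinery. Once that is in place, everything on the $gmod$-side needed for the stated isomorphism is formal duality.
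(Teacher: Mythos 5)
There is no proof in the paper to compare against: Theorem~\ref{KL-R} is recalled as an external result, attributed to Khovanov--Lauda \cite{KL} and Rouquier \cite{R} (its dual-canonical-basis refinement, Theorem~\ref{canobasis}, to \cite{R} and \cite{VV}), and the paper uses it as a black box. So your proposal can only be measured against the standard literature argument, and in outline it matches it: $ch_q$ descends to $K_0$ because the weight decomposition $M=\bigoplus_\nu e(\nu)M$ is exact in $M$, it is multiplicative by the quoted \cite[Lemma 2.20]{KL}, and the substantive content is indeed the projective-side theorem $K_0(R-pmod)\simeq U_q(\mathfrak{n})$ together with the perfect graded pairing $([P],[M])\mapsto \dim_q \operatorname{Hom}_R(P,M)$, under which classes of simple modules and of their projective covers are dual bases, so that the $gmod$ statement is obtained by dualizing, ${\A}_q(\mathfrak{n})$ being the graded dual of the integral form of $U_q(\mathfrak{n})$.

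Two points in your sketch need repair. First, the step where you pin down the image of $\Phi$ via ``quadratic Serre-type relations'' among the $[L(i)]$ is both misstated and insufficient: the Serre relations occur in degree $1-a_{ij}$, not degree two, and over $\mathbb{Z}[q,q^{-1}]$ the integral form ${\A}_q(\mathfrak{n})$ is \emph{not} generated as an algebra by the dual generators $f_i^{\ast}$, so checking relations among the $[L(i)]$ cannot identify the integral image. In the duality route this step is unnecessary anyway: the image identification is the dual of the projective-side isomorphism, or, if you want it concretely, one uses that classes of the induced (standard) modules map to shuffle products of letters and are unitriangular in the dual canonical basis --- but beware that the dominant-word machinery of \cite{KR} you would invoke there is finite-type, whereas the theorem as stated in the paper is for general symmetric Kac--Moody $\mathfrak{g}$. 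Second, the assertion that your Hom pairing ``matches Lusztig's non-degenerate pairing'' is exactly the point that requires work: one must prove the bialgebra compatibility, namely that parabolic induction is biadjoint to restriction and that restriction categorifies the twisted comultiplication (the Mackey-filtration computation), and this is needed to compare the pairings on the $gmod$ side, not only inside the projective-side input; one must also track the $q$-semilinearity of $\operatorname{Hom}$ under grading shift to obtain a genuinely $\mathbb{Z}[q,q^{-1}]$-bilinear form. With those repairs, your proposal is the standard proof that the cited references carry out.
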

 
   \begin{thm}[Rouquier, Varagnolo-Vasserot] \label{canobasis}
  The map $ch_q$ (see Definition~\ref{caract}) induces a bijection between the canonical basis of the quantum coordinate ring ${\A}_q(\mathfrak{n})$ and the set of isomorphism classes of self-dual simple modules in the category $R-gmod$. 
  \end{thm}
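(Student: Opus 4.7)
The plan is to combine the geometric realization of quiver Hecke algebras due to Varagnolo--Vasserot with Lusztig's geometric construction of the canonical basis on quiver varieties. By Theorem~\ref{KL-R} we already have a $\mathbb{Z}[q,q^{-1}]$-algebra isomorphism $K_0(R-gmod) \simeq \A_q(\mathfrak{n})$, so the task reduces to matching the distinguished bases on the two sides.

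First I would introduce the bar involution on $K_0(R-gmod)$ induced by the graded duality functor $M \mapsto M^{*}$ on finite-dimensional graded $R(\beta)$-modules, and check that it corresponds to the standard bar involution on $\A_q(\mathfrak{n})$. A self-dual simple $L$ then satisfies $[L] = \overline{[L]}$, so the set of isomorphism classes of self-dual simples yields a bar-invariant $\mathbb{Z}[q,q^{-1}]$-basis of $K_0(R-gmod)$. On the coordinate ring side, Lusztig's characterization of the canonical basis requires bar-invariance together with upper-triangularity (with off-diagonal coefficients in $q\mathbb{Z}[q]$) against a PBW-type basis. The theorem thus amounts to identifying these two bar-invariant bases.

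The core step is to construct, for each $\beta \in Q_{+}$, Lusztig's complex $\mathcal{L}_\beta$ on the representation variety $E_\beta$ (associated to a chosen orientation of the Dynkin diagram of $\mathfrak{g}$) and to establish a graded algebra isomorphism
$$ R(\beta) \simeq \mathrm{Ext}^{\bullet}_{G_\beta}(\mathcal{L}_\beta, \mathcal{L}_\beta),$$
where $G_\beta$ is the natural group acting on $E_\beta$. Under such an isomorphism, the category $R(\beta)-gmod$ becomes equivalent to a category of $G_\beta$-equivariant perverse sheaves on $E_\beta$, and self-dual simple modules correspond precisely to the simple perverse summands of $\mathcal{L}_\beta$ (up to a canonical shift normalization). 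By Lusztig's theorem these perverse sheaves realize the canonical basis of $\A_q(\mathfrak{n})$, and one verifies that the resulting identification is compatible with $ch_q$ by comparing the geometric recipe for characters (via stalks along the stratification) with Definition~\ref{caract}.

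The main obstacle is clearly the geometric isomorphism $R(\beta) \simeq \mathrm{Ext}^{\bullet}_{G_\beta}(\mathcal{L}_\beta, \mathcal{L}_\beta)$: one must realize each of the KLR generators $e(\nu)$, $x_i$, $\tau_k$ as a concrete endomorphism of $\mathcal{L}_\beta$ via convolution, Euler classes and geometric correspondences between Lusztig's flag-type varieties, and then verify every KLR relation --- in particular the braid-like relation for $\tau_{k+1}\tau_k\tau_{k+1} - \tau_k\tau_{k+1}\tau_k$, whose right-hand side encodes the polynomials $Q_{\nu_k,\nu_{k+1}}$. Once this matching of generators and relations is in place, the desired bijection between self-dual simples and canonical basis elements is an immediate consequence of Lusztig's geometric realization of the canonical basis.
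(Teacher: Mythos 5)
The paper does not actually prove this theorem: it is stated with attribution to Rouquier \cite{R} and Varagnolo--Vasserot \cite{VV}, the latter via a geometric realization of quiver Hecke algebras, so there is no internal proof to compare against. Your outline is essentially a faithful reconstruction of the Varagnolo--Vasserot argument that the paper cites: the isomorphism $R(\beta) \simeq \mathrm{Ext}^{\bullet}_{G_\beta}(\mathcal{L}_\beta,\mathcal{L}_\beta)$ is exactly their main technical result, and the identification of canonical basis elements with simple perverse summands of $\mathcal{L}_\beta$ is Lusztig's theorem, so in that sense you have correctly located where the real work lies. Two points deserve sharpening. First, the asserted equivalence of $R(\beta)-gmod$ with a category of $G_\beta$-equivariant perverse sheaves is an overstatement: what the Ext-algebra isomorphism yields is a correspondence between graded \emph{projective} $R(\beta)$-modules and direct summands of $\mathcal{L}_\beta$ (an idempotent-completion statement in the equivariant derived category), hence a matching of indecomposable projectives with canonical basis elements of $U_q(\mathfrak{n})$ inside $K_0(R-pmod)$. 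Second, the theorem as stated concerns $R-gmod$ and the quantum coordinate ring ${\A}_q(\mathfrak{n})$, i.e.\ the dual picture: one must pass to the graded dual of $K_0(R-pmod)$, under which $ch_q$ identifies $K_0(R-gmod)$ with ${\A}_q(\mathfrak{n})$ (Theorem~\ref{KL-R}), and the basis of self-dual simples --- dual to the indecomposable projectives under the standard pairing between $R-pmod$ and $R-gmod$ --- is sent to the dual canonical (upper global) basis, which is what the paper means by the canonical basis of ${\A}_q(\mathfrak{n})$. Your bar-involution and triangularity discussion is the right mechanism for normalizing the grading shifts of self-dual simples, and with these two adjustments your sketch matches the proof in the cited sources.
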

 
  \subsection{Renormalized $R$-matrices for quiver Hecke algebras} 
 
 Recall from Section~\ref{qha} that the weight lattice associated to the Kac-Moody algebra $\mathfrak{g}$ is given with a symmetric bilinear form $(\cdot , \cdot)$. Denoting by $A$ the symmetrizable generalized Cartan matrix of $\mathfrak{g}$, this bilinear form is entirely determined by its values on simple roots, namely:
 $$  \forall i,j, \quad (\alpha_i,\alpha_j) = {\bf s}_i a_{ij} $$
 where the ${\bf s}_i$ are the entries of a diagonal matrix $D$ such that $DA$ is symmetric.

  One also defines another symmetric bilinear form $(\cdot , \cdot)_n$ on the root lattice $Q$ as in \cite{KKK}:
  $$ \forall i,j, \quad  (\alpha_i,\alpha_j)_n := 
   \begin{cases}
      1 &\text{ if $i=j$, } \\
      0 &\text{otherwise.}
   \end{cases} $$
  Let  $\beta \in Q_{+} $ of length  $m$ and $ 1 \leq k < m $; the following operators $\varphi_k$ are introduced in \cite{KKK}:
 $$ \forall \nu \in Seq(\beta), \varphi_k e(\nu) :=  \begin{cases} (\tau_k (x_k - x_{k+1}) + 1)e(\nu) & \text{if $\nu_k = \nu_{k+1}$,} \\ \tau_k e(\nu) & \text{otherwise.} \end{cases}  $$
These operators satisfy the braid relation, hence for any permutation  $\sigma$, $\varphi_{\sigma}:=\varphi_{i_1} \cdots \varphi_{i_l}$ does not  depend  on the choice of a reduced expression  $\sigma=s_{i_1} \cdots s_{i_l}$.

 For any $ m,n \in \mathbb{Z}_{\geq 0} $, let $w[m,n]$ be the element of $\mathfrak{S}_{m+n}$ sending  $k$ on $k+n$ if $1 \leq k \leq m$ and on  $k-m$ if $m< k \leq m+n$.

\bigskip

Consider a non-zero $R(\beta)$-module $M$ and a non-zero $R(\gamma)$-module $N$. The following map is  defined in \cite{KKK}:
$$\begin{array}{ccccc}
   & M \otimes N & \longrightarrow & N \circ M \\
   & u \otimes v & \longmapsto & \varphi_{w[n,m]}(v \otimes u).
    \end{array} $$
  It is  $R(\beta) \otimes R(\gamma)$ linear and hence induces a   homomorphism of $R(\beta + \gamma)$-modules 
   $$ R_{M,N} : M \circ N \longrightarrow N \circ M. $$
The map $R_{M,N}$ satisfies the Yang-Baxter equation (see \cite{KKK}).

 Let $z$ be an  indeterminate, homogeneous of degree $2$.  For any $\beta \in Q_{+}$ and any non-zero module $M$ in $R(\beta)-gmod$, one defines $M_z := {\bf k}[z] \otimes M$ with the following  ${\bf k}[z] \otimes R(\beta)$-module structure:
 
\begin{equation*}
\begin{gathered}
 $$ e(\nu) . (P \otimes m) := P \otimes (e(\nu)m) \\
 x_k . (P \otimes m) := (zP) \otimes m  +  P \otimes ({x_k}m) \\
 \tau_k . (P \otimes m) := P \otimes ({\tau_k}m)
  $$
 \end{gathered}
\end{equation*}
for any  $\nu \in Seq(\beta)$, $P \in {\bf k}[z]$ and $m \in M$.

\smallskip
\smallskip

It is shown in \cite{KKK} that for any $\beta, \gamma \in Q_{+}$ and any non-zero $R(\beta)$-module $M$ and non-zero $R(\gamma)$-module $N$, the map  $R_{M_z,N}$ is polynomial in  $z$ and does not vanish. Let $s$ be the largest non-negative integer such that the image of $R_{M_z,N}$ is contained in $z^s N \circ M_z$. One defines R-matrices in the category $R-gmod$ in the following way:

\begin{deftn} \label{renormRmat}

 Let $\beta, \gamma \in Q_{+}$. For any non zero $R(\beta)$-module $M$ and non zero $R(\gamma)$-module $N$, define a homomorphism of $R(\beta + \gamma)$-modules
$$ r_{M,N} : M \circ N \longrightarrow N \circ M $$
by setting 
 $$ r_{M,N} := (z^{-s}R_{{M_z},N})_{\mid z=0} $$
 where $s$ is the integer defined above.
 
  \end{deftn}
  
  \begin{prop}[\cite{KKK}]
  The homomorphism $r_{M,N}$ does not vanish and satisfies the Yang-Baxter equation. 
  \end{prop}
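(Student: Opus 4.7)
The plan is to establish the two assertions separately: first non-vanishing, which is essentially built into the construction, and then the Yang-Baxter equation, which is obtained by introducing spectral parameters on all three factors and extracting the leading term.

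\textbf{Non-vanishing.} By hypothesis the polynomial morphism $R_{M_z,N} : M_z \circ N \to N \circ M_z$ does not vanish, and $s$ is the \emph{largest} non-negative integer such that the image of $R_{M_z,N}$ lies in $z^s (N \circ M_z)$. Since ${\bf k}[z]$ is a principal ideal domain and $M_z \circ N$, $N \circ M_z$ are free ${\bf k}[z]$-modules (as $M \circ N$ is a finite-dimensional ${\bf k}$-vector space and $M_z \circ N \cong {\bf k}[z] \otimes (M \circ N)$ as ${\bf k}[z]$-modules), one can factor $R_{M_z,N} = z^{s} R'$ for a uniquely determined homomorphism $R' : M_z \circ N \to N \circ M_z$. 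The maximality of $s$ means exactly that $R'$ cannot be further divided by $z$, which implies that the reduction $R'|_{z=0} = r_{M,N}$ is non-zero.

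\textbf{Yang-Baxter equation.} Let $L$ be a third non-zero module in $R-gmod$. I would introduce formal parameters $z_1, z_2, z_3$ of degree $2$ on $M$, $N$, $L$ respectively (deforming the action of the $x_k$'s as in the definition of $M_z$). The unnormalized $R$-matrices $R_{X,Y}$ satisfy the Yang-Baxter equation as an identity of morphisms
\[
(R_{N_{z_2},L_{z_3}} \circ \mathrm{id}) \circ (\mathrm{id} \circ R_{M_{z_1},L_{z_3}}) \circ (R_{M_{z_1},N_{z_2}} \circ \mathrm{id}) = (\mathrm{id} \circ R_{M_{z_1},N_{z_2}}) \circ (R_{M_{z_1},L_{z_3}} \circ \mathrm{id}) \circ (\mathrm{id} \circ R_{N_{z_2},L_{z_3}}),
\]
which holds in ${\bf k}[z_1,z_2,z_3] \otimes \mathrm{Hom}(M \circ N \circ L, L \circ N \circ M)$. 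Let $s_{MN}, s_{ML}, s_{NL}$ be the orders of vanishing produced by Definition~\ref{renormRmat} for each pair. Using that each $R_{X_{z_i},Y_{z_j}}$ is polynomial in $z_i - z_j$ (by translation invariance of the deformation), one divides both sides by $(z_1-z_2)^{s_{MN}}(z_2-z_3)^{s_{NL}}(z_1-z_3)^{s_{ML}}$ in the free module over ${\bf k}[z_1,z_2,z_3]$; the resulting identity holds because the target module is free and thus torsion-free over the polynomial ring. Specialising $z_1=z_2=z_3=0$ converts each renormalized factor into the corresponding $r_{-,-}$ and yields the Yang-Baxter relation for the renormalized R-matrices.

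\textbf{Main obstacle.} The non-vanishing part is essentially a direct consequence of the definition, so the substantive point is the YB equation. The delicate step is the bookkeeping of orders of vanishing: one must verify that each side of the unnormalized YB identity is divisible by exactly the same monomial in $(z_1-z_2),(z_2-z_3),(z_1-z_3)$, so that after division the identity remains well-defined and specialises correctly. This relies crucially on the translation invariance property of $R_{X_{z_i},Y_{z_j}}$ (which makes it depend only on the difference $z_i-z_j$) and on the freeness of the relevant convolution products over the polynomial ring of spectral parameters, ensuring no information is lost when dividing by these factors.
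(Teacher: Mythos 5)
The paper does not prove this proposition; it is recalled verbatim from \cite{KKK}, so there is no in-paper argument to compare against. Your proposal correctly reconstructs the standard proof from \cite{KKK}: the non-vanishing is indeed immediate from the maximality of $s$ together with freeness of $N \circ M_z$ over ${\bf k}[z]$ (so that $R_{M_z,N}=z^sR'$ with $R'$ not divisible by $z$, hence nonzero at $z=0$), and the Yang--Baxter equation follows by putting spectral parameters on all three factors, using the braid relations for the $\varphi$'s to get the unnormalized identity over ${\bf k}[z_1,z_2,z_3]$, dividing by $(z_1-z_2)^{s_{MN}}(z_1-z_3)^{s_{ML}}(z_2-z_3)^{s_{NL}}$ in a free (hence torsion-free) module, and specialising. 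One point you should make explicit: the translation invariance you invoke --- that $R_{M_{z_1},N_{z_2}}$ depends only on $z_1-z_2$, so that the two-parameter order of vanishing agrees with the one-parameter $s$ of Definition~\ref{renormRmat} and the divided factors specialise to the $r_{-,-}$ --- holds because the simultaneous shift $x_k \mapsto x_k+z$ of all polynomial generators is an algebra automorphism, and this is true precisely when the quiver Hecke algebra is \emph{symmetric} (each $Q_{i,j}$ a polynomial in $u-v$); this is where the standing symmetry hypothesis of \cite{KKK} enters and why the construction does not extend to arbitrary quiver Hecke algebras. Also note that you do not actually need each full composition to be divisible by \emph{exactly} that monomial, only each factor: after division the two sides specialise to the two compositions of renormalized R-matrices, and the equality persists regardless of any extra common vanishing.
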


Thus the maps $r_{M,N}$ are  R-matrices for the category $R-gmod$.
They are called renormalized R-matrices. As in the case of categories of representations of quantum affine algebras, these R-matrices are in general not invertible and thus yield (graded) short exact sequences in the category $R-gmod$. Consequently this produces some relations  in the Grothendieck ring $K_0(R-gmod) \simeq \mathcal{A}_q(\mathfrak{n})$. In the context of monoidal categorifications of (quantum) cluster algebras (see Section~\ref{moncatKLR} below), the exchange relations in $\mathcal{A}_q(\mathfrak{n})$ will be identified with some of these relations. 

The corresponding relations in the Grothendieck ring $K_0(R-gmod)$ will be identified with exchange relations   
 For any non-zero modules $M$ and $N$, we denote by $ \Lambda(M,N) $ the homogeneous  degree of the  morphism  $r_{M,N}$. It is given by 
 $$  \Lambda(M,N)  = -(\beta, \gamma) + 2(\beta,\gamma)_n -2s . $$
 The next statement gives a criterion for the renormalized R-matrix $r_{M,N}$ to be an isomorphism. It will be particularly useful for the proof of Theorem~\ref{initpara} (see for instance Corollary~\ref{Lambda}).

\begin{lem}[{{\cite[Lemma 3.2.3]{KKKO}}}] \label{comutlambda}
Let $M$ and $N$ be two simples in the category $R-gmod$ and assume one of them is real.
 Then the following are  equivalent :

 \begin{enumerate}[(i)]
\item $\Lambda(M,N) + \Lambda(N,M) = 0 $.
\item $r_{M,N}$ and $r_{N,M}$ are inverse to each other up to a constant multiple.
\item $M \circ N$ and $N \circ M$ are isomorphic up to grading shift.
\item $M \circ N$ is simple in the category $R-gmod$. 
 \end{enumerate}
\end{lem}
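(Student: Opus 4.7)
The plan is to establish the equivalence by cycling through (i) $\Rightarrow$ (ii) $\Rightarrow$ (iv) $\Rightarrow$ (iii) $\Rightarrow$ (i), and the central tool throughout is the composition $r_{N,M} \circ r_{M,N}$ viewed as a graded endomorphism of $M \circ N$ of degree $d := \Lambda(M,N) + \Lambda(N,M)$. I would first record the standing structural inputs from \cite{KKKO}: under the real-simple hypothesis, the convolution $M \circ N$ has simple head and simple socle, both isomorphic up to grading shift to a common simple module $M \nabla N$; the R-matrix factors as $M \circ N \twoheadrightarrow M \nabla N \hookrightarrow N \circ M$, realizing its image as the head of $M \circ N$ and as the socle of $N \circ M$; and the graded endomorphism ring $\mathrm{End}_{R-gmod}(M \circ N)$ has one-dimensional degree-zero component, spanned by the identity.

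From these inputs, (ii) $\Rightarrow$ (iv) is direct: a nonzero scalar composition $r_{N,M} \circ r_{M,N} = c \cdot \mathrm{id}$ forces $r_{M,N}$ to be injective, and the head factorization then collapses $M \circ N$ onto the simple $M \nabla N$. The step (iv) $\Rightarrow$ (iii) is more delicate: simplicity of $M \circ N$ together with the factorization of the R-matrix yields that $N \circ M$ has simple head and simple socle both isomorphic to $M \circ N$ up to shift; the spectral-parameter identity of the next paragraph, applied to the pair $(N,M)$, forces $r_{M,N} \circ r_{N,M}$ to be a nonzero scalar when $d = 0$, and via the head/socle analysis this makes $r_{M,N}$ split, collapsing $N \circ M$ to its simple socle. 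Finally, (iii) $\Rightarrow$ (i) is a Hom-space count: a given isomorphism $M \circ N \simeq q^a(N \circ M)$ identifies $\mathrm{Hom}_{R-gmod}(M \circ N, N \circ M)$ in degree $\Lambda(M,N)$ with $\mathrm{End}_{R-gmod}(M \circ N)$ in degree zero, which is one-dimensional; the isomorphism is thus a scalar multiple of $r_{M,N}$, and a degree count on the composition gives $d = 0$.

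The heart of the proof, and the main obstacle, is the implication (i) $\Rightarrow$ (ii). By the structural fact on endomorphisms, the composition $r_{N,M} \circ r_{M,N}$ of degree $d = 0$ is automatically a scalar multiple of the identity; the nontrivial point is that the scalar is nonzero. I would proceed via the spectral-parameter deformation: on $M_z \circ N$, the unrenormalized composition $R_{N, M_z} \circ R_{M_z, N}$ is a ${\bf k}[z] \otimes R$-endomorphism and, by the spectral-parameter analog of the endomorphism fact above (namely $\mathrm{End}_{{\bf k}[z] \otimes R}(M_z \circ N) = {\bf k}[z]$, still a consequence of the real-simple hypothesis), it takes the form $f(z) \cdot \mathrm{id}$ for some polynomial $f \in {\bf k}[z]$ which is nonzero by generic invertibility of the R-matrix. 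Writing $R_{M_z, N} = z^s \tilde R_{M_z, N}$ and $R_{N, M_z} = z^{s'} \tilde R_{N, M_z}$, one obtains $f(z) = z^{s+s'} g(z)$ with $g(0) \cdot \mathrm{id}_{M \circ N} = r_{N,M} \circ r_{M,N}$, and tracking gradings (with $z$ of degree $2$) yields $\deg_z f = s + s' + d/2$. Hypothesis (i) then forces $\deg_z g = 0$, so $g$ is a nonzero constant and $g(0) \neq 0$, supplying the required nonzero scalar.
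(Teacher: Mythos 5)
A preliminary remark: the paper does not prove this lemma at all --- it is imported verbatim from \cite{KKKO} (Lemma 3.2.3 there), so the comparison is with the proof in that source. Your treatment of the hard direction (i) $\Rightarrow$ (ii) is exactly the mechanism of \cite{KKK,KKKO}: affinize one factor, use that $\mathbf{k}(z)\otimes (M_z \circ N)$ is generically simple to get $\operatorname{End}_{\mathbf{k}[z]\otimes R}(M_z\circ N)=\mathbf{k}[z]\,\mathrm{id}$, write $R_{N,M_z}\circ R_{M_z,N}=f(z)\,\mathrm{id}$, and note that homogeneity (with $\deg z=2$) forces $f$ to be a monomial of $z$-degree $s+s'+d/2$, so that $r_{N,M}\circ r_{M,N}$ is a nonzero scalar precisely when $d=0$. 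This is correct, with one point to add: by Definition~\ref{renormRmat}, $r_{N,M}$ is renormalized from $R_{N_z,M}$ (parameter on the \emph{first} factor), whereas you use $R_{N,M_z}$; the two specializations agree up to a nonzero scalar because $\operatorname{HOM}(N\circ M, M\circ N)$ is one-dimensional by \cite{KKKO3}, and the scalar is harmless for (ii).

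There are, however, two genuine defects. First, your standing input is false as stated: for non-commuting $M,N$ the head and socle of $M\circ N$ are \emph{not} both isomorphic to a common simple; what \cite{KKKO3} gives is $\operatorname{hd}(M\circ N)\simeq\operatorname{soc}(N\circ M)\simeq\operatorname{Im}(r_{M,N})$, while $\operatorname{soc}(M\circ N)\simeq\operatorname{hd}(N\circ M)$ is a different simple in general (e.g.\ $L(1)\circ L(2)$ in type $A_2$ has head $L(12)$ and socle $L(21)$ up to shift). Where you actually use the input, only the correct halves are needed, so this is repairable. Second, and more seriously, your step (iv) $\Rightarrow$ (iii) is circular: you invoke the spectral identity ``when $d=0$'', but $d=0$ is statement (i), which in your cycle is only derived later from (iii). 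Hypothesis (iv) alone gives no control on $d$; if $d>0$ the composite vanishes, and $\operatorname{soc}(N\circ M)\subseteq\operatorname{rad}(N\circ M)$ cannot be excluded by head/socle bookkeeping alone. The standard fix is elementary and avoids the deformation entirely: parabolic induction gives $\dim_{\mathbf{k}}(M\circ N)=\dim_{\mathbf{k}}(N\circ M)$, and if $M\circ N$ is simple then $r_{M,N}$ is injective (its kernel is a proper submodule of a simple module), hence an isomorphism up to grading shift. Finally, in (iii) $\Rightarrow$ (i) you need the full statement $\operatorname{END}(M\circ N)=\mathbf{k}\,\mathrm{id}$ in \emph{all} degrees (\cite{KKKO3}), not merely a one-dimensional degree-zero component; otherwise you cannot match the degree of the given isomorphism with $\Lambda(M,N)$. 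With these repairs your cycle closes and coincides in substance with the source's proof.
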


One says that  $M$ and $N$   commute  if they satisfy these  properties.

 \subsection{Monoidal categorification via representations of quiver Hecke algebras}
  \label{moncatKLR}

 In this subsection we focus on the case where $\C$ is a full subcategory of $R-gmod$ stable under convolution products, subquotients, extensions, and grading shifts. $\C$ can be decomposed as  
$$ \C = \bigoplus_{\beta \in Q_{+}} \mathcal{C}_{\beta} $$
with $\mathcal{C}_{\beta}  := \C \cap R(\beta)-gmod$ for every $\beta \in Q_{+}$,
 so that the tensor product in $\mathcal{C}$ sends $\mathcal{C}_{\beta} \times \mathcal{C}_{\gamma} $ onto $\mathcal{C}_{\beta + \gamma}$ for any $\beta , \gamma \in Q_{+}$.

  Kang-Kashiwara-Kim-Oh \cite{KKKO} adapt the notion of monoidal categorification to the setting of quantum cluster algebras. 
  In the classical setting, a monoidal seed in $\C$ is defined as a triple $(\{M_i\}_{1 \leq i \leq n} , B , D)$ where  $\{M_i\}_{1 \leq i \leq n}$ is a collection of simple objects in $\C$ such that for any  $i_1, \ldots ,i_t$ in $\{1, \ldots , n\}$ , the object  $M_{i_1} \circ \cdots  \circ M_{i_t}$ is simple in $\mathcal{C}$, $B$ is an integer-valued matrix with skew-symmetric principal part and $D$ is a diagonal matrix encoding the weights of the modules $M_i$ (i.e. the elements $\beta_i \in Q_{+}$ such that $M_i \in \C_{\beta_i}$). Cluster mutations correspond to some (ungraded) short exact sequences in the category $\C$. These exact sequences come from the failure of the renormalized R-matrices (see Definition~\ref{renormRmat}) to be isomorphisms. The cluster mutations being involutive imposes some relations between the entries of the matrices $B$ and $D$.
  
 \smallskip
 
  In  the framework of \cite{KKKO}, one takes into account the natural grading of quiver Hecke algebras defined in Section~\ref{qha}: objects in $\C$ are graded as well. A quantum monoidal seed is the data of such a triple  $(\{M_i\} , B, D)$ with the further assumption that there exist integers $\lambda_{ij}$ and isomorphisms of graded modules $M_i \otimes M_j \simeq q^{\lambda_{ij}} M_j \otimes M_i$ for any  $i,j \in \{1, \ldots ,n\}$. The matrix $L=(\lambda_{ij})_{1 \leq i,j \leq n}$ is a skew-symmetric matrix and is assumed to satisfy some compatibility relations with the matrix $B$ as in \cite{BZ}. See \cite[Section 6.2.1]{KKKO} for a precise definition.  

In the quantum setting, cluster mutations correspond to some \textit{graded} short exact sequences.

\begin{deftn}[{{\cite[Definition 6.2.3]{KKKO}}}]
 Let $k \in \{1, \ldots ,r \}$ fixed. A quantum monoidal seed  $\mathcal{S} = (\{M_i\}_{1 \leq i \leq n} , \allowbreak  L \allowbreak , B, D)$ admits a mutation in the direction $k$ if there exists a simple object  $M'_k$ of $\mathcal{C}$ such that:
\begin{enumerate}[(a)]
\item $M'_k \in \mathcal{C}_{d'_k}$ with $d'_k := -d_k + \sum_{b_{ik}>0} b_{ik}d_i$. 
\item One has the following short exact sequences in  $\mathcal{C}$ : 
 $$ 0 \longrightarrow q M^{\mathbf{b'}} \longrightarrow q^{m_k} M_k \otimes M'_k \longrightarrow M^{\mathbf{b''}} \longrightarrow 0 $$
$$ 0 \longrightarrow q M^{\mathbf{b''}} \longrightarrow q^{m'_k} M'_k \otimes M_k \longrightarrow M^{\mathbf{b'}} \longrightarrow 0 $$
where $m_k$ and $m'_k$ are some integers. 
\item $ \mathcal{S}'^{(k)} := (\{M_i\}_{i \neq k} \cup \{M'_k\} ,  L'^{(k)}, B'^{(k)}, D'^{(k)})$ is again a quantum monoidal seed in $\mathcal{C}$,
where $L'^{(k)}$ and $B'^{(k)}$ are defined as in \cite[Definition 3.5]{BZ} and $D'^{(k)}$ is the diagonal matrix whose entries are the $d_i$ for $i \neq k$ and $d'_k$ for $i=k$.
\end{enumerate}
\end{deftn}

\begin{deftn} \label{defmoncatKKKO}
 The category $\mathcal{C}$ is a monoidal categorification of a quantum cluster algebra  $\A$ if:
\begin{enumerate}[(a)]
\item There is an isomorphism of graded rings $\mathbb{Z}[q^{\pm \frac{1}{2}}] \otimes_{\mathbb{Z}[q^{\pm 1}]} {K_0}(\mathcal{C}) \simeq \A $. 
\item There exists a quantum monoidal seed  $\mathcal{S} := ( \{M_i\}, L,B,D)$ in $\mathcal{C}$ such that $[\mathcal{S}] := (\{q^{- \frac{(d_i,d_i)}{4} }[M_i]\},L,B)$ is a quantum seed in  $\A$. 
\item The quantum monoidal seed  $\mathcal{S}$ admits arbitrary sequences of mutations in all directions.
\end{enumerate}
\end{deftn}

\bigskip

In this setting, the existence of a monoidal categorification implies that any  (quantum) cluster monomial is the class of some real simple object in $\C$.
Recall from Section~\ref{moncat} that this notion is slightly different from the notion of monoidal categorification initially defined by Hernandez-Leclerc \cite{HL}.
 
  The following definition provides a sufficient condition for producing quantum monoidal seeds. 
  
  \begin{deftn} \label{KKKOdef6.1}
  
  A pair $(\{M_i\},B)$ is admissible if:
  \begin{enumerate}[(i)]
  \item $ \{M_i\}_{1 \leq i \leq n}$ is a family of self-dual real simple modules  commuting with each other.
  \item The matrix $B$ is defined as above.
  \item For each $ 1 \leq k \leq r$ there exists a self-dual simple module  $M'_k$  such that  $M'_k$ commutes with the  $M_i$ for $i \neq k$ and there is a short exact sequence of graded objects in $\C$
$$ 0 \longrightarrow q M^{\mathbf{b'}} \longrightarrow q^{ \tilde{\Lambda} ( M_k,M'_k)} M_k \circ M'_k \longrightarrow M^{\mathbf{b''}} \longrightarrow 0. $$
  where $\tilde{\Lambda}(M,N)$ is defined as $ \frac{1}{2} ( \Lambda(M,N) + (\beta,\gamma) ) $ for $M \in R(\beta)-gmod$ and $N \in R(\gamma)-gmod$.
  \end{enumerate}
  \end{deftn}

 The data of an admissible pair naturally gives rise to a quantum monoidal seed in  $\mathcal{C}$. More precisely, if $(\{M_i\}_{1 \leq i \leq n}, B)$ is an admissible pair in $\mathcal{C}$, $M'_k$ as in the previous definition, then one defines a $r \times r$ skew-symmetric matrix $L$ and a diagonal matrix $D$ of size $n$ by setting
$$L_{ij} := \Lambda(M_i,M_j) \quad \text{and} \quad  D=Diag(d_1, \ldots ,d_n)$$
where $d_i$ stands for the weight of the module $M_i$. Then (\cite[Proposition 7.1.2]{KKKO}) the quadruple $\mathcal{S} := ( \{M_i\}_{1 \leq i \leq n}, -L,B,D)$ is a quantum monoidal seed in $\mathcal{C}$ which admits mutations in every direction $k$ (for $1 \leq k \leq r$). 

\smallskip
   
  The main result of  \cite{KKKO} can now be stated as follows: 
  
 \smallskip
    
   Let $(\{M_i\}_{1 \leq i \leq n}, B)$ be an admissible pair in  $\mathcal{C}$ and $$ \mathcal{S} := ( \{M_i\}_{1 \leq i \leq n}, -L,B,D) $$
the corresponding quantum monoidal seed. Set $ [\mathcal{S}] := ( \{q^{- \frac{(wt(M_i),wt(M_i))}{4} } [M_i]\}_{1 \leq i \leq n} , -L,B,D)$.

    \begin{thm}[{{\cite[Theorem 7.1.3]{KKKO}}}] \label{6.3KKKO}
   
   Assume there is a $\mathbb{Q}(q^{\frac{1}{2}})$-algebras isomorphism 
   $$ \mathbb{Q}(q^{\frac{1}{2}}) \otimes_{\mathbb{Z}[q^{\pm 1}]} {K_0}(\mathcal{C}) \simeq  \mathbb{Q}(q^{\frac{1}{2}}) \otimes_{\mathbb{Z}[q^{\pm 1}]} {\mathcal{A}_{q^{\frac{1}{2}}}}([\mathcal{S}]). $$
     Then for each  $1 \leq k \leq r$ the pair $(\{M_i\}_{i \neq k} \cup \{M'_k\}, B'^{(k)})$ is again an  admissible pair in the category $\mathcal{C}$.
   
    \end{thm}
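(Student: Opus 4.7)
The plan is to verify the three conditions of Definition~\ref{KKKOdef6.1} for the mutated pair $(\{M_i\}_{i \neq k} \cup \{M'_k\}, B'^{(k)})$. Condition (ii) on the new exchange matrix is automatic from the Fomin--Zelevinsky mutation rule. For condition (i), the module $M'_k$ is self-dual simple and real by the defining short exact sequence in the admissibility hypothesis on the original pair, and its commutation with each $M_i$ ($i \neq k$) is part of the data of the quantum monoidal seed $\mathcal{S}'^{(k)}$ produced from the original admissible pair. Thus the entire weight of the argument rests on condition (iii): for every $j \neq k$ I must produce a self-dual real simple module $M''_j$ fitting in a graded short exact sequence
$$ 0 \longrightarrow q M^{\mathbf{c'}} \longrightarrow q^{\tilde{\Lambda}(N_j, M''_j)} N_j \circ M''_j \longrightarrow M^{\mathbf{c''}} \longrightarrow 0,$$
where $N_j = M_j$ for $j \neq k$, $N_k = M'_k$, and the monomials $M^{\mathbf{c'}}, M^{\mathbf{c''}}$ are dictated by the columns of $B'^{(k)}$.

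The candidate module $M''_j$ is forced on us by the quantum cluster algebra isomorphism. Performing the Fomin--Zelevinsky exchange in direction $j$ inside $\mathbb{Q}(q^{1/2}) \otimes_{\mathbb{Z}[q^{\pm 1}]} K_0(\mathcal{C}) \simeq \mathbb{Q}(q^{1/2}) \otimes_{\mathbb{Z}[q^{\pm 1}]} \mathcal{A}_{q^{1/2}}([\mathcal{S}'^{(k)}])$ produces a concrete Laurent expression; by Theorem~\ref{canobasis} together with the assumed compatibility between $K_0(\mathcal{C})$ and the quantum cluster algebra, this expression must coincide, up to a specific power of $q^{1/2}$ reflecting the weight $d''_j$, with the class $[M''_j]$ of a uniquely determined self-dual simple module. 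To upgrade this equality of classes to an actual short exact sequence, I would construct the non-isomorphism $r_{N_j, M''_j}$ via the R-matrix formalism of Definition~\ref{renormRmat} and identify its image and cokernel with the monomials $M^{\mathbf{c'}}$ and $M^{\mathbf{c''}}$; the grading shift $\tilde{\Lambda}(N_j, M''_j)$ is then pinned down by the compatibility of $-L'^{(k)}$ with $B'^{(k)}$ inherited from the quantum monoidal seed structure.

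The main obstacle is the simultaneous control of simplicity, reality, commutativity, and grading. Reality of $M''_j$ (that $M''_j \circ M''_j$ be simple) is not automatic from the cluster algebra side: it likely requires an inductive argument together with a head-and-socle analysis of iterated convolution products of the $M_i$ and $M'_k$. Commutativity of $M''_j$ with each $M_i$ for $i \neq j,k$ should follow from the vanishing of the corresponding entries of $L'^{(k)}$, combined with Lemma~\ref{comutlambda}, provided one first shows that the relevant R-matrices have the right homogeneous degrees. Pinning down these degrees and matching them precisely with the $\tilde{\Lambda}$ values predicted by the mutation rule, then verifying that the constructed sequence is genuinely graded (not merely ungraded) exact, will be the most delicate part of the argument and represents the technical heart of the proof.
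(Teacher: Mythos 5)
First, a point of order: this theorem is not proved in the paper you are reading. It is quoted verbatim from \cite[Theorem 7.1.3]{KKKO} as background, so there is no internal proof to compare against; your proposal has to be measured against the original argument of Kang--Kashiwara--Kim--Oh. Your structural analysis is sound as far as it goes: conditions (i) and (ii) of Definition~\ref{KKKOdef6.1} for the mutated pair do come essentially for free from \cite[Proposition 7.1.2]{KKKO} (the quantum monoidal seed $\mathcal{S}'^{(k)}$ already encodes simplicity of all convolution products of the new family, hence reality of $M'_k$ and the required commutations), and condition (iii) is indeed where all the work lies. But your proposed route through condition (iii) contains a genuine circularity. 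You obtain the candidate $M''_j$ by performing the exchange in $\mathbb{Q}(q^{1/2}) \otimes K_0(\mathcal{C})$ and asserting, via Theorem~\ref{canobasis}, that the resulting element ``must coincide, up to a power of $q^{1/2}$, with the class of a uniquely determined self-dual simple module.'' Theorem~\ref{canobasis} only says that the canonical basis of $\mathcal{A}_q(\mathfrak{n})$ is in bijection with the self-dual simples; it does not say that quantum cluster variables lie in the canonical basis. The assumed isomorphism identifies $\mathbb{Q}(q^{1/2}) \otimes K_0(\mathcal{C})$ with the quantum cluster algebra merely as an algebra, so a priori the mutated cluster variable is just some $\mathbb{Z}[q^{\pm 1/2}]$-linear combination of classes of simples, with no simplicity, positivity, or self-duality known. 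That cluster monomials are classes of real simple modules is precisely the conclusion of the monoidal categorification theorem --- the output of iterating Theorem~\ref{6.3KKKO} --- and cannot be fed in as an input at each mutation step.

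The actual proof in \cite{KKKO} avoids this by constructing $M''_j$ on the categorical side from the start: it is defined as the simple head (or socle) of an explicit convolution product of the modules $M_i$ ($i \neq j$) and $M'_k$, with simplicity guaranteed by the head-and-socle results of \cite{KKKO3} for convolutions involving real simple modules, and the two graded exchange short exact sequences are produced by R-matrix arguments together with careful bookkeeping of the degrees $\Lambda$ and $\tilde{\Lambda}$ (additivity of $\Lambda$ on convolutions of commuting real simples, compatibility of the mutated $(L'^{(k)}, B'^{(k)})$, case analysis on the sign of $b_{kj}$). Only after the module and the exact sequences exist is the algebra isomorphism invoked, to match the class of the constructed module against the mutated cluster variable through the exchange relation in the Grothendieck ring. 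In other words, the items you defer at the end of your proposal --- reality of $M''_j$, its commutation with the other $M_i$, and the identification of the grading shifts --- are not a residue of technical polishing: together with the categorical construction of $M''_j$ they constitute the entire proof, occupying the bulk of Section 7 of \cite{KKKO}, and your sketch supplies no mechanism for any of them beyond the circular appeal to the canonical basis.
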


\subsection{Quantum monoidal seeds for $\Cw$} 
 \label{qmseedcw}

  In this subsection we recall  from \cite{KKKO} the definition of the subcategories $\Cw$ of $R-gmod$  as well as the construction of  admissible pairs for these categories. 
  
   For any element $w$ of the Weyl group $W$ associated to $\mathfrak{g}$, Geiss, Leclerc and Schr\"oer defined  algebras $\Aqnw$ as subalgebras of the quantum coordinate rings ${\A}_q(\mathfrak{n})$ (\cite[Section 7.2]{GLS}). They show  (\cite[Theorem 12.3]{GLS}) that it is possible to put a quantum cluster algebra structure on $\Aqnw$ for every $w \in W$. In \cite{KKKO}, Kang-Kashiwara-Kim-Oh introduce, for each $w \in W$, a subcategory $\Cw$ of $R-gmod$ such that  the Grothendieck ring $K_0(\Cw)$ is the preimage of $\Aqnw$ under the isomorphism given by Theorem~\ref{KL-R}: $M \in \Cw$ if and only if $ch_q(M) \in \Aqnw$. In \cite{KKKO}, Kang-Kashiwara-Kim-Oh prove the following:
 
  \begin{thm}[{{\cite[Theorem 11.2.3]{KKKO}}}] \label{categow}
 For each element $w$ of the Weyl group $W$, the category $\Cw$ is a monoidal categorification of the quantum cluster algebra $\mathcal{A}_{q^{1/2}}(\mathfrak{n}(w))$.
 \end{thm}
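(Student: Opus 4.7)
The strategy is to apply the general machinery of admissible pairs developed in the paper (culminating in Theorem~\ref{6.3KKKO}): it suffices to exhibit a single admissible pair $(\{M_i\}_{1\leq i\leq n},B)$ in $\Cw$ whose associated quantum monoidal seed $\mathcal{S}$ corresponds, under the Khovanov--Lauda--Rouquier isomorphism of Theorem~\ref{KL-R}, to an initial quantum cluster seed of the Geiss--Leclerc--Schr\"oer quantum cluster algebra structure on $\mathcal{A}_{q^{1/2}}(\mathfrak{n}(w))$. Given such an initial admissible pair, Theorem~\ref{6.3KKKO} propagates admissibility along arbitrary sequences of mutations, so each cluster monomial of $\mathcal{A}_{q^{1/2}}(\mathfrak{n}(w))$ is represented by the class of a real simple self-dual module in $\Cw$, which is precisely what Definition~\ref{defmoncatKKKO} demands.

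First I would fix a reduced expression $w = s_{i_1}\cdots s_{i_\ell}$ and construct the candidate seed from the corresponding chain of \emph{determinantial modules} $M_k := M(w_{\leq k}\Lambda_{i_k},\Lambda_{i_k})$, $1\leq k\leq\ell$, defined as the simple self-dual heads of products of cuspidal-type modules attached to the reduced expression. These are the KLR analogues of the unipotent quantum minors whose classes form the initial quantum cluster of $\mathcal{A}_q(\mathfrak{n}(w))$ in \cite{GLS}. The frozen part consists of those $M_k$ corresponding to the last occurrence in $w$ of each simple root, and the exchange matrix $B$ is read off the reduced expression by the standard recipe. I would then verify: (a) each $M_k$ lies in $\Cw$ and is self-dual real simple; (b) the $M_k$ pairwise commute in the sense of Lemma~\ref{comutlambda}, so that arbitrary products $M_{i_1}\circ\cdots\circ M_{i_t}$ remain simple; (c) the matrix $L$ formed by the $\Lambda(M_i,M_j)$ is compatible with $B$ in the Berenstein--Zelevinsky sense. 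Parts (a) and (b) follow from the analysis of renormalized $R$-matrices between determinantial modules and the relation between $\Lambda$ and the symmetric bilinear form, using in particular the Yang--Baxter identities recalled in Section~\ref{qha}.

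The decisive step, and the main obstacle, is to produce, for every exchange direction $k$, a self-dual simple module $M'_k\in\Cw$ commuting with all $M_i$ for $i\neq k$ and fitting into a graded short exact sequence
\[
0\longrightarrow q M^{\mathbf{b}'}\longrightarrow q^{\tilde{\Lambda}(M_k,M'_k)}M_k\circ M'_k\longrightarrow M^{\mathbf{b}''}\longrightarrow 0,
\]
where $\mathbf{b}'$ and $\mathbf{b}''$ are the positive and negative parts of the $k$-th column of $B$. This is a categorical lift of the quantum $T$-system identity satisfied by the unipotent quantum minors; the module $M'_k$ must be identified with the determinantial module obtained by replacing $w_{\leq k}\Lambda_{i_k}$ with the adjacent fundamental-weight combination dictated by the exchange relation. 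The existence of the short exact sequence follows from a careful analysis of the kernel and cokernel of the renormalized $R$-matrix $r_{M_k,M'_k}$: one shows that both are simple and computes their highest weights, matching them with $M^{\mathbf{b}'}$ and $M^{\mathbf{b}''}$ via the head/socle description of convolution products and the fact that $\Lambda(M_k,M'_k)+\Lambda(M'_k,M_k)\neq 0$ in the non-commuting direction. The required grading shift is exactly $\tilde{\Lambda}(M_k,M'_k)$ by the degree formula for $r_{M,N}$ recalled in Section~\ref{qha}.

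Once this initial admissible pair is in hand, condition~(a) of Definition~\ref{defmoncatKKKO} is supplied by restricting Theorem~\ref{KL-R} to $\Cw$ and using that $K_0(\Cw)$ is precisely the preimage of $\mathcal{A}_q(\mathfrak{n}(w))$; condition~(b) is built into the construction of $\mathcal{S}$; condition~(c) is exactly the conclusion of Theorem~\ref{6.3KKKO} applied iteratively. Combining these yields the monoidal categorification statement for $\Cw$. Thus the proof reduces to the construction and verification of the initial admissible pair of determinantial modules, with the $T$-system-type short exact sequence being the technical heart of the argument.
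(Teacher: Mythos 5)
Your proposal takes essentially the same route as the paper: the proof is reduced to exhibiting the admissible pair $(\{M(k,0)\}_{1\leq k\leq r},B)$ of determinantial modules attached to a reduced expression of $w$ (Theorem~\ref{initseedCw}), with the $T$-system-type graded short exact sequences supplying condition (iii) of Definition~\ref{KKKOdef6.1}, after which Theorem~\ref{6.3KKKO} together with the isomorphism $K_0(\Cw)\simeq\Aqnw$ propagates admissibility along arbitrary mutation sequences and yields the categorification in the sense of Definition~\ref{defmoncatKKKO}. Your identification of the frozen indices with last occurrences of each letter in the reduced word and of the exchange matrix with the standard recipe from \cite{KKKO} matches the paper's construction exactly.
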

Thus the categories $\Cw$ provide many examples of monoidal categorifications of (quantum) cluster algebras. 

\begin{rk}
 The category $R-gmod$ corresponds ${\C}_{w_0}$ where $w_0$ stands for the longest element of the Weyl group of $\mathfrak{g}$.  When $w$ is the square of a well chosen Coxeter element $c$ in $W$, the quantum cell $\mathcal{A}_{q^{1/2}}(\mathfrak{n}(w))$ is also categorified by the category $\mathcal{C}_1$ defined in \cite{HL}. The category ${\C}_{w_0}$ (resp. ${\C}_{c^2}$) is related to the category $\CQ$ (resp. $\mathcal{C}_1$) introduced by Hernandez-Leclerc in \cite{HL2} (resp. \cite{HL}) via a functor called \textit{generalized quantum affine Schur-Weyl duality} defined in \cite{KKK}. In the case of ${\C}_{w_0}$ Fujita \cite{Fujita}  proved that this functor is an equivalence of categories. 
\end{rk} 

\smallskip

 Note that Geiss-Leclerc-Schr\"oer defined categories $\tilde{\mathcal{C}}_w$ which provide additive categorifications of the quantum coordinate rings $\Aqnw$ for each $w \in W$ (\cite[Theorem 12.3]{GLS}). The categories $\tilde{\mathcal{C}}_w$ are defined as subcategories of the preprojective algebra of certain quivers. The categories $\Cw$ as defined in \cite{KKKO} can be seen as monoidal analogs of the categories $\tilde{\mathcal{C}}_w$ of \cite{GLS} in terms of representations of quiver Hecke algebras. However, Theorem~\ref{categow} provides a monoidal categorification statement and is thus of different nature than the results of \cite{GLS}. 
 
 \smallskip
 \smallskip
 
 In order to prove Theorem~\ref{categow}, Kang-Kashiwara-Kim-Oh construct an admissible pair (see Definition~\ref{KKKOdef6.1}) in the category $\Cw$ for each $w \in W$. We now recall this construction. By the results of \cite{KKKO}, the existence of such a pair implies Theorem~\ref{categow}.
 
 First one defines \textit{unipotent quantum minors} as some distinguished elements of ${\A}_q(\mathfrak{n})$: for any dominant weight $\lambda$ in the weight lattice $P$ and any couple $(\mu,\zeta)$ of elements of $W \lambda$, the unipotent quantum minor $D(\mu, \zeta)$ is an element of ${\A}_q(\mathfrak{n})$ which is either a member of the canonical basis of ${\A}_q(\mathfrak{n})$ or zero (\cite[Lemma 9.1.1]{KKKO}). The following statement gives a necessary and sufficient condition so that $D(\mu,\zeta)$ is non zero. First recall some notation from \cite{KKKO}:
 
  \begin{deftn}
  Let $\lambda \in P^{+} , \mu, \zeta \in W \lambda$. We write $\mu \lesssim \zeta$ if there exists a finite sequence $(\beta_1 , \ldots , \beta_l)$ such that, setting $\lambda_0 := \zeta , \lambda_k = s_{\beta_k} \lambda_{k-1} (1 \leq k \leq l) $ one has $\lambda_l = \mu$ and $ \forall 1 \leq k \leq l , (\beta_k , \lambda_{k-1}) \geq 0$.
  \end{deftn}

  \begin{lem}[{{\cite[Lemma 9.1.4]{KKKO}}}]
  Let $\lambda \in P^{+} , \mu, \zeta \in W \lambda$. Then $D(\mu , \zeta) \neq 0$ if and only if $\mu \lesssim \zeta$.
  \end{lem}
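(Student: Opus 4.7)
The plan is to establish the two implications separately and tie them together through a combination of weight-grading arguments and a quantum determinantal identity for unipotent minors.

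For the \emph{if} direction ($\mu \lesssim \zeta \Rightarrow D(\mu,\zeta) \neq 0$), I would induct on the length $l$ of a witnessing sequence $(\beta_1,\ldots,\beta_l)$. For $l=0$ we have $\mu=\zeta$ and $D(\zeta,\zeta)=1$ is a (nonzero) member of the canonical basis of $\A_q(\mathfrak{n})$. For the inductive step, write $\lambda_1 = s_{\beta_1}\zeta$; the shifted sequence $(\beta_2,\ldots,\beta_l)$ witnesses $\mu \lesssim \lambda_1$, hence by induction $D(\mu,\lambda_1)\neq 0$. Separately, the single-reflection case $D(\lambda_1,\zeta) \neq 0$ follows from $(\beta_1,\zeta)\geq 0$ by a direct computation with extremal weight vectors $u_{\zeta}$ and $u_{\lambda_1}$ in the integrable module $V(\lambda)$, since in this situation $u_{\lambda_1}$ is obtained from $u_\zeta$ by applying a power of a Chevalley generator that is nonzero on $u_\zeta$. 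The last step is to invoke the T-system / quantum determinantal identity for unipotent minors (of Berenstein--Zelevinsky type, reformulated for $\A_q(\mathfrak{n})$ in \cite{GLS} and \cite{KKKO}) expressing $D(\mu,\zeta)$ (up to a unit) as the leading term of a product involving $D(\mu,\lambda_1)$ and $D(\lambda_1,\zeta)$; combined with the fact that $\A_q(\mathfrak{n})$ is an integral domain, this yields $D(\mu,\zeta) \neq 0$.

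For the \emph{only if} direction, I would proceed by induction on the height $\mathrm{ht}(\zeta-\mu)$. Note first that $D(\mu,\zeta)$ lives in the weight component of degree $\mu-\zeta$ of $\A_q(\mathfrak{n})$; since $\A_q(\mathfrak{n})$ is graded by $-Q_+$, nonvanishing forces $\zeta-\mu\in Q_+$. The base case $\mathrm{ht}(\zeta-\mu)=0$ gives $\mu=\zeta$ and the empty sequence works. Otherwise, one uses the characterization of $D(\mu,\zeta)$ as a matrix coefficient paired with the extremal vectors $u_\mu$ and $u_\zeta$ in $V(\lambda)$ to find a simple root $\alpha_i$ with $(\alpha_i,\zeta)>0$ and such that $s_i\zeta-\mu$ is still in $Q_+$ with $D(\mu,s_i\zeta)\neq 0$; the existence of such an $i$ is forced by the Weyl-group action on $V(\lambda)$ (if no such reflection existed, the orbit-theoretic position of $\mu$ below $\zeta$ could not be realized, contradicting nonvanishing of the matrix coefficient). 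Applying the inductive hypothesis to $(\mu,s_i\zeta)$ and prepending $\alpha_i$ produces the required sequence for $(\mu,\zeta)$.

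The delicate step, and the one I expect to be the main obstacle, is the exchange-type identity invoked in the backward direction: one has to show that in the quantum determinantal relation tying $D(\mu,\zeta)$ to the product $D(\mu,\lambda_1)\cdot D(\lambda_1,\zeta)$, the coefficient of $D(\mu,\zeta)$ is a unit and that no cancellation can make $D(\mu,\zeta)$ vanish despite both factors being nonzero. Verifying this requires a careful bookkeeping of the signs/weights arising from $(\beta_1,\zeta)\geq 0$ and a use of the fact that canonical basis elements of $\A_q(\mathfrak{n})$ remain linearly independent after such products, so that the nonvanishing of the product translates into nonvanishing of the specific summand $D(\mu,\zeta)$.
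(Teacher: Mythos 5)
Note first that the paper you are trying to match does not prove this lemma at all: it is quoted from \cite[Lemma 9.1.4]{KKKO}, so your attempt can only be compared with the argument in that source. Your architecture (induct on the length of the witnessing sequence, reduce to a single reflection, and for the converse induct on $\mathrm{ht}(\zeta-\mu)$) is the right shape, but the step you yourself flag as ``the main obstacle'' is not merely delicate --- it is unavailable. There is no T-system/quantum determinantal identity expressing $D(\mu,\zeta)$ as the leading term of the product $D(\mu,\lambda_1)\cdot D(\lambda_1,\zeta)$ for an arbitrary chain $\mu \lesssim \lambda_1 \lesssim \zeta$ inside a single orbit $W\lambda$: the identities of that type in \cite{KKKO} (and the multiplicative identities for unipotent minors in \cite{GLS}) hold only for special nested pairs attached to a reduced word, or for minors attached to \emph{sums} of dominant weights. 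A product of two matrix coefficients of $V(\lambda)$ is a matrix coefficient of $V(\lambda)\otimes V(\lambda)$, not of $V(\lambda)$, and nothing forces the summand $D(\mu,\zeta)$ to survive. The missing idea that makes the lemma almost formal is the membership criterion: since the extremal weight space $V(\lambda)_\mu$ is one-dimensional and $D(\mu,\zeta)$ is the matrix coefficient attached to the pair of extremal vectors $(u_\mu,u_\zeta)$, one has $D(\mu,\zeta)\neq 0$ if and only if $u_\mu \in U_q^-(\mathfrak{g})\,u_\zeta$. This is the engine of the proof behind \cite[Lemma 9.1.4]{KKKO}, and with it transitivity along the chain is immediate from $U_q^-(\mathfrak{g})U_q^-(\mathfrak{g}) = U_q^-(\mathfrak{g})$ --- no product identity, no integral-domain argument, no cancellation bookkeeping.

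Two further gaps. In the one-step case you obtain $u_{\lambda_1}$ from $u_\zeta$ by ``applying a power of a Chevalley generator''; this is correct only when $\beta_1$ is a \emph{simple} root, whereas in the definition of $\lesssim$ (following \cite{KKKO}) the $\beta_k$ are arbitrary positive roots. So you must either reduce positive-root steps to chains of simple-root steps (a genuine Bruhat-order lifting argument on the orbit $W\lambda$) or prove the membership $u_{s_{\beta_1}\zeta}\in U_q^-(\mathfrak{g})u_\zeta$ directly, e.g.\ by a rank-one (integrable $\mathfrak{sl}_2$) argument attached to $\beta_1$; neither is addressed. In the converse direction, the existence of a simple root $\alpha_i$ with $(\alpha_i,\zeta)>0$ and $D(\mu,s_i\zeta)\neq 0$ is precisely what has to be proved, and your justification (``the orbit-theoretic position of $\mu$ below $\zeta$ could not be realized'') asserts the conclusion rather than arguing for it. With the membership criterion in hand this step is done by induction on $\mathrm{ht}(\zeta-\mu)$, using the compatibility of $U_q^-(\mathfrak{g})u_\zeta$ with the global/crystal basis (opposite Demazure modules) to produce the required $i$. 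In short: right skeleton, but both load-bearing steps are missing, and the single ingredient that repairs everything is the extremal-vector characterization of the nonvanishing of $D(\mu,\zeta)$.
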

  
  The following statement is a direct consequence of Theorem~\ref{canobasis} and of the previous lemma:

\begin{cor} \label{determimod}
  Let $\lambda \in P^{+} , \mu, \zeta \in W \lambda$ such that $\mu \lesssim \zeta$.
  There exists a unique self-dual simple module $M(\mu, \zeta) \in R-gmod$ whose image under the character map $ch_q$ is $D(\mu,\zeta)$. Moreover, $M(\mu,\zeta)$ is real. 
  \end{cor}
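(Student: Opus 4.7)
The statement is labeled as a direct consequence of Theorem~\ref{canobasis} together with the preceding lemma, so the plan is correspondingly short and built around invoking these two results carefully.

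First, I would use Lemma~9.1.4 (the previous lemma) to deduce from the hypothesis $\mu \lesssim \zeta$ that the unipotent quantum minor $D(\mu,\zeta)$ is non zero in $\mathcal{A}_q(\mathfrak{n})$. Since by the definition recalled just above the lemma (\cite[Lemma 9.1.1]{KKKO}) any unipotent quantum minor is either a member of the canonical basis of $\mathcal{A}_q(\mathfrak{n})$ or is zero, this forces $D(\mu,\zeta)$ to lie in the canonical basis.

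Next, I would apply Theorem~\ref{canobasis} (Rouquier, Varagnolo--Vasserot), which asserts that the graded character map $ch_q$ induces a bijection between the canonical basis of $\mathcal{A}_q(\mathfrak{n})$ and the set of isomorphism classes of self-dual simple modules in $R-gmod$. The preimage of $D(\mu,\zeta)$ under this bijection gives the desired module $M(\mu,\zeta)$, and the uniqueness (up to isomorphism) is automatic from the bijectivity. Thus the first half of the corollary follows in two short lines.

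The only point requiring a genuine argument is the realness of $M(\mu,\zeta)$, which does not follow from Theorem~\ref{canobasis} alone. Here the plan is to invoke the characterization from Lemma~\ref{comutlambda}: to show $M(\mu,\zeta)$ is real it suffices to check $\Lambda(M(\mu,\zeta),M(\mu,\zeta)) + \Lambda(M(\mu,\zeta),M(\mu,\zeta)) = 0$, or equivalently that $M(\mu,\zeta) \circ M(\mu,\zeta)$ is simple. At the level of $\mathcal{A}_q(\mathfrak{n})$ this translates into the statement that $D(\mu,\zeta)^2$ equals, up to a power of $q$, an element of the canonical basis, which is a standard property of unipotent quantum minors (they are \emph{$q$-commuting} elements of the quantum unipotent coordinate ring whose squares stay in the dual canonical basis, cf. the quantum Pl\"ucker-type relations used in \cite{KKKO,GLS}). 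The main (minor) obstacle is thus locating and citing the precise result on quantum minors that gives this realness; everything else is mechanical from the two results already stated in the excerpt.
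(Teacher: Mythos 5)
Your proof of existence and uniqueness is exactly the paper's argument: the paper gives no separate proof, presenting the corollary as an immediate consequence of Lemma 9.1.4 of \cite{KKKO} (non-vanishing of $D(\mu,\zeta)$, hence membership in the canonical basis by \cite[Lemma 9.1.1]{KKKO}) combined with the bijection of Theorem~\ref{canobasis}; the realness is likewise imported from \cite{KKKO}, and your sketch via the multiplicativity of quantum minors is the argument actually used there.

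One logical wrinkle in your realness step: you cannot invoke Lemma~\ref{comutlambda} here, since its hypothesis requires that one of the two simple modules already be real --- with $M=N=M(\mu,\zeta)$ that is precisely what you are trying to prove, so the appeal to $\Lambda(M,M)+\Lambda(M,M)=0$ is circular. Fortunately the detour is unnecessary: realness of $M(\mu,\zeta)$ \emph{means} by definition that $M(\mu,\zeta)\circ M(\mu,\zeta)$ is simple, and this is what your quantum-minor argument verifies directly. Indeed, for minors attached to the same pair of Weyl group elements one has $D(u\lambda,v\lambda)\,D(u\lambda',v\lambda') = q^{c}\,D\bigl(u(\lambda+\lambda'),v(\lambda+\lambda')\bigr)$ for some integer $c$ (see \cite{KKKO}, Section 9, building on \cite{GLS}), so $ch_q\bigl(M(\mu,\zeta)\circ M(\mu,\zeta)\bigr)=D(\mu,\zeta)^2$ is, up to a power of $q$, again a nonzero quantum minor, hence a canonical basis element; since $ch_q$ is injective and the classes of simples form a $\mathbb{Z}[q^{\pm 1}]$-basis of $K_0(R\text{-}gmod)$, the class $[M(\mu,\zeta)\circ M(\mu,\zeta)]$ is $q^{c}$ times the class of a single simple module, so the convolution has exactly one composition factor and is simple. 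With that repair your proposal is complete and coincides with the intended proof.
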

  
  This module is called \textit{determinantial module} (\cite[Definition 10.2.1]{KKKO}). Its weight is equal to $\zeta - \mu$, i.e. $M(\mu,\zeta) \in R(\zeta - \mu)-gmod$. 
 
 \begin{rk}  
  This is one of the key points that we will use to compute the dominant words of the modules corresponding to the frozen variables in $R-gmod$ in Section~\ref{initseed}.
  \end{rk}
  
  \smallskip
  
  One can now construct an admissible seed for the category $ \mathcal{C}_w$. Fix some element $w$ in the Weyl group $W$ and a reduced expression $w=s_{i_1} \cdots s_{i_r}$. For $s \in \{1, \ldots, r \}$, set 

$ \begin{array}{rcl} 
   s_{+} & := & \min (\{ k \mid s < k \leq r , i_k = i_s \} \cup \{r+1 \} ) \\
   s_{-} & := & \max (\{ k \mid 1 \leq k < s  , i_k = i_s \} \cup \{ 0 \} ) 
 \end{array}$

 For $1 \leq k \leq r $, set 
 $$ \lambda_k := s_{i_1} \cdots s_{i_k} \omega_{i_k} .$$
For $0 \leq t \leq s \leq r$, set 
$$ D(s,t) := \begin{cases}  D(\lambda_s , \lambda_t)  &\text{if $0 <t$,} \\ D(\lambda_s , \omega_{i_s}) &\text{if $0=t<s \leq r$,} \\ 1 &\text{if $t=s=0$.}  \end{cases} $$
 
  \begin{deftn}[\cite{KKKO}] \label{determimodbis}
 As in Corollary~\ref{determimod}, consider $M(s,t)$  the unique simple real module (up to shift and isomorphism) such that $ch(M(s,t)) = D(s,t)$ for any $0 \leq s \leq t \leq r$.
\end{deftn} 
 
Set $J= \{1, \ldots , r\} , J_{fr} = \{ k \in J \mid k_{+}= r+1 \} $ and $J_{ex} = J \setminus J_{fr}$. The initial quiver is set to have $J = \{1 , \ldots , r\}$ as set of vertices with the following arrows:
 
$ \begin{array}{ll}
    s \longrightarrow t  & \text{if $1 \leq s < t < s_{+} < t_{+} \leq r+1$ }  \\
   s \longrightarrow s_{-} & \text{ if $1 \leq s_{-} < s \leq r$ }
 \end{array} $

 Denoting by $B$ the corresponding exchange matrix, the main result of \cite{KKKO} can be stated in the following way:

   \begin{thm}[{{\cite[Theorem 11.2.2]{KKKO}}}] \label{initseedCw}
 The pair $(\{ M(k,0)\}_{1 \leq k \leq r} , B )$ is admissible in the category ${\C}_w$. 
 \end{thm}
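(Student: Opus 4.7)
The plan is to verify the three defining conditions of Definition~\ref{KKKOdef6.1} for the pair $(\{M(k,0)\}_{1 \leq k \leq r}, B)$. By Corollary~\ref{determimod} and Definition~\ref{determimodbis}, each $M(k,0)$ is a determinantial module, hence a self-dual real simple object of $\mathcal{C}_w$; condition (ii) is immediate from the combinatorial definition of the quiver on $J$, whose principal part is skew-symmetric by direct inspection. The substance of the proof therefore lies in (a) the pairwise commutation of the $M(k,0)$ (the remaining half of (i)) and (b) constructing the replacements $M'_k$ together with the two required short exact sequences (condition (iii)).

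For (a), I would invoke Lemma~\ref{comutlambda} to reduce commutation to the identity
\[ \Lambda(M(s,0),M(t,0)) + \Lambda(M(t,0),M(s,0)) = 0 \qquad (1 \leq s, t \leq r). \]
Since $ch_q(M(k,0)) = D(\lambda_k, \omega_{i_k})$, Theorem~\ref{KL-R} translates this into the $q$-commutation in $\mathcal{A}_q(\mathfrak{n}(w))$ of any two such unipotent quantum minors, with an exponent explicitly given in terms of the symmetric bilinear form on $P$. This is a known identity, and comparing the exponent against the formula $\Lambda(M,N) = -(\beta,\gamma) + 2(\beta,\gamma)_n - 2s$ for the homogeneous degree of $r_{M,N}$ delivers the required vanishing.

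For (b), I would fix $k \in J_{ex}$ and let the exchange relation forced by the quiver $B$ dictate the character of the candidate $M'_k$: in $\mathcal{A}_q(\mathfrak{n}(w))$ this exchange relation is a generalized quantum T-system (quantum determinantial identity) among the minors $D(\lambda_s, \omega_{i_s})$, whose existence is a classical input. Corollary~\ref{determimod} then promotes the prescribed character into a unique self-dual real simple module $M'_k$, whose commutation with each $M(i,0)$ for $i \neq k$ is again an instance of the minor $q$-commutation identity used in (a). The two short exact sequences of Definition~\ref{KKKOdef6.1}(iii) would then be produced by analyzing the non-invertible renormalized R-matrices $r_{M(k,0),M'_k}$ and $r_{M'_k,M(k,0)}$ (Definition~\ref{renormRmat}): their images and cokernels should be identified with the monomial modules $M^{\mathbf{b'}}$ and $M^{\mathbf{b''}}$ determined by the entries of $B$, and the grading shifts matched via $\tilde{\Lambda}(M(k,0),M'_k) = \tfrac12\bigl(\Lambda(M(k,0),M'_k) + (\mathrm{wt}(M(k,0)),\mathrm{wt}(M'_k))\bigr)$.

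The main obstacle is precisely this final step: promoting the T-system, which a priori lives only as a numerical relation in $K_0(\mathcal{C}_w) \simeq \mathcal{A}_q(\mathfrak{n}(w))$, to a genuine graded short exact sequence in $\mathcal{C}_w$. This demands a fine analysis of the head and socle of $M(k,0) \circ M'_k$ through the R-matrix formalism, together with careful tracking of the grading shifts — this is exactly the heart of the technical argument of~\cite{KKKO} and cannot be reduced to the formal manipulation of quantum minors alone.
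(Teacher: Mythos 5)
You should first note a structural fact: this paper contains no proof of Theorem~\ref{initseedCw} at all. The statement is imported verbatim from \cite[Theorem 11.2.2]{KKKO}, and Section~\ref{qmseedcw} merely assembles the notation (the minors $D(s,t)$, the determinantial modules $M(s,t)$, the quiver on $J$) needed to state it. So your proposal can only be measured against the proof in \cite{KKKO}, and at the level of architecture it is faithful to that proof: self-duality, realness and simplicity of the $M(k,0)$ come from Corollary~\ref{determimod}; commutation is controlled by the invariant $\Lambda$ through Lemma~\ref{comutlambda}; and the exchange sequences arise by categorifying a quantum T-system among unipotent quantum minors.

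Two concrete gaps remain, however. First, your commutation step (a) runs the logic backwards and does not close as written: the $q$-commutation of the flag minors $D(\lambda_s,\omega_{i_s})$ in $\Aqnw$ (known from \cite{GLS}, since they belong to a common quantum seed) only yields $[M(s,0)\circ M(t,0)] = q^c\,[M(t,0)\circ M(s,0)]$ in the Grothendieck ring, i.e. equality of composition series up to shift. That is not among the four equivalent conditions of Lemma~\ref{comutlambda}, and it does not allow you to ``compare exponents'' in $\Lambda(M,N) = -(\beta,\gamma)+2(\beta,\gamma)_n-2s$, because $s$ is the vanishing order of $R_{M_z,N}$ (Definition~\ref{renormRmat}) and is invisible in $K_0$. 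In \cite{KKKO} the implication goes the other way: the invariants $\Lambda(M(\mu,\zeta),M(\mu',\zeta'))$ are computed directly for pairs of determinantial modules under $\lesssim$-hypotheses on the weights, and the commutation (hence the minor identities at the module level) is deduced from those computations. Second, for condition (iii) you leave both the identification of $M'_k$ and the exact sequences open. In \cite{KKKO} the mutated module is again a determinantial module, namely $M(k_+,k)$ --- this is precisely why Definition~\ref{determimodbis} records the full two-parameter family $M(s,t)$ with $t>0$, a point your outline does not use --- and the two graded sequences are the categorified T-system, established by induction along the reduced word using restriction functors and the simple-head/simple-socle theory of \cite{KKKO3}, with $M^{\mathbf{b'}}$ and $M^{\mathbf{b''}}$ identified as the socle and head of $M(k,0)\circ M'_k$. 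Since your final paragraph explicitly concedes this step, what you have is a correct plan for the theorem, with its two central verifications missing, rather than a proof of it.
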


\subsection{Irreducible representations of quiver Hecke algebras}
 \label{irredKLR}

In this subsection we recall from \cite{KR} the classification of simple finite-dimensional modules over finite type quiver Hecke algebras. The main result (Theorem~\ref{KR} below) is that simple objects in the category $R-gmod$ are parametrized in a combinatorial way by \textit{dominant words}, which are analogs of Zelevinsky's multisegments in the classification of simple representations of affine Hecke algebras of type $A$. As for Lie  algebras, simple modules over quiver Hecke algebras are constructed as quotients of tensor products of some distinguished irreducible representations, called \textit{cuspidal modules} in \cite{KR}.

Choose a labeling  of the vertices of the Dynkin diagram of $\mathfrak{g}$ by $I=\{1,\ldots,n\}$.
A word is a finite set of  elements of $I$.  
We fix a total order  on $I$ by setting $1< \cdots <n$. The set of all words is a totally ordered set with respect to the lexicographic order induced by $<$. 

  For $ {\bf i } := (i_1,\ldots,i_d) $, set $ |{\bf i }| := \alpha_1 + \cdots + \alpha_d  \in Q_{+} $. Recall from Section~\ref{qha} that for any $ \beta \in Q_{+} $, $Seq(\beta) = \{ {\bf i},  |{\bf i}| = \beta \}$.

     \begin{deftn}
 A word is called Lyndon if it is smaller than all its proper right factors. 
  \end{deftn} 

     \begin{ex}
 The words $123$, $24$, $13$ are Lyndon. The word $231$ is not. 
 \end{ex}

\smallskip
\smallskip

The following statement is a well-known fact (see \cite[Theorem 5.1.5]{Lo}):

     \begin{prop}[Canonical factorization] \label{canofacto}
 Any word $\mu$ can be written in a unique way as a concatenation of Lyndon words in the decreasing order :
 $$ \mu = ({\bf i}^{(1)})^{n_1} \cdots ({\bf i}^{(r)})^{n_r} $$
with ${\bf i}^{(1)}, \ldots ,{\bf i}^{(r)}$ Lyndon words satisfying ${\bf i}^{(1)} > \cdots > {\bf i}^{(r)}$ and $n_1, \ldots ,n_r$ nonnegative integers. 
  \end{prop}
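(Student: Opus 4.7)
The plan is to prove existence and uniqueness separately, both by induction on the length $|\mu|$, relying on a single standard consequence of the definition of Lyndon words: if $u$ and $v$ are Lyndon with $u < v$, then the concatenation $uv$ is again Lyndon. This follows because any proper right factor of $uv$ is either a proper right factor of $v$ (hence strictly greater than $v$, so greater than $uv$), or of the form $u'v$ where $u'$ is a proper right factor of $u$ (hence $u' > u$, so $u'v > uv$).

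For existence, given $\mu$ nonempty, let $\ell$ be the longest Lyndon prefix of $\mu$; this is well-defined since every single letter is Lyndon. Write $\mu = \ell \mu'$ and apply the induction hypothesis to $\mu'$, yielding a factorization $\mu' = \ell_2^{m_2} \cdots \ell_k^{m_k}$ with $\ell_2 > \cdots > \ell_k$ all Lyndon. The key claim is $\ell \geq \ell_2$: otherwise $\ell < \ell_2$, and by the preliminary observation $\ell \ell_2$ would be Lyndon, contradicting the maximality of $\ell$ as a Lyndon prefix of $\mu$. Inserting $\ell$ into the $\ell_i$-factorization (and merging if $\ell = \ell_2$) produces a factorization of the required form.

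For uniqueness, suppose $\mu = u_1^{a_1} \cdots u_r^{a_r} = v_1^{b_1} \cdots v_s^{b_s}$ are two strictly decreasing Lyndon factorizations. It is enough to show $u_1 = v_1$ and $a_1 = b_1$, since the remaining parts are then shorter and the induction hypothesis applies. Both $u_1$ and $v_1$ are prefixes of $\mu$, so one is a prefix of the other; suppose without loss of generality $|u_1| \leq |v_1|$. If $u_1 \neq v_1$, write $v_1 = u_1 \xi$ with $\xi$ a nonempty proper right factor of $v_1$, so that $v_1 < \xi$ by the Lyndon property of $v_1$. The suffix of $\mu$ starting just after $u_1$ then equals $\xi v_1^{b_1 - 1} v_2^{b_2} \cdots$ on the one hand and $u_2^{a_2}\cdots u_r^{a_r}$ (or $u_1^{a_1 - 1} u_2^{a_2} \cdots$ if $a_1 \geq 2$) on the other; comparing these two representations of the same word in their leading $|\xi|$ letters and using $\xi > v_1 \geq u_1 \geq u_i$ for every $i \geq 1$ produces a contradiction. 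Hence $u_1 = v_1$, and an analogous argument comparing the next Lyndon block on each side forces $a_1 = b_1$, completing the induction.

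The main obstacle is handling the case $u_1 \neq v_1$ in the uniqueness step cleanly: one must rule out the possibility that the string $\xi$ is itself assembled as a prefix of some $u_i^{a_i} u_{i+1}^{a_{i+1}} \cdots$ with all $u_j \leq u_1$. This is where the Lyndon property of $v_1$ (giving $\xi > v_1 \geq u_1$) has to be used carefully against the decreasing hypothesis on the $u_i$'s, ensuring that no concatenation of factors of size $\leq u_1$ can start with a word lexicographically larger than $u_1$.
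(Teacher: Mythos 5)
The paper itself offers no proof of this proposition (it cites \cite[Theorem 5.1.5]{Lo}), so your attempt can only be judged on its own terms. Your overall strategy is the standard Chen--Fox--Lyndon one, and the existence half is essentially sound, but note a small omission already in your preliminary lemma: the proper right factors of $uv$ are the proper right factors of $v$, the words $u'v$ with $u'$ a proper right factor of $u$, \emph{and $v$ itself}, and this last case is not covered by your two bullets. There you must show $uv < v$, which is immediate if $u$ is not a prefix of $v$, but if $u$ is a proper prefix of $v$, writing $v = uw$, the comparison of $uv = uuw$ with $v = uw$ reduces after cancelling $u$ to comparing $v$ with $w$, and one needs the Lyndon property of $v$ (namely $v < w$) to conclude. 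This is a one-line repair, but it is exactly the prefix subtlety that becomes fatal later.

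The genuine gap is in the uniqueness step, and it sits precisely where you flagged ``the main obstacle.'' The guiding principle you state --- that no concatenation of Lyndon factors each $\leq u_1$ can start with a word lexicographically larger than $u_1$ --- is false: over $\{a < b\}$ take $u_1 = ab$; the concatenation $u_1 u_1 = abab$ has the prefix $aba$, which is larger than $ab$ since $ab$ is a proper prefix of $aba$. So ``comparing the leading $|\xi|$ letters'' using only $\xi > u_1 \geq u_i$ does not produce a contradiction. What rescues the argument is the \emph{stronger} inequality you actually possess: $\xi > v_1 = u_1\xi$, hence by iteration $\xi > u_1^k \xi$ for every $k$, and since a proper-prefix relation is excluded for length reasons, $\xi$ must disagree with the periodic word $u_1 u_1 u_1 \cdots$ by a strictly larger letter within its first $|\xi|$ positions. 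Against this one needs the lemma that any concatenation of Lyndon words each $\leq u_1$ never exceeds $u_1 u_1 u_1 \cdots$ letterwise (proved by induction on blocks: if the first block $m$ satisfies $m < u_1$ and is not a prefix of $u_1$, done; if $m = u_1$, cancel; if $m$ is a proper prefix of $u_1$, write $u_1 = mz$ and use $z > u_1$ from the Lyndon property of $u_1$). Since $\xi$ is a prefix of $u_1^{a_1-1}u_2^{a_2}\cdots u_r^{a_r}$, these two facts collide and rule out $u_1 \neq v_1$. Without this lemma (or an equivalent, e.g.\ Duval's argument), your uniqueness half is an assertion rather than a proof. Finally, once $u_1 = v_1$ is known, no separate argument for $a_1 = b_1$ is needed: cancel one copy of $u_1$ from the front of both factorizations and apply the induction hypothesis to the shorter word.
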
 

This is called the \textit{canonical factorization} of the word $\mu$. 
 Recall from Section~\ref{qha} (Definition~\ref{caract}) that for $\beta \in Q_{+} $, any $ R(\beta) $-module $M$ decomposes as a direct sum of vector spaces $ M = \bigoplus_{\nu \in Seq(\beta)} M_\nu $ with $M_{\nu} := e(\nu) M $. 
 
    \begin{deftn} \label{domiword}
 A word $\mu$ is dominant if there is an $R(\beta)$-module $M$ such that $\mu$ is the highest word among the words $\nu$ such that $ M_{\nu}$ is not zero: $ M = M_{\mu} \oplus \bigoplus_{\nu < \mu} M_{\nu} $ and $M_\mu \neq 0$. 
   \end{deftn} 

Dominant words play the same role as highest weights in the representation theory of finite dimensional semisimple Lie algebras (see \cite{CharPres}).
The next  statement provides a very useful combinatorial criterion to determine whether a word is dominant or not. In particular it shows that a dominant word can be seen as a collection (or a sum with positive coefficients) of positive roots, which is why the terminology \textit{root partitions} is sometimes used (see \cite{McN}).

     \begin{thm}[\cite{KR}]
     \label{domfacto}

 \begin{enumerate}[(i)]
   \item  There is a bijection between the set of dominant Lyndon words and the set $\Delta_{+}$ of positive roots of $\mathfrak{g}$, given by ${\bf i} \mapsto |{\bf i}|$. 
   \item A word $\mu$ is dominant if and only if all the Lyndon words appearing in the canonical factorization of $\mu$ are dominant. 
\end{enumerate}

\end{thm}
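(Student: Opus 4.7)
The plan is to exploit the shuffle algebra realization of characters. By Theorem~\ref{KL-R} the map $ch_q$ is an injective ring homomorphism, and the shuffle product of Definition~\ref{shuffle} has the crucial property that, for words ${\bf i} > {\bf j}$ in the lexicographic order, the concatenation ${\bf i}{\bf j}$ appears in ${\bf i} \circ {\bf j}$ with nonzero coefficient and is its unique maximal term. Iterating, if $M_1, \ldots, M_r$ are modules whose characters have highest words ${\bf i}^{(1)} \geq \cdots \geq {\bf i}^{(r)}$, then the highest word of $ch_q(M_1 \circ \cdots \circ M_r)$ is the concatenation ${\bf i}^{(1)} \cdots {\bf i}^{(r)}$, again with nonzero coefficient. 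This leading-term principle is the engine driving both parts of the theorem.

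For part (i), the strategy is to construct a family of simple modules $L({\bf i})$ (the cuspidal modules) indexed by dominant Lyndon words, inductively along the costandard factorization ${\bf i} = {\bf i}_1 {\bf i}_2$: assuming $L({\bf i}_1)$ and $L({\bf i}_2)$ already built with the correct highest words, one defines $L({\bf i})$ as the simple head of $L({\bf i}_1) \circ L({\bf i}_2)$, and the shuffle analysis above forces this head to have highest word exactly ${\bf i}$ and weight $|{\bf i}|$. Comparing, via the injectivity of $ch_q$, the number of dominant Lyndon words of a given weight $\beta \in Q_+$ with the dimension of the corresponding root space then shows that ${\bf i} \mapsto |{\bf i}|$ realizes a bijection with $\Delta_+$. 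In the type $A_n$ case relevant to Section~\ref{mutrule}, with the natural order $1 < \cdots < n$, this bijection specializes to the explicit map $i(i+1)\cdots j \mapsto \alpha_i + \alpha_{i+1} + \cdots + \alpha_j$.

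For part (ii), the ``if'' direction follows directly from the leading-term formula: given a canonical factorization $\mu = ({\bf i}^{(1)})^{n_1} \cdots ({\bf i}^{(r)})^{n_r}$ with each ${\bf i}^{(k)}$ dominant Lyndon, the convolution $L({\bf i}^{(1)})^{\circ n_1} \circ \cdots \circ L({\bf i}^{(r)})^{\circ n_r}$ has $\mu$ as the maximal word of its character, so its simple head is a simple module with highest word exactly $\mu$, proving $\mu$ is dominant. The ``only if'' direction is contrapositive: if some ${\bf i}^{(k)}$ in the canonical factorization is a Lyndon word which is not dominant, then part (i) forbids the existence of any simple module having ${\bf i}^{(k)}$ as a highest word. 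An induction on the length of the factorization, isolating ${\bf i}^{(k)}$ via the leading-term formula applied to an appropriate restriction, then shows that no simple module can have $\mu$ itself as a highest word, so $\mu$ is not dominant.

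The main obstacle is the inductive step in part (i): verifying that the simple head of $L({\bf i}_1) \circ L({\bf i}_2)$ is indeed nonzero, simple and carries ${\bf i}$ (rather than some larger word) as its highest word, and then matching the number of dominant Lyndon words of weight $\beta$ with the corresponding multiplicity in $\Delta_+$. This requires Leclerc's theory of good Lyndon words together with the compatibility of the costandard bracketing with the triangular decomposition of $U_q(\mathfrak{n})$, a delicate combinatorial-algebraic input which is the technical heart of the argument. Once this input is in place, part (ii) follows essentially formally from the quantum shuffle identity $ch_q(M \circ N) = ch_q(M) \circ ch_q(N)$ combined with the unique-maximal-term observation.
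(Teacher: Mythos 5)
You should first note that the paper contains no proof of Theorem~\ref{domfacto}: it is quoted verbatim from Kleshchev--Ram \cite{KR} (with the Lyndon-word/positive-root bijection going back to Leclerc's good Lyndon words \cite{L}), so your proposal can only be measured against the argument of \cite{KR}. At the level of architecture you do track that argument faithfully: your ``leading-term principle'' is exactly Lemma~\ref{max} and Proposition~\ref{KRprod} of the paper ([KR, Lemmas 5.1 and 5.3]), the cuspidal modules are indeed constructed along Lyndon factorizations, and the ``if'' direction of (ii) does follow formally from these facts together with $ch_q(M \circ N) = ch_q(M) \circ ch_q(N)$, just as you say.

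As a proof, however, the proposal has a genuine gap: its two pivotal steps are invoked rather than carried out, and one of them comes close to assuming the statement. For (i), defining $L({\bf i})$ as the simple head of $L({\bf i}_1) \circ L({\bf i}_2)$ along the costandard factorization requires already knowing which Lyndon words are dominant --- precisely the content of the good-Lyndon-word theory you defer to --- so the ``comparison with root multiplicities'' is circular unless Leclerc's computation is taken as input or redone; you acknowledge this but do not supply it. More seriously, your contrapositive induction for the ``only if'' half of (ii), via ``an appropriate restriction isolating ${\bf i}^{(k)}$'', is not a step that follows from anything you have set up: controlling the highest word of a simple module under restriction is exactly what the standard-module machinery of \cite{KR} exists to provide, and it is unavailable at that point of your induction. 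What actually closes (ii) in \cite{KR} is a counting argument: by the categorification theorem (Theorem~\ref{KL-R}), the number of self-dual simple graded $R(\beta)$-modules up to isomorphism equals the rank of $\mathcal{A}_q(\mathfrak{n})_{\beta}$, i.e.\ the Kostant partition number of $\beta$; the ``if'' direction, combined with uniqueness of the canonical factorization (Proposition~\ref{canofacto}) and Lemma~\ref{max}, already produces that many simples with pairwise distinct highest words of weight $\beta$; hence those words exhaust the dominant words of weight $\beta$, and no word with a non-dominant Lyndon factor can be the highest word of any module. Replacing your undeveloped restriction induction with this dimension count would make the outline sound.
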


     \begin{ex}
In type $A_4$, $24$ is Lyndon but not dominant Lyndon, and $123$ is dominant Lyndon. 
The word $ 12312 $ is dominant but the word $ 3213 $ is not. 
\end{ex}

 \begin{rk}
 Dominant Lyndon words already appear in the work of Leclerc \cite{L} as \textit{good Lyndon words} in the study of dual canonical basis for quantum groups and quantum coordinate rings. 
 \end{rk} 
 
\smallskip
\smallskip

 The next Proposition introduces the notion of \textit{cuspidal modules}. The existence of cuspidal modules follows from results of Varagnolo-Vasserot \cite{VV} in simply-laced cases and Rouquier \cite{R} in the general case. Cuspidal modules can be seen as analogs of fundamental representations for Lie algebras.

     \begin{prop}[{{\cite[Proposition 8.4]{KR}}}]
For any dominant Lyndon word ${\bf i}$ in $Seq(\beta)$, there is a unique (up to isomorphism and shift) irreducible $R(\beta)$-module of highest weight ${\bf i}$. We denote it by $L({\bf i})$.  
 \end{prop}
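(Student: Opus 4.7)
The plan is to prove existence and uniqueness separately, using induction on the length of $\beta$ for existence and a standard highest-word argument for uniqueness. Throughout, the quantum shuffle realization of $ch_q$ from Definition~\ref{shuffle} and Theorem~\ref{KL-R} plays the central role, together with the combinatorics of Lyndon factorization in Proposition~\ref{canofacto} and the bijection in Theorem~\ref{domfacto}.

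For uniqueness, suppose $L$ and $L'$ are two simple $R(\beta)$-modules whose highest word (in the sense of Definition~\ref{domiword}) is ${\bf i}$. Then $e({\bf i})L$ and $e({\bf i})L'$ are both non-zero. One can choose homogeneous generators of these one-dimensional graded pieces and, using the fact that a simple $R(\beta)$-module is generated by any non-zero vector, construct a non-zero homomorphism $L \to L'$ (after a suitable grading shift). By simplicity this is an isomorphism. The key point is that the action of the generators $x_k, \tau_k$ on the $e({\bf i})$-component is essentially determined by the Lyndon property of ${\bf i}$: the non-Lyndon suffixes cannot contribute to $e({\bf i})M$ for $M$ a candidate simple, so the structure at the highest word is rigid.

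For existence, I would argue by induction on the length $d$ of $\beta$. The base case $d=1$ is trivial: ${\bf i}=(i)$ for some $i\in I$, and $L({\bf i})$ is the one-dimensional module with $x_1=0$ and $e((i))=\mathrm{id}$. For the inductive step, consider the standard Lyndon factorization ${\bf i}={\bf j}{\bf k}$, where ${\bf k}$ is the longest proper Lyndon suffix of ${\bf i}$; then ${\bf j}$ is also Lyndon and both are strictly shorter. A combinatorial lemma (this is where Theorem~\ref{domfacto}(i) is used in conjunction with the identification of dominant Lyndon words with positive roots) shows that ${\bf j}$ and ${\bf k}$ are again dominant Lyndon, hence by induction the modules $L({\bf j})$ and $L({\bf k})$ exist. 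Form the convolution product $L({\bf j})\circ L({\bf k})$. Using the compatibility of $ch_q$ with convolution, $ch_q(L({\bf j})\circ L({\bf k})) = ch_q(L({\bf j})) \circ ch_q(L({\bf k}))$ in the quantum shuffle algebra. A direct shuffle computation shows that the highest word occurring in this product is precisely ${\bf i}={\bf j}{\bf k}$, and that it appears with multiplicity $1$ (up to a $q$-power). Consequently $L({\bf j})\circ L({\bf k})$ has a unique simple head, which is taken to be $L({\bf i})$; by construction its highest word is ${\bf i}$.

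The main obstacle will be the shuffle computation controlling the highest word of $ch_q({\bf j})\circ ch_q({\bf k})$ and the assertion that ${\bf j}$ and ${\bf k}$ remain dominant Lyndon. For the former one must check that no word strictly larger than ${\bf i}$ can arise in the shuffle of a word whose highest term is ${\bf j}$ with one whose highest term is ${\bf k}$; this reduces, by the definition of the quantum shuffle, to the purely combinatorial fact that the concatenation ${\bf j}{\bf k}$ is the maximum of the shuffle set of ${\bf j}$ and ${\bf k}$ whenever ${\bf j}{\bf k}$ is Lyndon with ${\bf k}$ its longest proper Lyndon suffix. For the latter, one invokes the bijection between dominant Lyndon words and positive roots together with the fact that the Lyndon factorization of a positive root corresponds to a decomposition $|{\bf i}|=|{\bf j}|+|{\bf k}|$ as a sum of two positive roots with appropriate ordering properties. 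Once these two combinatorial ingredients are in place, the inductive construction of $L({\bf i})$ as the unique simple head of $L({\bf j})\circ L({\bf k})$ is straightforward, and uniqueness closes the argument.
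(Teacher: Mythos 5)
The paper itself offers no proof of this statement---it is imported verbatim from \cite{KR}---so your proposal has to stand on its own, and it does not: both halves have genuine gaps. The decisive one is in your existence step. Your induction rests on the ``purely combinatorial fact'' that the concatenation ${\bf j}{\bf k}$ is the maximum of the shuffle of ${\bf j}$ and ${\bf k}$ when ${\bf i}={\bf j}{\bf k}$ is the standard factorization. This is false. By the definition of Lyndon words in Section~\ref{irredKLR} (a Lyndon word is smaller than all its proper right factors) one has ${\bf j}<{\bf i}<{\bf k}$, so the two factors of the standard factorization occur in \emph{increasing} order, whereas the concatenation-is-maximal principle (Proposition~\ref{KRprod}, Corollary~\ref{littlecor}) applies only to factors in \emph{decreasing} order. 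Already for ${\bf i}=12$ in type $A_2$, with ${\bf j}=(1)$ and ${\bf k}=(2)$, Definition~\ref{shuffle} gives $1\circ 2=(12)+q\,(21)$, whose maximal word is $21>12$; accordingly $ch_q\left(L(1)\circ L(2)\right)=(12)+q\,(21)$, so $L({\bf j})\circ L({\bf k})$ has highest word \emph{strictly larger} than ${\bf i}$ and contains the composition factor $L(21)$ with strictly larger highest word. Your deduction ``the maximal word of the product is ${\bf i}$ with multiplicity one, hence the product has a simple head of highest word ${\bf i}$'' therefore collapses. The conclusion is in fact correct---the head of $L(1)\circ L(2)$ is $L(12)$, as the paper itself uses in its type $A_3$ seed computation---but showing that the head is simple with highest word exactly ${\bf i}$ \emph{despite} the presence of factors above ${\bf i}$ is the hard content, and nothing in a ``direct shuffle computation'' delivers it. (Your subsidiary claim that ${\bf j}$ and ${\bf k}$ are again dominant Lyndon is true but itself nontrivial, resting on convexity properties of the ordering on positive roots due to Leclerc.)

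The uniqueness half is also not a proof. From $e({\bf i})L\neq 0\neq e({\bf i})L'$ you cannot ``construct a non-zero homomorphism $L\to L'$'' by choosing vectors: a morphism must intertwine the whole action of $R(\beta)$, and producing one requires a universal highest-weight object of which both simples are quotients (a Verma-type module); no such object appears in your argument, and the standard modules $\Delta(\mu)$ of Theorem~\ref{KR} are built \emph{from} the cuspidal modules, so invoking them here would be circular. You also assume without justification that $e({\bf i})L$ is one-dimensional, and ``the structure at the highest word is rigid'' together with ``non-Lyndon suffixes cannot contribute'' are assertions, not arguments. Note finally that, with the paper's Definition~\ref{domiword}, existence is essentially immediate: a dominant word is by definition the highest word of some module $M$, and since $ch_q$ is additive on short exact sequences the maximum is attained on a composition factor of $M$, which is then a simple module of highest weight ${\bf i}$. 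So the entire weight of the proposition lies in uniqueness---exactly the part your proposal leaves unestablished; in \cite{KR} it is obtained by a different induction on positive roots together with explicit constructions of the cuspidal modules, such as the one-dimensional modules $L(k\ldots l)$ in type $A_n$ recalled in Section~\ref{irredKLR}.
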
  

In \cite{KR}, Kleshchev-Ram give explicit constructions of cuspidal modules for each finite type. For instance, in type $A_n$, the set of positive roots is $\Delta_{+} = \{ \alpha_i + \cdots + \alpha_j  , 1 \leq i \leq j \leq n \} $  and the cuspidal module $L(k \ldots l)$ corresponding to the positive root $\alpha_k + \cdots + \alpha_l$  is the one dimensional vector space spanned by a vector $v$ with action of $R(\alpha_k + \cdots + \alpha_l)$ given by:
$$  x_i \cdot v = 0 , \quad  \tau_j \cdot v = 0 , \quad 
 e(\nu) \cdot v = \begin{cases}
     v &\text{if $ \nu =k \ldots l$} \\
     0 &\text{otherwise.} 
     \end{cases} $$

\smallskip
\smallskip
  
  Recall from Section~\ref{qha} that the graded character of a finite dimensional $R(\beta)$-module $M$ is a (finite) formal sum of elements of $Seq(\beta)$ with coefficients in $\mathbb{Z}[q,q^{-1}]$.  For any such formal sum $S := \sum_{\lambda} P_{\lambda}(q)  \lambda $, we  let $ max(S)$ denote the greatest word appearing in this sum (for the lexicographic order). In particular, for any finite dimensional $R(\alpha)$-module $M$, we set  $max(M) := max(ch_q (M))$. The word $max(M)$ is called the \textit{highest weight} of the module $M$ in \cite{KR}.

 The next statement shows that any word (not necessarily dominant) always appears as the highest word in the quantum shuffle product of the Lyndon words appearing in its canonical factorization. 
 
 \begin{prop}[{{\cite[Lemma 5.3]{KR}}}] \label{KRprod}
 Let $\mu$ be a  word, and $\mu = {\bf i}^{(1)} \cdots {\bf i}^{(r)}$ its canonical factorization.
  Then we have 
  $ max({\bf i}^{(1)} \circ \cdots \circ {\bf i}^{(r)})  = \mu. $
   \end{prop}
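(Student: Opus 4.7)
My plan is to proceed by induction on the number $r$ of Lyndon factors in the canonical factorization of $\mu$. The base case $r=1$ is immediate, since then $\mu = {\bf i}^{(1)}$ itself is Lyndon and the ``shuffle'' reduces to $\mu$. For the inductive step, the main work lies in the two-factor statement: if $u$ and $v$ are Lyndon words with $u \geq v$ in the lexicographic order, then $\max(u \circ v) = uv$.

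For the two-factor case, first observe that the concatenation $uv$ appears in $u \circ v$, coming from the identity element of $\mathfrak{S}_{|u|,|v|}$ (with $q$-exponent $0$), so it remains only to show that no other shuffle is lexicographically larger. Compare $uv$ letter by letter with an arbitrary shuffle $w$ of $u$ and $v$. If $u$ and $v$ have distinct first letters then $u_1 > v_1$ (since $u > v$), and any shuffle beginning with a letter from $v$ starts with $v_1 < u_1$; the remaining comparison reduces to shuffling the tail of $u$ with $v$, which concludes by an induction on $|u|+|v|$. The delicate case is when $u$ and $v$ share a common prefix. Here I would invoke the defining Lyndon property of $u$, namely that $u$ is strictly smaller than each of its proper right factors, to show that at every ambiguous position the maximal shuffle must take the letter from $u$ rather than from $v$; iterating this argument along the shared prefix yields $\max(u \circ v) = uv$.

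To pass from the two-factor case to arbitrary $r \geq 2$, set $u = {\bf i}^{(1)}$ and $T = {\bf i}^{(2)} \circ \cdots \circ {\bf i}^{(r)}$. By the induction hypothesis, $\max(T) = {\bf i}^{(2)} \cdots {\bf i}^{(r)}$. A short monotonicity lemma — that for words of equal length, $\lambda \leq \lambda'$ implies $\max(u \circ \lambda) \leq \max(u \circ \lambda')$ — then allows me to replace $T$ inside $u \circ T$ by its lex-maximal term $\max(T)$ when computing $\max(u \circ T)$. Since all Lyndon factors appearing in $\max(T)$ are $\leq {\bf i}^{(1)} = u$, the two-factor case applies and gives $\max(u \circ T) = u \cdot \max(T) = \mu$, as desired.

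The main obstacle is the two-factor argument in the shared-prefix case: the Lyndon hypothesis on $u$ has to be invoked precisely and the combinatorial descent along the common prefix must be handled cleanly. The auxiliary monotonicity statement for $\max$ of shuffles is a purely combinatorial check that does not involve the $q$-coefficients, since the lexicographic order on words ignores the $q$-weighting that appears in Definition~\ref{shuffle}.
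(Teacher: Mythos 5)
The paper never proves this statement: it is imported verbatim from \cite[Lemma 5.3]{KR}, so your proposal can only be measured against the cited source and against the paper's closest in-house analogue, namely the proof of Proposition~\ref{prod}. Your overall architecture is the natural one, and your ``monotonicity lemma'' is exactly Lemma~\ref{max} (which the paper indeed uses, right after Proposition~\ref{maxshuffle}, in the form $\max(R \circ S) = \max\left( \max(R) \circ \max(S) \right)$; note also that no cancellation of the top word can occur, since all shuffle coefficients $q^{-e(\sigma)}$ are nonnegative). But the proof as sketched has a genuine structural gap: your inductions do not preserve their own hypotheses. In the passage from two factors to $r$ factors, you apply the two-factor lemma to $u = {\bf i}^{(1)}$ and $\max(T) = {\bf i}^{(2)} \cdots {\bf i}^{(r)}$, yet $\max(T)$ is not a Lyndon word (it is a decreasing concatenation of Lyndon words), so the statement ``$u \geq v$ Lyndon $\Rightarrow \max(u \circ v) = uv$'' simply does not apply to it. The statement you actually need --- $u$ Lyndon, $w$ a word all of whose Lyndon factors are $\leq u$, then $\max(u \circ w) = uw$ --- has essentially the same strength as the proposition itself; compare Corollary~\ref{littlecor}, which the paper derives \emph{from} Proposition~\ref{KRprod}, not the other way around.

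The same defect recurs inside your two-factor argument. In the distinct-first-letter case you reduce to ``shuffling the tail of $u$ with $v$,'' but proper right factors of a Lyndon word need not be Lyndon (e.g.\ $1213$ is Lyndon while its tail $213$ is not, since $13 < 213$), so the inductive hypothesis again fails to apply; one must strengthen the induction to first factors that are arbitrary words whose Lyndon factors all dominate $v$. Finally, the shared-prefix case --- which you correctly flag as the crux --- is only asserted: the greedy claim that ``at every ambiguous position the maximal shuffle must take the letter from $u$'' is precisely the hard combinatorial point, and no argument is supplied. It is instructive to see how the paper handles the analogous difficulty in Proposition~\ref{prod}: there one fixes a permutation $\sigma$ with $\sigma(\mu . \mu') \geq \mu \odot \mu'$, carries out a three-way case analysis on the leading Lyndon blocks (exploiting, in that type-$A_n$ setting, their explicit form $k, k+1, \ldots, l$), and inducts on the total number of blocks $r + r'$ --- a statement formulated so as to survive its own induction. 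To repair your proof you would need to state and prove such a strengthened lemma, with at least one shuffle factor allowed to be a decreasing product of Lyndon words, rather than the bare Lyndon-versus-Lyndon case.
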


One can now state the main result of \cite{KR}. It shows that finite dimensional irreducible representations of finite type quiver Hecke algebras are parametrized by dominant words.

     \begin{thm}[{{\cite[Theorem 7.2]{KR}}}]
      \label{KR} 
Let $\mu$ be a dominant word, and $ \mu = ({\bf i}^{(1)})^{n_1} \cdots ({\bf i}^{(r)})^{n_r} $ its canonical factorization. 
Set $$ \Delta(\mu) := L({\bf i}^{(1)})^{ \circ n_1} \circ \cdots \circ L({\bf i}^{(r)})^{ \circ n_r} \langle s(\mu) \rangle  $$
where $ s(\mu) := \sum_{k=1}^{r} ( {\bf i}_k . {\bf i}_k ) n_k (n_k - 1)/4. $

 Then :
 \begin{enumerate}[(i)]
  \item $\Delta(\mu)$ has an irreducible head, denoted $L(\mu)$.
  \item  The highest weight of $L(\mu)$ is $\mu$: $max(L(\mu)) = \mu$.
  \item  The set $ \{ L(\mu) \} $ for $\mu$ dominant words in $Seq(\beta)$ is a complete and irredundant set of irreducible graded $R(\beta)$-modules up to isomorphism and shift. 
 \end{enumerate}
Moreover for $\mu$ of the form ${\bf j}^n$ with  ${\bf j}$ dominant Lyndon, one has $L(\mu) = L({\bf j})^{\circ n}. $
  \end{thm}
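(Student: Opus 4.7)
The plan is to adapt the classical highest-weight machinery to the quiver Hecke setting, using the quantum shuffle product to control the leading word of each standard module $\Delta(\mu)$.

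First I would compute the character of $\Delta(\mu)$ and identify its leading word. The multiplicativity of $ch_q$ under convolution, combined with the fact that $\max(ch_q(L({\bf i}^{(k)}))) = {\bf i}^{(k)}$ by construction of the cuspidal modules, reduces the computation of $\max(ch_q(\Delta(\mu)))$ to that of the quantum shuffle of the words ${\bf i}^{(k)}$ (each appearing $n_k$ times), which is exactly $\mu$ by Proposition~\ref{KRprod}. A more careful bookkeeping shows moreover that the coefficient of $\mu$ in $ch_q(\Delta(\mu))$ is exactly $1$, the shift $\langle s(\mu)\rangle$ being precisely what is needed to cancel the $q$-twist produced when shuffling equal blocks of the same Lyndon word.

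Second, I would deduce that $\Delta(\mu)$ admits a unique simple head. Any simple quotient $S$ of $\Delta(\mu)$ must satisfy $e(\mu) S \neq 0$ (otherwise $S$ would factor through the submodule $\bigoplus_{\nu < \mu} e(\nu)\Delta(\mu)$), and the one-dimensionality of $e(\mu)\Delta(\mu)$ forces any proper submodule to lie entirely in $\bigoplus_{\nu < \mu} e(\nu)\Delta(\mu)$. Hence the sum of all proper submodules is still proper, yielding a unique maximal submodule and a unique simple head $L(\mu)$; by construction $\max(ch_q(L(\mu))) = \mu$. This establishes (i) and (ii).

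Third, for the bijection in (iii), injectivity of $\mu \mapsto [L(\mu)]$ is immediate from the $\max$ invariant: isomorphic simples (up to grading shift) have the same leading word. For exhaustivity I would invoke Theorem~\ref{KL-R}: the graded rank of $K_0(R(\beta)\text{-gmod})$ over $\mathbb{Z}[q^{\pm 1}]$ equals the dimension of the weight space of ${\A}_q(\mathfrak{n})$ at $-\beta$, which in turn is the Kostant partition function of $\beta$. Via the bijection of Theorem~\ref{domfacto}(i) between dominant Lyndon words and positive roots, together with the canonical factorization (Proposition~\ref{canofacto}), this count matches exactly the number of dominant words in $Seq(\beta)$.

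Finally, for the moreover statement, I would verify that for ${\bf j}$ dominant Lyndon the cuspidal module $L({\bf j})$ is real, so that the iterated convolution $L({\bf j})^{\circ n}$ is itself simple and must then coincide with its own head, which is $L({\bf j}^n)$ by definition. The main obstacle of the whole argument is the one-dimensionality of $e(\mu)\Delta(\mu)$: this is the combinatorial heart of the proof, as one must rule out any non-identity shuffle of the Lyndon blocks producing an extra copy of the word $\mu$. This ultimately rests on the Lyndon property that a Lyndon word is strictly smaller than any of its proper right factors, so that permuting blocks always produces a word lexicographically strictly smaller than $\mu$.
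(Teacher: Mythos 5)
The paper does not actually prove this statement; it is quoted verbatim from Kleshchev--Ram \cite{KR}, so your attempt can only be measured against their argument. Your overall highest-weight strategy is the right one, and two pieces are sound: the identification $\max(ch_q(\Delta(\mu)))=\mu$ via multiplicativity of $ch_q$ and Proposition~\ref{KRprod}, and the counting argument for (iii) (injectivity from distinct leading words, exhaustivity from Theorem~\ref{KL-R} plus the Kostant partition function, granted that classes of self-dual simples form a $\mathbb{Z}[q^{\pm 1}]$-basis of $K_0$); the latter is a legitimate alternative to \cite{KR}, where surjectivity is instead proved by showing directly that $\max(S)$ is dominant for every simple $S$ and that $S$ is a quotient of $\Delta(\max(S))$. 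But the heart of your proof of (i) contains a genuine error: the claim that the coefficient of $\mu$ in $ch_q(\Delta(\mu))$ is $1$, equivalently that $e(\mu)\Delta(\mu)$ is one-dimensional, is false whenever some $n_k\geq 2$. The paper itself records the counterexample: $ch_q(L(11))=(q+q^{-1})(11)$. Indeed $(1)\circ(1)=(1+q^{-2})(11)$, so after the shift $\langle s(\mu)\rangle=\langle 1\rangle$ the top-word space of $\Delta(11)$ has graded dimension $q+q^{-1}$, not $1$; in general the top coefficient is a product of quantum factorials $\prod_k [n_k]!$, and the shift $\langle s(\mu)\rangle$ only makes it bar-invariant, it does not cancel it. Correspondingly, your final appeal to the Lyndon property ("permuting blocks always produces a word lexicographically strictly smaller than $\mu$") fails exactly for equal blocks: shuffling two copies of the same Lyndon word reproduces $\mu$, which is precisely where the quantum factorial comes from. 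With $e(\mu)\Delta(\mu)$ of dimension $>1$, your deduction "any proper submodule lies in $\bigoplus_{\nu<\mu}e(\nu)\Delta(\mu)$, hence there is a unique maximal submodule" no longer follows.

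The repair, which is what \cite{KR} actually do, requires reorganizing the logic. One first proves that powers of cuspidal modules $L({\bf j})^{\circ n}$ are irreducible --- your "moreover" statement --- and this is a substantive independent step (Section 6 of \cite{KR}), not a consequence of (i) nor a quick verification that $L({\bf j})$ is "real"; as written, your sketch assumes it where it is needed as an input. With that in hand, $\Delta(\mu)$ is induced from the outer product $L({\bf i}^{(1)})^{\circ n_1}\boxtimes\cdots\boxtimes L({\bf i}^{(r)})^{\circ n_r}$ of \emph{simple} modules, and irreducibility of the head follows by adjunction: $\mathrm{Hom}(\Delta(\mu),S)\cong\mathrm{Hom}\left(L({\bf i}^{(1)})^{\circ n_1}\boxtimes\cdots\boxtimes L({\bf i}^{(r)})^{\circ n_r},\,\mathrm{Res}\,S\right)$, and a word-filtration argument using Lemma~\ref{max} (strict decrease under shuffles for strictly decreasing, hence \emph{distinct}, Lyndon blocks) shows this layer occurs with multiplicity one in the restriction. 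This gives a unique simple quotient with no one-dimensionality claim anywhere, and (ii) then follows as in your sketch since every simple quotient receives the generating top-word vector.
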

  
    \begin{ex}
  Here are some examples of characters of some simple modules:

$ \begin{array}{lll}
 ch_q (L(1)) &= &(1) \\
ch_q (L(12)) &= &(12) \\
ch_q (L(21)) &= & (21) \\
ch_q(L(312) &= & (312) + (132) \\
ch_q (L(11)) &= & (q + q^{-1}) (11).
 \end{array}$

 \end{ex}

 \section{Dominance order and compatible seeds}
  \label{dom-compat}
 
  In this section we define a partial ordering on the set of Laurent monomials in the cluster variables of a cluster algebra $\A$. This ordering coincides with the \textit{dominance order} introduced by Qin in \cite{Qin}. In the context of monoidal categorification of a cluster algebra, we use this dominance order to introduce the notion of \textit{compatible seed} and state the main conjecture of this work (Conjecture~\ref{conjecture}). We end this section with a discussion of monoidal categorifications of cluster algebras via representations of quantum affine algebras following the work of Hernandez-Leclerc \cite{HL}, which provides a first example where Conjecture~\ref{conjecture} holds. 
 
 \subsection{Partial ordering on monomials}
  \label{dominance}

Consider a cluster algebra $\A$ and choose a seed $((x_1 , \ldots , x_n, x_{n+1} , \ldots , x_m),B)$, where $x_1 , \ldots , x_n$ are the unfrozen variables and $x_{n+1} , \ldots , x_m$ are the frozen variables. Let $\mathcal{M}_{{\bf x}}$ be the monoid of all monomials in the  $x_i$ and $\mathcal{G}_{{\bf x}}$ the abelian group of all Laurent monomials in the  $x_i$. Recall from Section~\ref{algclust} the variables $\yjh$ defined as
$$ \yjh := \prod_{1 \leq i \leq m} x_i^{b_{ij}} $$
for any $1 \leq j \leq n$.
In what follows we write ${\bf x}^{\bf \alpha}$ for  $ \prod_i x_i^{\alpha_i}$ for any integers $\alpha_i$.

  One now defines a partial preorder on $\mathcal{G}_{{\bf x}}$ in the following way: given two Laurent monomials $ {\bf x}^{\boldsymbol{\alpha}} = \prod_i x_i^{\alpha_i}$ and $ {\bf x}^{\boldsymbol{\beta}} = \prod_i x_i^{\beta_i}$ in $\mathcal{G}_{{\bf x}}$, we set 
  $$  {\bf x}^{\boldsymbol{\alpha}} \preccurlyeq {\bf x}^{\boldsymbol{\beta}} $$
  if and only if there exist non-negative integers $ \gamma_j , 1 \leq j \leq n$ such that 
  $$ {\bf x}^{\boldsymbol{\beta}} = {\bf x}^{\boldsymbol{\alpha}} . \prod_{j} {\yjh}^{\gamma_j}.$$ 
 We denote by $\succcurlyeq$ the opposite preorder. 
    
  Assume that the initial exchange matrix $B$ has full rank $n$. Then the preorder $\preccurlyeq$ becomes an order on $\mathcal{G}_{{\bf x}}$. This order is the same as the \textit{dominance order} as defined by F. Qin \cite[Definition 3.1.1]{Qin}.  Indeed, by definition, the relation $\prod_i x_i^{\alpha_i}  \preccurlyeq \prod_i x_i^{\beta_i}$ is equivalent to the existence of nonnegative integers $ \gamma_j , 1 \leq j \leq n$ such that 
$$ \forall i , \beta_i = \alpha_i + \sum_j b_{ij} \gamma_j .$$
 In vector notation this can be rewritten as 
 $$\beta = \alpha + B \gamma $$
 and thus the order $\preccurlyeq$ coincides with Qin's dominance order on multi-indices. 

 Following \cite{Qin}, one can use this dominance order to introduce the notions of \textit{pointed elements} and \textit{pointed sets}.
 
  \begin{deftn}[{{\cite[Definitions 3.1.4, 3.1.5]{Qin}}}] \label{pointed}
  Fix a seed $((x_1 , \ldots , x_n, x_{n+1} , \ldots , x_m),B)$ in $\A$ and assume $B$ has full rank $n$. 
  \begin{enumerate}[(i)]
   \item Let $P$ be a Laurent polynomial in the cluster variables $x_1 , \ldots , x_m$. One says that $P$ is pointed with respect to the seed $((x_1, \ldots , x_m),B)$ if among the monomials of $P$, there is a unique monomial which is a maximal element (for the dominance order $\preccurlyeq$)  and has coefficient $1$. This monomial is called the leading term of $P$ in \cite{Qin}.
    \item Let $L$ be any set of Laurent polynomials in the cluster variables $x_1 , \ldots , x_m$. One says that $L$ is pointed with respect to the seed $((x_1, \ldots , x_m),B)$ if all the elements of $L$ are pointed and two distinct elements of $L$ have different leading terms. 
   \end{enumerate}

\end{deftn}
     
     One can associate a degree to each of the cluster variables $x_i$ (see \cite{Qin}). If $P$ is a pointed element with respect to the seed $((x_1, \ldots , x_m),B)$, then the degree of its leading term can be seen as a generalization of the notion of $g$-vector. 
 
 \subsection{Generalized parameters} 
 \label{genepara}
 
 Let us consider an Artinian monoidal category $\C$ and assume we are given a classification of simple objects in $\C$.  That is, suppose we are given a  poset $(\M, \leq)$ together with  a  bijection $\psi$ between $\M$ and the set  $ {\bf S} :=  \{ [V] , V  \text{simple in $\C$} \} $. Let $L(\mu)$ denote a representative of the isomorphism class in $ K_0(\C) $ corresponding to $ \mu \in \M $ via this bijection. 
 $$ \begin{array}{crcc}
       \psi :      &{\bf S} &\longrightarrow &\M   \\
                &[L(\mu)]   &\longmapsto      &\mu. 
     \end{array}$$
   In what follows, $\mu$ will be referred to as the \textit{parameter} of the simple object $L(\mu)$ in $\C$. 
     We will also assume that the identity object is simple in $\C$.
     
     \bigskip
     
    From now on we assume that the category $\C$ satisfies the following property:
  
  \begin{assump}[Decomposition property]  \label{decomp} 
    let $\mu,\mu' \in \M$ and $L(\mu),L(\mu')$ the corresponding simple objects in $\C$; then the following equality holds in the Grothendieck ring $K_0(\C)$   
    \begin{equation*}
       [L(\mu)] \cdot [L(\mu')] = \sum_{\nu \in {\bf N}_{\mu,\mu'} \subset \M } a_{\nu} [L(\nu)] 
     \end{equation*}
   where ${\bf N}_{\mu,\mu'}$ is a finite subset of $\M$  such that there exists a unique maximal element in ${\bf N}_{\mu,\mu'}$ and $\{a_{\nu}, \nu \in {\bf N}_{\mu,\mu'} \}$ is a family of nonzero integers. The maximal element of ${\bf N}_{\mu,\mu'}$ is denoted by $ \mu \odot \mu' .$
   \end{assump}
 
 \begin{rk}
 In various examples of categories satisfying this property (for instance categories of modules over quiver Hecke algebras or quantum affine algebras), the integer $a_{\mu \odot \mu'}$ happens to be equal to $1$ for any $\mu,\mu' \in \M$, but we will not need this assumption here.
 \end{rk}
 
  In what follows we will need the additional assumption that the law $\odot$ is compatible with the partial ordering on $\M$ in the  following sense:

 \begin{assump}  \label{assump}
\begin{equation*} 
 \forall \mu , \nu \in \M , \mu \leq \nu \Rightarrow \forall \lambda \in \M , (\lambda \odot \mu \leq \lambda \odot \nu \quad \text{and} \quad \mu \odot \lambda \leq \nu \odot \lambda) . 
 \end{equation*}
 \end{assump}
 
 Combining Assumptions~\ref{decomp} and ~\ref{assump} leads to the following:
 
\begin{lem} \label{lemKim} 
  The law $\odot$ on $\M$ is associative. 
  \end{lem}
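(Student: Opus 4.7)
The plan is to exploit associativity of multiplication in the Grothendieck ring $K_0(\C)$, and to identify both parenthesizations $(\mu \odot \mu') \odot \mu''$ and $\mu \odot (\mu' \odot \mu'')$ as the greatest element of the support of the single triple product
\[ P \;:=\; [L(\mu)] \cdot [L(\mu')] \cdot [L(\mu'')] \;\in\; K_0(\C). \]
Expanding $P = ([L(\mu)] \cdot [L(\mu')]) \cdot [L(\mu'')]$ by two successive applications of Assumption~\ref{decomp} yields
\[ P \;=\; \sum_{\nu \in {\bf N}_{\mu,\mu'}} a_\nu \sum_{\rho \in {\bf N}_{\nu,\mu''}} a^{(\nu)}_\rho\, [L(\rho)], \]
where $a^{(\nu)}_\rho$ denotes the coefficient of $[L(\rho)]$ in the decomposition of $[L(\nu)] \cdot [L(\mu'')]$. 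Since the product of two classes of simples in the Grothendieck ring of an abelian monoidal category decomposes into simples with non-negative integer multiplicities (the Jordan--H\"older multiplicities of the composition factors of the underlying tensor product), all coefficients $a_\nu$ and $a^{(\nu)}_\rho$ are strictly positive; no cancellation can occur, and the support $S$ of $P$ in the basis of classes of simples is exactly $\bigcup_{\nu \in {\bf N}_{\mu,\mu'}} {\bf N}_{\nu,\mu''}$.

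I then claim that $(\mu \odot \mu') \odot \mu''$ is the greatest element of $S$. For any $\nu \in {\bf N}_{\mu,\mu'}$ one has $\nu \leq \mu \odot \mu'$ by Assumption~\ref{decomp}, hence $\nu \odot \mu'' \leq (\mu \odot \mu') \odot \mu''$ by Assumption~\ref{assump}; and for any $\rho \in {\bf N}_{\nu,\mu''}$ one has $\rho \leq \nu \odot \mu''$. Combining these, every element of $S$ is dominated by $(\mu \odot \mu') \odot \mu''$. Taking $\nu = \mu \odot \mu'$ and $\rho = (\mu \odot \mu') \odot \mu''$ shows that this element itself belongs to $S$, so it is the greatest element. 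Computing $P$ instead as $[L(\mu)] \cdot ([L(\mu')] \cdot [L(\mu'')])$ and running the symmetric argument identifies $\mu \odot (\mu' \odot \mu'')$ as the greatest element of $S$ as well. Uniqueness of the greatest element in the finite poset $S$ then forces
\[ (\mu \odot \mu') \odot \mu'' \;=\; \mu \odot (\mu' \odot \mu''), \]
which is the desired associativity.

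The main subtlety, and the only delicate point in the argument, is to rule out cancellations that could erase the candidate top term $(\mu \odot \mu') \odot \mu''$ from $P$ when several $\nu \in {\bf N}_{\mu,\mu'}$ happen to satisfy $\nu \odot \mu'' = (\mu \odot \mu') \odot \mu''$. This is handled by the positivity of multiplicities in the Grothendieck ring of an abelian monoidal category; once positivity is in hand, Assumption~\ref{assump} and the existence of a greatest element in each ${\bf N}_{\alpha,\beta}$ (Assumption~\ref{decomp}) reduce the problem entirely to comparing leading terms.
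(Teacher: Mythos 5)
Your proof is correct, and its skeleton is the same as the paper's: expand the triple product $[L(\mu)][L(\mu')][L(\mu'')]$ in the two possible ways, use Assumptions~\ref{decomp} and~\ref{assump} to bound every parameter occurring in the expansion by $(\mu \odot \mu') \odot \mu''$ and by $\mu \odot (\mu' \odot \mu'')$ respectively, and conclude by antisymmetry. The one point where you go beyond the printed argument is precisely the subtlety you flag at the end: the paper passes from ``every parameter in the decomposition is $\leq (\mu\odot\mu')\odot\mu''$'' (and symmetrically) to the two inequalities $\mu\odot(\mu'\odot\mu'') \leq (\mu\odot\mu')\odot\mu''$ and $\geq$, which tacitly requires that each candidate top parameter actually \emph{occurs} in the decomposition of the triple product, i.e.\ that its coefficient does not get killed by cancellation among the several $\nu \in {\bf N}_{\mu,\mu'}$ (Assumption~\ref{decomp} only demands the $a_{\nu}$ be nonzero integers, not positive). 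Your positivity argument closes this gap and is legitimate in the paper's setting: since the multiplication on $K_0(\C)$ is induced by the tensor product and $\C$ is Artinian with Jordan--H\"older series (the paper itself invokes them in defining $\mathcal{P}$), the coefficients in the basis of simples are composition multiplicities, hence nonnegative, and therefore strictly positive wherever nonzero. So your write-up is, if anything, slightly more complete than the paper's own proof, while following the same essential route.
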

 
  \begin{proof}
  
 First note that for any $\mu,\mu' \in \M$, the set ${\bf N}_{\mu,\mu'}$ is finite and has a unique maximal element by Assumption~\ref{decomp}, hence this element (namely $\mu \odot \mu'$) is a greatest element in ${\bf N}_{\mu,\mu'}$. 
 Now let $\mu$,$\mu'$ and $\mu''$ in $\M$ and decompose in two different ways the product $[L(\mu)][L(\mu')][L(\mu'')]$. On the one hand, Assumption~\ref{decomp} gives
 $$[L(\mu)][L(\mu')][L(\mu'')] = [L(\mu)] \cdot  \sum_{\nu \in {\bf N}_{\mu',\mu''}} a_{\nu} [L(\nu)] $$
 with $\nu \leq \mu' \odot \mu''$ for every $\nu \in {\bf N}_{\mu',\mu''}$. For any $\nu \in {\bf N}_{\mu',\mu''}$, the parameters appearing in the decomposition of  $[L(\mu)] \cdot  [L(\nu)]$ into classes of simples are all smaller than $\mu \odot \nu$; as $\nu \leq \mu' \odot \mu''$,  Assumption~\ref{assump} implies $ \mu \odot \nu \leq \mu \odot (\mu' \odot \mu'')$. Hence all the parameters involved in the decomposition of $[L(\mu)][L(\mu')][L(\mu'')]$ into classes of simples are smaller than $\mu \odot (\mu' \odot \mu'')$. 
  
   On the other hand, one can write 
 $$[L(\mu)][L(\mu')][L(\mu'')] =   \sum_{\nu \in {\bf N}_{\mu,\mu'}} a_{\nu} [L(\nu)] . [L(\mu'')]$$ and the same arguments show that all the parameters involved in the decomposition of $[L(\mu)][L(\mu')][L(\mu'')]$ into classes of simples are smaller than $(\mu \odot \mu') \odot \mu''$. 
     In particular we get $\mu \odot (\mu' \odot \mu'') \leq (\mu \odot \mu') \odot \mu''$ and 
     $\mu \odot (\mu' \odot \mu'') \geq (\mu \odot \mu') \odot \mu''$ and hence 
    $ \mu \odot (\mu' \odot \mu'') =  (\mu \odot \mu') \odot \mu''$.
    
     \end{proof}
 
 Thus the operation 
 $$  
     \begin{array}{lcr}
       \M \times \M &\rightarrow &\M \\
        (\mu,\mu') &\mapsto &\mu \odot \mu'
      \end{array}
 $$       
   provides $\M$ with a monoid structure. The neutral element $1_M$ is the image via $\psi$ of the class of the identity object of $\C$. 
    By Assumption~\ref{assump}, the monoid $(\M,\odot)$ is an ordered monoid with respect to $\leq$.
     
      \bigskip

 We now assume that $\C$ is a monoidal categorification of a cluster algebra $\A$. Let $\phi$ be a ring isomorphism
  $$  \phi : K_0(\C) \xrightarrow[]{\simeq}  \A .$$
  As $K_0(\C)$ is isomorphic to $\A$, it is in particular commutative which implies that the monoid $(\M , \odot)$ is commutative as well. Hence it can be canonically embedded into its Grothendieck group $G(\M)$, which is defined as follows (see \cite{Bou}):

\begin{deftn}[Grothendieck group of $\M$]
 Elements of $G(\M)$ are equivalence classes of couples $(\mu,\nu)$ of elements of $\M$ with respect to the equivalence relation 
 $$ (\mu,\nu) \sim (\mu',\nu') \Leftrightarrow \exists \lambda \in \M , \mu \odot \nu' \odot \lambda = \nu \odot \mu' \odot \lambda. $$
The group $G(\M)$ is an abelian group. We denote it by $\G$ in what follows. 
\end{deftn}

The inverse in $\G$ of an element $\mu \in \M$ will be denoted by $ {\mu}^{\odot -1}$. Similarly $g^{\odot -1}$ stands for the inverse in $\G$ of any element $g$ of $\G$. We will refer to elements of $\M$ as \textit{parameters} and elements of $\G$ as \textit{generalized parameters}.

\begin{prop}
The ordering $\leq$ on $\M$  naturally extends to a partial ordering on $\G$ that we also denote by $\leq$.
\end{prop}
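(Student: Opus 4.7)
The plan is to introduce a positive cone $P \subseteq \G$ and declare $g \leq g'$ if and only if $g' \odot g^{\odot -1} \in P$; checking that $P$ is well-defined and that this relation is a partial order extending $\leq$ on $\M$ will then reduce to careful manipulations combining Assumption~\ref{assump} with the equivalence relation $\sim$ defining $\G$. Concretely, I set
\[ P := \{ g \in \G \mid \exists \mu, \nu \in \M \text{ with } \nu \leq \mu \text{ and } g = \mu \odot \nu^{\odot -1} \} . \]

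The first step is to verify that membership in $P$ depends only on $g$ and not on the chosen representative. If $\mu \odot \nu^{\odot -1} = \tilde{\mu} \odot \tilde{\nu}^{\odot -1}$ with $\nu \leq \mu$, then by definition of $\sim$ there exists $\lambda \in \M$ with $\mu \odot \tilde{\nu} \odot \lambda = \nu \odot \tilde{\mu} \odot \lambda$. Applying Assumption~\ref{assump} to $\nu \leq \mu$ yields $\nu \odot \tilde{\nu} \odot \lambda \leq \mu \odot \tilde{\nu} \odot \lambda = \nu \odot \tilde{\mu} \odot \lambda$; setting $\lambda' := \nu \odot \lambda$, the alternative representative $(\tilde{\mu} \odot \lambda', \tilde{\nu} \odot \lambda')$ of the same element of $\G$ also fulfils the defining inequality for $P$.

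Next I plan to verify that $P$ is a submonoid of $\G$: it contains $1_\G$ (take $\mu = \nu = 1_\M$), and applying Assumption~\ref{assump} twice shows that $\nu \leq \mu$ and $\nu' \leq \mu'$ imply $\nu \odot \nu' \leq \mu \odot \mu'$, so $P$ is closed under $\odot$. Reflexivity and transitivity of $\leq$ on $\G$ follow immediately, and for $\mu, \mu' \in \M$ with $\mu \leq \mu'$ the representative $\mu' \odot \mu^{\odot -1}$ tautologically lies in $P$, so the extended relation restricts to $\leq$ on the image of $\M$ in $\G$.

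The main obstacle will be antisymmetry, i.e.\ showing that $g \in P$ and $g^{\odot -1} \in P$ force $g = 1_\G$. Writing $g = \mu \odot \nu^{\odot -1}$ with $\nu \leq \mu$ and $g^{\odot -1} = \mu' \odot \nu'^{\odot -1}$ with $\nu' \leq \mu'$, the identity $g \odot g^{\odot -1} = 1_\G$ yields via $\sim$ some $\lambda \in \M$ such that
\[ \mu \odot \mu' \odot \lambda = \nu \odot \nu' \odot \lambda \quad \text{in } \M . \]
Applying Assumption~\ref{assump} to the two inequalities $\nu \leq \mu$ and $\nu' \leq \mu'$ separately produces the chain $\nu \odot \nu' \odot \lambda \leq \nu \odot \mu' \odot \lambda \leq \mu \odot \mu' \odot \lambda$; combined with equality at the endpoints this forces $\nu \odot \mu' \odot \lambda = \mu \odot \mu' \odot \lambda$ in $\M$. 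With $\lambda'' := \mu' \odot \lambda$ this becomes $\mu \odot \lambda'' = \nu \odot \lambda''$, which by definition of $\sim$ means $\mu = \nu$ in $\G$, whence $g = 1_\G$ as required.
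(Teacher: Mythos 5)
Your proof is correct, and it reaches the proposition by an equivalent but differently organized route than the paper. The paper works upstairs on $\M \times \M$: it declares $(\mu,\nu) \leq (\mu',\nu')$ if and only if there exists $\lambda \in \M$ with $\lambda \odot \mu \odot \nu' \leq \lambda \odot \mu' \odot \nu$, and then uses Assumption~\ref{assump} to check that this relation is invariant under the equivalence $\sim$, so that it descends to $\G$. Your relation coincides with the paper's: for $g = \mu \odot \nu^{\odot -1}$ and $g' = \mu' \odot {\nu'}^{\odot -1}$, one has $g' \odot g^{\odot -1} \in P$ precisely when such a $\lambda$ exists (given $\lambda$, take $\alpha := \lambda \odot \mu' \odot \nu$ and $\beta := \lambda \odot \mu \odot \nu'$; conversely, if $(\alpha,\beta)$ with $\beta \leq \alpha$ represents $g' \odot g^{\odot -1}$ and $\kappa \in \M$ witnesses the equivalence, take $\lambda := \beta \odot \kappa$). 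What your cone packaging buys: since $P$ is defined by an existential condition over representatives, it is automatically a well-defined subset of $\G$ --- so your opening ``well-definedness'' step is logically superfluous, though harmless --- and you verify all the order axioms explicitly, in particular antisymmetry via $P \cap P^{\odot -1} = \{1_{\G}\}$, which the paper compresses into ``one can check''. One caveat you share with the paper: your last claim about restriction to $\M$ is only proved in one direction, and indeed both arguments show only that the canonical map $(\M,\leq) \to (\G,\leq)$ is order-preserving; deducing $\mu \leq \mu'$ in $\M$ from $\mu' \odot \mu^{\odot -1} \in P$ would require an order-cancellation property ($\mu \odot \kappa \leq \mu' \odot \kappa \Rightarrow \mu \leq \mu'$) that Assumption~\ref{assump} does not supply, so ``extends'' must be read in this weaker sense in both treatments.
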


\begin{proof}

One defines a partial ordering on $\M \times \M$  by setting 
$$ (\mu,\nu) \leq  (\mu',\nu') \Leftrightarrow  \exists \lambda \in \M , \lambda  \odot \mu \odot \nu' \leq \lambda \odot \mu' \odot \nu .$$
Using the assumption~\ref{assump}, one can check that if $(\mu,\nu) \sim (\mu',\nu')$ then for any $(\mu'',\nu'') \in \M \times \M$, one has $ (\mu,\nu) \leq  (\mu'',\nu'') \Leftrightarrow (\mu',\nu') \leq  (\mu'',\nu'') $. 
Thus $\leq$ naturally gives rise to a well-defined  partial ordering on $\G$. 
\end{proof}

\smallskip
\smallskip
 
Let us now fix a seed  $((x_1, \ldots , x_m), B)$ in $\A$ and choose for each $1 \leq i \leq m$ a representative $L(\mu_i)$ of the isomorphism class $\phi^{-1}(x_i) \in {\bf S}$ corresponding to the cluster variable $x_i$ . Recall that $\mathcal{M}_{{\bf x}}$ stands for the monoid of all monomials in the  $x_i$ and $\mathcal{G}_{{\bf x}}$ for the abelian group of all Laurent monomials in the  $x_i$. For any $m$-tuple of integers $(\alpha_1, \ldots , \alpha_m)$, we let ${\boldsymbol{\mu}}^{\boldsymbol{\alpha}}$ denote the element $ \bigodot_{1 \leq i \leq m} {\mu_i}^{\alpha_i}$ of $\G$. Of course ${\boldsymbol{\mu}}^{\boldsymbol{\alpha}}$ belongs to $\M$ if all the $\alpha_i$ are nonnegative.  

  Let us consider the subset $\mathcal{P}$ of $K_0(\C)$ consisting of nonzero classes $[M]$ such that there is a unique maximal element among the parameters of the simples appearing in the Jordan-H\"older series of $M$. 
  Let $\Psi$ be the map 
  $$ \begin{array}{cccc}
 \Psi  : & \mathcal{P} & \longrightarrow & \G \\
    {}   & [M] =  a_1[M_1] + \cdots + a_r[M_r] & \longmapsto &  max_{\leq}\left( \psi([M_k]) , 1 \leq k \leq r \right).
    \end{array}$$
    Here the $a_k$ are integers and the $M_i$ are the simples of the Jordan-H\"older series of $M$. Note that $\mathcal{P}$ contains $1$, whose image by $\Psi$ is the neutral element of $\M$.
  The set ${\bf S}$ of classes of simples in $\C$ is a basis of $K_0(\C)$ hence the writing $a_1[M_1] + \cdots + a_r[M_r]$ of an element of $\mathcal{P}$ is unique up to reordering the terms and thus the map $\Psi$ is well-defined.

  Let $\tilde{\mathcal{P}}$  be the subset of $Frac(\A)$ defined in the following way:  
  $$ \tilde{\mathcal{P}} := \{ {\bf x}^{\boldsymbol{\alpha}} \phi (p) ,  \boldsymbol{\alpha} \in \mathbb{Z}^m , p \in \mathcal{P} \}.$$ 
  In particular $\tilde{\mathcal{P}}$ contains $\mathcal{G}_{{\bf x}}$.
 Let  $\tilde{\Psi}$ be the map
 $$ \begin{array}{cccc}
 \tilde{\Psi}  : & \tilde{\mathcal{P}} & \longrightarrow & \G \\
    {}   & {\bf x}^{\boldsymbol{\alpha}} \phi (p) & \longmapsto & {\boldsymbol{\mu}}^{\boldsymbol{\alpha}} \odot \Psi (p).
    \end{array}$$
 
  \begin{prop} \label{psi}
 The map $ \tilde{\Psi}$ is well defined and satisfies the following properties:
 
   \begin{enumerate}[(i)]
   \item $\tilde{\Psi} \circ \phi $ coincides with $\psi$ on ${\bf S}$.
   \item $\tilde{\Psi}$ defines an abelian group morphism from $\mathcal{G}_{{\bf x}}$ (for the natural multiplication) to $(\G,\odot)$.
   \end{enumerate}
   
   \end{prop}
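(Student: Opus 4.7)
The plan is to proceed in three steps: first establish a multiplicativity property for $\Psi$ on $\mathcal{P}$; then use it to prove well-definedness of $\tilde{\Psi}$; finally read off the two asserted properties (i) and (ii).

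First I would show that for $q_1, q_2 \in \mathcal{P}$ one has $q_1 q_2 \in \mathcal{P}$ with $\Psi(q_1 q_2) = \Psi(q_1) \odot \Psi(q_2)$. Writing $q_1 = \sum_i a_i [L(\mu_i)]$ and $q_2 = \sum_j b_j [L(\nu_j)]$ with unique maximal parameters $\mu_{i_0}$ and $\nu_{j_0}$, Assumption~\ref{decomp} expands each $[L(\mu_i)] \cdot [L(\nu_j)]$ as a $\mathbb{Z}$-combination of simples whose parameters are bounded above by $\mu_i \odot \nu_j$; Assumption~\ref{assump} then bounds all such parameters by $\mu_{i_0} \odot \nu_{j_0}$. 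The maximum $\mu_{i_0} \odot \nu_{j_0}$ is attained in the $(i_0, j_0)$ piece with nonzero coefficient $a_{i_0} b_{j_0} a_{\mu_{i_0} \odot \nu_{j_0}}$, and uniqueness of this maximum is guaranteed by the cancellativity of $(\M, \odot)$ implicit in its embedding into the Grothendieck group $\G$. Specializing to $q_1 = [L(\mu_i)]$ and iterating, for any $\boldsymbol{\alpha} \in \mathbb{Z}_{\geq 0}^m$ and $p \in \mathcal{P}$ the product $\phi^{-1}({\bf x}^{\boldsymbol{\alpha}}) \cdot p$ lies in $\mathcal{P}$ with $\Psi$-image $\boldsymbol{\mu}^{\boldsymbol{\alpha}} \odot \Psi(p)$.

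For well-definedness, suppose ${\bf x}^{\boldsymbol{\alpha}} \phi(p) = {\bf x}^{\boldsymbol{\alpha}'} \phi(p')$ in $\mathrm{Frac}(\mathcal{A})$ with $p, p' \in \mathcal{P}$ and $\boldsymbol{\alpha}, \boldsymbol{\alpha}' \in \mathbb{Z}^m$. Decomposing $\boldsymbol{\alpha} - \boldsymbol{\alpha}' = \boldsymbol{\beta}_+ - \boldsymbol{\beta}_-$ with $\boldsymbol{\beta}_\pm \in \mathbb{Z}_{\geq 0}^m$, one rewrites the equality as $\phi^{-1}({\bf x}^{\boldsymbol{\beta}_+}) \cdot p = \phi^{-1}({\bf x}^{\boldsymbol{\beta}_-}) \cdot p'$ inside $K_0(\mathcal{C})$. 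Both sides are in $\mathcal{P}$ by the previous step, and applying $\Psi$ yields $\boldsymbol{\mu}^{\boldsymbol{\beta}_+} \odot \Psi(p) = \boldsymbol{\mu}^{\boldsymbol{\beta}_-} \odot \Psi(p')$; transferring to $\G$, this is exactly $\boldsymbol{\mu}^{\boldsymbol{\alpha}} \odot \Psi(p) = \boldsymbol{\mu}^{\boldsymbol{\alpha}'} \odot \Psi(p')$, as required.

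The two remaining properties then follow immediately. For (i), any $[L(\mu)] \in {\bf S}$ is trivially in $\mathcal{P}$ with $\Psi([L(\mu)]) = \mu$, so $\tilde{\Psi}(\phi([L(\mu)])) = \mu = \psi([L(\mu)])$. For (ii), every ${\bf x}^{\boldsymbol{\alpha}} \in \mathcal{G}_{\bf x}$ is represented by the choice $p = 1 \in \mathcal{P}$ (whose $\Psi$-image is the neutral element of $\M$), so $\tilde{\Psi}({\bf x}^{\boldsymbol{\alpha}}) = \boldsymbol{\mu}^{\boldsymbol{\alpha}}$; and the product in $\mathcal{G}_{\bf x}$ corresponds to addition of exponents, which the assignment $\boldsymbol{\alpha} \mapsto \boldsymbol{\mu}^{\boldsymbol{\alpha}}$ converts into the $\odot$-product in the abelian group $\G$. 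The step I expect to be the main obstacle is the multiplicativity of $\Psi$ on $\mathcal{P}$: one must rigorously exclude accidental coincidences of parameters $\mu_i \odot \nu_j$ with $\mu_{i_0} \odot \nu_{j_0}$ coming from other pairs $(i,j)$, and this is precisely where the cancellativity of $(\M, \odot)$ established through Lemma~\ref{lemKim} and the passage to $\G$ is essential.
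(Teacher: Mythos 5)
Your proof is correct, and its skeleton is the same as the paper's: reduce the equality ${\bf x}^{\boldsymbol{\alpha}}\phi(p)={\bf x}^{\boldsymbol{\alpha}'}\phi(p')$ to an identity inside $K_0(\C)$ by clearing denominators (your split $\boldsymbol{\alpha}-\boldsymbol{\alpha}'=\boldsymbol{\beta}_+-\boldsymbol{\beta}_-$ plays the role of the paper's auxiliary monomial ${\bf x}^{\boldsymbol{\gamma}}$), identify the unique maximal parameter on each side via Assumptions~\ref{decomp} and~\ref{assump} and the uniqueness of the expansion in the basis ${\bf S}$, then read off (i) and (ii) exactly as the paper does (including the choice $p=1$ for (ii)). The one substantive divergence is at the step you flag as the main obstacle: to see that $[L(\mu_{i_0}\odot\nu_{j_0})]$ survives with nonzero coefficient, you invoke cancellativity of $(\M,\odot)$ to exclude contributions from other pairs $(i,j)$, whereas the paper sidesteps this by positivity: elements of $\mathcal{P}$ are by definition classes of actual objects, so the $a_i,b_j$ are Jordan--H\"older multiplicities and the structure constants of Assumption~\ref{decomp} are likewise positive; hence even if some $\mu_i\odot\nu_j$ accidentally coincided with $\mu_{i_0}\odot\nu_{j_0}$, all contributions to that coefficient would be nonnegative with at least one positive, and no cancellation could occur. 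Your route is admissible within the paper's standing framework, since the paper asserts that $\M$ embeds canonically into $\G$ (an assertion equivalent to cancellativity), but note that cancellativity appears in neither Assumption~\ref{decomp} nor Assumption~\ref{assump} and is never proved, so the positivity argument is the more economical and self-contained one. Conversely, your packaging through the multiplicativity statement $\Psi(q_1q_2)=\Psi(q_1)\odot\Psi(q_2)$ on $\mathcal{P}$ buys a little extra robustness: the paper's computation tacitly identifies $\prod_i [L(\mu_i)]^{\gamma_i+\alpha_i}$ with the class of a single simple $L(\boldsymbol{\mu}^{\boldsymbol{\gamma}+\boldsymbol{\alpha}})$, using that cluster monomials correspond to real simple objects, while your iterated-product argument needs only the ring isomorphism $K_0(\C)\simeq\A$ and applies to products of arbitrary elements of $\mathcal{P}$.
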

    
    \begin{proof} 
    
 In order to show that the map $\tilde{\Psi}$ is well-defined, we need to check that if $\boldsymbol{\alpha} , \boldsymbol{\beta}$ are two $m$-tuples of integers and $p,q$ two elements of $\mathcal{P}$ such that ${\bf x}^{\boldsymbol{\alpha}} \phi (p) = {\bf x}^{\boldsymbol{\beta}} \phi (q)$, then the equality ${\boldsymbol{\mu}}^{\boldsymbol{\alpha}} \odot \Psi (p) = {\boldsymbol{\mu}}^{\boldsymbol{\beta}} \odot \Psi (q)$ holds in $\G$. Let us write $ p = a_1[M_1] + \cdots + a_r[M_r] $ and $q = b_1[N_1] + \cdots + b_s[N_s]$, with $r,s \geq 0$, $a_1, \ldots a_r, b_1 , \ldots , b_s \in \mathbb{Z}$ and $[M_1], \ldots [M_r],[N_1], \ldots , [N_s] \in {\bf S}$. Let $\boldsymbol{\gamma}$ be an $m$-tuple of nonnegative integers such that ${\bf x}^{\boldsymbol{\gamma}}{\bf x}^{\boldsymbol{\alpha}}$ and ${\bf x}^{\boldsymbol{\gamma}}{\bf x}^{\boldsymbol{\beta}}$ are monomials in the $x_i$. One can write
 \begin{align*}
 {\bf x}^{\boldsymbol{\alpha}} \phi (p) = {\bf x}^{\boldsymbol{\beta}} \phi (q) &\Leftrightarrow {\bf x}^{\boldsymbol{\gamma}}{\bf x}^{\boldsymbol{\alpha}} \phi (p) = {\bf x}^{\boldsymbol{\gamma}}{\bf x}^{\boldsymbol{\beta}} \phi (q)   \\
   & \Leftrightarrow \phi \left( \prod_i [L(\mu_i)]^{\gamma_i + \alpha_i} p \right) = \phi \left( \prod_i [L(\mu_i)]^{\gamma_i + \beta_i} q \right) \\
   & \Leftrightarrow [L({\boldsymbol{\mu}}^{\boldsymbol{\gamma}+\boldsymbol{\alpha}})].(a_1[M_1] + \cdots + a_r[M_r]) =  [L({\boldsymbol{\mu}}^{\boldsymbol{\gamma}+\boldsymbol{\beta}})].(b_1[N_1] + \cdots + b_s[N_s]) 
 \end{align*}
 as $\phi$ is an isomorphism. Let us set $\mu := {\boldsymbol{\mu}}^{\boldsymbol{\gamma}+\boldsymbol{\alpha}} $ and $\nu := {\boldsymbol{\mu}}^{\boldsymbol{\gamma}+\boldsymbol{\beta}}$; they are elements of $\M$. By  Assumption~\ref{decomp}, for every $1 \leq i \leq r$ the product $[L(\mu)].[M_i]$  decomposes as a sum of classes of simples and $\mu \odot \psi([M_i])$ is maximal among the corresponding parameters. As $p \in \mathcal{P}$, the finite set of parameters $\psi([M_1]), \ldots \psi([M_r])$ has a unique maximal element. For simplicity let us assume it is $\psi([M_1])$. Then Assumption~\ref{assump} implies that $\mu \odot \psi([M_1])$ is maximal among the parameters appearing in the decompositions of $[L(\mu)].[M_i], 1 \leq i \leq r$ into classes of simples. The same arguments of course hold for the right hand side $[L(\nu)].q$.  As ${\bf S}$ is a basis of $K_0(\C)$, one gets
  $$ \mu \odot \psi([M_1]) = \nu \odot \psi([N_1]).$$
  Now by definition of $\Psi$, one has $\psi([M_1]) = \Psi(p)$ and $\psi([N_1]) = \Psi(q)$. Hence we can write in $\G$:
 $$ {\boldsymbol{\mu}}^{\boldsymbol{\alpha}} \odot \Psi (p) = {\boldsymbol{\mu}}^{- \boldsymbol{\gamma}} \odot \mu \odot \Psi (p) = {\boldsymbol{\mu}}^{- \boldsymbol{\gamma}} \odot \mu \odot \psi([M_1]) 
   = {\boldsymbol{\mu}}^{- \boldsymbol{\gamma}} \odot \nu \odot \psi([N_1]) =  {\boldsymbol{\mu}}^{- \boldsymbol{\gamma}} \odot \nu \odot \Psi (q) =  {\boldsymbol{\mu}}^{\boldsymbol{\beta}} \odot \Psi (q) $$
  which is the desired equality. Thus $\tilde{\Psi}$ is well-defined.

  For any $\mu \in \M$,  $\phi ([L(\mu)])$ belongs to $\tilde{\mathcal{P}}$ with $ \boldsymbol{\alpha}$ being zero and $p=[L(\mu)]$. Hence by definition one has $$\tilde{\Psi} \left( \phi ([L(\mu)]) \right) = \Psi ([L(\mu)]) = \mu = \psi ([L(\mu)])$$ which proves (i). 
  
   Taking elements of $\tilde{\mathcal{P}}$ for which $p$ is the empty sum, one gets 
\begin{align*}  
   \tilde{\Psi} ({\bf x}^{\boldsymbol{\alpha}}) 
    & = {\boldsymbol{\mu}}^{\boldsymbol{\alpha}} 
     =  \bigodot_{1 \leq i \leq m} {\mu_i}^{\alpha_i} \\
     &=  \bigodot_{1 \leq i \leq m}   { \tilde{\Psi} \left( \phi ([L(\mu_i)]) \right) }^{\alpha_i}    \text{ by (i) } \\
    & = \bigodot_{1 \leq i \leq m}   { \tilde{\Psi}(x_i) }^{\alpha_i} 
    \end{align*}
      which proves (ii). 
    
      \end{proof}

 \subsection{Compatible seeds} 
  \label{compatseed}

 In this subsection, we introduce the notion of compatible seed, state our main conjecture (Conjecture~\ref{conjecture}) and explain some consequences. 

 \smallskip
 
      \begin{deftn}[Compatible seed]
        \label{compat}
     Let $ \s := ((x_1, \ldots , x_m), B)$ be a seed in $\A$,  $\mathcal{G}_{{\bf x}}$ the group of Laurent monomials in the cluster variables $x_1, \ldots, x_m$, and $\preccurlyeq $ the corresponding dominance order on $\mathcal{G}_{{\bf x}}$. Let $\tilde{\Psi}$ be the map given by Proposition~\ref{psi}. For any $1 \leq j \leq n$, set 
$$ \mjh := \tilde{\Psi}(\yjh) =  \bigodot_{1 \leq i \leq m} \mu_i^{\odot b_{ij} }.$$
    We say that the seed $\s$  is compatible if the restriction $\tilde{\Psi}_{\mid \mathcal{G}_{{\bf x}}} : (\mathcal{G}_{{\bf x}} , \preccurlyeq) \longrightarrow (\G , \leq) $ is either increasing or decreasing.
      \end{deftn}

  \begin{rk} \label{rkpsi}
By construction, the restriction of  $\tilde{\Psi}$  to $\mathcal{G}_{{\bf x}}$ is  increasing if and only if  for any  Laurent monomials  $\prod_i x_i^{\alpha_i}$ and $ \prod_i x_i^{\beta_i}$ one has 
   $$ \prod_i x_i^{\alpha_i} \preccurlyeq \prod_i x_i^{\beta_i} \Rightarrow \bigodot_i \mu_i^{\odot \alpha_i } \leq \bigodot_i \mu_i^{\odot \beta_i }.$$ 
   This is equivalent to the following:
     $$ \forall 1 \leq j \leq n , \forall \mu \in \M ,  \mu  \leq \hat{\mu_j} \odot \mu. $$
  In the same way, $\tilde{\Psi}$ is decreasing on $\mathcal{G}_{{\bf x}}$ if and only if
   $$ \forall 1 \leq j \leq n , \forall \mu \in \M ,  \mu  \geq \hat{\mu_j} \odot \mu .$$ 
  In many examples of monoidal categorifications of cluster algebras, for instance via quantum affine algebras or quiver Hecke algebras, we will check this condition to prove that a seed is compatible. 
 \end{rk}    
 
  \begin{rk}
  Note that if the seed $ \s = ((x_1, \ldots , x_m), B)$ is compatible with $\tilde{\Psi}$ increasing on $\mathcal{G}_{{\bf x}}$, then $\tilde{\mathcal{P}}$ contains all the Laurent polynomials in the $x_i$ that are  pointed with respect to $\s$, i.e. $ \tilde{\mathcal{P}} \supseteq \mathcal{P} \mathcal{T}(0)$ with the notations of \cite{Qin}.
  \end{rk}
  
  \smallskip
 
 One can now state the main conjecture of this paper:
 
      \begin{conj} \label{conjecture}
         Let $\A$ be a cluster algebra  and $\C$ an Artinian  monoidal categorification of $\A$. Assume there exists a poset $(\M,\leq)$ as above such that Assumptions~\ref{decomp} and ~\ref{assump} hold. 
      Then there exists a compatible seed in $\A$.
       \end{conj}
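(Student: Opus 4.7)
The plan is to construct a compatible seed by using a candidate arising from the categorical structure itself, and then to verify compatibility via the reformulation in Remark~\ref{rkpsi}. In both known examples --- Hernandez-Leclerc's monoidal categorification of $\mathcal{C}_1$ and the Kang-Kashiwara-Kim-Oh initial seed for $\Cw$ --- the compatible seed is not chosen arbitrarily but is read off from the category: its cluster variables are classes of distinguished real simple objects (fundamental modules, respectively determinantial modules) whose parameters are extremal in $\M$. So the first step is to single out, from the given monoidal categorification $\C$, a candidate initial family $\{L(\mu_1), \ldots, L(\mu_m)\}$ of pairwise commuting real simple objects together with an exchange matrix $B$ making $\s = (([L(\mu_1)], \ldots, [L(\mu_m)]), B)$ a seed of $\A$ under the isomorphism $K_0(\C) \simeq \A$.

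The verification of compatibility would then proceed as follows. By Proposition~\ref{psi}(ii), the generalized parameter associated with $\yjh$ equals $\mjh = \bigodot_{i} \mu_i^{\odot b_{ij}}$ in $\G$. By Remark~\ref{rkpsi}, $\s$ is compatible if and only if the inequality $\mu \leq \mjh \odot \mu$ holds uniformly in $\mu \in \M$ and $j$, or else the reverse inequality holds uniformly. Specialising $\mu$ to the neutral element $1_{\M}$ and then re-propagating the inequality to arbitrary $\mu$ via Assumption~\ref{assump} (and its extension to $\G$), this reduces to proving that each $\mjh$ lies on one uniformly-chosen side of $1_{\M}$ in $\G$. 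The key input making this work in the examples is that $\mjh$ is precisely the \emph{parametric ratio} between the two monomials appearing in the exchange relation at direction $j$: if the categorical lift of the cluster mutation is an honest short exact sequence of the form appearing in Definition~\ref{KKKOdef6.1}, with identifiable extremal terms among the two monomials $\prod_i x_i^{[b_{ij}]_+}$ and $\prod_i x_i^{[-b_{ij}]_+}$, then the position of $\mjh$ relative to $1_{\M}$ is governed directly by Assumption~\ref{decomp}: the head of the product $L(\mu_j) \circ L(\mu_j')$ carries the maximal parameter, and its image in $\G$ yields the required inequality.

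The hard part will be bridging the gap between this strategy --- which rests crucially on the existence of exchange short exact sequences --- and the conjecture's bare assumptions. Assumptions~\ref{decomp} and~\ref{assump} do not encode how the Jordan-H\"older parameters of $L(\mu_j) \circ L(\mu_j')$ relate to the two monomials of the exchange relation in $\A$; in the absence of such information, the sign of $\mjh$ relative to $1_{\M}$ is essentially uncontrolled. A realistic route is therefore to first establish the conjecture in additional concrete frameworks (for instance for type $D_n$ or $E_n$ quiver Hecke algebras, or for the Cautis-Williams coherent Satake category) where categorified exchange sequences are available, isolate the structural mechanism common to these cases, and then either upgrade Assumptions~\ref{decomp} and~\ref{assump} with a compatibility axiom for exchange sequences, or prove a rigidity result showing that the combinatorial data of the ordered monoid $(\M, \odot, \leq)$ alone already forces at least one seed of $\A$ to enjoy this extremal behaviour.
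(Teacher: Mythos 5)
The statement you are addressing is a conjecture, and the paper itself contains no general proof: it only \emph{verifies} the conjecture in two families of examples, namely the category $\C_1$, where Proposition~\ref{HLcompat} (Hernandez--Leclerc's computation $\mjh = A_{j,q^{\xi_j+1}}^{-1}$) shows the map $\tilde{\Psi}$ is decreasing, and the category $R-gmod$ in type $A_n$, where Theorems~\ref{initpara} and~\ref{mainthm} explicitly compute the dominant words of the Kang--Kashiwara--Kim--Oh seed ${\s}_0^n$ and check $\mu < \mjh \odot \mu$ for all $\mu \in \M$ and all $j$. Your overall strategy --- single out the distinguished, categorically-defined seed, compute the generalized parameters $\mjh$, and test one-sidedness via Remark~\ref{rkpsi} --- is exactly the route the paper follows in those verified cases, and your reduction of compatibility to the condition that all $\mjh$ lie on one uniformly chosen side of $1_{\G}$ is sound, given commutativity of $\odot$ and the extension of Assumption~\ref{assump} to $\G$. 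Your final paragraph then honestly concedes that Assumptions~\ref{decomp} and~\ref{assump} alone cannot settle the general case; this is the genuine and expected gap, so what you have is a correct assessment of the state of the problem rather than a proof.

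One substantive flaw in your middle paragraph deserves flagging: you assert that once exchange short exact sequences as in Definition~\ref{KKKOdef6.1} exist, ``the position of $\mjh$ relative to $1_{\M}$ is governed directly by Assumption~\ref{decomp}'' because the head of $L(\mu_j)\circ L(\mu'_j)$ carries the maximal parameter. What that mechanism actually yields (this is precisely Proposition~\ref{mutpara}) is the identity $\mu_j \odot \mu'_j = \max(\mu_{+},\mu_{-})$, where $\mu_{\pm}$ are the parameters of the two exchange monomials --- and this identity holds at \emph{every} seed of the categorification, including non-compatible ones. The paper's type $A_3$ example makes this concrete: the seed ${\s}_1$, obtained from the compatible seed ${\s}_0$ by a single mutation, still admits all its categorified exchange sequences, yet its vectors $\overrightarrow{\hat{\mu}_j}$ fail to lie on a single side of zero, so ${\s}_1$ is not compatible. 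Hence exchange sequences plus maximality of heads cannot by themselves force compatibility; the entire content of the conjecture is that \emph{some} seed (in practice a very particular initial one) enjoys the uniform inequality. Note also that the sign is genuinely example-dependent --- $\tilde{\Psi}$ is decreasing for $\C_1$ but increasing for ${\s}_0^n$ in $R-gmod$ --- so any structural axiom you might add along the lines you suggest must accommodate both behaviours.
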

 
\bigskip

 The next statements provide some useful consequences of the existence of a compatible seed. In particular we combine it with the results of \cite{FZ4} and \cite{DWZ} recalled in Section~\ref{algclust} to relate parameters of simple objects in $\C$ to some cluster algebra invariants, such as $g$-vectors and $F$-polynomials.
 
  Let $\C$ be a Artinian monoidal categorification  of a cluster algebra $\A$ and assume Conjecture~\ref{conjecture} holds. Let $((x_1, \ldots , x_n, x_{n+1} , \ldots , x_m) , B)$ be a compatible seed. Consider $x_l^t$ a cluster variable in $\A$ belonging to another cluster and let $F^{l,t}$ be its $F$-polynomial. Let $X_1^{a_1^{l,t}} \cdots X_n^{a_n^{l,t}}$ be the monomial given by Theorem~\ref{FpolyDWZ}(i).

 \begin{cor}
 One has $F^{l,t}  \left( \ylh , \ldots , \ynh \right) \in \tilde{\mathcal{P}}$ and  
  $$\tilde{\Psi} \left( F^{l,t}(\ylh , \ldots , \ynh) \right) =
   \begin{cases}
       \bigodot_j {\mjh}^{ \odot a_j^{l,t}} &\text{ if $\tilde{\Psi}$ is increasing on $\mathcal{G}_{{\bf x}}$,} \\
        1_{\G} &\text{in the other case.}
        \end{cases}$$
   Here $1_{\G}$ denotes the neutral element of $\G$. 
\end{cor}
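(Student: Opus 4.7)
The strategy is to compute $\tilde{\Psi}(F^{l,t}(\ylh, \ldots, \ynh))$ by first clearing denominators: after multiplying by a suitable cluster monomial of the initial cluster, one reduces to a positive integer combination of cluster monomials, and the $\leq$-maximal parameter in the corresponding decomposition in the basis of classes of simples is identified using Theorem~\ref{FpolyDWZ} together with the compatibility of $\s$.

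Concretely, I would first choose a multi-index $\mathbf{N} \in \mathbb{Z}_{\geq 0}^m$ large enough that $\mathbf{N} + B\mathbf{c} \in \mathbb{Z}_{\geq 0}^m$ for every $\mathbf{c}$ in the finite support of $F^{l,t}$. Then
\begin{equation*}
   \mathbf{x}^{\mathbf{N}} \cdot F^{l,t}(\ylh, \ldots, \ynh) \;=\; \sum_{\mathbf{c}} f_{\mathbf{c}} \, \mathbf{x}^{\mathbf{N}+B\mathbf{c}}
\end{equation*}
is a positive integer combination of cluster monomials in the initial cluster. Each such cluster monomial is the $\phi$-image of the class of a real simple $[L(\boldsymbol{\mu}^{\mathbf{N}+B\mathbf{c}})]$, where $\boldsymbol{\mu}^{\mathbf{N}+B\mathbf{c}} := \bigodot_{i} \mu_i^{\odot (N_i + (B\mathbf{c})_i)}$; this parameter is obtained by iterating the identity $[L(\mu)] \cdot [L(\nu)] = [L(\mu \odot \nu)]$ valid for products of commuting real simples, a direct consequence of Assumption~\ref{decomp}.

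The key step is to invoke Theorem~\ref{FpolyDWZ} together with the compatibility of $\s$. Part (i) of that theorem ensures that every $\mathbf{c} \neq \mathbf{a}^{l,t}$ in the support of $F^{l,t}$ satisfies $c_j \leq a_j^{l,t}$ for all $j$ with at least one strict inequality, and that $f_{\mathbf{a}^{l,t}} = 1$. These inequalities translate into strict dominance relations $\mathbf{x}^{\mathbf{N}+B\mathbf{c}} \prec \mathbf{x}^{\mathbf{N}+B\mathbf{a}^{l,t}}$ in $(\mathcal{G}_{\mathbf{x}}, \preccurlyeq)$, which by compatibility transfer to $\leq$-inequalities on the parameters in $\G$. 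In the increasing case, $\boldsymbol{\mu}^{\mathbf{N}+B\mathbf{a}^{l,t}}$ is therefore the unique $\leq$-maximum appearing in
\begin{equation*}
    \phi^{-1}\bigl(\mathbf{x}^{\mathbf{N}} F^{l,t}(\ylh, \ldots, \ynh)\bigr) \;=\; \sum_{\mathbf{c}} f_{\mathbf{c}} [L(\boldsymbol{\mu}^{\mathbf{N}+B\mathbf{c}})],
\end{equation*}
and it has combined coefficient at least $f_{\mathbf{a}^{l,t}} = 1 > 0$. Hence this class belongs to $\mathcal{P}$ with $\Psi$-value $\boldsymbol{\mu}^{\mathbf{N}+B\mathbf{a}^{l,t}}$, which shows in particular $F^{l,t}(\ylh, \ldots, \ynh) \in \tilde{\mathcal{P}}$. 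Proposition~\ref{psi} then gives
\begin{equation*}
\tilde{\Psi}\bigl(F^{l,t}(\ylh, \ldots, \ynh)\bigr) \;=\; \boldsymbol{\mu}^{-\mathbf{N}} \odot \boldsymbol{\mu}^{\mathbf{N}+B\mathbf{a}^{l,t}} \;=\; \boldsymbol{\mu}^{B\mathbf{a}^{l,t}} \;=\; \bigodot_j \mjh^{\odot a_j^{l,t}}.
\end{equation*}
The decreasing case is symmetric: by Theorem~\ref{FpolyDWZ}(ii) the monomial $1$ (with coefficient $1$) is divisibility-minimal, yielding the $\preccurlyeq$-minimum $\mathbf{x}^{\mathbf{N}}$, which under decreasing compatibility maps to the $\leq$-maximum $\boldsymbol{\mu}^{\mathbf{N}}$; the analogous computation gives $\tilde{\Psi}(F^{l,t}(\ylh, \ldots, \ynh)) = 1_{\G}$.

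The main technical issue I anticipate is the possibility of ties — distinct $\mathbf{c}$'s producing the same parameter $\boldsymbol{\mu}^{\mathbf{N}+B\mathbf{c}}$ if $\tilde{\Psi}|_{\mathcal{G}_{\mathbf{x}}}$ fails to be strictly monotonic. Such ties are harmless: one simply combines the positive integer coefficients $f_\mathbf{c}$, and the leading coefficient $f_{\mathbf{a}^{l,t}} = 1$ (respectively $f_{\mathbf{0}} = 1$) ensures the extremal parameter survives with strictly positive combined multiplicity, so that the resulting class remains in $\mathcal{P}$.
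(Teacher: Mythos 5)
Your argument is sound and reaches the paper's conclusion, but it diverges from the paper's proof on the membership claim, and there it leans on an unproved ingredient. For $F^{l,t}(\ylh , \ldots , \ynh) \in \tilde{\mathcal{P}}$ the paper does something much lighter: by Theorem~\ref{thmFpoly}, equation~\eqref{Fpoly} exhibits $F^{l,t}(\ylh , \ldots , \ynh)$ as a Laurent monomial in the $x_i$ times the cluster variable $x_l^t$, and $x_l^t = \phi([L(\mu^{l,t})])$ is the image of the class of a simple object, which lies in $\mathcal{P}$ trivially; no expansion of $F^{l,t}$ is needed. You instead clear denominators and expand monomial by monomial, which forces you to assert that the coefficients $f_{\mathbf{c}}$ are \emph{positive} integers. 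That is true, but it is the positivity theorem for $F$-polynomials of skew-symmetric cluster algebras (Lee--Schiffler, or Gross--Hacking--Keel--Kontsevich) --- a deep result neither proved nor cited in the paper --- and your proof genuinely needs it: since compatibility is only non-strict monotonicity, a tie $\boldsymbol{\mu}^{\mathbf{N}+B\mathbf{c}} = \boldsymbol{\mu}^{\mathbf{N}+B\mathbf{a}^{l,t}}$ can occur, and without a sign condition on $f_{\mathbf{c}}$ the maximal class could cancel, expelling your element from $\mathcal{P}$. Either cite positivity explicitly, or (better) obtain membership the paper's way and keep your expansion only for the evaluation of $\tilde{\Psi}$.

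On that evaluation your write-up is, if anything, more careful than the paper's. The paper applies Theorem~\ref{FpolyDWZ} together with compatibility to conclude that, among the generalized parameters $\bigodot_j \mjh^{\odot b_j}$ attached to the monomials of $F^{l,t}$, the element $\bigodot_j \mjh^{\odot a_j^{l,t}}$ (resp.\ $1_{\G}$ in the decreasing case) is the greatest, and then implicitly identifies this maximum with $\tilde{\Psi}\left( F^{l,t}(\ylh , \ldots , \ynh) \right)$; your denominator-clearing step combined with Proposition~\ref{psi}(ii) is precisely what makes that identification rigorous, ties included. One attribution slip to fix: the identity $[L(\mu)]\cdot[L(\nu)] = [L(\mu \odot \nu)]$ for the initial cluster variables is not a consequence of Assumption~\ref{decomp} alone --- Assumption~\ref{decomp} only locates the maximal parameter in the decomposition. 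You also need the monoidal categorification hypothesis, which guarantees that cluster monomials of the seed are classes of real simple objects, so that the decomposition reduces to a single class whose parameter Assumption~\ref{decomp} then identifies as $\mu \odot \nu$. With the positivity citation added and this point repaired, your proof is a valid, somewhat heavier alternative to the paper's.
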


 \begin{proof}
 
 By Theorem~\ref{thmFpoly}, $F^{l,t}(\ylh , \ldots , \ynh)$ is the product of a Laurent monomial in the $x_i$ with the cluster variable $x^{l,t}$. As $\C$ is a monoidal categorification of $\A$, $x^{l,t}$ is the image by $\phi$ of the class of a simple object in $\C$. Hence $F^{l,t}(\ylh , \ldots , \ynh) \in \tilde{\mathcal{P}}$.

 By Theorem~\ref{FpolyDWZ} (i) and (ii),  any monomial of $F^{l,t}$ can be written as $X_1^{b_1} \cdots X_n^{b_n}$ with $ 0 \leq b_j \leq a_j^{l,t}$ for all $1 \leq j \leq n$. As the seed $((x_1, \ldots , x_m),B)$ is compatible, evaluating on the $\yjh$ and considering the corresponding generalized parameters $\mjh$ yields
$$ \bigodot_j {\mjh}^{a_j^{l,t}} \geq \bigodot_j {\mjh}^{b_j} \quad  \text{if $\tilde{\Psi}$ is increasing on $\mathcal{G}_{{\bf x}}$}$$
and 
$$ \bigodot_j {\mjh}^{a_j^{l,t}} \leq \bigodot_j {\mjh}^{b_j} \quad  \text{if $\tilde{\Psi}$ is decreasing $\mathcal{G}_{{\bf x}}$}.$$
Hence among all the generalized parameters  appearing in the term $F^{l,t}(\ylh , \ldots , \ynh)$, there is one which is greater than the others, namely $\bigodot_j {\mjh}^{\odot a_j^{l,t}}$ if $\tilde{\Psi}$ is increasing $\mathcal{G}_{{\bf x}}$, $1_{\G}$ in the other case. 

  \end{proof} 
 
  The following Corollary shows that the existence of a compatible seed in $\A$ implies relations between the $g$-vector (with respect to this initial (compatible) seed) of any cluster variable in $\A$ and  the parameter of the corresponding simple object in $\C$.
  
  Let $x_{n+1}^{c_1^{l,t}} \cdots x_m^{c_{m-n}^{l,t}}$ be the monomial in the frozen variables equal to the denominator ${F^{l,t}}_{|{\p}}(y_1, \ldots, y_n)$ in the right hand side of equation~\eqref{Fpoly}. Note that, in view of the definition of the semifield $\p$, the $c_i^{l,t}$ are negative integers, as the $F$-polynomial $F^{l,t}$  has  constant term equal to $1$ by Theorem~\ref{FpolyDWZ}(ii). 
  
 \begin{cor}
 Let $\mu^{l,t}$ be  the parameter of the simple module corresponding to the cluster variable $x^{l,t}$, i.e. $x^{l,t} = \phi([L(\mu^{l,t})])$. Then 
 $$ \mu^{l,t} = 
  \begin{cases}
     \bigodot_{1 \leq j \leq n} {\mjh}^{ \odot a_j^{l,t}} \odot \bigodot_{1 \leq i \leq m-n} \mu_{n+i}^{ \odot (-c_i^{l,t})}  \odot  \bigodot_{1 \leq i \leq n} \mu_i^{ \odot g_i^{l,t} } &\text{if $\tilde{\Psi}$ is increasing on $\mathcal{G}_{{\bf x}}$, } \\
     \bigodot_{1 \leq i \leq m-n} \mu_{n+i}^{ \odot (-c_i^{l,t})}  \odot  \bigodot_{1 \leq i \leq n} \mu_i^{ \odot g_i^{l,t} } &\text{if $\tilde{\Psi}$ is decreasing on $\mathcal{G}_{{\bf x}}$.}
      \end{cases}$$
 \end{cor}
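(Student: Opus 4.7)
The plan is to apply $\tilde\Psi$ to both sides of the Fomin--Zelevinsky expansion of Theorem~\ref{thmFpoly}, then invoke the preceding corollary to evaluate the contribution of the $F$-polynomial factor. No new representation-theoretic input is required beyond what has already been established; everything reduces to the defining formula for $\tilde\Psi$ and commutativity in $\G$.

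First, rewrite \eqref{Fpoly} using the explicit form of the denominator as
$$
 x^{l,t} \;=\; F^{l,t}\bigl(\ylh,\ldots,\ynh\bigr)\cdot \prod_{i=1}^{m-n} x_{n+i}^{-c_i^{l,t}} \cdot \prod_{j=1}^{n} x_j^{g_j^{l,t}}.
$$
The right-hand side lies in $\tilde{\mathcal P}$: indeed, the preceding corollary places $F^{l,t}(\ylh,\ldots,\ynh)$ in $\tilde{\mathcal P}$, and multiplying by a Laurent monomial in the $x_i$ only shifts the exponent $\boldsymbol{\alpha}$ in the presentation ${\bf x}^{\boldsymbol{\alpha}}\phi(p)$ used to define $\tilde{\mathcal P}$. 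The left-hand side is also in $\tilde{\mathcal P}$ via the trivial decomposition $x^{l,t}={\bf x}^{0}\cdot\phi\bigl([L(\mu^{l,t})]\bigr)$.

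Next, choosing a presentation $F^{l,t}(\ylh,\ldots,\ynh)={\bf x}^{\boldsymbol{\gamma}}\phi(q)$ with $q\in\mathcal P$ and substituting, one obtains a presentation of $x^{l,t}$ as ${\bf x}^{\boldsymbol{\gamma}+\boldsymbol{c'}+\boldsymbol{g}}\phi(q)$, where $\boldsymbol{c'}$ and $\boldsymbol{g}$ encode respectively the frozen exponents $-c_i^{l,t}$ and the $g$-vector. Applying the definition of $\tilde\Psi$ to both presentations and using the tautology ${\boldsymbol{\mu}}^{\boldsymbol{\gamma}+\boldsymbol{\delta}}\odot\Psi(q)=({\boldsymbol{\mu}}^{\boldsymbol{\gamma}}\odot\Psi(q))\odot{\boldsymbol{\mu}}^{\boldsymbol{\delta}}$ (which follows from commutativity of $\G$ and does not require extending Proposition~\ref{psi}(ii) beyond $\mathcal G_{\bf x}$) yields
$$
 \tilde\Psi(x^{l,t}) \;=\; \tilde\Psi\bigl(F^{l,t}(\ylh,\ldots,\ynh)\bigr)\;\odot\; \bigodot_{i=1}^{m-n}\mu_{n+i}^{\odot(-c_i^{l,t})}\;\odot\; \bigodot_{j=1}^{n}\mu_j^{\odot\, g_j^{l,t}}.
$$

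Finally, the left-hand side equals $\mu^{l,t}$ by Proposition~\ref{psi}(i), since $x^{l,t}=\phi([L(\mu^{l,t})])$. Substituting the value of $\tilde\Psi\bigl(F^{l,t}(\ylh,\ldots,\ynh)\bigr)$ supplied by the preceding corollary---namely $\bigodot_j \mjh^{\odot a_j^{l,t}}$ when $\tilde\Psi$ is increasing on $\mathcal G_{\bf x}$, and $1_{\G}$ otherwise---produces the two stated formulas. There is no real obstacle; the only point demanding care is the bookkeeping of signs, relying on the fact from Theorem~\ref{FpolyDWZ}(ii) that $F^{l,t}$ has constant term $1$, so that the exponents $c_i^{l,t}$ are non-positive and $-c_i^{l,t}\geq 0$ gives a genuine element of the monoid $\M$ whenever needed.
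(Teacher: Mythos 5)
Your proof is correct and follows essentially the same route as the paper's: apply $\tilde{\Psi}$ to the Fomin--Zelevinsky formula~\eqref{Fpoly}, identify $\tilde{\Psi}(x_l^t)$ with $\mu^{l,t}$ via Proposition~\ref{psi}(i), and evaluate the $F$-polynomial factor by the preceding corollary. Your only deviation is justifying the multiplicative step directly from the well-definedness of $\tilde{\Psi}$ on presentations ${\bf x}^{\boldsymbol{\alpha}}\phi(p)$ rather than citing Proposition~\ref{psi}(ii) (which, strictly, only covers $\mathcal{G}_{{\bf x}}$) --- a slightly more careful rendering of the same argument.
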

 
  \begin{proof}
 
  We give the proof only in the case where the restriction of $\tilde{\Psi}$ to $\mathcal{G}_{{\bf x}}$ is increasing, the other case being analogous.

 By Proposition~\ref{psi} (i), $\tilde{\Psi}(x_l^t)=\psi([L(\mu^{l,t})])=\mu^{l,t}$.
 On the other hand, one can use the previous Corollary and apply $\tilde{\Psi}$ to both sides of equation~\eqref{Fpoly}:
 \begin{align*}
    \bigodot_j {\mjh}^{ \odot a_j^{l,t}}
  &= \tilde{\Psi} \left( F^{l,t}(\ylh , \ldots , \ynh) \right)
  = \tilde{\Psi} \left( \frac{x_{n+1}^{c_1^{l,t}} \cdots x_m^{c_{m-n}^{l,t}}}{x_1^{g_1^{l,t}} \cdots x_n^{g_n^{l,t}}} . x^{l,t} \right) \\
  &=  \mu^{l,t} \odot \bigodot_{1 \leq i \leq m-n} \mu_{n+i}^{ \odot c_i^{l,t}} \odot \bigodot_{1 \leq i \leq n} \mu_i^{\odot (-g_i^{l,t})}  \text{by Proposition~\ref{psi}(ii).}  
  \end{align*}
  Finally one can conclude:
$$ \mu^{l,t} = \bigodot_{1 \leq j \leq n} {\mjh}^{ \odot a_j^{l,t}} \odot \bigodot_{1 \leq i \leq m-n} \mu_{n+i}^{ \odot (-c_i^{l,t})} \odot \bigodot_{1 \leq i \leq n} \mu_i^{\odot g_i^{l,t}} .$$

\end{proof}

\subsection{First example: the category ${\C}_1$}

    The first example of compatible seed appears in the work of Hernandez-Leclerc \cite{HL} and is one of the main motivations for this work. 
    
     \smallskip
     \smallskip
     
     Recall from Section~\ref{qaa} the definition of the category ${\C}_1$. This category was introduced by Hernandez-Leclerc in \cite{HL} as a (monoidal) subcategory of the category of finite dimensional representations of the quantum affine algebra $U_q (\hat{\mathfrak{g}})$. For $\mathfrak{g}$ of types $ADE$, the category ${\C}_1$ is a monoidal categorification of a cluster algebra of the same cluster type (in the classification of \cite{FZ2}) than the Lie type of $\mathfrak{g}$ (\cite{HL,Naka}). As explained in Section~\ref{qaa}, the simple finite dimensional $U_q (\hat{\mathfrak{g}})$-modules are parametrized by dominant monomials. The monoid $\M$ parametrizing simple objects in the category ${\C}_1$ is a submonoid of the set of dominant monomials involving only the variables $Y_{i,a}, i \in I , a \in q^{\mathbb{Z}}$. The monoid law $\odot$ is simply the natural multiplication of monomials. The ordering $\leq$ on $\M$ is the restriction of the Nakajima order on dominant monomials (see Section~\ref{qaa}).  Assumptions~\ref{decomp} and ~\ref{assump} are obviously satisfied for the category ${\C}_1$.

     In \cite{HL}, Hernandez-Leclerc give explicitly an initial  seed in the category ${\C}_1$:
   
   \begin{thm}[\cite{HL}] \label{HLseed}

 Each seed has $n=|I|$ unfrozen variables and $n$ frozen variables. These frozen variables are given by the classes of the modules $L(Y_{i,q^{\xi_i}}Y_{i,q^{\xi_i +2}})$, $i \in I$.

Moreover, an initial seed in this cluster algebra  is given by the following classes:
 $$  [L(Y_{i,q^{ \xi_i +2}})] \qquad  \qquad [L(Y_{i, q^{\xi_i}}Y_{i, q^{\xi_i +2}})] \qquad  i \in I $$
together with the exchange matrix $B=(b_{ij})$  whose columns are indexed by $I$ and rows by $I \sqcup I' = [1,n] \sqcup [n+1,2n]$, and whose entries are given by
$$ b_{ij} := \begin{cases}  (-1)^{\xi_j} a_{ij} &\text{if $i,j \in I$ and $i \neq j$,}  \\ -1 &\text{if $j \in I$ and $i=j+n \in I'$,} \\ 
                                    -a_{kj} &\text{if $j \in J_0$ and $i=k+n \in I'$ with $k \neq j$,} \\ 0 &\text{otherwise.}   \end{cases}  $$
Here the $a_{ij}$ are the entries of the Cartan matrix associated to $\mathfrak{g}$. 

 \end{thm}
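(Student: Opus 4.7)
The plan is to follow Hernandez--Leclerc's construction in \cite{HL} closely. First, I would identify the candidate modules: the unfrozen cluster variables are classes of the fundamental representations $L(Y_{i,q^{\xi_i+2}})$ with spectral parameters chosen according to the bipartite decomposition $I = I_0 \sqcup I_1$, and the frozen cluster variables are classes of the level-two Kirillov--Reshetikhin modules $L(Y_{i,q^{\xi_i}}Y_{i,q^{\xi_i+2}})$. The simplicity of these modules follows from the Chari--Pressley classification, since both are associated to dominant monomials of low height. Reality is verified by direct computation with $q$-characters: fundamental representations are always real, and level-two KR modules satisfy the $T$-system which implies their tensor squares remain simple up to known short exact sequences.

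Next, I would check that the prescribed modules indeed generate a cluster with exchange matrix $B$ of the stated form. The cornerstone is that the cluster variables of $\mathcal{C}_1$ live in the subalgebra of $K_0(\mathcal{C}_1)$ generated by the fundamental modules $L(Y_{i,q^{\xi_i}})$ and $L(Y_{i,q^{\xi_i+2}})$, which by the definition of $\mathcal{C}_1$ is all of $K_0(\mathcal{C}_1)$. The signs $(-1)^{\xi_j}$ in the formula for $b_{ij}$ reflect the two possible orientations of the edges of the Dynkin diagram under the bipartite decomposition, and the entries $-a_{kj}$ in the $I'$-block record how each level-two KR module contains, in its $q$-character expansion, the fundamental modules indexed by neighbors in the Dynkin diagram. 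Concretely, one reads these entries off the $T$-system short exact sequence
\[
0 \longrightarrow \bigotimes_{k \neq i} L(Y_{k,q^{\xi_k+1}})^{\otimes(-a_{ki})} \longrightarrow L(Y_{i,q^{\xi_i}}) \otimes L(Y_{i,q^{\xi_i+2}}) \longrightarrow L(Y_{i,q^{\xi_i}}Y_{i,q^{\xi_i+2}}) \longrightarrow 0,
\]
which identifies the exchange relation obtained by mutating at the vertex $i \in I_0$ (and analogously for $i \in I_1$).

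To finish, one checks that the matrix $B$ has the required skew-symmetrizability on its principal part and that the seed so obtained generates the full cluster algebra. The former is a direct consequence of the symmetry of the Cartan matrix combined with the sign convention $(-1)^{\xi_j}$. For the latter, one compares with the classification of finite-type cluster algebras in \cite{FZ2}: the quiver obtained on the $I$-block is an orientation of the Dynkin diagram of $\mathfrak{g}$, so the cluster algebra is of the same finite type as $\mathfrak{g}$.

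The main obstacle is proving that this is an initial seed of a cluster algebra \emph{isomorphic to $K_0(\mathcal{C}_1)$ under the identification sending cluster variables to classes of real prime simples}. This is the core content of the monoidal categorification theorem for $\mathcal{C}_1$: verifying that iterated mutation at each unfrozen vertex produces classes of simple objects. In types $A_n$ and $D_4$, Hernandez--Leclerc handle this by explicit induction on $q$-characters and a careful control of the truncation procedure; in all simply-laced types, Nakajima's geometric proof via perverse sheaves on graded quiver varieties provides the full statement. I would invoke this result rather than reprove it, taking the identification $K_0(\mathcal{C}_1) \simeq \mathcal{A}$ as established and focusing the proof effort on showing that the specific data displayed above matches the output of either the combinatorial or the geometric construction.
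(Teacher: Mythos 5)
This theorem is recalled verbatim from \cite{HL}; the paper offers no proof of it beyond the citation, so your decision to defer the substantive categorification content to \cite{HL} and \cite{Naka} is consistent with the paper's treatment. Note, though, that you slightly overshoot in identifying ``the main obstacle'': the statement as recalled only requires the ring isomorphism $K_0(\mathcal{C}_1)\simeq\A$ together with the identification of the initial seed and the frozen variables; the stronger assertion that cluster variables are classes of real prime simples is \cite[Conjecture 4.6]{HL} (proved there in types $A_n$, $D_4$, and in all $ADE$ types by \cite{Naka}), which the surrounding text of the present paper states separately.

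The genuine problem is in your central verification step, in two concrete ways. First, the spectral parameters in your displayed $T$-system are wrong: with $a=q^{\xi_i}$ the correct relation is
\begin{equation*}
[L(Y_{i,q^{\xi_i}})]\,[L(Y_{i,q^{\xi_i+2}})] \;=\; [L(Y_{i,q^{\xi_i}}Y_{i,q^{\xi_i+2}})] \;+\; \prod_{k\sim i}[L(Y_{k,q^{\xi_i+1}})],
\end{equation*}
whose neighbor factors carry parameter $q^{\xi_i+1}$, i.e. $q^{\xi_k+2}$ when $\xi_i=1$ and $q^{\xi_k}$ when $\xi_i=0$ (recall $\xi_k=1-\xi_i$ for $k\sim i$); your $Y_{k,q^{\xi_k+1}}$ is systematically the wrong one of these two. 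Second, and more substantively, this $T$-system matches the exchange relation of the displayed seed only at vertices of one parity. At $j\in I_1$, column $j$ of $B$ gives $x_jx_j'=\prod_{k\sim j}x_k + x_{n+j}$, which is exactly the $T$-system above with $x_j'=[L(Y_{j,q^{\xi_j}})]$. But at $j\in I_0$ the matrix $B$ gives $x_jx_j'=\prod_{k\sim j}x_{n+k} + x_{n+j}\prod_{k\sim j}x_k$, in which the frozen Kirillov--Reshetikhin classes enter \emph{both} exchange monomials; this is not a bare $T$-system (the mutated variable is of the form $[L(Y_{j,q^{\xi_j}}\prod_{k\sim j}Y_{k,q^{\xi_k+2}})]$, not $[L(Y_{j,q^{\xi_j}})]$), and in \cite{HL} relations of this kind are established by explicit computations with (truncated) $q$-characters rather than read off a $T$-system; the $T$-systems at $I_0$-vertices only appear after first mutating at all $I_1$-vertices. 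So the step ``read the exchange relation off the $T$-system at $i\in I_0$ (and analogously for $i\in I_1$)'' fails as stated. The remaining points of your outline (skew-symmetry of the principal part from $b_{ij}=(-1)^{\xi_j}a_{ij}$ with $\xi_i\neq\xi_j$, the bipartite orientation of the Dynkin quiver and the finite-type identification via \cite{FZ2}) are fine.
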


The cluster algebra $\A$ is the cluster algebra (in the classification of \cite{FZ2}) generated by the initial seed $((x_1, \ldots , x_n , x_{n+1} , \ldots , x_{2n}), B)$ where $B$  is the matrix above. Following \cite{HL}, we denote by $\iota$ the unique ring isomorphism 
\[ \xymatrix{ \iota : \A \ar[r]^{\simeq} & K_0(\C_1) }
\]
such that 
$$ \iota(x_i) = [L(Y_{i,q^{ \xi_i +2}})] \qquad  \qquad  \iota(x_{n+i}) = [L(Y_{i,q^{\xi_i}}Y_{i,q^{\xi_i +2}})] \qquad 1 \leq i \leq n. $$
The isomorphism $\iota$ is the inverse of the isomorphism $\phi$ in Section~\ref{genepara}.  Using the map $\tilde{\Psi}$ associated to the seed $((x_1, \ldots , x_n , x_{n+1} , \ldots , x_{2n}), B)$ above, one can compute the generalized parameters $\mjh$  corresponding to this seed. This is done by the following statement, as a direct consequence of Theorem~\ref{HLseed}:

 \begin{prop}[{{\cite[Lemma 7.2]{HL}}}] \label{HLcompat}
 With the notations of Section~\ref{genepara}, one has:
 $$ \forall 1 \leq j \leq n, \quad \mjh = A_{j, q^{\xi_j +1}}^{-1} .$$
 \end{prop}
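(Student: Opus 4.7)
The plan is to verify the identity by a direct computation from the definition $\mjh = \bigodot_{1 \leq i \leq 2n} \mu_i^{\odot b_{ij}}$, using that on $\M$ the monoid law $\odot$ is simply the ordinary multiplication of monomials in the variables $Y_{i,a}$. All the ingredients are explicit: by Theorem~\ref{HLseed} the parameters of the initial seed are $\mu_i = Y_{i, q^{\xi_i + 2}}$ for $1 \leq i \leq n$ and $\mu_{n+i} = Y_{i, q^{\xi_i}} Y_{i, q^{\xi_i+2}}$ for $1 \leq i \leq n$, and the entries of $B$ are given in the same statement, so this is essentially a bookkeeping exercise with monomials in the $Y_{k,a}$'s.

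First I would separate the product $\mjh$ into its unfrozen and frozen contributions and plug in the formulas for the $b_{ij}$. This yields
$$\mjh = \prod_{i \in I, i \neq j} Y_{i, q^{\xi_i+2}}^{(-1)^{\xi_j} a_{ij}} \cdot \bigl(Y_{j, q^{\xi_j}} Y_{j, q^{\xi_j+2}}\bigr)^{-1} \cdot \prod_{k \in I, k \neq j} \bigl(Y_{k, q^{\xi_k}} Y_{k, q^{\xi_k+2}}\bigr)^{b_{n+k,j}},$$
where the last factor is trivial unless $j \in J_0$, in which case $b_{n+k, j} = -a_{kj}$.

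Next I would split into two cases according to the parity $\xi_j$. The key observation in both cases is the bipartite structure of the Dynkin diagram: if $k$ is a neighbor of $j$ then $\xi_k = 1 - \xi_j$, so $q^{\xi_j+1}$ coincides with $q^{\xi_k}$ when $\xi_j=0$ and with $q^{\xi_k+2}$ when $\xi_j=1$. This both ensures that the variables appearing in $\mjh$ are precisely those indexing the category $\C_1$, and identifies the right-hand side of the above formula with
$$Y_{j, q^{\xi_j}}^{-1} Y_{j, q^{\xi_j+2}}^{-1} \prod_{k \neq j} Y_{k, q^{\xi_j+1}}^{-a_{kj}},$$
which is $A_{j, q^{\xi_j+1}}^{-1}$ by the very definition of the monomials $A_{i,a}$.

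The only delicate point is checking that the two parity cases do produce the same shape. When $\xi_j = 1$, the third case of the definition of $b_{ij}$ is void and the simplification is immediate from the contributions of the unfrozen variables alone. When $\xi_j = 0$, by contrast, the unfrozen contribution $\prod_{k \neq j} Y_{k, q^{\xi_k+2}}^{a_{kj}}$ and the frozen contribution $\prod_{k \neq j} \bigl(Y_{k, q^{\xi_k}} Y_{k, q^{\xi_k+2}}\bigr)^{-a_{kj}}$ must combine so that the $Y_{k, q^{\xi_k+2}}$ factors cancel, leaving only the $Y_{k, q^{\xi_k}}^{-a_{kj}}$ factors; this is exactly where the sign $(-1)^{\xi_j}$ in the definition of $b_{ij}$ plays its role. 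Once these cancellations are verified, the proposition follows immediately.
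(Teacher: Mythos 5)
Your computation is correct and is exactly the argument the paper has in mind: Proposition~\ref{HLcompat} is stated there as a direct consequence of Theorem~\ref{HLseed} (citing \cite[Lemma 7.2]{HL}), i.e.\ one expands $\mjh = \prod_{1 \leq i \leq 2n} \mu_i^{b_{ij}}$ using the explicit parameters and exchange matrix of the initial seed, splits according to the parity $\xi_j$, and uses the bipartite structure to match spectral parameters with the definition of $A_{j,q^{\xi_j+1}}$. Your bookkeeping — including the cancellation of the $Y_{k,q^{\xi_k+2}}$ factors between the unfrozen and frozen contributions when $\xi_j = 0$, which is where the sign $(-1)^{\xi_j}$ and the third case of the matrix entries intervene — checks out, so the proposal fills in precisely the verification the paper leaves implicit.
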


 \begin{cor}
 Conjecture~\ref{conjecture} holds for the category ${\C}_1$.
 \end{cor}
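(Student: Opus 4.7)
The plan is to exhibit the Hernandez–Leclerc initial seed from Theorem~\ref{HLseed} as a compatible seed in the sense of Definition~\ref{compat}. The excerpt has already set up everything needed: by \cite{HL,Naka} the category $\C_1$ is an Artinian monoidal categorification of $\A$, the monoid $(\M,\odot)$ is the submonoid of dominant monomials in the $Y_{i,a}$ with $a \in q^{\mathbb{Z}}$ under ordinary multiplication, and the partial order is the restriction of the Nakajima order. Assumptions~\ref{decomp} and~\ref{assump} have been declared to hold in this setting (this is standard from the theory of $q$-characters and the Nakajima order), so the entire content of the corollary reduces to checking the monotonicity clause in Definition~\ref{compat} for this particular seed.

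To carry out that check, I would first invoke Remark~\ref{rkpsi}: compatibility is equivalent to the statement that for every $1 \leq j \leq n$ and every $\mu \in \M$, either $\mu \leq \mjh \odot \mu$ holds universally, or $\mu \geq \mjh \odot \mu$ holds universally. Proposition~\ref{HLcompat} computes these generalized parameters explicitly as $\mjh = A_{j,q^{\xi_j+1}}^{-1}$. A direct computation of the ratio gives
\[
\frac{\mu}{\mjh \odot \mu} \;=\; \mjh^{-1} \;=\; A_{j,q^{\xi_j+1}},
\]
which is by definition a monomial in the $A_{i,a}$ with nonnegative exponents. Hence $\mjh \odot \mu \leq \mu$ in the Nakajima order for every $\mu \in \M$ and every $j$, which is exactly the condition in Remark~\ref{rkpsi} that $\tilde{\Psi}$ be decreasing on $\mathcal{G}_{\mathbf{x}}$.

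There is no real obstacle: once Proposition~\ref{HLcompat} is in hand, the corollary is a one-line unpacking of definitions, and the only thing to highlight in the write-up is the sign convention showing that $\tilde{\Psi}$ is decreasing (rather than increasing) on $\mathcal{G}_{\mathbf{x}}$. I would conclude by observing that the seed of Theorem~\ref{HLseed} is therefore compatible, which establishes Conjecture~\ref{conjecture} for $\C_1$ and provides the prototype example motivating the general framework developed in Section~\ref{compatseed}.
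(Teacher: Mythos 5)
Your proposal is correct and follows exactly the paper's own argument: invoke Proposition~\ref{HLcompat} to identify $\mjh = A_{j,q^{\xi_j+1}}^{-1}$, deduce from the definition of the Nakajima order that $\mjh \odot \mu \leq \mu$ for all $\mu \in \M$, and conclude via Remark~\ref{rkpsi} that $\tilde{\Psi}$ is decreasing, hence the Hernandez--Leclerc seed is compatible. Your explicit computation of the ratio $\mu / (\mjh \odot \mu) = A_{j,q^{\xi_j+1}}$ is a slightly more spelled-out version of the same one-line deduction, with no substantive difference.
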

 
 \begin{proof}
By definition of the Nakajima ordering on monomials (see Section~\ref{qaa}), Proposition~\ref{HLcompat} implies that for any dominant monomial $\mathfrak{m}$, one has
$$ \forall 1 \leq j \leq n ,  \mathfrak{m}  \geq \mjh  \mathfrak{m}. $$
By Remark~\ref{rkpsi}, this implies that the map $\tilde{\Psi}$ associated to the seed $((x_1, \ldots , x_n , x_{n+1} , \ldots , x_{2n}), B)$ is decreasing. Hence this seed is compatible in the sense of Definition~\ref{compat} and Conjecture~\ref{conjecture} holds.
\end{proof}

We conclude this section with an illustration of the relationships between $g$-vectors and highest weights for quantum affine algebras that occur as a consequence of the initial seed of \cite{HL} being compatible. The cluster structure on $K_0(\C_1)$ is a finite type cluster algebra; thus one can use the results of \cite{FZ2} and label the cluster variables by almost positive roots, i.e. positive roots together with the opposite of the simple roots. Let $x[\alpha]$ denote the cluster variable associated to the almost positive root $\alpha$ with respect to the above initial seed. 

 Following \cite{FZ2}, one defines piece-wise linear involutions $\tau_{\epsilon}$ ($\epsilon \in \{-1,1\}$) of the root lattice $Q$ of $\mathfrak{g}$: for any $\gamma \in Q$, 
 $$[\tau_{\epsilon}(\gamma) : \alpha_i] = 
    \begin{cases} 
      - [\gamma : \alpha_i] - \sum_{j \neq i} a_{ij} \max(0,[\gamma, \alpha_j]) &\text{if $\epsilon_i = \epsilon$} \\
      [\gamma : \alpha_i] &\text{if $\epsilon_i \neq \epsilon$}
    \end{cases}
 $$
 where $[\gamma : \alpha_i]$ stands for the coefficient of $\alpha_i$ in the expansion of $\gamma$ on simple roots. 
 
 \begin{cor}[{{\cite[Corollary 7.4]{HL}}}]
 Let $\alpha$ be an almost positive root. Set $ \beta := \tau_{-}(\alpha)$. Write $\beta = \sum_i b_i \alpha_i$.  The highest weight of the simple module corresponding to the cluster variable $x[\alpha]$ is given by
   $$ \prod_{i \in I_0} Y_{i,1}^{b_i} . \prod_{i \in I_1} Y_{i,q^3}^{b_i}. $$
 \end{cor}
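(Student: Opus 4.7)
The plan is to combine the compatibility of the Hernandez--Leclerc seed (which was just established) with the explicit combinatorics of finite-type cluster algebras from \cite{FZ2}. By the previous corollary applied in the decreasing case, the parameter $\mu[\alpha]$ of the simple module corresponding to $x[\alpha]$ is expressed as
\[
  \mu[\alpha] \;=\; \bigodot_{1 \leq i \leq n} \mu_{n+i}^{\odot(-c_i[\alpha])} \;\odot\; \bigodot_{1 \leq i \leq n} \mu_i^{\odot\, g_i[\alpha]},
\]
where $(g_i[\alpha])_i$ is the $g$-vector of $x[\alpha]$ with respect to the initial seed of Theorem~\ref{HLseed} and $(-c_i[\alpha])_i$ are the nonnegative exponents of the frozen variables $x_{n+i}$ in the denominator ${F^{[\alpha]}}_{|\p}(y_1,\ldots,y_n)$ of the Fomin--Zelevinsky formula~\eqref{Fpoly}.

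The next step is to translate the two cluster-theoretic invariants $g_i[\alpha]$ and $c_i[\alpha]$ into the combinatorics of almost positive roots. The initial exchange matrix in Theorem~\ref{HLseed} is bipartite, so the cluster algebra is of finite type and its cluster variables are indexed by almost positive roots via the bijection $\alpha \mapsto x[\alpha]$ of \cite{FZ2}. In this bipartite-belt setting, the piecewise-linear involution $\tau_{-}$ governs the passage between the initial cluster (labeled by $-\Pi$) and the cluster variables $x[\alpha]$; more precisely, writing $\beta = \tau_{-}(\alpha) = \sum_i b_i \alpha_i$, both the $g$-vector components $g_i[\alpha]$ and the frozen-denominator components $c_i[\alpha]$ can be read off from the $b_i$ (together with the bipartition $I = I_0 \sqcup I_1$). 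This is the Fomin--Zelevinsky description of finite-type cluster variables.

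The third step is routine: substitute the explicit initial data $\mu_i = Y_{i,q^{\xi_i+2}}$ and $\mu_{n+i} = Y_{i,q^{\xi_i}}Y_{i,q^{\xi_i+2}}$ into the formula for $\mu[\alpha]$. Since $\mu_{n+i}/\mu_i = Y_{i,q^{\xi_i}}$ as monomials and $\odot$ is just the multiplication of dominant monomials, the contributions coming from $Y_{i,q^{\xi_i+2}}$ in $\mu_{n+i}$ and in $\mu_i$ assemble, after the Fomin--Zelevinsky identification, into the claimed product $\prod_{i \in I_0} Y_{i,1}^{b_i} \prod_{i \in I_1} Y_{i,q^3}^{b_i}$.

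The main obstacle is the second step: one has to match $g_i[\alpha]$ and $c_i[\alpha]$ with the explicit expression $\beta = \tau_{-}(\alpha)$. This is not immediate from the definitions because $\tau_{-}$ is defined by a piecewise-linear formula while $g$-vectors and denominator vectors are defined through iterated mutation; bridging the two requires invoking the finite-type classification and denominator theorem of \cite{FZ2} and carefully bookkeeping the contributions of frozen variables (which do not appear in the standard \cite{FZ2} setup of coefficient-free finite-type cluster algebras, so a comparison between the two coefficient systems is needed).
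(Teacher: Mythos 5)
The first thing to say is that the paper itself does not prove this statement: it is quoted directly as \cite[Corollary 7.4]{HL}, as an illustration of what compatibility of the Hernandez--Leclerc seed means, and the only argumentative content the paper adds afterwards is the reformulation via the fact from \cite{FZ4} that ${\bf g}(\alpha) = E \tau_{-}(\alpha)$ in finite type. So your proposal is not an alternative to a paper-internal proof but an attempt to rederive Hernandez--Leclerc's result from the compatibility machinery of Section~\ref{compatseed}. Your first and third steps are sound: since $\tilde{\Psi}$ is decreasing for the seed of Theorem~\ref{HLseed} (this is Proposition~\ref{HLcompat} together with Remark~\ref{rkpsi}), the second corollary of Section~\ref{compatseed} gives $\mu[\alpha] = \bigodot_i \mu_{n+i}^{\odot(-c_i)} \odot \bigodot_i \mu_i^{\odot g_i}$, and substituting $\mu_i = Y_{i,q^{\xi_i+2}}$, $\mu_{n+i} = Y_{i,q^{\xi_i}}Y_{i,q^{\xi_i+2}}$ yields $\prod_i Y_{i,q^{\xi_i}}^{-c_i}\, Y_{i,q^{\xi_i+2}}^{\,g_i-c_i}$. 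Matching this against the claimed monomial forces $c_i = g_i = -b_i$ for $i \in I_0$ and $c_i = 0$, $g_i = b_i$ for $i \in I_1$; the $g$-vector half of this dictionary is exactly ${\bf g}(\alpha) = E\tau_{-}(\alpha)$, which is available from \cite{FZ4} as the paper notes.

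The genuine gap is your second step, which you flag as the main obstacle but never close: nothing you cite computes the frozen exponents $c_i[\alpha]$, i.e.\ the tropical evaluation ${F^{[\alpha]}}_{|\p}(y_1,\ldots,y_n)$ for the specific coefficient system of Theorem~\ref{HLseed}. Concretely, you would need the identity ${F^{[\alpha]}}_{|\p}(y_1,\ldots,y_n) = \prod_{i \in I_0} x_{n+i}^{-b_i}$, where by the exchange matrix of Theorem~\ref{HLseed} one has $y_j = x_{n+j}^{-1}\prod_{k \neq j} x_{n+k}^{-a_{kj}}$ for $j \in I_0$ and $y_j = x_{n+j}^{-1}$ for $j \in I_1$. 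The tools you invoke do not deliver this: the finite-type classification and denominator theorem of \cite{FZ2} control the exponents of the initial \emph{unfrozen} cluster variables (the $d$-vector) in the coefficient-free setting, whereas $c_i[\alpha]$ lives in the tropical semifield generated by the frozen variables of this particular seed, and no result you cite matches the two coefficient systems. Proving the required identity --- say by induction along the bipartite belt, or by computing the relevant $F$-polynomials in terms of $\tau_{\pm}$ --- is essentially the substance of Hernandez--Leclerc's own proof of \cite[Corollary 7.4]{HL}; as written, your argument asserts rather than establishes the one nontrivial ingredient, so it is an outline with a hole precisely where the work lies.
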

 
  It is known from \cite{FZ4} that in the case of a cluster algebra of finite type, the $g$-vector of the variable $x[\alpha]$ is given by 
   $$ {\bf g}(\alpha) = E \tau_{-}(\alpha)$$
      where $E$ is the automorphism of the root lattice of $\mathfrak{g}$ which sends the simple root $\alpha_i$ onto $ (-1)^{\xi_i +1} \alpha_i$. 
      
   Thus the previous corollary can be reformulated in the following way:
   
 \begin{cor}
Let $\alpha$ be an almost positive root and let ${\bf g}(\alpha)$ be the $g$-vector of the cluster variable $x[\alpha]$ with respect to the above initial seed. The highest weight of the simple module corresponding to $x[\alpha]$ is given by
$$ \prod_{i \in I_0} Y_{i,1}^{-g_i} . \prod_{i \in I_1} Y_{i,q^3}^{g_i}. $$
\end{cor}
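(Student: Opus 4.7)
The plan is simply to combine the preceding corollary (which expresses the highest weight in terms of the coefficients $b_i$ of $\beta = \tau_-(\alpha)$ on the simple roots) with the already-quoted identity ${\bf g}(\alpha) = E \tau_-(\alpha)$ from \cite{FZ4}, where $E$ is the involution of the root lattice acting by $\alpha_i \mapsto (-1)^{\xi_i+1}\alpha_i$. Since $E$ is diagonal in the basis of simple roots, the relation between the components $g_i$ of ${\bf g}(\alpha)$ and the coefficients $b_i$ of $\tau_-(\alpha)$ is read off one root at a time: for $i \in I_0$ one has $\xi_i = 0$, so $g_i = -b_i$, whereas for $i \in I_1$ one has $\xi_i = 1$, so $g_i = b_i$.

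The proof then reduces to a direct substitution. First I would invert these two relations to obtain $b_i = -g_i$ for $i \in I_0$ and $b_i = g_i$ for $i \in I_1$. Plugging these into the formula of the previous corollary,
$$\prod_{i \in I_0} Y_{i,1}^{b_i} \cdot \prod_{i \in I_1} Y_{i,q^3}^{b_i} = \prod_{i \in I_0} Y_{i,1}^{-g_i} \cdot \prod_{i \in I_1} Y_{i,q^3}^{g_i},$$
yields exactly the claimed highest weight. No real obstacle arises here: all substantive content, in particular the explicit computation of $\mjh$ in Proposition~\ref{HLcompat} and the compatibility of the initial seed, has been established just above, and the statement ${\bf g}(\alpha) = E\tau_-(\alpha)$ is imported directly from \cite{FZ4}. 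The corollary is therefore best presented as a short reformulation, emphasizing that it is the interaction between the cluster-theoretic involution $E$ and the bipartite decomposition $I = I_0 \sqcup I_1$ that converts the $\tau_-$-description of highest weights into a $g$-vector description.
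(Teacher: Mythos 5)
Your proposal is correct and follows exactly the paper's own route: the paper also derives this corollary as a direct reformulation of the preceding one via the identity ${\bf g}(\alpha)=E\tau_{-}(\alpha)$ from \cite{FZ4}, using that $E$ acts diagonally by $(-1)^{\xi_i+1}$ so that $b_i=-g_i$ on $I_0$ and $b_i=g_i$ on $I_1$. Your sign bookkeeping is accurate, and no further argument is needed.
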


 \section{A mutation rule for parameters of simple representations of quiver Hecke algebras}
 \label{mutrule}

 In this section, we consider the category $\C= R-gmod$ of finite dimensional representations of symmetric quiver Hecke algebras of finite type $A_n$. The set $\M$ is the set of \textit{dominant words} (see Section~\ref{reminders}) and the order $\leq$ is the natural lexicographic order; it is a total ordering hence Assumption~\ref{decomp} obviously holds. Moreover, with the notations of Section~\ref{irredKLR}, one has $\mu \odot \nu = \max (L(\mu) \circ L(\nu))$ for any dominant words $\mu$ and $\nu$. We begin by describing explicitly the monoid operation $\odot$ for dominant words. In particular it can be easily computed using  \textit{canonical factorizations} of dominant words (see Proposition~\ref{canofacto}). We apply this in the context of monoidal categorifications of cluster algebras via quiver Hecke algebras following the works of Kang-Kashiwara-Kim-Oh (\cite{KKK,KKKO}). We obtain a combinatorial rule for the transformation of dominant words under cluster mutation.

\smallskip
 
\subsection{Convolution product of simple modules} \label{KLRtechnique}

 This subsection is devoted to the description of the monoid structure $\odot$ on the monoid $\M$ of dominant words in the case of a symmetric quiver Hecke algebra of type $A_n$. First we restrict to the case where the canonical factorizations of two words $\mu , \mu'$ are ordered with respect to each other. We show that in this case, Proposition~\ref{KRprod} implies that the monoidal product $\mu \odot \mu'$ is simply the concatenation of $\mu$ and $\mu'$ (Corollary~\ref{littlecor}). Then we state the main result of this section (Proposition~\ref{prod}) which gives a combinatorial expression for $\mu \odot \mu'$ for any $\mu, \mu'$. Our proof involves ideas similar to the ones used in \cite{KR} for the proof of Proposition~\ref{KRprod}, but here we use the specific form of dominant Lyndon words (in bijection with positive roots) in type $A_n$. 

Recall (see Proposition~\ref{canofacto}) that any word $\mu$  can be written in a unique way as a concatenation of Lyndon words in the decreasing order. This is called the canonical factorization of $\mu$. Moreover, if the word $\mu$ is dominant, then all the Lyndon words involved in the canonical factorization of $\mu$ are dominant as well (Theorem~\ref{domfacto}(ii)). By Theorem~\ref{domfacto}(i), the canonical factorization of a dominant word $\mu$ can be seen as a sum of positive roots in the decreasing order. In particular, in type $A_n$ these positive roots correspond to words of the form $k(k+1) \ldots l$ with $k \leq l$. 

  We begin by recalling a technical result from \cite{KR}.
  
     \begin{lem}[{{\cite[Lemma 5.1]{KR}}}]
     \label{max}
 Let $ {\bf i}^{(1)}, \ldots ,{\bf i}^{(r)},{\bf j}^{(1)}, \ldots ,{\bf j}^{(r)}$ be words such that for each $k \in \{1, \ldots, r\}$, ${\bf i}^{(k)}$ and ${\bf j}^{(k)}$ have same length. 
Assume that  ${\bf i}^{(1)} \geq {\bf j}^{(1)}, \ldots , {\bf i}^{(r)} \geq {\bf j}^{(r)}.$
 Then $ \max ({\bf i}^{(1)} \circ \ldots \circ {\bf i}^{(r)}) \geq \max ({\bf j}^{(1)} \circ \ldots \circ  {\bf j}^{(r)}).$
Moreover, this inequality is an equality if and only if all the inequalities $ {\bf i}^{(k)} \geq {\bf j}^{(k)}$ are equalities. 
 \end{lem}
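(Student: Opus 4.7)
The plan is to argue by induction on the total length $N = \sum_k |{\bf i}^{(k)}|$, after first pinning down a recursive description of $\max({\bf w}^{(1)} \circ \cdots \circ {\bf w}^{(r)})$ for an arbitrary tuple of (possibly empty) words. Since every summand of the quantum shuffle product of Definition~\ref{shuffle} has a nonzero $q$-coefficient, the lexicographic maximum can be computed combinatorially from the underlying shuffles alone. Each shuffle begins with the first letter of some ${\bf w}^{(k)}$, so $\max({\bf w}^{(1)} \circ \cdots \circ {\bf w}^{(r)})$ begins with $a := \max_k {\bf w}^{(k)}_1$; writing $K := \{k : {\bf w}^{(k)}_1 = a\}$ and $\widetilde{{\bf w}^{(k)}}$ for ${\bf w}^{(k)}$ with its first letter removed, I will establish the recursion
\[
  \max({\bf w}^{(1)} \circ \cdots \circ {\bf w}^{(r)}) \;=\; a \cdot \max_{k \in K} \max\bigl({\bf w}^{(1)} \circ \cdots \circ \widetilde{{\bf w}^{(k)}} \circ \cdots \circ {\bf w}^{(r)}\bigr).
\]

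In the inductive step the hypothesis ${\bf i}^{(k)} \geq {\bf j}^{(k)}$ (with equal lengths) forces ${\bf i}^{(k)}_1 \geq {\bf j}^{(k)}_1$ whenever both words are nonempty, hence $a := \max_k {\bf i}^{(k)}_1 \geq b := \max_k {\bf j}^{(k)}_1$. If $a > b$, strict inequality of the two maxima follows at once from the recursion. Otherwise $a = b$, and the key observation is that $L := \{k : {\bf j}^{(k)}_1 = a\}$ is contained in $K$: any $l \in L$ satisfies $a = {\bf j}^{(l)}_1 \leq {\bf i}^{(l)}_1 \leq a$. For each $l \in L$ the tuples obtained by deleting the first letter of the $l$-th word on both sides still satisfy the hypotheses $\widetilde{{\bf i}^{(l)}} \geq \widetilde{{\bf j}^{(l)}}$ (with the same length) but have strictly smaller total length, so the inductive hypothesis applies. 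Taking the max over $l \in L$ on the ${\bf j}$-side and enlarging to $k \in K$ on the ${\bf i}$-side produces the required inequality $\max({\bf i}^{(1)} \circ \cdots \circ {\bf i}^{(r)}) \geq \max({\bf j}^{(1)} \circ \cdots \circ {\bf j}^{(r)})$.

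For the equality assertion, suppose the two maxima coincide; then necessarily $a = b$. Pick $l^* \in L$ realizing $\max_{l \in L} \max({\bf j}^{(1)} \circ \cdots \circ \widetilde{{\bf j}^{(l)}} \circ \cdots)$. The chain
\[
  \max_{k \in K} \max\bigl({\bf i}^{(1)} \circ \cdots \circ \widetilde{{\bf i}^{(k)}} \circ \cdots\bigr) \geq \max\bigl({\bf i}^{(1)} \circ \cdots \circ \widetilde{{\bf i}^{(l^*)}} \circ \cdots\bigr) \geq \max\bigl({\bf j}^{(1)} \circ \cdots \circ \widetilde{{\bf j}^{(l^*)}} \circ \cdots\bigr)
\]
collapses into equalities. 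Applying the inductive equality case at $l^*$ yields ${\bf i}^{(k)} = {\bf j}^{(k)}$ for $k \neq l^*$ and $\widetilde{{\bf i}^{(l^*)}} = \widetilde{{\bf j}^{(l^*)}}$, which together with ${\bf i}^{(l^*)}_1 = a = {\bf j}^{(l^*)}_1$ forces ${\bf i}^{(l^*)} = {\bf j}^{(l^*)}$ as well. The main subtlety is the tie case $a = b$: the optimal index on the two sides could \emph{a priori} differ, and the inclusion $L \subseteq K$ is precisely what makes it legitimate to transport the optimizer $l^*$ of the ${\bf j}$-side into a valid choice on the ${\bf i}$-side, so that a single aligned pair of shortened tuples can feed the inductive hypothesis.
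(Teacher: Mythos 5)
Your proposal is correct, but note that the paper itself offers no proof to compare against: Lemma~\ref{max} is imported verbatim from \cite[Lemma 5.1]{KR}, so your argument stands as a self-contained replacement for Kleshchev--Ram's. The three load-bearing points all check out. First, reducing $\max$ of the quantum shuffle to a purely combinatorial maximum over shuffles is legitimate because each word occurring in ${\bf w}^{(1)} \circ \cdots \circ {\bf w}^{(r)}$ has coefficient a sum of monomials $q^{-e(\sigma)}$ with coefficient $+1$, i.e.\ an element of $\mathbb{Z}_{\geq 0}[q,q^{-1}]$, so no cancellation can kill the maximal word. Second, the greedy recursion is exact: every shuffle begins with the first letter of one of the factors, a shuffle beginning with $a$ must draw that letter from some $k \in K$, and conversely prefixing $a$ to any shuffle of the truncated tuple yields a shuffle of the original tuple; since all shuffles have the common length $N$, lexicographic comparison is decided letter by letter, which is also what makes the case $a > b$ give strict inequality immediately. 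Third, your observation $L \subseteq K$ is exactly the right device for the tie case, and the truncation step is sound because for $l \in L$ the deleted first letters of ${\bf i}^{(l)}$ and ${\bf j}^{(l)}$ coincide (both equal $a$), so $\widetilde{{\bf i}^{(l)}} \geq \widetilde{{\bf j}^{(l)}}$ with equal lengths and strictly smaller total length, and the inductive equality case applied at the transported optimizer $l^*$ collapses the chain as you describe. Two cosmetic remarks: you should state the trivial converse direction of the ``if and only if'' (equal tuples give equal shuffle products, hence equal maxima), and in the strict direction it is worth saying explicitly that equality of the maxima forces $a = b$ precisely because the two maximal words have the same length $N$ and agree in their first letter only if $a = b$ --- both of which you implicitly use and neither of which costs more than a line.
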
 

The next Proposition states that the product $\mu \odot \mu'$ of two dominant words $\mu$ and $\mu'$ coincides with the  highest word in the quantum shuffle product of $\mu$ and $\mu'$. 
 
   \begin{prop} \label{maxshuffle}
 Let $\mu,\nu$ be dominant words. 
Then $$ \mu \odot \nu = \max (\mu \circ \nu).$$
  \end{prop}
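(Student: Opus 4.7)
My approach is to transfer the whole statement to the level of $q$-characters, where the shuffle product structure is directly visible and Lemma~\ref{max} applies. The first step is to identify $\mu \odot \nu$ with the highest word in $ch_q(L(\mu) \circ L(\nu))$. By definition, $\mu \odot \nu$ is the maximum (for the lexicographic order) of the parameters $\lambda$ appearing in the Jordan--H\"older decomposition $[L(\mu)\circ L(\nu)] = \sum_\lambda a_\lambda(q)[L(\lambda)]$, where $a_\lambda \in \mathbb{Z}_{\geq 0}[q,q^{-1}]$. Applying $ch_q$ and using Theorem~\ref{KR}(ii), each $ch_q(L(\lambda))$ has highest word $\lambda$ with non-zero coefficient, and since the $a_\lambda$ have non-negative coefficients there is no possible cancellation of the highest terms. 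Hence the maximum word occurring in $ch_q(L(\mu)\circ L(\nu))$ is exactly $\mu \odot \nu$.

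Next, I will use the fact that $ch_q$ is an algebra morphism for the convolution / shuffle products (recalled just before Theorem~\ref{KL-R}), so
\[ ch_q(L(\mu)\circ L(\nu)) = ch_q(L(\mu)) \circ ch_q(L(\nu)). \]
Writing $ch_q(L(\mu)) = \sum_{\lambda \leq \mu} c_\lambda(q)\,\lambda$ and $ch_q(L(\nu)) = \sum_{\lambda' \leq \nu} c'_{\lambda'}(q)\,\lambda'$, with $c_\mu, c'_\nu$ non-zero (they are the graded dimensions of the highest weight spaces), bilinearity of the quantum shuffle product yields
\[ ch_q(L(\mu)) \circ ch_q(L(\nu)) = \sum_{\lambda \leq \mu,\ \lambda' \leq \nu} c_\lambda(q)\, c'_{\lambda'}(q)\,(\lambda \circ \lambda'). \]
I now apply Lemma~\ref{max} with $r=2$: since the words entering $ch_q(L(\mu))$ all have the same total weight as $\mu$, and similarly for $\nu$, the length conditions are satisfied, and we get $\max(\lambda \circ \lambda') \leq \max(\mu \circ \nu)$ with equality if and only if $(\lambda,\lambda')=(\mu,\nu)$.

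It then remains to check that the highest word $\max(\mu\circ\nu)$ actually survives in the sum. It only comes from the term $(\lambda,\lambda')=(\mu,\nu)$, with coefficient $c_\mu(q)\,c'_\nu(q)\cdot q^{-e(\sigma_0)}$ (where $\sigma_0$ is the shuffle permutation producing the highest word in $\mu\circ\nu$, with coefficient $q^{-e(\sigma_0)}\neq 0$ by Definition~\ref{shuffle}). This coefficient is a non-zero element of $\mathbb{Z}[q,q^{-1}]$, so $\max(\mu\circ\nu)$ appears non-trivially, and strictly higher words do not appear at all. Combining this with the first paragraph gives $\mu \odot \nu = \max(ch_q(L(\mu)\circ L(\nu))) = \max(\mu \circ \nu)$, as desired. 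I do not anticipate any real obstacle: the argument is essentially a bookkeeping of highest terms under the shuffle, and the only delicate point --- that components of a shuffle of strictly smaller words cannot contribute at the top --- is exactly what Lemma~\ref{max} supplies.
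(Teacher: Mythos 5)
Your proof is correct and takes essentially the same route as the paper's: identify $\mu \odot \nu$ with $\max\bigl(ch_q(L(\mu)\circ L(\nu))\bigr)$, use that $ch_q$ is a ring morphism to rewrite this as $\max\bigl(ch_q(L(\mu))\circ ch_q(L(\nu))\bigr)$, and apply Lemma~\ref{max} to see that only the pair $(\mu,\nu)$ can contribute the top word. Your additional bookkeeping (the Jordan--H\"older justification of the first identification and the explicit non-vanishing of the top coefficient, which holds even without uniqueness of the shuffle permutation since the relevant coefficients are Laurent polynomials with non-negative coefficients) merely makes explicit what the paper leaves implicit.
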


\begin{proof}

 By  Theorem~\ref{KR}, we can write 
$$ ch_q ( L(\mu) ) = P(q) . \mu  + \sum_{ \mu'< \mu} a_{\mu'} (q) . \mu'  \qquad and \qquad ch_q ( L(\nu) ) = Q(q) . \nu  + \sum_{\nu' < \nu} b_v (q) . \nu' $$
where $P,Q,a,b$ are Laurent polynomials in $q$ ($P$ and $Q$ non zero).
  By definition 
 $$ \mu \odot \nu = \max (L(\mu)  \circ L(\nu)) = \max (ch_q (L(\mu)  \circ L(\nu)))  = \max ( ch_q(L(\mu))  \circ ch_q(L(\nu)) ).$$
By Lemma~\ref{max}, for any $ \mu'< \mu$, $ \max (\mu' \circ \nu) < \max  (\mu \circ \nu) $.  Similarly, $ \max (\mu \circ \nu') < \max (\mu \circ \nu) $ and $ \max (\nu' \circ \nu) < \max (\mu \circ \nu) $ for any $\mu' < \mu$ and $\nu' < \nu$. 
 So the highest word of $ ch_q(L(\mu))  \circ ch_q(L(\nu)) $ can only come from the shuffle product $\mu \circ \nu$. 
Hence 
$$  \mu \odot \nu = \max (\mu \circ \nu).$$

\end{proof}

  More generally, the same proof shows that for any finite formal series $R$ and $S$ on $W$ with coefficients in $\mathbb{Z}[q,q^{-1}]$, one has $ \max (R \circ S) =  \max \left( max(R) \circ max(S) \right)$. The following Corollary is then a direct consequence of Proposition~\ref{KRprod}.
  
  \begin{cor} \label{littlecor}
   Let $\mu, \mu'$ two dominant words. Write their canonical factorizations
   $$  \mu = ({\bf i}^{(1)})^{n_1} \cdots ({\bf i}^{(r)})^{n_r} \qquad and \qquad 
 \mu' = ({\bf i'}^{(1)})^{n'_1} \cdots ({\bf i'}^{(r')})^{n'_{r'}} $$
  with  ${\bf i}^{(1)} > \cdots > {\bf i}^{(r)}$ and   ${\bf i'}^{(1)} > \cdots > {\bf i'}^{(r')}$.
 Assume ${\bf i}^{(r)} \geq {\bf i'}^{(1)}$.
  
  Then 
  $$ \mu \odot \mu'  = ({\bf i}^{(1)})^{n_1} \cdots ({\bf i}^{(r)})^{n_r} ({\bf i'}^{(1)})^{n'_1} \cdots ({\bf i'}^{(r')})^{n'_{r'}}. $$
   \end{cor}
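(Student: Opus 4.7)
The plan is to apply Proposition~\ref{maxshuffle} and reduce the statement to computing $\max(\mu \circ \mu')$. Setting $W := ({\bf i}^{(1)})^{n_1} \cdots ({\bf i}^{(r)})^{n_r} ({\bf i'}^{(1)})^{n'_1} \cdots ({\bf i'}^{(r')})^{n'_{r'}}$, we aim to show that $\mu \odot \mu' = W$. The hypothesis ${\bf i}^{(r)} \geq {\bf i'}^{(1)}$ guarantees that the sequence of Lyndon factors appearing in $W$ is (weakly) decreasing; by Theorem~\ref{domfacto}(ii), $W$ is therefore a dominant word. Its canonical factorization is either the juxtaposition already written (if ${\bf i}^{(r)} > {\bf i'}^{(1)}$), or obtained by merging $({\bf i}^{(r)})^{n_r}$ and $({\bf i'}^{(1)})^{n'_1}$ into $({\bf i}^{(r)})^{n_r + n'_1}$ (if the two factors coincide); in both cases the multiset of Lyndon factors of $W$ equals the union of those of $\mu$ and of $\mu'$.

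For the lower bound $\max(\mu \circ \mu') \geq W$, the identity permutation in $\mathfrak{S}_{|\mu|,|\mu'|}$ contributes the concatenation $W$ to $\mu \circ \mu'$ with coefficient $q^0$. Since each coefficient appearing in Definition~\ref{shuffle} is a positive monomial $q^{-e(\sigma)}$, the sum over $\sigma \in \mathfrak{S}_{|\mu|,|\mu'|}$ cannot cancel $W$, which therefore really appears in $\mu \circ \mu'$ with nonzero coefficient. For the reverse inequality, any word $w$ appearing in $\mu \circ \mu'$ is an interleaving of the letters of $\mu$ and of $\mu'$ that preserves the internal order of each; in particular the letters of every individual Lyndon factor of $\mu$ (or of $\mu'$) still occur in order inside $w$. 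Consequently $w$ also occurs in the iterated quantum shuffle $({\bf i}^{(1)})^{\circ n_1} \circ \cdots \circ ({\bf i}^{(r)})^{\circ n_r} \circ ({\bf i'}^{(1)})^{\circ n'_1} \circ \cdots \circ ({\bf i'}^{(r')})^{\circ n'_{r'}}$, whose maximum equals $W$ by Proposition~\ref{KRprod} applied to the canonical factorization of $W$. Combining the two bounds yields $\max(\mu \circ \mu') = W$, and Proposition~\ref{maxshuffle} delivers $\mu \odot \mu' = W$.

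The only subtle point is the set-theoretic comparison underlying the upper bound: an interleaving respecting the global order of $\mu$ seen as a single word automatically respects the order inside each of its Lyndon subfactors, and similarly for $\mu'$; hence the set of words appearing (with nonzero coefficient) in $\mu \circ \mu'$ embeds into the analogous set for the iterated shuffle. Combined with the positivity of the quantum shuffle coefficients, which prevents any spurious cancellation on either side, this comparison is the main content of the argument; the rest follows formally from Propositions~\ref{KRprod} and~\ref{maxshuffle}.
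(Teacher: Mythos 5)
Your proof is correct, and it rests on the same two pillars as the paper's: Proposition~\ref{maxshuffle} to reduce $\mu\odot\mu'$ to $\max(\mu\circ\mu')$, and Proposition~\ref{KRprod} applied to the concatenation, whose canonical factorization is exactly the weakly decreasing list of Lyndon factors thanks to the hypothesis ${\bf i}^{(r)}\geq{\bf i'}^{(1)}$. Where you diverge is the middle step. The paper writes $\mu=\max(R)$ and $\mu'=\max(R')$ with $R,R'$ the iterated shuffles of the Lyndon factors, and invokes the extension of Proposition~\ref{maxshuffle} to formal series, $\max(R\circ S)=\max\left(\max(R)\circ\max(S)\right)$ (stated in the paper right after that proposition), to get $\max(\mu\circ\mu')=\max(R\circ R')$ in one line. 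You instead prove the equality $\max(\mu\circ\mu')=W$ by two direct inequalities: the lower bound from the identity shuffle together with the positivity of the coefficients $q^{-e(\sigma)}$ (which indeed rules out cancellation, since every coefficient is a sum of monomials with coefficient $+1$), and the upper bound from the observation that every interleaving of $\mu$ and $\mu'$ is a fortiori an interleaving of all their Lyndon factors, so the word-support of $\mu\circ\mu'$ embeds in that of the full iterated shuffle, whose maximum is $W$ by Proposition~\ref{KRprod}. This unpacking is slightly longer but more self-contained, as it avoids appealing to the formal-series version of Proposition~\ref{maxshuffle}; your interleaving-embedding argument is essentially the word-level content of associativity of the shuffle product, which the paper uses implicitly. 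Your remark on merging $({\bf i}^{(r)})^{n_r}$ and $({\bf i'}^{(1)})^{n'_1}$ when the factors coincide is a careful touch the paper glosses over, and it is handled correctly since Proposition~\ref{KRprod} only needs the weakly decreasing list of factors.
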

   
    \begin{proof}
    Setting $R := ({\bf i}^{(1)})^{n_1} \circ \cdots \circ ({\bf i}^{(r)})^{n_r}$ and $R' := ({\bf i'}^{(1)})^{n'_1} \circ \cdots \circ ({\bf i'}^{(r)})^{n'_{r'}}$, Proposition~\ref{KRprod} implies  $\mu = \max (R)$ and $\mu' = \max (R')$. Then by Proposition~\ref{maxshuffle}, one has:
  \begin{align*}
   \mu \odot \mu'  &=   \max( \mu \circ \mu' )  &\text{ by  Proposition~\ref{maxshuffle} }  \\
          &= \max (max(R) \circ max(R'))\\
          &= \max( R \circ R' ) &\text{ by  Proposition~\ref{maxshuffle} }  \\
          &= \max \left( ({\bf i}^{(1)})^{n_1} \circ \cdots \circ ({\bf i}^{(r)})^{n_r} \circ ({\bf i'}^{(1)})^{n'_1} \circ \cdots \circ ({\bf i'}^{(r)})^{n'_{r'}} \right). 
  \end{align*}
The assumption ${\bf i}^{(r)} \geq {\bf i'}^{(1)}$ implies that $({\bf i}^{(1)})^{n_1} \cdots ({\bf i}^{(r)})^{n_r}  ({\bf i'}^{(1)})^{n'_1}  \cdots  ({\bf i'}^{(r)})^{n'_{r'}} $ is the canonical factorization of the concatenation $\mu \mu'$. Hence by Proposition~\ref{KRprod}, we get 
$$ \mu \odot \mu' = \mu \mu' = ({\bf i}^{(1)})^{n_1} \cdots ({\bf i}^{(r)})^{n_r} ({\bf i'}^{(1)})^{n'_1} \cdots ({\bf i'}^{(r')})^{n'_{r'}}. $$
    \end{proof}
  
  One can now state the main result of this section. It can be seen as a generalization of Corollary~\ref{littlecor}, as we now drop the hypothesis ${\bf i}^{(r)} \geq {\bf i'}^{(1)}$.

   \begin{prop} \label{prod}
  Let $\mu, \mu'$ two dominant words. Write their canonical factorizations
   $$  \mu = ({\bf i}^{(1)})^{n_1} \cdots ({\bf i}^{(r)})^{n_r} \qquad and \qquad 
 \mu' = ({\bf i'}^{(1)})^{n'_1} \cdots ({\bf i'}^{(r')})^{n'_{r'}} . $$
Let $ \{ {\bf j}^{(1)}, \ldots , {\bf j}^{(s)} \}$ be the set of all the words ${\bf i}^{(1)},  \ldots , {\bf i}^{(r)}, {\bf i'}^{(1)}, \ldots , {\bf i'}^{(r')}$ ranged in the decreasing  order. Let $m_1, \ldots , m_s ,m'_1 \ldots m'_s$ the positive integers uniquely determined by 
  $$ \mu = ({\bf j}^{(1)})^{m_1} \cdots ({\bf j}^{(s)})^{m_s} \qquad , \qquad  \mu' = ({\bf j}^{(1)})^{m'_1} \cdots ({\bf j}^{(s)})^{m'_s}. $$
 Then 
$$ \mu \odot \mu' = ({\bf j}^{(1)})^{m_1 + m'_1} \cdots ({\bf j}^{(s)})^{m_s + m'_s}. $$
 \end{prop}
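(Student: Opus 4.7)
The plan is to reduce to Proposition~\ref{KRprod} by rearranging the Lyndon factors inside an iterated quantum shuffle product. First I would invoke Proposition~\ref{maxshuffle} to rewrite $\mu \odot \mu' = \max(\mu \circ \mu')$, then expand both $\mu$ and $\mu'$ via their canonical factorizations using Proposition~\ref{KRprod}. Combining this with the identity $\max(R \circ S) = \max(\max(R) \circ \max(S))$ recorded in the paragraph immediately following the proof of Proposition~\ref{maxshuffle}, I would obtain
$$ \mu \odot \mu' = \max \left( (\mathbf{i}^{(1)})^{\circ n_1} \circ \cdots \circ (\mathbf{i}^{(r)})^{\circ n_r} \circ (\mathbf{i'}^{(1)})^{\circ n'_1} \circ \cdots \circ (\mathbf{i'}^{(r')})^{\circ n'_{r'}} \right). $$

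The crux of the argument is then the observation that the maximum of an iterated quantum shuffle product is invariant under permutation of its factors. By Definition~\ref{shuffle}, the coefficient of any word $w$ in $\mathbf{a} \circ \mathbf{b}$ is a non-zero sum of monomials $q^{-e(\sigma)}$ with strictly positive integer multiplicities, so the support of $\mathbf{a} \circ \mathbf{b}$ is exactly the set of all interleavings of $\mathbf{a}$ and $\mathbf{b}$; this set is manifestly symmetric in $\mathbf{a}$ and $\mathbf{b}$. Iterating, the support of a $k$-fold shuffle $\mathbf{a}^{(1)} \circ \cdots \circ \mathbf{a}^{(k)}$ is the set of all $k$-interleavings of the $\mathbf{a}^{(l)}$, which is invariant under permutations of the factors. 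In particular, its maximum is unchanged.

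Applying this symmetry to reorder the Lyndon factors in decreasing order, the multiset of factors above coincides with the multiset consisting of $(\mathbf{j}^{(k)})^{\circ (m_k + m'_k)}$ for $1 \leq k \leq s$, so
$$ \mu \odot \mu' = \max \left( (\mathbf{j}^{(1)})^{\circ (m_1 + m'_1)} \circ \cdots \circ (\mathbf{j}^{(s)})^{\circ (m_s + m'_s)} \right). $$
Since the $\mathbf{j}^{(k)}$ are strictly decreasing dominant Lyndon words, the concatenation $(\mathbf{j}^{(1)})^{m_1 + m'_1} \cdots (\mathbf{j}^{(s)})^{m_s + m'_s}$ is its own canonical factorization, and a final application of Proposition~\ref{KRprod} yields the claimed equality.

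The only delicate step in this program is justifying the permutation invariance of the maximum of the shuffle product. I do not expect a genuine obstacle: the claim reduces to the elementary combinatorial observation that the set of $k$-interleavings of a fixed family of sequences is symmetric in the inputs, together with the fact that the scalar coefficients arising in the quantum shuffle product (sums of $q^{-e(\sigma)}$) never vanish. Once this permutation invariance is recorded, the rest of the argument is a direct bookkeeping exercise.
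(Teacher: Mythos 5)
Your proof is correct, but it is genuinely different from the paper's. The paper proves Proposition~\ref{prod} by a hands-on induction on $r+r'$: it fixes a shuffle permutation $\sigma$ with $\sigma(\mu.\mu') \geq \mu \odot \mu'$ and runs a three-case analysis on the leading Lyndon blocks (${\bf i'}^{(1)} > {\bf i}^{(1)}$, $<$, $=$), crucially exploiting the type $A_n$ form of dominant Lyndon words as intervals $(k,k+1,\ldots,l)$ to force $\sigma$ to place the blocks in decreasing order. You instead bypass all of this with a positivity observation: since the quantum shuffle coefficients in Definition~\ref{shuffle} are sums of monomials $q^{-e(\sigma)}$ with coefficient $+1$, no cancellation can occur, so the support of an iterated shuffle product is exactly the (manifestly permutation-symmetric) set of interleavings, and hence its maximal word is invariant under reordering the factors; combined with Proposition~\ref{maxshuffle}, the identity $\max(R\circ S)=\max(\max(R)\circ\max(S))$, and Proposition~\ref{KRprod} applied to the merged decreasing factorization, the result follows at once --- this is exactly the paper's proof of Corollary~\ref{littlecor} with your symmetry lemma replacing the ordering hypothesis ${\bf i}^{(r)} \geq {\bf i'}^{(1)}$. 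Two remarks. First, your approach buys generality: nothing in it uses the type $A_n$ shape of dominant Lyndon words, so it proves the statement in any finite type, whereas the paper's argument (as the author says explicitly) is type-$A_n$-specific; it is also considerably shorter. Second, when you "iterate" the support computation you should record that the intermediate series have coefficients in $\mathbb{Z}_{\geq 0}[q^{\pm 1}]$, so positivity (hence cancellation-freeness) is preserved at each stage; this is the same caveat already implicit in the paper's remark following Proposition~\ref{maxshuffle} (the identity $\max(R\circ S)=\max(\max(R)\circ\max(S))$ needs some positivity or genericity of coefficients, and holds in all uses here for that reason), so your reliance on it is no worse than the paper's own. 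With that one sentence added, your argument is complete.
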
 
 
  Using Theorem~\ref{KR}, one can reformulate this statement  in the following way: write the positive roots in the decreasing order
 \begin{align} \label{decrorder}
     & \alpha_n > \alpha_{n-1} + \alpha_n  >  \alpha_{n-1} > \cdots > \alpha_i +  \alpha_{i+1} +\cdots+ \alpha_n > \cdots> \alpha_i > \cdots \nonumber \\
      &  \qquad   \qquad  \qquad  \qquad  \qquad  \qquad  \qquad  \qquad  \qquad  \qquad  \qquad  \cdots > \alpha_1 +\cdots+ \alpha_n > \alpha_1
     \end{align} 
  and let $r_n=n(n+1)/2$ denote the number of these positive roots.  
 Define a  map 
 \begin{equation} \label{isomonoids}
   \begin{array}{ccc}
        (\M , \odot) & \longrightarrow & (\mathbb{Z}_{\geq 0}^{r_n} , +) \\
            \mu        & \longmapsto      &  \overrightarrow{\mu}
   \end{array} 
  \end{equation}
 such that the $i$th coordinate of the vector $\overrightarrow{\mu}$ is equal to the multiplicity of the Lyndon word corresponding to the $i$th positive root (in the above decreasing order) in the canonical factorization of $\mu$.
 
  \begin{thm}
   \label{thmonoid}
  The map~\eqref{isomonoids} is an isomorphism of abelian monoids. 
   \end{thm}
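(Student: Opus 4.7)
The plan is to first prove Proposition~\ref{prod}, from which Theorem~\ref{thmonoid} follows as a short formal consequence.

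I would begin by recording two properties of the operation $\odot$ on $\M$. Commutativity $\mu\odot\mu' = \mu'\odot\mu$ follows from Proposition~\ref{maxshuffle} together with the observation that the set of words appearing in the quantum shuffle $\mu\circ\mu'$ (ignoring the $q$-coefficients) is precisely the set of interleavings of $\mu$ and $\mu'$, which is symmetric in the two arguments. Associativity is Lemma~\ref{lemKim}; one may also check it directly here, using Lemma~\ref{max} to observe that the map $w\mapsto \max(w\circ \mu'')$ is monotone on the support of $\mu\circ\mu'$, so that both $(\mu\odot\mu')\odot\mu''$ and $\mu\odot(\mu'\odot\mu'')$ coincide with the lex-maximum of all interleavings of the three words.

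Using associativity and commutativity, I rewrite
\begin{equation*}
\mu \odot \mu' \;=\; \Bigl(\bigodot_{i=1}^{r} ({\bf i}^{(i)})^{n_i}\Bigr) \odot \Bigl(\bigodot_{j=1}^{r'} ({\bf i'}^{(j)})^{n'_j}\Bigr) \;=\; ({\bf j}^{(1)})^{m_1 + m'_1} \odot \cdots \odot ({\bf j}^{(s)})^{m_s + m'_s},
\end{equation*}
the final $\odot$-product being performed left-to-right in the strictly decreasing order ${\bf j}^{(1)}>\cdots>{\bf j}^{(s)}$ and each $({\bf j}^{(k)})^{m_k+m'_k}$ being the dominant word whose canonical factorization is the single Lyndon word ${\bf j}^{(k)}$ repeated $m_k+m'_k$ times. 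I then apply Corollary~\ref{littlecor} iteratively: after $k$ combinations I have the concatenation $({\bf j}^{(1)})^{m_1+m'_1}\cdots({\bf j}^{(k)})^{m_k+m'_k}$, whose canonical factorization ends with ${\bf j}^{(k)}$ (since the Lyndon factors are in strictly decreasing order); the hypothesis ${\bf j}^{(k)}\geq {\bf j}^{(k+1)}$ of Corollary~\ref{littlecor} therefore holds, and combining with $({\bf j}^{(k+1)})^{m_{k+1}+m'_{k+1}}$ produces the further concatenation. After $s-1$ such steps I obtain Proposition~\ref{prod}.

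Theorem~\ref{thmonoid} is then immediate: by Theorem~\ref{domfacto}, dominant words are in bijection with multisets of dominant Lyndon words, and in type $A_n$ dominant Lyndon words correspond to the $r_n$ positive roots, so $\mu\mapsto\overrightarrow{\mu}$ is a bijection $\M\to\mathbb{Z}_{\geq 0}^{r_n}$ sending the empty word (the parameter of the trivial module) to $0$. Proposition~\ref{prod} translates to $\overrightarrow{\mu\odot\mu'}=\overrightarrow{\mu}+\overrightarrow{\mu'}$, giving an isomorphism of monoids; the commutativity of $(\M,\odot)$ is inherited from the target (or from the preliminary observation). I expect the subtlest point to be the bookkeeping in the iterative use of Corollary~\ref{littlecor}: one must verify at each step that the partial concatenation built so far is in canonical form so that the corollary's hypothesis can be read off directly. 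This is automatic from the fact that a concatenation of distinct Lyndon words in strictly decreasing order is itself a canonical factorization, but the induction statement has to be phrased with this invariant in mind.
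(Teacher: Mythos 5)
Your proposal is correct, but it reaches Proposition~\ref{prod} (and hence Theorem~\ref{thmonoid}) by a genuinely different route than the paper. The paper proves Proposition~\ref{prod} head-on: it fixes a shuffle permutation $\sigma \in \mathfrak{S}_{r,s}$ with $\sigma(\mu.\mu') \geq \mu \odot \mu'$ and runs an induction on $r+r'$ with a three-case analysis (${\bf i'}^{(1)} > {\bf i}^{(1)}$, $<$, $=$), tracking letter by letter how $\sigma$ must move the leading Lyndon blocks; this argument leans on the explicit type $A_n$ form of dominant Lyndon words as consecutive segments $k(k+1)\ldots l$ to control first letters and derive contradictions. You instead bootstrap from Corollary~\ref{littlecor} using commutativity and associativity of $\odot$: since $\mu = \bigodot_i ({\bf i}^{(i)})^{n_i}$ (iterated \ref{littlecor}, with the intermediate concatenations dominant by Theorem~\ref{domfacto}(ii) and in canonical form by uniqueness in Proposition~\ref{canofacto}), one may reorder and merge blocks freely and then reassemble by \ref{littlecor} again -- note that its hypothesis ${\bf i}^{(r)} \geq {\bf i'}^{(1)}$ allows equality, which is what the merging step $({\bf j}^{(k)})^{m_k} \odot ({\bf j}^{(k)})^{m'_k} = ({\bf j}^{(k)})^{m_k+m'_k}$ needs. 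Your two justifications of associativity are both sound, and the direct one is preferable here: the remark after Proposition~\ref{maxshuffle} gives $\max(R \circ S) = \max(\max(R)\circ\max(S))$ for formal series, so both parenthesizations equal $\max(\mu\circ\mu'\circ\mu'')$; this sidesteps the fact that invoking Lemma~\ref{lemKim} requires Assumption~\ref{assump}, which the paper does not explicitly verify for $R$-gmod (it holds for same-weight words by Lemma~\ref{max}, which is all that is needed, but your direct check avoids the point entirely). For commutativity, the symmetry of the shuffle support does the job because the coefficients $q^{-e(\sigma)}$ and the character coefficients are sign-positive, so no cancellation can kill the maximal word. What each approach buys: the paper's proof yields fine-grained information about which shuffles attain the maximum, while yours is shorter, avoids the case analysis, and -- since \ref{littlecor}, \ref{maxshuffle} and \ref{KRprod} are type-independent -- would establish Proposition~\ref{prod} verbatim in any finite type, whereas the paper's argument as written is confined to type $A_n$.
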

  
 \begin{proof}[Proof of Proposition~\ref{prod}]

 For simplicity we use a slight change of notation for the proof: we write
 $$ \mu = {\bf i}^{(1)} \cdots {\bf i}^{(r)} \qquad  and \qquad  \mu' = {\bf i'}^{(1)} \cdots {\bf i'}^{(r')} $$
  with  ${\bf i}^{(1)} \geq \cdots \geq  {\bf i}^{(r)}$ and   ${\bf i'}^{(1)} \geq \cdots \geq {\bf i'}^{(r')}$ dominant Lyndon words not necessarily distinct. 
 Let  $n$ (resp. $n'$) be the length of $\mu$ (resp. $\mu'$). The starting point is the word $\mu . \mu'$ which is the concatenation of the words $\mu$ and $\mu'$ and we consider permutations $\sigma \in \mathfrak{S}_{r,s} $, i.e. whose restrictions to  $ [| 1;n|]$ and $[|n+1;n+n'|]$ are increasing (see Definition~\ref{shuffle}).
 
 First note that the word $\mu \odot \mu'$ indeed appears in the quantum shuffle product of $\mu$ with $\mu'$: consider the permutation $\sigma$ simply defined by rearranging the blocks  $({\bf i}^{(1)}),\ldots, ({\bf i}^{(r)}), ({\bf i'}^{(1)}),\ldots, ({\bf i'}^{(r')})$ of the concatenation $\mu . \mu'$ and put them in the decreasing order. 
 
  \bigskip
  
   We write $\mu= h_1, \ldots, h_n$ and $\mu' = h'_1, \ldots , h'_{n'}$. The concatenation of $\mu$ and $\mu'$ is then 
   $\mu . \mu' = h_1, \ldots, h_n, h'_1, \ldots , h'_{n'}$. As in Definition~\ref{shuffle}, we set $h_{n+1} := h'_1 , \ldots , h_{n+n'} := h'_{n'}$ and thus $\mu . \mu' = h_1 , \ldots , h_{n+n'}$. 
   We set $\sigma (\mu . \mu') := h_{\sigma^{-1}(1)} , \ldots , h_{\sigma^{-1}(n+n')}$ for any permutation $\sigma \in  \mathfrak{S}_{r,s}$.
   From now on we fix  $\sigma \in  \mathfrak{S}_{r,s} $ and we assume that the word  $\sigma(\mu . \mu')$ is greater than or equal to $ \mu \odot \mu'$ (for the lexicographic order). We show that under this assumption, one necessarily has $\sigma(\mu . \mu') = \mu \odot \mu'$.

    The proof is based on an induction on $r + r'$ or equivalently on the sum of the lengths of $\mu$ and $\mu'$. 
   
   We first look at the action of  $\sigma$ on the Lyndon words  ${\bf i}^{(1)}$ and  ${\bf i'}^{(1)}$  and  show  that $\sigma$ necessarily rearranges these two blocks so that in the word $\sigma(\mu . \mu')$  they will appear in the decreasing order. Then, considering the restriction of the action of $\sigma$ on the other Lyndon words, we find ourselves considering a shuffle of parameters $\tilde{\mu}$ and $\tilde{\mu'}$, one of them being of length strictly smaller than the corresponding initial parameter.  
 
 \bigskip
 
  {\bf First case: ${\bf i'}^{(1)} > {\bf i}^{(1)}$}. Then the word $\mu \odot \mu'$ begins with ${\bf i'}^{(1)}$. 
   
   The two words  ${\bf i}^{(1)}$ and  ${\bf i'}^{(1)}$  are dominant Lyndon words so we write them ${\bf i}^{(1)}=(k,k+1, \ldots ,k+d_1)$ and ${\bf i'}^{(1)}=(k',k'+1, \ldots ,d'_1)$  with either $k' > k$ or $k'=k$ and $d'_1 > d_1$.

  We first show that the assumption $\sigma(\mu . \mu') \geq \mu \odot \mu'$ implies $ \sigma(n+1)=1$. 
  Indeed, if  $\sigma(n+1) \geq  d_1 + 1$ then, as the restrictions of $\sigma$ to $[|1;n|]$ and $[| n+1;n+n'|]$ are increasing, we have $\sigma(1)=1, \ldots, \sigma(d_1)=d_1$. The word $\sigma(\mu . \mu')$ then begins with $(k, \ldots k+d_1, l, \ldots)$, where $l$ is equal to $k$ if $\sigma(n+1) >  d_1 + 1$ and to $k'$ if $\sigma(n+1) = d_1 + 1$. 
   If $k<k'$ then $ k, \ldots, k+d_1 < {\bf i'}^{(1)} $ and hence $\sigma(\mu . \mu') < \mu \odot \mu'$. 
   If $k'=k$ and $d'_1 > d_1$ then 
  \begin{align*}
   k, \ldots k+d_1, l \ldots  &=  k', \ldots , k' + d_1 , k' \\
                                            &<  k', \ldots , k' + d_1 , k' + d_1 + 1 \\
                                            &\leq k', \ldots k' + d'_1 \\
                                            &= {\bf i'}^{(1)} 
\end{align*}   
and  the conclusion is the same.

  If $\sigma(n+1) \in \{2,3,\ldots, d_1 \}$, then $\sigma(\mu . \mu')$ begins with $ (k,k+1,\ldots,k+p,k',\ldots)$ where $p$ is some integer such that $0 \leq p < d_1$. 
  If $k<k'$ then $k,k+1,\ldots,k+p,k' < {\bf i'}^{(1)} $ and hence $ \sigma(\mu . \mu') < \mu \odot \mu'$. 
  If $k'=k$ and $d'_1 > d_1$ then 
  \begin{align*}
   k,k+1,\ldots,k+p,k' &= k,k+1,\ldots,k+p,k \\
                                   &< k,k+1,\ldots,k+p,k+p+1 \\
                                   &\leq k,k+1,\ldots,k + d_1 \\
                                   &< k,k+1,\ldots,k+p,k + d'_1 \\
                                   &= {\bf i'}^{(1)} 
  \end{align*}
  and  the conclusion is the same. 
   Thus $\sigma(n+1)=1$. 
      
      As the restrictions of $\sigma$ to $[|1;n|]$ and $[| n+1;n+n'|]$ are increasing, $\sigma^{-1}(2)$ is either equal to $1$ or to $n+2$; but the first possibility gives a word beginning with $k'k$ which is obviously  strictly smaller than $\mu \odot \mu'$. Hence   $\sigma(n+2)=2$. Then by iterating this, we see that necessarily $\sigma(n+1)=1, \ldots , \sigma(n+d'_1)=d'_1$. In other words $\sigma$  sends the blocks ${\bf i'}^{(1)}$ on the left of the blocks ${\bf i}^{(1)}$, i.e. at the beginning of the word $\sigma(\mu . \mu')$.
 
 \bigskip
 
  {\bf Second case: ${\bf i'}^{(1)} < {\bf i}^{(1)}$.} Then $\mu \odot \mu'$ begins with $({\bf i}^{(1)})$ and with the previous notations, one has  either $k' < k$ or $k'=k$ and $d'_1 < d_1$. 
 
 We  show that the assumption $\sigma(\mu . \mu') \geq \mu \odot \nu$ implies  $ \sigma(n+1) > d_1$. 
 
 Indeed, if $ \sigma(n+1) \in \{2,\ldots, d_1 \}$, then $\sigma(\mu . \mu')$ begins with $(k,\ldots,k+p,k')$  where $p$ is some integer such that $0 \leq p < d_1$. But then 
 \begin{align*}
   k,k+1,\ldots,k+p,k' &\leq  k,k+1,\ldots,k+p,k \\
                                   &< k,k+1,\ldots,k+p,k+p+1 \\
                                   &\leq k,k+1,\ldots,k + d_1 \\                               
                                   &= {\bf i}^{(1)} 
  \end{align*}
 and hence $\sigma(\mu . \mu') < \mu \odot \mu'$. 
 
  If $\sigma(n+1)=1$, then $\sigma(\mu . \mu') \geq \mu \odot \mu'$ implies $k'=k$ (and hence $d'_1 < d_1$). But then it is easy to see that necessarily $\sigma(n+2)=2, \ldots , \sigma(n+d'_1)=d'_1$, i.e. $\sigma(\mu . \mu')$ begins with $(k,\ldots,k+d'_1,\ldots)$. The letter coming after $k+d'_1$ is either the first letter of ${\bf i}^{(1)}$ (if $\sigma(1)=d'_1 + 1$) or the first letter of ${\bf i'}^{(2)}$ (if $\sigma(n+d'_1 + 1)=d'_1 + 1$); in both cases it is smaller than $k$ and in particular smaller than $k+d'_1 + 1$ and hence $\sigma(\mu . \mu') < \mu \odot \mu'$.
  Thus $ \sigma(n+1) > d_1$. 
 
 In particular, $\sigma(1)=1,\ldots, \sigma(d_1)=d_1$ (in other words, $\sigma$ fixes the block ${\bf i}^{(1)}$, i.e. leaves it at the beginning of the resulting word.
 
 \bigskip
 
  {\bf Third case: ${\bf i'}^{(1)} = {\bf i}^{(1)}$}. Then the word $\mu \odot \mu'$ begins with $({\bf i}^{(1)})^{2}$. 
 
 We  show that under the assumption $\sigma(\mu . \mu') \geq \mu \odot \mu'$, one has either  $ \sigma(n+1) =1, \ldots ,\sigma(n+d_1) = d_1$ (i.e. $\sigma$ sends the block ${\bf i}^{(1)}$ coming from $\mu'$ to the left of the block ${\bf i}^{(1)}$ coming from $\mu$) or   $ \sigma(1) =1, \ldots ,\sigma(d_1) = d_1$ (i.e. $\sigma$ fixes the block ${\bf i}^{(1)}$ coming from $\mu$).
 
  Indeed, as the restrictions of $\sigma$ to $[|1;n|]$ and $[| n+1;n+n'|]$ are increasing, $\sigma^{-1}(1)$ is either equal to $1$ or to $n+1$. 
  
  If $\sigma(1)=1$, then $\sigma(n+1)$ is necessarily strictly greater than $d_1$, otherwise $\sigma(\mu . \mu')$ would begin with $(k,k+1,\ldots,k+p,k,\ldots)$ (where $p$ is some integer such that $0 \leq p < d_1$) and would be strictly smaller than $\mu \odot \mu'$. Hence in this case we get $ \sigma(1) =1, \ldots ,\sigma(d_1) = d_1$. 
  
  If $\sigma(n+1)=1$, then the same argument shows that $\sigma(1)$ is necessarily strictly greater than $d_1$, and hence we get $ \sigma(n+1) =1, \ldots ,\sigma(n+d_1) = d_1$.
 
 \bigskip

  In conclusion, we have shown that the permutations we are seeking fix the block ${\bf i}^{(1)}$ if ${\bf i}^{(1)} > {\bf i'}^{(1)}$,  send the block ${\bf i'}^{(1)}$ to the left of the block ${\bf i}^{(1)}$ if ${\bf i}^{(1)}<{\bf i'}^{(1)}$, and  send either ${\bf i}^{(1)}$ or ${\bf i'}^{(1)}$ to the beginning of the resulting word  if ${\bf i}^{(1)} = {\bf i'}^{(1)}$.
  The desired result follows by induction on $r+r'$. 
     
      \end{proof}

 \subsection{A mutation rule for dominant words}

 We now  use Theorem~\ref{thmonoid} (or equivalently Proposition~\ref{prod}) to obtain a mutation rule on the parameters of simple modules corresponding to cluster variables in the setting of \cite{KKKO}. We express it in a vector setting, i.e. in terms of the images of dominant words under the isomorphism~\eqref{isomonoids}. Recall that the image of any dominant word $\mu$ under the isomorphism~\eqref{isomonoids} is the vector $\overrightarrow{\mu}$ whose $i$th coordinate is equal to the multiplicity  of the Lyndon word corresponding to the $i$th positive root (in the decreasing order~\eqref{decrorder}) in the canonical factorization of $\mu$. Such vectors are of size  $r_n$, the number of positive roots  in type $A_n$ ($r_n = n(n+1)/2)$).

     \begin{ex}  
    In type $A_2$, there are three positive roots: $\alpha_2 > \alpha_1 + \alpha_2 > \alpha_1$.
     The word $21$ will be represented by the vector $ ~^t (1,0,1)$.
  
   In type $A_3$ there are six positive roots: $ \alpha_3 > \alpha_2 + \alpha_3 > \alpha_2 >  \alpha_1 + \alpha_2 + \alpha_3 > \alpha_1 +  \alpha_2 >  \alpha_1.$
The word $2312$ will be represented by the vector $~^t (0,1,0,0,1,0)$  and the word $321$ by the vector $ ~^t (1,0,1,0,0,1)$. 
 \end{ex}
 
  \smallskip
  \smallskip
   
  Let us consider a quantum monoidal seed $ \mathcal{S} := ( \{M_i\}_{i \in I} ,B, \Lambda,D)$ in the sense of \cite{KKKO}. Recall that $I$ splits into $I = J_{ex}  \cup J_{fr}$ with the $\{ [M_i] \}_{i \in J_{ex}}$ corresponding to unfrozen variables  and   the $\{ [M_i] \}_{i \in J_{fr}}$ corresponding to frozen variables. For every $i \in I$, let $\mu_i$ be the parameter of the simple module $M_i$ and $\overrightarrow{\mu_i}$ the corresponding vector.
  
\begin{rk} \label{vector}

 \begin{enumerate}
  
   \item The abelian monoid isomorphism~\eqref{isomonoids} naturally extends to an abelian group isomorphism between the respective Grothendieck groups of $(\M,\odot)$ and $(\mathbb{Z}_{\geq 0}^{r_n} , +)$, namely
  \begin{equation} \label{isogp}
  (\G , \odot) \simeq (\mathbb{Z}^{r_n} , +). 
  \end{equation}
 Under this isomorphism,  the inverse in $\G$ of a parameter $\mu \in \M$ corresponds to the opposite vector in $\mathbb{Z}^{r_n}$. For instance the vector corresponding to the generalized parameter $\mjh$ is 
 $$ \overrightarrow{\mjh} = \sum_{1 \leq i \leq n+m} b_{ij} \overrightarrow{\mu_i}. $$
   
    \item The lexicographic order $\leq$ on $\M$ and $\G$ also turns into a (total) ordering on $\mathbb{Z}^{r_n}$ through the above isomorphism: a vector $\overrightarrow{\mu_1}$ is strictly greater than a vector $\overrightarrow{\mu_2}$ if and only if the first non zero component of $ \overrightarrow{\mu_1} - \overrightarrow{\mu_2}$ is positive. 
  \end{enumerate}
  
   \end{rk}
   
   \smallskip
   
   Let $k$ be fixed in $J_{ex}$ and let us look at the mutation in direction $k$ of the seed $\mathcal{S}$. It leads to a new seed $ \mathcal{S}'$ with the same variables except $ M_k$ that has turned into $M'_k$ such that we have a short exact sequence of graded modules:
   
\begin{equation} \label{ses}
     0 \rightarrow q \bigodot_{b_{ik}>0} M_i^{\circ b_{ik}} \rightarrow q^{ \tilde{\Lambda}(M_k,M'_k)} M_k \circ M'_k \rightarrow \bigodot_{b_{ik}<0} M_i^{\circ (-b_{ik})} \rightarrow 0.
   \end{equation}
    
The next statement shows that one can deduce the parameter of the simple module $M'_k$ from the knowledge of the parameters $\mu_i$ and the exchange matrix $B$ of the seed $\mathcal{S}$.
   
     \begin{prop} \label{mutpara}
      
  Let $ \mu'_k$ be the parameter of the simple module $M'_k$ and $  \overrightarrow{\mu'_k}$ be the corresponding vector.
  Then  we have:
  $$ \overrightarrow{\mu'_k} = - \overrightarrow{\mu_k} + max \left( \sum_{b_{ik} > 0} b_{ik} \overrightarrow{\mu_i} , \sum_{b_{ik} < 0} (- b_{ik}) \overrightarrow{\mu_i} \right).$$
   
    \end{prop}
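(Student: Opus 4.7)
The plan is to pass to the Grothendieck ring via the short exact sequence~\eqref{ses} and then take highest parameters on both sides, using Theorem~\ref{thmonoid} to translate back to vector identities. First, I note that since $\mathcal{S}$ is a quantum monoidal seed, the modules $\{M_i\}_{i\neq k}$ pairwise commute and are real simples; by Lemma~\ref{comutlambda} and induction on the number of factors, the convolution products $A := \bigodot_{b_{ik}>0} M_i^{\circ b_{ik}}$ and $C := \bigodot_{b_{ik}<0} M_i^{\circ (-b_{ik})}$ are themselves (graded shifts of) simple modules in $R-gmod$. Their parameters in $\M$ are therefore, by the very definition of $\odot$ and Theorem~\ref{thmonoid},
\[
\psi([A]) = \bigodot_{b_{ik}>0} \mu_i^{\odot b_{ik}}, \qquad \psi([C]) = \bigodot_{b_{ik}<0} \mu_i^{\odot (-b_{ik})},
\]
corresponding under the isomorphism~\eqref{isogp} to the vectors $\sum_{b_{ik}>0} b_{ik}\overrightarrow{\mu_i}$ and $\sum_{b_{ik}<0} (-b_{ik})\overrightarrow{\mu_i}$ respectively.

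Next, taking the ungraded exact sequence~\eqref{ses} in $K_0(R-gmod)$ and ignoring the $q$-shifts (which do not affect dominant words), I obtain the identity
\[
[M_k] \cdot [M'_k] = [A] + [C].
\]
The right-hand side is a sum of classes of two simple modules with distinct parameters, so the highest parameter appearing in its decomposition as a sum of classes of simples is simply $\max(\psi([A]),\psi([C]))$ for the lexicographic order on $\M$. On the left-hand side, by Assumption~\ref{decomp} and the definition of $\odot$, the highest parameter appearing in the decomposition of $[L(\mu_k)]\cdot[L(\mu'_k)]$ is $\mu_k \odot \mu'_k$. Comparing these two highest parameters yields
\[
\mu_k \odot \mu'_k \;=\; \max\!\left(\,\bigodot_{b_{ik}>0} \mu_i^{\odot b_{ik}},\ \bigodot_{b_{ik}<0} \mu_i^{\odot (-b_{ik})}\right).
\]

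Finally, I apply the monoid isomorphism of Theorem~\ref{thmonoid} (and its extension~\eqref{isogp} to the Grothendieck group), under which $\odot$ becomes addition and the lexicographic order translates to the componentwise max as described in Remark~\ref{vector}. This transforms the displayed equality into
\[
\overrightarrow{\mu_k} + \overrightarrow{\mu'_k} \;=\; \max\!\left(\sum_{b_{ik}>0} b_{ik}\overrightarrow{\mu_i},\ \sum_{b_{ik}<0} (-b_{ik})\overrightarrow{\mu_i}\right),
\]
and rearranging gives the desired formula for $\overrightarrow{\mu'_k}$.

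The one point that requires care rather than being routine is the justification that taking "the highest parameter" genuinely commutes with the equality $[M_k\circ M'_k]=[A]+[C]$ in $K_0$: this uses crucially that $[A]$ and $[C]$ are themselves classes of simples (not arbitrary positive combinations), so no cancellation can hide a leading term. This in turn rests on the commuting property built into the quantum monoidal seed, which is why the argument goes through cleanly in this setting.
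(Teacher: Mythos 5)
Your proof is correct and takes essentially the same route as the paper's: both arguments use the commutation of the real simples $M_i$ to conclude that the two products in the exchange sequence~\eqref{ses} are themselves simple with parameters $\bigodot_{b_{ik}>0}\mu_i^{\odot b_{ik}}$ and $\bigodot_{b_{ik}<0}\mu_i^{\odot(-b_{ik})}$, pass to the Grothendieck ring, extract the highest dominant word on each side (the paper does this via graded characters and Theorem~\ref{KR}(ii), you via Assumption~\ref{decomp}, with the same key observation that positivity of the coefficients rules out cancellation of the leading term), and then translate through the isomorphism of Theorem~\ref{thmonoid}. One small correction of wording: in Remark~\ref{vector}(2) the $\max$ appearing in the final vector identity is the maximum of the two vectors for the \emph{total} order induced by the lexicographic order (first nonzero coordinate of the difference is positive), not a componentwise maximum, though your displayed formulas use it correctly.
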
 
    
  \begin{proof}
  
  As the real simple modules $M_i$ commute, the modules  $ \bigodot_{b_{ik}>0} {M_i}^{\circ b_{ik}}$  and $ \bigodot_{b_{ik}<0} {M_i}^{\circ (-b_{ik})}$  are simple. Thus they correspond to some dominant words $\mu_{+}$ and $\mu_{-}$. Using Theorem~\ref{KR} (ii), one can write
   \begin{equation*}
  \mu_{+} = max(L(\mu_{+})) 
                = max \left( \bigodot_{b_{ik}>0} {L(\mu_i)}^{\circ b_{ik}} \right) 
                = \bigodot_{b_{ik} > 0}  \mu_i^{\odot b_{ik}}.
   \end{equation*}         
 Under the isomorphism~\eqref{isomonoids} we get
$$ \overrightarrow{\mu_{+}} = \sum_{b_{ik}>0} b_{ik} \overrightarrow{\mu_i} . $$
Now the short exact sequence \eqref{ses}  gives the relation
$$  q^{ \tilde{\Lambda}(M_k,M'_k)} [M_k][M'_k] = q \prod_{b_{ik}>0}  [M_i]^{b_{ik}}  + \prod_{b_{ik}<0}  [M_i]^{-b_{ik}}  $$
 in the Grothendieck ring of the category $R-gmod$. 
 Taking the characters we get 
 $$  q^{ \tilde{\Lambda}(M_k,M'_k)} ch_q(M_k) \circ ch_q( M'_k) =  q ch_q(L(\mu_{+})) + ch_q(L(\mu_{-})).$$
 Looking at the highest weight on both sides of this equality we get
  \begin{align*}
 \mu_k \odot \mu'_k &= max \left( ch_q(M_k) \circ ch_q( M'_k) \right) \\
     &= max \left( max \left( ch_q (L(\mu_{+})) \right)  , max \left( ch_q (L(\mu_{-})) \right) \right).
 \end{align*}
 Applying isomorphism~\eqref{isomonoids}, we get
$$ \overrightarrow{\mu_k} + \overrightarrow{\mu'_k} = max(  \overrightarrow{\mu_{+}},  \overrightarrow{\mu_{-}} ) 
  = max \left( \sum_{b_{ik} > 0} b_{ik} \overrightarrow{\mu_i}  , \sum_{b_{ik} < 0}  (-b_{ik}) \overrightarrow{\mu_i} \right)$$   
which is the desired statement in the image of isomorphism~\eqref{isogp}.
 
  \end{proof}

 \subsection{An example in type $A_3$}

 In this subsection we apply Proposition~\ref{mutpara} to the example of the category $R-gmod$ for a Lie algebra of type $A_3$. This example provides an illustration of Theorem~\ref{mainthm} which will be proved in general type $A_n$ in Section~\ref{initseed}. 
 The category $R-gmod$ corresponds to $\Cw$ with $w = w_0$ the longest element of the Weyl group of $\mathfrak{g}$ (see Section~\ref{qmseedcw}). In type $A_3$ this element can be written as:
 $$ w_0 = s_1s_2s_3s_1s_2s_1.$$
 Theorem~\ref{initseedCw} provides an admissible pair (in the sense of Definition~\ref{KKKOdef6.1}), which gives rise to a quantum monoidal seed for this category. We denote this seed by ${\s}_0^3$. Firstly, one can see that $J_{ex} = \{1,2,3\}$ and $J_{fr}=\{4,5,6\}$ (see Section~\ref{qmseedcw}). The simple modules whose classes are the cluster variables of the seed ${\s}_0^3$ can be computed directly using \cite[Proposition 10.2.4]{KKKO}.  
 
 \begin{lem}
  The seed ${\s}_0^3$ for the category $R-gmod$ in type $A_3$ is given by three unfrozen variables $[L(1)], [L(12)], [L(21)]$ and three frozen variables $[L(123)], L(2312)], [L(321)]$ together with the following exchange matrix:
 $$ B_0 = \begin{pmatrix}  0 & 1 & -1  \\ -1 & 0 & 1  \\ 1 & -1 &  0 \\ 0 & -1 & 0  \\ 0 & 1 & -1  \\ 0 & 0 & 1 \end{pmatrix} .$$ 
 \end{lem}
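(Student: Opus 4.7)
The plan is to apply Theorem~\ref{initseedCw} directly to the category $R-gmod \simeq {\C}_{w_0}$ with the reduced expression $w_0 = s_1 s_2 s_3 s_1 s_2 s_1$, and to verify each ingredient of the statement by an explicit computation from the recipe of Section~\ref{qmseedcw}. First I tabulate the auxiliary sequences attached to the word $(i_1,\ldots,i_6) = (1,2,3,1,2,1)$: one obtains $1_+ = 4$, $2_+ = 5$, $3_+ = 7$, $4_+ = 6$, $5_+ = 7$, $6_+ = 7$, together with $1_- = 2_- = 3_- = 0$, $4_- = 1$, $5_- = 2$, $6_- = 4$. This immediately gives $J_{fr} = \{3,5,6\}$ and $J_{ex} = \{1,2,4\}$ in the intrinsic KKKO labeling, which after the relabeling $(1,2,4,3,5,6) \mapsto (1,2,3,4,5,6)$ used in the statement yields the claimed split into three unfrozen and three frozen variables.

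Next, for each $k \in \{1,\ldots,6\}$ I identify the determinantial module $M(k,0)$ from Definition~\ref{determimodbis} with a simple $L(\mu_k)$ by reading off its dominant word. By \cite[Proposition 10.2.4]{KKKO}, $M(k,0)$ is realized as the head of a convolution product of cuspidal modules $L(\beta_k) \circ L(\beta_{k_-}) \circ L(\beta_{(k_-)_-}) \circ \cdots$ indexed by the iterated predecessors of $k$ under the operation $k \mapsto k_-$, where $\beta_s = s_{i_1}\cdots s_{i_{s-1}}\alpha_{i_s}$. For the chosen reduced expression one computes $\beta_1 = \alpha_1$, $\beta_2 = \alpha_1+\alpha_2$, $\beta_3 = \alpha_1+\alpha_2+\alpha_3$, $\beta_4 = \alpha_2$, $\beta_5 = \alpha_2+\alpha_3$, $\beta_6 = \alpha_3$, so the corresponding Lyndon words $1, 12, 123, 2, 23, 3$ form a strictly decreasing sequence along each chain $(k, k_-, (k_-)_-, \ldots)$. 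Applying Proposition~\ref{KRprod} and Theorem~\ref{KR} to concatenate these Lyndon words into canonical factorizations then yields $M(1,0) = L(1)$, $M(2,0) = L(12)$, $M(3,0) = L(123)$, $M(4,0) = L(21)$, $M(5,0) = L(2312)$, and $M(6,0) = L(321)$, as claimed.

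Finally, I read off the exchange matrix from the quiver described in Section~\ref{qmseedcw}. Going through all pairs $1 \leq s < t \leq 6$ against the condition $s < t < s_+ < t_+$ together with the standard Cartan-adjacency requirement $(\alpha_{i_s}, \alpha_{i_t}) \neq 0$ (which rules out the otherwise-admissible pair $(s,t) = (1,3)$), and adding the arrows $s \to s_-$ whenever $s_- \geq 1$, I obtain exactly the seven arrows $1 \to 2$, $2 \to 3$, $2 \to 4$, $4 \to 1$, $4 \to 5$, $5 \to 2$, $6 \to 4$. Computing $b_{ij} = \#(i \to j) - \#(j \to i)$ for columns $j \in J_{ex} = \{1,2,4\}$ and then reindexing via the bijection of the first step reproduces the matrix $B_0$ exactly.

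The main obstacle lies in the second step: several dominant words can share the same weight, so the ambiguity must be resolved to assign each $M(k,0)$ the correct simple module. For instance, the common weight $\alpha_1+\alpha_2+\alpha_3$ of $M(3,0)$ and $M(6,0)$ is realized by the four dominant words $123, 231, 312, 321$. The selection is dictated by the chain of predecessors $k, k_-, (k_-)_-, \ldots$ coming from the reduced expression, and is precisely where the explicit convolution formula of \cite[Proposition 10.2.4]{KKKO} is indispensable.
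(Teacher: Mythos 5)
Your proposal is correct and rests on the same skeleton as the paper's proof --- instantiate the construction of Section~\ref{qmseedcw} for $w_0$, identify each determinantial module $M(k,0)$ of Definition~\ref{determimodbis} via \cite[Proposition 10.2.4]{KKKO} together with the Kleshchev--Ram classification, and read the matrix off the quiver --- but the execution differs in ways worth recording. First, you index the word left-to-right, $(i_1,\ldots,i_6)=(1,2,3,1,2,1)$, obtaining $J_{fr}=\{3,5,6\}$ and then relabeling; the paper indexes the letters from the right (compare the proof of Proposition~\ref{Aseed}, where $w_0$ is written $s_{r_n}\cdots s_1$), i.e.\ it effectively uses the commutation-equivalent word $(1,2,1,3,2,1)$, for which $J_{fr}=\{4,5,6\}$ holds directly with no relabeling. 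Since the two words differ by the single commutation $s_1s_3=s_3s_1$, the seeds agree up to your bijection, and I verified that your $s_{\pm}$ tables and the roots $\beta_k=\alpha_1,\ \alpha_1{+}\alpha_2,\ \alpha_1{+}\alpha_2{+}\alpha_3,\ \alpha_2,\ \alpha_2{+}\alpha_3,\ \alpha_3$ are all correct. Second --- and this is genuine added value --- you actually derive $B_0$ from the quiver, including the Dynkin-adjacency requirement $(\alpha_{i_s},\alpha_{i_t})\neq 0$, which the recap of the KKKO quiver in Section~\ref{qmseedcw} omits; without it the pair $(s,t)=(1,3)$ would contribute a spurious arrow and the matrix would come out wrong. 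Your seven-arrow list does reproduce $B_0$ exactly after relabeling, and this fills a step the paper's proof skips entirely: the paper only treats the module identifications and asserts the matrix.

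The one soft spot is the identification of the cuspidal inputs themselves. Your chain formula $M(k,0)\simeq \mathrm{hd}(V_k\circ V_{k_-}\circ\cdots)$ is fine, but for $k=2$ and $k=3$ (in your labels) the chain is a singleton, so it says only $M(k,0)=V_k$ and cannot distinguish, say, $V_3$ among the four simples $L(123)$, $L(231)$, $L(312)$, $L(321)$ of weight $\alpha_1+\alpha_2+\alpha_3$; likewise $V_2\in\{L(12),L(21)\}$ and $V_5\in\{L(23),L(32)\}$ are \emph{inputs} to, not outputs of, the chains for $k=4,5,6$. Your closing paragraph correctly points to \cite[Proposition 10.2.4]{KKKO} as resolving exactly this ambiguity, but not via the predecessor chain: the paper uses that proposition in its one-letter form, peeling off modules $L(i)$ along a reduced expression of $\lambda_k$, which is how it obtains $M(2,0)=\mathrm{hd}(L(1)\circ L(2))=L(12)$ and $M(3,0)=\mathrm{hd}(L(2)\circ L(1))=L(21)$ in its own labels. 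Applied in your labels this gives $V_2=\mathrm{hd}(L(1)\circ L(2))=L(12)$, $V_3=\mathrm{hd}\bigl(L(1)\circ \mathrm{hd}(L(2)\circ L(3))\bigr)=L(123)$ and $V_5=\mathrm{hd}(L(2)\circ L(3))=L(23)$; alternatively, you could observe that the convex order on $\Delta_{+}$ induced by your reduced word coincides with the lexicographic order on dominant Lyndon words $1<12<123<2<23<3$, so that the KKKO cuspidal modules are the Kleshchev--Ram cuspidals of \cite{KR}. Either repair is one line, and with it all of your final identifications, the frozen/unfrozen split, and the matrix $B_0$ are correct.
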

 
 \begin{proof}
 By definition of the modules $M(k,0)$ defining the underlying admissible pair of the seed ${\s}_0^3$ (see Section~\ref{qmseedcw}), one has
 
$\begin{array}{lll}
 M(1,0)  &= &M(s_1 \omega_1 , \omega_1) ) \\
 M(2,0) &= &M(s_1 s_2 \omega_2 , \omega_2)  \\
 M(3,0) &= &M(s_1 s_2 s_1 \omega_1 , \omega_1) \\
 M(4,0) &= &M(s_3s_1s_2s_1 \omega_1 , \omega_1)  \\
 M(5,0) &= &M(s_2s_3s_1s_2s_1 \omega_1 , \omega_1)  \\
 M(6,0) &= &M(s_1s_2s_3s_1s_2s_1 \omega_1 , \omega_1).
\end{array}$
 
 Using \cite[Proposition 10.2.4]{KKKO}, one gets $M(1,0)=L(1)$, $M(2,0)=hd(L(1) \circ L(2))=L(12)$, $M(3,0) = hd(L(2) \circ L(1))=L(21)$. The computations are similar for $M(k,0) , k \in \{4,5,6\}$.
 
 \end{proof}
 
The (ungraded) short exact sequences corresponding to the mutations in each of the three exchange directions can be written as follows:
 $$ 0 \rightarrow L(21) \rightarrow L(1) \circ L \rightarrow L(12) \rightarrow 0. $$
 $$ 0 \rightarrow L(1) \circ L(2312) \rightarrow L(12) \circ M  \rightarrow L(21) \circ L(123) \rightarrow 0. $$ 
$$ 0 \rightarrow L(12) \circ L(321)  \rightarrow L(21) \circ N  \rightarrow L(1) \circ L(2312) \rightarrow 0. $$
Let $\lambda$ (resp. $\mu$, $\nu$) be the parameters of the simple module $L$ (resp. $M$, $N$). We can compute these parameters using  Proposition~\ref{mutpara}. For instance consider the second of the above exact sequences. Then with the notations of Remark~\ref{vector}, a straightforward computation gives the parameter of $M$ as $\overrightarrow{\mu} =  ~^t (0,1,0,0,0,1)$. Hence $M=L(231)$. 
 
 In the same way one can compute $L=L(2)$ and $N=L(312)$.

  \smallskip
  \smallskip
  
 Let ${\s}_1$ be the seed obtained from the seed ${\s}_0$ by mutation in the first direction. 
   One can now show that ${\s}_0$ is compatible in the sense of Definition~\ref{compat} and ${\s}_1$ is not. 
  
 First we write the exchange matrix for ${\s}_1$:
 $$ B_1  = \begin{pmatrix}  0 & -1 & 1  \\ 1 & 0 & 0  \\ -1 & 0 &  0 \\ 0 & -1 & 0  \\ 0 & 1 & -1  \\ 0 & 0 & 1  \end{pmatrix}.$$ 
 Then for ${\s_0}$, the images under isomorphism~\eqref{isogp}  of the generalized parameters $\mjh$ are :
$$ \overrightarrow{\hat{\mu}_1} = ~^t (0,0,1,0,1,1) \quad  \overrightarrow{\hat{\mu}_2} = ~^t (0,1,-1,1,1,0)\quad  \overrightarrow{\hat{\mu}_3} = ~^t (1,-1,1,0,0,0) $$
and for ${\s_1}$ we get:
$$  \overrightarrow{\hat{\mu}_1} = ~^t (0,0,-1,0,1,-1) \quad  \overrightarrow{\hat{\mu}_2} = ~^t (0,1,-1,-1,1,0) \quad  \overrightarrow{\hat{\mu}_3} = ~^t (1,-1,2,0,-1,1). $$
 Combining Remark~\ref{vector}(2) and Remark~\ref{rkpsi}, one can see  that ${\s}_0$ is compatible in the sense of Definition~\ref{compat}  but it is not the case for ${\s}_1$.

 \bigskip

 More generally, given an initial quantum monoidal seed $(\{M_i\}_i , B)$,  Proposition~\ref{mutpara} allows us to compute explicitly the parameters of the simple modules appearing when mutating the initial seed an arbitrary number of times in any directions.

 \section{A compatible seed for $R-gmod$ in type $A$}
   \label{initseed}

 In this section we compute in type $A_n$ the parameters of the simple modules of the monoidal seed ${\s}_0^n$  arising from the construction of \cite{KKKO} (see Subsection~\ref{qmseedcw}) for the category $R-gmod$.

 \subsection{Statements of the main theorems}
  \label{statements}
  
  In this subsection we state the two main results of this paper, Theorems~\ref{initpara}  and ~\ref{mainthm}. Recall that ${\A}_q( \mathfrak{n}) = {\A}_q( \mathfrak{n}(w_0))$ where $w_0$ is the longest element of the Weyl group of $\mathfrak{g}$. The category $R-gmod$ coincides with the category $\mathcal{C}_{w_0}$ (see Section~\ref{qmseedcw}). In type $A_n$ we have 
 $$w_0 = (1 \dots n)(1 \ldots (n-1)) \cdots (12)(1).$$
 Recall that $r_n := n(n+1)/2$ stands for the length of $w_0$. In the category $R-gmod =  \mathcal{C}_{w_0}$ in type $A_n$, Theorem~\ref{initseedCw} provides an admissible pair (in the sense of Definition~\ref{KKKOdef6.1}), which gives rise to a quantum monoidal seed for this category. We denote this seed by ${\s}_0^n$.
 
 \smallskip
 \smallskip
 
 Our first main result is the following:
  
   \begin{thm} \label{initpara}

  The cluster variables of the seed ${\s}_0^n$ can be explicitly described in terms of parameters as follows:

$ \begin{array}{ccccc}
  [L(1)]  & {} & {} & {} & {} \\\relax
  [L(12)] & [L((2)(1))] & {} & {} & {} \\\relax
  [L(123)] & [L((23)(12))] & [L((3)(2)(1))] & {} & {} \\\relax
  \vdots & \vdots & {} & {} & {} \\\relax
  [L(1 \ldots k)] & [L \left( (2 \ldots k)(1 \ldots k-1) \right)] & \cdots & [L((k)\cdots (1))] & {} \\\relax
  \vdots  & \vdots & {} & {} & {} \\\relax
  [L(1 \ldots n)] & [L \left( (2 \ldots n)(1 \ldots n-1) \right)]  & \cdots & \cdots &  [L((n) \cdots (1))].  
\end{array}$

 The set of frozen variables corresponds to the last line and the set of unfrozen variables consists in the union of lines $1 \ldots n-1$.

 \end{thm}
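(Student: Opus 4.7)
The plan is to identify, for each $k \in \{1, \ldots, r_n\}$, the determinantial module $M(k,0) = M(\lambda_k, \omega_{i_k})$ from Definition~\ref{determimodbis} with the simple module $L(\mu^{(j,p)})$ whose dominant word is the entry of the triangular table at row $r = j+p-1$ and column $l = j$; here $(j,p)$ is the unique pair such that $k$ is the $p$th index in the $j$th block of the reduced expression $w_0 = (1 \cdots n)(1 \cdots n-1) \cdots (1)$. Written in canonical Lyndon form, the predicted dominant word is
\[\mu^{(j,p)} = (j \cdots j+p-1)(j-1 \cdots j+p-2) \cdots (1 \cdots p).\]

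My first step would be to verify that the two modules lie in the same homogeneous component $R(\beta)\text{-gmod}$. Using the standard formulas $s_i \omega_m = \omega_m - \delta_{im}\alpha_i$ and $s_i \alpha_m = \alpha_m - a_{im}\alpha_i$ in type $A_n$, a direct induction on $k$ along $w_0$ yields
\[\omega_{i_k} - \lambda_k = \sum_{l=1}^{j}(\alpha_l + \alpha_{l+1} + \cdots + \alpha_{l+p-1}),\]
which is exactly the weight of $L(\mu^{(j,p)})$ read off from its Lyndon factorization via the bijection of Theorem~\ref{domfacto}(i).

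To pin down the isomorphism class, I would induct on $k$ along the reduced expression using the convolution recursion for determinantial modules from \cite[Proposition 10.2.4]{KKKO} (the tool already applied in the $A_3$ example of Section~\ref{mutrule}). That recursion realizes $M(\lambda_k, \omega_{i_k})$ as the head of a convolution product of a cuspidal module $L(\nu_k)$ with an earlier determinantial module $M(\lambda_{k_{-}}, \omega_{i_k})$, where $k_{-} = \max\{l < k \mid i_l = i_k\}$. Assuming inductively that $M(\lambda_{k_{-}}, \omega_{i_k}) \simeq L(\mu^{(j-1,p)})$, Theorem~\ref{KR} identifies the head with the simple module whose dominant word is $\nu_k \odot \mu^{(j-1,p)}$, and Proposition~\ref{prod} computes this $\odot$-product by rearranging the Lyndon factors in decreasing lexicographic order. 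The main obstacle will be verifying that the cuspidal $\nu_k$ produced by the recursion is exactly the Lyndon word $(j \cdots j+p-1)$, so that it slots into the leading position of the canonical factorization and yields $\mu^{(j,p)}$; this should reduce to an explicit identification of $\lambda_{k_{-}} - \lambda_k$ with the positive root $\alpha_j + \cdots + \alpha_{j+p-1}$, which follows from the block structure of $w_0$ together with braid-type simplifications available in the subword $s_{i_{k_{-}+1}} \cdots s_{i_{k-1}}$ separating the two consecutive occurrences of $i_k$.
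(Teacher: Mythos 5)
Your plan rests on a convolution recursion that the cited reference does not supply, and this is where the argument breaks. \cite[Proposition 10.2.4]{KKKO} is the \emph{one-reflection} recursion $M(s_i\mu,\zeta)\simeq \mathrm{hd}\left(L(i)^{\circ \langle h_i,\mu\rangle}\circ M(\mu,\zeta)\right)$ -- this is exactly how the paper computes the $A_3$ seed, e.g. $M(2,0)=\mathrm{hd}(L(1)\circ L(2))$ -- and the new reflection is prepended at the \emph{left} end of the prefix $s_{i_1}\cdots s_{i_k}$. Iterating it therefore passes through intermediate modules $M(s_{i_l}\cdots s_{i_k}\omega_{i_k},\omega_{i_k})$ that are not among your inductive data $M(k_-,0)$, so it does not yield your single-step recursion $M(\lambda_k,\omega_{i_k})\simeq \mathrm{hd}\left(L(\nu_k)\circ M(\lambda_{k_-},\omega_{i_k})\right)$. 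To obtain that block recursion you would need two substantive unproved inputs: that the determinantial module $M(\lambda_k,\lambda_{k_-})$ is the Kleshchev--Ram cuspidal $L(j\cdots j{+}p{-}1)$, and a head-transitivity statement $\mathrm{hd}\left(M(\lambda_k,\lambda_{k_-})\circ M(\lambda_{k_-},\omega_{i_k})\right)\simeq M(\lambda_k,\omega_{i_k})$. The first input is itself an instance of the theorem being proved (computing the dominant word of a determinantial module), so as written the induction is circular at its core.

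A second flaw, fatal in the form you state it: heads of convolutions are \emph{not} the maximal-word constituents, so Theorem~\ref{KR} and Proposition~\ref{prod} do not "identify the head" with $L(\nu_k\odot\mu^{(j-1,p)})$; the $\odot$-calculus computes the maximal subquotient. Concretely, in the paper's own $A_3$ exchange sequence one has $\mathrm{hd}(L(1)\circ L(2))=L(12)$ while $1\odot 2=21$ is carried by the socle. Your step can be repaired only because $\nu_k$ dominates every Lyndon factor of $\mu^{(j-1,p)}$, so that $L(\nu_k)\circ L(\mu^{(j-1,p)})$ is a quotient of the standard module $\Delta(\mu^{(j,p)})$, whose irreducible head is $L(\mu^{(j,p)})$ by Theorem~\ref{KR}(i) -- but that argument is unavailable for the one-letter recursion actually provided by \cite[Proposition 10.2.4]{KKKO}, where the prepended cuspidal is typically smaller than the leading factor. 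Note also that your indexing is off against the seed: in $A_3$ one has $M(3,0)=L((2)(1))$, sitting in row $2$ of the table, whereas your rule (row $j+p-1$) places $k=3$ in row $3$ under either reading of the blocks of $w_0$; this is repairable bookkeeping, unlike the two gaps above. The paper avoids all of this by a completely different route: it reduces to the $n$ frozen modules by the nesting property (Proposition~\ref{Aseed}), computes their weights (Proposition~\ref{occurrences}), then uses commutation with every cuspidal $L(i)$ and explicit R-matrix valuation bounds (Corollary~\ref{Lambda} and Section~\ref{frozenpara}) to narrow each frozen parameter to a two-element set, and finally resolves the ambiguity with the mutation rule of Proposition~\ref{mutpara}, never needing to track heads along the reduced expression.
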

 
  The three following sections are devoted to some intermediate steps for the proof of Theorem~\ref{initpara}. Recall from Section~\ref{qmseedcw} that $J_{fr}$ denotes the index set of the frozen variables of the monoidal seed ${\s}_0^n$. In Section~\ref{initseedRgmod}, we prove that  $ |J_{fr}| = n$. We show that the knowledge of the dominant words of the $M_j , j \in J_{fr}$ is sufficient to recover the whole seed ${\s}_0^n$.  Section~\ref{weights} is devoted to the computation of the weights of the $M_j , j \in J_{fr}$. These weights are determined by the construction of \cite{KKKO} (see Section~\ref{qmseedcw}). In Section~\ref{frozenpara}, we use the fact that for any $j \in J_{fr}$, the module $M_j$ necessarily commutes with any other simple module in $R-gmod$. This strongly constrains the form of the corresponding dominant word. Together with the weights obtained in Section~\ref{weights}, we find at most $n$ possible dominant words, which is exactly the number of frozen variables computed in Section~\ref{initseedRgmod}. Hence we get a bijection between these parameters and the set of modules $ \{ M_j , j \in J_{fr} \}$. 
  
    We complete the proof in Section~\ref{proofs} by determining which parameter corresponds to every simple module, which is more precise than just a global bijection. The key argument is provided by Theorem~\ref{mutpara}.
    
    \smallskip
    \smallskip
    
    The following statement is our second main result. We deduce it from Theorem~\ref{initpara}.
   
   \begin{thm} \label{mainthm}

   The seed ${\s}_0^n$ is compatible in the sense of Definition~\ref{compat}.

 \end{thm}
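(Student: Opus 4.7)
The plan is to verify, for each exchange direction $j$ of $\s_0^n$, that the vector $\overrightarrow{\hat{\mu}_j} \in \mathbb{Z}^{r_n}$ is lexicographically positive---that is, its first nonzero coordinate is strictly positive. By Remark~\ref{rkpsi} combined with the group isomorphism of Remark~\ref{vector}, this is exactly the condition for $\tilde{\Psi}$ to be increasing on $\mathcal{G}_{\mathbf{x}}$, hence for $\s_0^n$ to be compatible. So the proof reduces to a lexicographic positivity check on each $\overrightarrow{\hat{\mu}_j}$.

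Theorem~\ref{initpara} indexes the cluster variables of $\s_0^n$ by pairs $(k,l)$ with $1 \leq l \leq k \leq n$, the parameter at $(k,l)$ being the dominant word
$$\mu_{k,l} = (l, l+1, \ldots, k)(l-1, l, \ldots, k-1)\cdots(1, 2, \ldots, k-l+1),$$
whose canonical factorization has $(l, l+1, \ldots, k)$ as its largest dominant Lyndon factor. The exchange directions are precisely the positions $(k,l)$ with $k \leq n-1$. Under the natural bijection between the linear indexing of Section~\ref{qmseedcw} associated to $w_0 = (1 \ldots n)(1 \ldots n-1)\cdots(1)$ and the triangular coordinates, the arrows of the second type $s \to s_-$ in Theorem~\ref{initseedCw} become diagonal arrows from $(k+1, l+1)$ to $(k,l)$, and are always present whenever $(k, l)$ is an exchange position. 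I would then decompose $\overrightarrow{\hat{\mu}_j} = \sum_i b_{ij}\,\overrightarrow{\mu_i}$ as a signed combination of vectors coming from the neighbors of $j$ in the quiver.

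The key combinatorial observation is that all the $\mu_i$ with $b_{ij} \neq 0$ sit at triangular positions in a small neighborhood of $(k,l)$, so every Lyndon word appearing in their canonical factorizations is of the form $(l', l'+1, \ldots, k')$ with $l' \leq l+1$ and $k' \leq k+1$. In the paper's order on dominant Lyndon words (larger starting letter first, then larger ending letter for the tiebreak), the strict maximum of all such words is $(l+1, l+2, \ldots, k+1)$, and it appears exactly once among the relevant parameters---namely as the leading Lyndon factor of $\mu_{k+1, l+1}$. Consequently, the coordinate of $\overrightarrow{\hat{\mu}_j}$ indexed by the positive root $\alpha_{l+1} + \cdots + \alpha_{k+1}$ equals $b_{(k+1, l+1),\,(k,l)} = +1$, while every coordinate indexed by a larger root vanishes. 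This establishes $\overrightarrow{\hat{\mu}_j} >_{\text{lex}} 0$ for every exchange $j$.

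The main obstacle will be the combinatorial bookkeeping in the last paragraph: one must list precisely the quiver adjacencies of $(k, l)$ (using both arrow types of Theorem~\ref{initseedCw}, not only the diagonal ones), verify that no Lyndon word strictly larger than $(l+1, \ldots, k+1)$ occurs in any contributing neighbor's parameter, and handle the boundary cases ($l = 1$, $l = k$, or $k = n - 1$), where certain potential neighbors lie outside the triangular array and the set of contributing $M_i$ shrinks. In each boundary case the diagonal arrow from $(k+1, l+1)$ persists (as soon as $k \leq n-1$), so the leading-coordinate argument goes through essentially unchanged.
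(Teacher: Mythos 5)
Your proposal is correct and follows essentially the same route as the paper: both reduce compatibility via Remark~\ref{rkpsi} (and the vector picture of Remark~\ref{vector}) to checking that each $\hat{\mu}_j = \bigodot_i \mu_i^{\odot b_{ij}}$ satisfies $\hat{\mu}_j \odot \mu > \mu$, and both verify this by computing $\hat{\mu}_j$ from the explicit exchange matrix of ${\s}_0^n$ together with the dominant words of Theorem~\ref{initpara}. Your leading-coordinate observation --- that $(l+1 \ldots k+1)$ is the unique maximal Lyndon word among all contributing neighbors of $(k,l)$ and enters with coefficient $+1$ --- is just a uniform packaging of the cancellation-and-comparison the paper carries out explicitly for the directions $r_{n-2}+j$ and declares ``similar'' for the others, and it matches the paper's signed neighbor set $\mu_{(k-1,l-1)}^{\odot -1},\ \mu_{(k-1,l)},\ \mu_{(k,l-1)},\ \mu_{(k,l+1)}^{\odot -1},\ \mu_{(k+1,l)}^{\odot -1},\ \mu_{(k+1,l+1)}$ including the boundary degenerations.
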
 

 In particular Conjecture~\ref{conjecture} holds for the category $R-gmod$ in type $A_n$.

 \subsection{Initial seed for $R-gmod$}
  \label{initseedRgmod}

  For $n \geq 1$, we consider a Lie algebra $\mathfrak{g}$ of type $A_n$. We let $\{\alpha_1, \ldots, \alpha_n \}$ denote the simple roots and $Q_{+}^{n} := \bigoplus_{i=1..n} \mathbb{N}\alpha_i$. We also denote $\Delta_{+}^{n}$ the set of positive roots, ${R-gmod}^n$ the category of (graded) finite dimensional representation of the quiver Hecke algebras associated with $\mathfrak{g}$, and  $\M^{n}$ the set of dominant words in bijection with the set of simple objects in 
  ${R-gmod}^n$ (up to isomorphism). There is a canonical embedding $\iota_{n}^{m}$ of $\M^{n}$ into $\M^{m}$ for any $m \geq n$. In particular the set of simple objects in ${R-gmod}^n$ is naturally included into the set of simple objects in ${R-gmod}^{m}$. We again denote $\iota_{n}^{m}$ this inclusion.

  Let $J_{ex}^n$ (resp. $J_{fr}^n$) denote the index set of the unfrozen variables (resp. frozen variables) of the seed ${\s}_0^n$ for every $n \geq 1$. We also set $J^n := J_{ex}^n \cup J_{fr}^n$.
  In order to prove Theorem~\ref{initpara}, we begin by determining the sets $J_{ex}^n$ and $J_{fr}^n$. We point out an inductive property of this seed: the set $\{ M_i , i \in J_{ex}^n \}$ coincides with the set $\{ \iota_{n-1}^{n} (M_i) , i \in J^{n-1} \}$.  
  
  \begin{prop} \label{Aseed}
  The cluster variables of the seed  ${\s}_0^n$ split into the following  exchange and frozen sets:
  
   \begin{enumerate}
     \item There are $n$ frozen variables in ${\s}_0^n$, which correspond to the classes of the last $n$ modules $M(r_{n-1} +1,0) , \ldots , M(r_n , 0)$.
     \item The set $\{ M_i , i \in J_{ex}^n \}$  coincides with the union of the sets $\{   \iota_{k}^{n} (M_i) , i \in J_{fr}^k \} , 1 \leq k \leq n-1$.
   \end{enumerate}
   
    \end{prop}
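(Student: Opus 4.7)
The plan is to exploit the block structure of the reduced expression
\[
w_0 = (s_1)\cdot(s_2 s_1)\cdot(s_3 s_2 s_1)\cdots(s_n s_{n-1}\cdots s_1)
\]
of the longest element that underlies the construction of ${\s}_0^n$ in Theorem~\ref{initseedCw}. Reading left to right, the positions $\{1,2,\ldots,r_n\}$ split into $n$ consecutive blocks $B_1,B_2,\ldots,B_n$, where $B_k$ has length $k$ and contains each of the indices $1,2,\ldots,k$ exactly once (occurring in the order $k,k{-}1,\ldots,1$).

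For assertion (1), I would argue directly from the definition $J_{fr} = \{k : k_+ = r+1\}$ recalled in Section~\ref{qmseedcw}. The last block $B_n$ occupies positions $r_{n-1}+1,\ldots,r_n$ and contains the $n$ distinct indices $1,\ldots,n$; since no subsequent block exists, each of these is a last occurrence of its index. Conversely, any position inside a block $B_l$ with $l<n$ carries an index in $\{1,\ldots,l\}$, which reappears in block $B_{l+1}$ (whose indices cover $\{1,\ldots,l+1\}$). Therefore $J_{fr}^n = \{r_{n-1}+1,\ldots,r_n\}$, producing exactly the $n$ frozen modules $M(r_{n-1}+1,0),\ldots,M(r_n,0)$.

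For assertion (2), the strategy is induction on $n$. The key point is that the concatenation of the first $n-1$ blocks is itself the reduced expression of the longest element of the Weyl group of $A_{n-1}$ (of length $r_{n-1}$), involving only the reflections $s_1,\ldots,s_{n-1}$. So for every $k\leq r_{n-1}$, the data $(\lambda_k,\omega_{i_k})$ defining the determinantial module $M(k,0)$ in ${\s}_0^n$ lies entirely in the $A_{n-1}$ sub-datum, and the corresponding unipotent quantum minor is preserved under the natural inclusion of quantum coordinate rings. Consequently, the simple module $M(k,0)$ of ${\s}_0^n$ agrees with $\iota_{n-1}^n$ applied to $M(k,0)$ of ${\s}_0^{n-1}$, for all $k\leq r_{n-1}$. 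Since $J_{ex}^n = \{1,\ldots,r_{n-1}\} = J_{ex}^{n-1} \sqcup J_{fr}^{n-1}$ and $\iota_k^n = \iota_{n-1}^n\circ \iota_k^{n-1}$, applying the induction hypothesis to $J_{ex}^{n-1}$ will yield the desired decomposition $\bigcup_{k=1}^{n-1}\{\iota_k^n(M_i) : i\in J_{fr}^k\}$.

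The main obstacle I expect is making precise the identification $M(k,0)^{(n)} = \iota_{n-1}^n \bigl(M(k,0)^{(n-1)}\bigr)$ for $k\leq r_{n-1}$. This reduces to verifying that the unipotent quantum minor $D(\lambda_k,\omega_{i_k})$, which characterizes $M(k,0)$ through Corollary~\ref{determimod}, depends only on the $A_{n-1}$ sub-datum when $i_1,\ldots,i_k<n$, and that the embedding of quantum coordinate rings respects these minors. This should follow from a careful reading of the constructions of Section~\ref{qmseedcw}: the fundamental weight $\omega_{i_k}$ with $i_k<n$ and the weight $\lambda_k$ both lie in the $A_{n-1}$ weight lattice, so both sides of the claimed equality satisfy the same character formula under $ch_q$, and Corollary~\ref{determimod} guarantees that the simple module they define is unique.
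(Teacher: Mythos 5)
Your proposal matches the paper's own proof: part (1) is established there by exactly the same last-occurrence analysis of the block-structured reduced expression of $w_0$ (yielding $J_{ex}^n = \{1,\ldots,r_{n-1}\}$ and $J_{fr}^n = \{r_{n-1}+1,\ldots,r_n\}$), and part (2) by the same observation that the first $r_{n-1}$ letters form the chosen reduced word for the longest element in type $A_{n-1}$, so that $\iota_{n-1}^n$ carries the frozen modules of ${\s}_0^{n-1}$ onto $M(r_{n-2}+1,0),\ldots,M(r_{n-1},0)$, followed by iteration. The identification $M(k,0)$ in ${\s}_0^n$ with $\iota_{n-1}^n$ of $M(k,0)$ in ${\s}_0^{n-1}$, which you flag as the main obstacle, is simply asserted in the paper, so your proposed verification via preservation of the unipotent quantum minors and the uniqueness statement of Corollary~\ref{determimod} is, if anything, more careful than the original.
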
 
   
 \begin{proof} 
 
  For the first statement, write $w_0 = (s_1 \ldots s_n)(s_1 \ldots s_{n-1}) \cdots (s_1s_2)(s_1) = s_{r_n} \cdots s_1$. Then for any $l \in \{ 1 , \ldots , r_{n-1} \}$, the letter $s_l$ is in $ \{ 1 , \ldots , n-1 \}$ and this letter obviously appears again in the word $w_0$ as $s_{l'}$ for some $l' > l$. In other words, $l_{+} \leq r_n$ and thus $l \in J_{ex}^n$. Conversely, if $ l \in \{ r_{n-1} +1, r_n \}$, then all the letters $s_{l'} , l' > l$ are distinct from $s_l$ and thus $l_{+} = r+1$.   Hence one has 
  $$ J_{ex}^n = \{ 1, \ldots , r_{n-1} \} \quad \text{and} \quad  J_{fr}^n = \{ r_{n-1} +1 , \ldots , r_n \}.$$
  In particular the modules corresponding to the  frozen variables of the seed ${\s}_0^n$ can be written as 
 
$  \begin{array}{rcl}
  M(r_{n-1}+1,0) &= &M \left( s_1(s_2s_1) \cdots (s_{n-1} \ldots s_1)s_n . \omega_n , \omega_n \right) \\
  \vdots &\vdots &\vdots \\
  M(r_n,0) &= &M \left( s_1(s_2s_1) \cdots (s_{n-1} \ldots s_1)(s_n \ldots s_1) . \omega_1 , \omega_1 \right).
  \end{array}$
 
 The second statement follows from the first one applied to the seeds ${\s}_0^k , 1 \leq k \leq n-1$. Indeed, one can write in the same way the modules $M_i , i \in J_{fr}^{n-1}$ as:

 $\begin{array}{c}
 M \left( s_1(s_2s_1) \cdots (s_{n-2} \ldots s_1)s_{n-1} . \omega_{n-1} , \omega_{n-1} \right) \\
  \vdots \\
M \left( s_1(s_2s_1) \cdots (s_{n-2} \ldots s_1)(s_{n-1} \ldots s_1) . \omega_1 , \omega_1 \right).
  \end{array}$
 
 The images via $\iota_{n-1}^{n}$ of these modules respectively coincide with the modules $M(r_{n-2}+1,0), \ldots , M(r_{n-1},0)$, whose classes are exactly the  last $n-1$ unfrozen variables of the seed ${\s}_0^n$. Iterating this, we conclude that the set $\{M_l, l \in J_{ex}^n \}$ is the union of the sets $\{\iota_{k}^{n} (M_l), l \in J_{fr}^k \}$ , $1 \leq k \leq n-1$. 
 
  \end{proof} 

As a direct consequence of the previous Proposition, it suffices to compute the parameters of the modules corresponding to the frozen variables of the seed ${\s}_0^n$. This is what we focus on in the next two subsections.

 \subsection{Weights of the simple modules $M(r_{n-1}+k,0) , 1 \leq k \leq n$}
 \label{weights}

  From now on, the integer $n$ is fixed. We write $J_{fr}$ for $J_{fr}^n$. By Proposition~\ref{Aseed}, the simple modules corresponding to the frozen variables of the seed ${\s}_0^n$ are the $M(r_{n-1}+k,0) , 1 \leq k \leq n$. For simplicity we set $M_k := M(r_{n-1}+k , 0)$ for any $1 \leq k \leq n $. This subsection is devoted to the computation of the weights of the simple modules $M_k$, i.e. the elements $\beta_k$ such that $M_k \in R(\beta_k)-gmod$ for every $1 \leq k \leq n$. Our main tool is the definition of the modules $M(l,0)$ from \cite{KKKO} (see Definition~\ref{determimodbis}).

   \begin{prop} \label{occurrences}
For each $ 1 \leq k \leq n/2 $, the two modules $M_k$ and $M_{n-k+1}$ both belong to the subcategory $R(\alpha_n + 2 \alpha_{n-1} +\cdots+ k\alpha_{n-k+1}+ \cdots + k \alpha_{k} + \cdots + 2 \alpha_{2} + \alpha_{1})-mod$.
 \end{prop}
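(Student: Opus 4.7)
My plan is to compute the weight of $M_k$ directly from the construction of determinantial modules and to read off the symmetry $k \leftrightarrow n-k+1$ from the resulting formula. By Corollary~\ref{determimod}, the module $M_k = M(r_{n-1}+k,0)$ has weight $\omega_{i_{r_{n-1}+k}} - \lambda_{r_{n-1}+k}$, where the $i_s$ and $\lambda_s$ are determined by the reduced expression $w_0 = s_1(s_2 s_1)\cdots(s_n s_{n-1}\cdots s_1)$ displayed in the proof of Proposition~\ref{Aseed}. First I identify $i_{r_{n-1}+k} = n-k+1$ by reading off the $k$-th letter of the last block of this reduced word, and I factor the corresponding prefix of length $r_{n-1}+k$ as $w_k = w' \cdot u_k$, where $w' = s_1(s_2s_1)\cdots(s_{n-1}\cdots s_1)$ is the longest element of the parabolic subgroup $\langle s_1,\ldots,s_{n-1}\rangle$ and $u_k = s_n s_{n-1}\cdots s_{n-k+1}$.

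Next I realize $\mathfrak{g}$ as $\mathfrak{sl}_{n+1}$ acting on $\mathbb{C}^{n+1} = \bigoplus_{i=1}^{n+1}\mathbb{C}e_i$, so that $s_i$ acts as the transposition of $e_i$ and $e_{i+1}$, the fundamental weight satisfies $\omega_p - w\omega_p = \sum_{i=1}^p (e_i - w(e_i))$ for every $w \in W$, and $\alpha_p = e_p - e_{p+1}$. Computing the actions of the cycle $u_k$ and of the reversal $w'$ on the basis $(e_1,\ldots,e_{n+1})$ determines $w_k(e_i)$ for every $i$; explicitly one obtains $w_k(e_i) = e_{n+1-i}$ for $1 \le i \le n-k$ and $w_k(e_{n-k+1}) = e_{n+1}$. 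A short calculation then expresses $\omega_{n-k+1} - w_k\omega_{n-k+1}$ as a nonnegative integer combination of simple roots whose coefficient on $\alpha_j$ equals $\min(j, n-j+1, k, n-k+1)$. Under the hypothesis $k \le n/2$ one has $\min(k, n-k+1) = k$, and the formula simplifies to $\min(j, n-j+1, k)$, which is exactly the weight displayed in the proposition.

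The symmetry $k \leftrightarrow n-k+1$ is then immediate since the expression $\min(j, n-j+1, k, n-k+1)$ is manifestly invariant under this swap, so $M_k$ and $M_{n-k+1}$ lie in the same $R(\beta)-mod$ for the $\beta$ given in the proposition. The main obstacle to watch is the bookkeeping of the permutation action, in particular the direction of composition when computing $u_k(e_i)$ and $w'(e_i)$; beyond that the argument is routine Weyl-group combinatorics in type $A_n$.
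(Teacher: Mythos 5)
Your proof is correct, and although it shares the paper's overall skeleton --- both read off the weight of $M_k = M(r_{n-1}+k,0)$ as $\omega_{i_{r_{n-1}+k}} - \lambda_{r_{n-1}+k}$ from Corollary~\ref{determimod} and the reduced word $w_0 = s_1(s_2s_1)\cdots(s_n\cdots s_1)$ used in Proposition~\ref{Aseed} --- your execution of the Weyl-group computation is genuinely different. The paper computes $\zeta_k = s_1(s_2s_1)\cdots(s_{n-1}\cdots s_1)(s_n\cdots s_k)\,\omega_k$ by peeling off the reflection blocks one at a time inside the abstract weight lattice, which forces a case split ($2k \leq n$ versus $2k > n$) plus a stabilization step (invariance of the intermediate weight under $s_1,\ldots,s_{n-k}$), and the symmetry $k \leftrightarrow n-k+1$ only emerges by comparing the two resulting expressions. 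You instead work in the permutation realization $W \simeq \mathfrak{S}_{n+1}$ and factor the prefix as $w_k = w'u_k$ with $w'$ the longest element of the parabolic $\langle s_1,\ldots,s_{n-1}\rangle$ (the reversal of $e_1,\ldots,e_n$) and $u_k$ an explicit cycle; I verified the bookkeeping: with $u_k$ acting first one gets $u_k(e_i)=e_i$ for $i \leq n-k$, $u_k(e_{n-k+1})=e_{n+1}$, $u_k(e_i)=e_{i-1}$ for $i \geq n-k+2$, hence $w_k(e_i)=e_{n+1-i}$ for $i \leq n-k$ and $w_k(e_{n-k+1})=e_{n+1}$ as you state, and then $\omega_p - w\omega_p = \sum_{i\leq p}(e_i - w(e_i))$ gives the weight $\sum_{i=1}^{k}(e_i - e_{n+2-i})$ when $k \leq n/2$, whose $\alpha_j$-coefficient is $\min(j,\,n+1-j,\,k)$, exactly the $\beta$ of the statement; for general $k$ the coefficient is indeed $\min(j,\,n+1-j,\,k,\,n-k+1)$, so applying the formula at index $n-k+1$ settles both modules at once. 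What your route buys is a uniform closed formula with no case analysis, in which the swap symmetry is manifest rather than checked a posteriori; what the paper's route buys is a self-contained calculation in the root/weight formalism it uses throughout Section~\ref{weights}, without invoking the type-$A$ epsilon-coordinates.
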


 \begin{proof}

For $1 \leq l \leq n$, we have 
$$ M_l = M(r_{n-1} +l,0) = M \left( s_1(s_2s_1) \cdots (s_{n-1} \ldots s_1)(s_n \ldots s_k) \omega_k , \omega_k \right) \text{ where $k := n-l+1$}. $$
   One computes $ \zeta_k := s_1(s_2s_1) \cdots (s_{n-1} \ldots s_1)(s_n \ldots s_k) \omega_k$. The weight of $M_l$ is given by $\omega_k - \zeta_k$ (see Corollary~\ref{determimod}). 
\begin{align*}
 \zeta_k  &= s_1(s_2s_1) \cdots (s_{n-1} \ldots s_1) ( \omega_k - (\alpha_n + \cdots + \alpha_k) ) \\
          &= s_1(s_2s_1) \cdots (s_{n-2} \ldots s_1)( \omega_k - (\alpha_n + 2 \alpha_{n-1} + \cdots + 2 \alpha_k + \alpha_{k-1}) ) \\
         \begin{split}
          &= s_1(s_2s_1) \cdots (s_{n-3} \ldots s_1)( \omega_k - (\alpha_n + 2 \alpha_{n-1} + 3 \alpha_{n-2}+ \cdots + \\
            &  \qquad \qquad \qquad \qquad \qquad \qquad \qquad \qquad + \cdots + 3 \alpha_k + 2 \alpha_{k-1} + \alpha_{k-2}) ).
          \end{split}
\end{align*}
  If $2k \leq n$ then by iterating we get 
\begin{equation*} 
  \begin{split}
  \zeta_k = s_1(s_2s_1) \cdots (s_{n-k} \ldots s_1) &( \omega_k - (\alpha_n + 2 \alpha_{n-1}  + \cdots + k \alpha_{n-k+1}+ \cdots + \\
   & \qquad \qquad \qquad + \cdots +  k \alpha_k + \cdots + 2 \alpha_2 + \alpha_1) )
  \end{split}
\end{equation*}
but   $\omega_k - (\alpha_n + 2 \alpha_{n-1} + \cdots + k \alpha_{n-k+1}+ \cdots + k \alpha_k +  \cdots + 2 \alpha_2 + \alpha_1)$ is invariant under the action of $s_1, \ldots , s_{n-k}$. 
  Hence $$\zeta_k =  \omega_k - (\alpha_n + 2 \alpha_{n-1} + \cdots + k \alpha_{n-k+1}+ \cdots + k \alpha_k + \cdots + 2 \alpha_{2} + \alpha_{1}).$$ 
  If $2k > n$ then by iterating we get 
  \begin{align*}
         \begin{split}
        \zeta_k &= s_1(s_2s_1) \cdots (s_k \ldots s_1) ( \omega_k - ( \alpha_n + 2 \alpha_{n-1} + \cdots + (n-k) \alpha_{k+1} \\
        & \qquad  \qquad  \qquad  \qquad \qquad \qquad \qquad + (n-k) \alpha_k + \cdots + 2 \alpha_2 + \alpha_{2k-n+1} ) ) 
          \end{split} \\
            \begin{split}
      &= s_1(s_2s_1) \cdots (s_{k-1} \ldots s_1).  ( \omega_k - (\alpha_n + 2 \alpha_{n-1} + \cdots + (n-k) \alpha_{k+1} \\
       &  \qquad \qquad \qquad \qquad  + (n-k+1)\alpha_k + (n-k)\alpha_{k-1} \cdots + 2 \alpha_{2} + \alpha_{2k-n}) )
                  \end{split}  \\
           &= \ldots \\
           \begin{split}
  &= s_1(s_2s_1) \cdots (s_{n-k} \ldots s_1).  ( \omega_k - (\alpha_n + 2 \alpha_{n-1} + \cdots + (n-k) \alpha_{k+2}  \\
   & \qquad +  (n-k+1) \alpha_{k+1} + \cdots +  (n-k+1)\alpha_{n-k+1} + \cdots + 2 \alpha_{2} + \alpha_{1}) )
                  \end{split}
\end{align*} 
 and $\omega_k - (\alpha_n + 2 \alpha_{n-1} + \cdots + (n-k+1) \alpha_{k+1}+ \cdots (n-k+1)\alpha_{n-k+1} + \cdots + 2 \alpha_{2} + \alpha_{1})$ is invariant under the action of $s_1 , \ldots , s_{n-k}$. 
  Hence we get 
 \begin{align*}
     \zeta_k &=  \begin{cases} 
    \begin{split}
       & \omega_k - (\alpha_n + 2 \alpha_{n-1} + \cdots + k \alpha_{n-k+1}+ \cdots +  \\
        &  \qquad \qquad \qquad \qquad + \cdots + k \alpha_k + \cdots + 2 \alpha_{2} + \alpha_{1})     \qquad  \quad  \text{if $2k \leq n$,} 
         \end{split} \\
   \begin{split}
     & \omega_k - (\alpha_n + 2 \alpha_{n-1} + \cdots + (n-k+1)\alpha_{k}+ \cdots +  \\
       & \qquad   + \cdots + (n-k+1) \alpha_{n-k+1} + \cdots + 2 \alpha_{2} + \alpha_{1}) \qquad \text{if $2k > n$.} 
      \end{split}
 \end{cases}
      \\
             &= \begin{cases} 
     \begin{split}
       & \omega_k - (\alpha_n + 2 \alpha_{n-1} + \cdots + k \alpha_{l}+ \cdots +  \\
        & \qquad \qquad \qquad \qquad \quad + \cdots + k \alpha_k + \cdots + 2 \alpha_{2} + \alpha_{1}) \qquad    \text{if $k<l$,} 
        \end{split}\\
    \begin{split}
       & \omega_k - (\alpha_n + 2 \alpha_{n-1} + \cdots l\alpha_{k}+ \cdots + \\
 & \qquad  \qquad \qquad \quad \qquad + \cdots +   l \alpha_{l} + \cdots + 2 \alpha_{2} + \alpha_{1} ) \qquad \text{if $k \geq l$.} 
     \end{split}
      \end{cases}
\end{align*}
 Hence for each $ 1 \leq k \leq n/2 $, the two modules $M_k$ and $M_{n-k+1}$ both belong to the subcategory $R(\alpha_n + 2 \alpha_{n-1} + \cdots + k\alpha_{n-k+1}+ \cdots + k \alpha_{k} + \cdots + 2 \alpha_{2} + \alpha_{1})-mod$.

 \end{proof}

 Let us now fix $ 1 \leq k \leq n$ and consider the parameter $\mu_k$ of the simple module $ M_k$. Let $m_k$ be the length of $\mu_k$. 
 Since $k$ and $n-k+1$ play symmetric roles, we assume from now on that $k \leq n/2$.   
 
  The following statement is a direct consequence of the previous proposition. 
  
   \begin{cor}  \label{pairing}
    For any $1 \leq i \leq n $,
    $$  (\mu_k , i) = 
         \begin{cases}  1  &\text{if $i=k$ or $i=n-k+1$,} \\
                                  0  &\text{otherwise.}
        \end{cases}                          
 $$
 \end{cor}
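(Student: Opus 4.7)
The key observation is that the pairing $(\mu_k, i)$ depends only on the content of the word $\mu_k$, that is, on the weight $\beta_k$ of the module $M_k$. Indeed, writing $\mu_k = j_1 \cdots j_{m_k}$, the definition of $(\mu_k,i)$ recalled in Section~\ref{qha} gives
$$(\mu_k, i) = \sum_{p=1}^{m_k}(\alpha_{j_p}, \alpha_i) = (\beta_k, \alpha_i).$$
So the corollary reduces to computing the pairing of $\alpha_i$ with the explicit element $\beta_k$ furnished by Proposition~\ref{occurrences}.

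Writing $\beta_k = \sum_{j=1}^{n} c_j \alpha_j$, the coefficient sequence $(c_j)$ produced by Proposition~\ref{occurrences} is piecewise linear with a flat plateau at height $k$: explicitly, $c_j = j$ for $1 \leq j \leq k$, $c_j = k$ for $k \leq j \leq n-k+1$, and $c_j = n-j+1$ for $n-k+1 \leq j \leq n$ (the two pieces agreeing at the overlap indices). Using the type $A_n$ values of the bilinear form on simple roots, namely $(\alpha_i, \alpha_i) = 2$, $(\alpha_i, \alpha_{i \pm 1}) = -1$ and zero otherwise, together with the notational convention $c_0 = c_{n+1} = 0$, one obtains the compact formula
$$(\beta_k, \alpha_i) = 2c_i - c_{i-1} - c_{i+1}.$$

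The remainder of the argument is a routine case analysis on the position of $i$ relative to the two corners $k$ and $n-k+1$ of the plateau. In each of the three flat regions ($i < k$, $k < i < n-k+1$, $i > n-k+1$), the three consecutive values $c_{i-1}, c_i, c_{i+1}$ lie on a common arithmetic progression of slope $+1$, $0$, or $-1$, so the right-hand side telescopes to $0$. At the two corners $i = k$ and $i = n-k+1$, the transition between the ramp and the plateau breaks this symmetry by exactly one unit, yielding the value $+1$: for instance $(\beta_k,\alpha_k) = 2k - (k-1) - k = 1$, and similarly at $i=n-k+1$. Edge cases $i \in \{1,n\}$ arising when $k=1$ are handled correctly by the convention $c_0 = c_{n+1} = 0$, and the standing assumption $k \leq n/2$ guarantees $k < n-k+1$ so the two corners are distinct. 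I expect no real obstacle here: once the pairing is identified with $(\beta_k, \alpha_i)$, the statement is a one-line verification in each of the five cases.
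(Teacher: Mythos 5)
Your proposal is correct and takes essentially the same approach as the paper: both reduce $(\mu_k , i)$ to the letter multiplicities $c_j$ furnished by Proposition~\ref{occurrences} and evaluate $2c_i - c_{i-1} - c_{i+1}$ by the same case analysis, the flat regions giving $0$ and the corners $i=k$, $i=n-k+1$ giving $1$. Your plateau formulation with the convention $c_0 = c_{n+1} = 0$ is merely a slightly more uniform packaging of the identical computation.
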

 
  \begin{proof}
   For $1 \leq i \leq k-1$ or $n-k+2 \leq i \leq n$, by Proposition~\ref{occurrences} there are $i-1$ (resp. $i,i+1$) occurrences of the letter $i-1$ (resp. $i,i+1$) in the word $\mu_k$ and hence $(\mu_k , i) = 2i -(i-1) -(i+1) = 0$. 
   
   If $k+1 \leq i \leq n-k$ then by Proposition~\ref{occurrences} each of the letters $i-1,i,i+1$ appears $k$ times and hence $(\mu_k , i) = 2i -i -i =0$. 
   
  Finally  if $i=k$ then by Proposition~\ref{occurrences} there are $k$ occurrences of the letters $k,k+1$ and $k-1$ occurrences of the letter $k-1$ which gives $(\mu_k , i)=2k -k -(k-1)=1$. If $i=n-k+1$ then there are $k$ occurrences of the letters $i-1,i$ and $k-1$ occurrences of the letter $i+1$ and thus $(\mu_k , i)=2k -k -(k-1)=1$.
  \end{proof}
 
 In particular, one can compute  the quantities $\Lambda(M_k,L(i))$ for any $1 \leq i \leq n$:
 
  \begin{cor} \label{Lambda}
    For any $1 \leq i \leq n $, let $N_i$ be the number of occurrences of the letter $i$ in the word $\mu_k$. Let $s_i$ and $s'_i$ be the integers such that (see Remark~\ref{rklambda})
    $$ \Lambda(L(i),M_k) = -(\mu_k , i) + 2N_i - 2s_i \quad  \text{and} \quad  \Lambda(M_k,L(i)) = -(\mu_k , i) + 2N_i - 2s'_i .$$     
   Then one has $s_i=s'_i=N_i$ if $i \notin \{k,n-k+1 \}$ and either $s_i=N_i , s'_i = N_i - 1$ or $s_i = N_i - 1 , s'_i = N_i $ if $i \in \{k,n-k+1 \}$.
 \end{cor}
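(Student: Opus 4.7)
The plan is to apply Lemma~\ref{comutlambda} combined with a polynomial-degree bound on the renormalized $R$-matrices. Both modules involved are real simple: $M_k$ is a determinantial module (hence real by Corollary~\ref{determimod}) and $L(i)$ is cuspidal, so the lemma applies.

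The first step is to establish that $M_k$ and $L(i)$ commute for every $1\le i\le n$. The computation in the proof of Proposition~\ref{occurrences} identifies $M_k$ with the determinantial module $M(w_0\omega_{k'},\omega_{k'})$ for a suitable fundamental weight $\omega_{k'}$: indeed the $\zeta_{k'}$ appearing there equals $w_0\omega_{k'}$, since the prefix $s_1(s_2 s_1)\cdots(s_{n-1}\cdots s_1)(s_n\cdots s_{k'})$ of the reduced word of $w_0$ equals $w_0\cdot s_1 s_2\cdots s_{k'-1}$, and all the simple reflections $s_j$ with $j<k'$ fix $\omega_{k'}$. Under the isomorphism of Theorem~\ref{KL-R}, such a module corresponds to the $q$-central unipotent quantum minor $D(w_0\omega_{k'},\omega_{k'})$ of $\A_q(\mathfrak{n})$; hence $L(i)\circ M_k$ and $M_k\circ L(i)$ are isomorphic up to grading shift. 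Applying Lemma~\ref{comutlambda}(i), substituting the displayed formulas and invoking Corollary~\ref{pairing} yields
\[
  s_i + s'_i \;=\; 2N_i - (\mu_k,i) \;=\; \begin{cases} 2N_i & \text{if } i\notin\{k,n-k+1\},\\ 2N_i - 1 & \text{if } i\in\{k,n-k+1\}.\end{cases}
\]

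The second step is the upper bound $\max(s_i,s'_i)\le N_i$. Since $L(i)$ is one-dimensional, the $R$-matrix $R_{L(i)_z,M_k}$ is computed by applying $\varphi_{w[|\mu_k|,1]}=\varphi_{|\mu_k|}\cdots\varphi_1$, moving the $L(i)_z$-generator across a weight-word of $M_k$. Each $\varphi_j$ reduces to $\tau_j$ except at the positions where the $L(i)_z$-generator crosses a letter $i$, in which case $\varphi_j=\tau_j(x_j-x_{j+1})+1$ becomes polynomial in $z$ of degree $1$ (via the defining relation $x_{\bullet}\cdot v = z\,v$ on the $L(i)_z$-generator). Since these equal-letter crossings occur exactly $N_i$ times, the $z$-degree of $R_{L(i)_z,M_k}$ is at most $N_i$; as the polynomial is nonzero, its $z$-adic valuation $s_i$ satisfies $s_i\le\deg_z R_{L(i)_z,M_k}\le N_i$, and symmetrically $s'_i\le N_i$.

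Combining the two steps: in Case 1 the unique solution of $s_i+s'_i=2N_i$ with $s_i,s'_i\le N_i$ is $s_i=s'_i=N_i$; in Case 2 the solutions of $s_i+s'_i=2N_i-1$ with $s_i,s'_i\le N_i$ are exactly $\{(N_i,N_i-1),(N_i-1,N_i)\}$, giving the stated dichotomy. The main obstacle is the commutativity assertion of the first step: the $q$-centrality of the determinantial modules $M(w_0\omega_j,\omega_j)$ is an external input about the $q$-center of the quantum unipotent coordinate ring that is not proved within the excerpt, although it is consistent with — and essentially forced by — the frozen status of $M_k$ in the KKKO admissible pair underlying the seed $\s_0^n$.
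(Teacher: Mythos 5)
Your proof is correct and follows essentially the same route as the paper's: commutation of $M_k$ with $L(i)$ combined with Lemma~\ref{comutlambda}(i) and Corollary~\ref{pairing} yields $s_i+s'_i=2N_i-(\mu_k,i)$, and the bound $s_i,s'_i\le N_i$ then forces the stated values. The only difference is that you spell out two facts the paper merely asserts --- the commutation, which the paper likewise imports from \cite{KKKO} (your identification $\zeta_{k'}=w_0\omega_{k'}$, hence $M_k=M(w_0\omega_{k'},\omega_{k'})$, is correct), and the $z$-degree argument behind $s_i,s'_i\le N_i$ --- so the substance is identical.
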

 
 \begin{proof}   
  By Corollary~\ref{pairing}, the quantity $\Lambda(L(i),M_k)$ can be written as 
    $$  \Lambda(L(i),M_k) = 
         \begin{cases}  2N_i -1 -2s_i  &\text{if $i=k$ or $i=n-k+1$,} \\
                                  2N_i -2s_i  &\text{otherwise.}
        \end{cases}                        
 $$
and similarly for $\Lambda(M_k,L(i))$ with $s'_i$. 
 As $M_k$ commutes with $L(i)$, one has by Lemma~\ref{comutlambda}:
$$   s_i + s'_i = 
          \begin{cases}  2N_i -1 &\text{if $i=k$ or $i=n-k+1$,} \\
                                   2N_i      &\text{otherwise.}
          \end{cases}
          $$
  As the integers $s_i$ and $s'_i$ are always smaller than $N_i$, one gets the desired result. 
  
   \end{proof}

  \begin{rk} \label{rklambda}
 In the following we will make several computations of some $\Lambda(M,N)$ for various simple real objects $M,N$ in $R-gmod$ in order to check commutation between these modules. For any $\beta \in Q_{+}$, any simple (left) $R(\beta)$-module $M$  is cyclic, i.e. is isomorphic to $R(\beta).u$ for some $u \in M$. We will refer to any such vector $u$ in $M$ as a generating vector in $M$. Now let $\beta, \gamma \in Q_{+}$, $M$ a simple $R(\beta)$-module, and $N$  a simple  $R(\gamma)$-module. As the morphism 
$$\begin{array}{ccccc}
   & M \otimes N & \longrightarrow & N \circ M \\
   & u \otimes v & \longmapsto & \varphi_{w[n,m]}(v \otimes u)
    \end{array} $$
is $R(\beta) \otimes R(\gamma)$-linear, computing the map $R_{M,N}$ is equivalent to computing the action of  $\varphi_{w[n,m]} \in R(\beta + \gamma)$ on the tensor product of generating vectors $u$ and $v$ for $M$ and $N$.

 Now let $u_z := 1 \otimes u \in M_z$ and  let $\tilde{s}$ be the valuation of the polynomial in $z$ given by $\varphi_{w[n,m]}.(v \otimes u_z)$. As the actions of the generators $x_i, \tau_k$ and $e(\nu)$ can only make the degree in $z$ increase, the image of the map $R_{M_z,N}$ is contained in $z^{\tilde{s}} N \circ M_z$. Moreover, by definition of $\tilde{s}$, $\varphi_{w[n,m]}.(v \otimes u_z)$ contains a nonzero term of degree $\tilde{s}$  hence it does not belong to $z^k N \circ M_z$ for any $k > \tilde{s}$. Hence $\tilde{s}$ coincides with $s$ in Definition~\ref{renormRmat}. Thus in what follows, for any simple  $R(\beta)$-module $M$ and any simple  $R(\gamma)$-module $N$, we will always fix some choices of generating vectors $u \in M$ and $v \in N$ and  write 
$$ \Lambda(M,N) = -(\beta, \gamma) + 2 (\beta, \gamma)_n -2s $$
with $s$ being the valuation  of the polynomial in $z$ given by $\varphi_{w[n,m]}.(v \otimes u_z)$. 
 \end{rk}
 
 \subsection{Dominant words associated to frozen variables in $R-gmod$}
  \label{frozenpara}
 
  In this subsection, we compute the dominant words associated to the frozen variables for the category $R-gmod$ in type $A_n$. As in the previous subsection, we fix $k$ such that $1 \leq k \leq n/2$  and we consider $M_k = M(r_{n-1}+k,0)$ the simple module whose isomorphism class is the $k$th frozen variable in the seed ${\s}_0^n$ constructed in \cite{KKKO}. We use the fact that $M_k$ commutes with all the simple modules in $R-gmod$. In particular, it commutes with all the cuspidal modules $L(i), 1 \leq i \leq n$. Together with the form of the weight of $M_k$ given by Proposition~\ref{occurrences} and Corollay~\ref{Lambda}, this leads to only $n$ possible dominant words. As there are exactly $n$ frozen variables in the seed ${\s}_0^n$ (see Proposition~\ref{Aseed}), we get a bijection between the possible parameters and the frozen variables for $R-gmod$.
  
     For every $1 \leq i \leq n$, the algebra $R(\alpha_i)$ is generated by one generator $x_i$ and one generator $e(i)$ commuting with each other (with the notation of Section~\ref{qha}, the set $Seq(\alpha_i)$ is a singleton consisting in the word reduced to a single letter $i$). Recall from Section~\ref{irredKLR} that for every $1 \leq i \leq n$, the cuspidal module $L(i)$ is a one dimensional vector space spanned by a generating vector $v_i$ with action of $R(\alpha_i)$ given by:
$$  x_i \cdot v_i = 0, \quad e(i) \cdot v_i = v_i. $$
  It is the only simple object in the category $R(\alpha_i)-mod$. 

   As above we let $\mu_k$ denote the parameter of the simple module $M_k$ and $m_k$ the length of $\mu_k$. In what follows we will write the word $\mu_k$ as 
  $$\mu_k = h_1 , \ldots , h_{m_k}.$$
  Note that in this setting the $h_j$ are the \textit{letters} of the word $\mu_k$, whereas we use bold letters ${\bf i}_l$ to refer to \textit{Lyndon words} in the canonical factorization of $\mu_k$ (see after Remark~\ref{littlerk} below).
  
   As the module $M_k$ is simple and real, Lemma~\ref{comutlambda} shows that checking  its commutation with any other simple module $L$ is equivalent to computing the quantities $\Lambda(L,M_k)$ and $\Lambda(M_k,L)$. When $L=L(i)$ for some $i \in \{1 , \ldots , n \}$, these quantities are given by Corollary~\ref{Lambda}. Thus as explained in Remark~\ref{rklambda} above, once fixed a generating vector $u$ for $M_k$, we will compute the valuations $s_i$ (resp.$s'_i$) of the polynomial functions $\varphi_{w[m_k,1]}(u \otimes (v_i)_z) =   \varphi_1 \cdots \varphi_{m_k} (u \otimes (v_i)_z)$ (resp. $\varphi_{w[1,m_k]}(v_i \otimes u_z) =  \varphi_{m_k} \cdots \varphi_1 (v_i \otimes u_z)$) for various choices of $i \in \{1, \ldots ,n \}$. 
 We fix once for all a generating vector $u$ (resp. $v_i, 1 \leq i \leq n$) for $M_k$ (resp. $L(i), 1 \leq i \leq n$).
 
 We begin by showing that there are only two possibilities for the first letter of $\mu_k$.

   \begin{lem} \label{lem4}

Let $p=h_1$ denote the first letter of $\mu_k$.
 The letter $p$ is equal either to $k$ or $n-k+1$.

  \end{lem}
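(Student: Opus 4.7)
The plan is to argue by contradiction: suppose $p = h_1 \notin \{k, n-k+1\}$, and show that this contradicts the commutation of the frozen module $M_k$ with the cuspidal $L(p)$. By Corollary~\ref{Lambda} applied to $i = p$, such commutation forces $s'_p = N_p$, where $N_p \geq 1$ is the multiplicity of $p$ in $\mu_k$ (which is positive since $p = h_1$) and $s'_p$ is the $z$-valuation of
\[
\varphi_{w[1,m_k]}(v_p \otimes u_z) \; = \; \varphi_{m_k} \circ \cdots \circ \varphi_1(v_p \otimes u_z)
\]
for $u$ a generating vector of $M_k$. The target is therefore to exhibit a non-zero constant-in-$z$ coefficient, which forces $s'_p = 0 < N_p$ and yields the desired contradiction.

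The crucial preliminary is that $u$ is annihilated by every $x_j$. Indeed, $L(\mu_k)$ is the simple head of the standard module $\Delta(\mu_k) = L({\bf i}_1)^{\circ n_1} \circ \cdots \circ L({\bf i}_r)^{\circ n_r}$, whose highest-weight vector is the tensor product of the cuspidal generators, each killed by every $x_j$; this property descends to the quotient $L(\mu_k)$. As a consequence, on $v_p \otimes u_z \in L(p) \circ (M_k)_z$, the operator $x_1$ acts by zero while $x_j$ acts by multiplication by $z$ for every $j \geq 2$. Since $h_1 = p$, the first two letters in the sequence of $v_p \otimes u_z$ are both equal to $p$, so $\varphi_1 = \tau_1(x_1 - x_2) + 1$ simplifies on our vector to $1 - z\tau_1$, giving
\[
\varphi_1(v_p \otimes u_z) \; = \; v_p \otimes u_z \; - \; z\,\tau_1(v_p \otimes u_z),
\]
whose constant-in-$z$ part is exactly $v_p \otimes u_z$. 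Because the $\varphi_j$'s do not involve $z$, setting $z = 0$ commutes with their action, and the constant-in-$z$ coefficient of the full product $\varphi_{w[1,m_k]}(v_p \otimes u_z)$ may be identified with $\varphi_{w[1,m_k]}(v_p \otimes u)$, viewed now as an element of the non-deformed module $L(p) \circ M_k$.

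The main obstacle is to prove that this specific element is non-zero in $L(p) \circ M_k$ when $p \notin \{k, n-k+1\}$. The natural strategy is to leverage Proposition~\ref{KRprod} and the Lyndon-factorization combinatorics of Section~\ref{KLRtechnique} to isolate a surviving leading component. Concretely, one verifies via the KLR commutation relations $(\tau_k x_i - x_{s_k(i)}\tau_k) e(\nu) \in \{0,\pm e(\nu)\}$ that the full expression expands as a linear combination of vectors $\tau_{\sigma}(v_p \otimes u)$, and that, when $p \notin \{k, n-k+1\}$, the identity $(\mu_k, \alpha_p) = 0$ from Corollary~\ref{pairing} together with $h_1 = p$ forbids the specific cancellations that would be available in the ``corner'' cases $p \in \{k, n-k+1\}$ (where $(\mu_k, \alpha_p) = 1$ introduces an extra compensating crossing). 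Identifying the shuffle-maximal letter sequence $(h_1, \ldots, h_{m_k}, p)$ as a component where the coefficient of $\varphi_{w[1,m_k]}(v_p \otimes u)$ must survive non-trivially yields the desired non-vanishing and completes the contradiction, forcing $p \in \{k, n-k+1\}$.
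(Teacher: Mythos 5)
Your setup mirrors the paper's: assume $p \notin \{k,n-k+1\}$, note that Corollary~\ref{Lambda} then forces the relevant valuation to equal $N_p$, and exploit $h_1 = p$ so that the first intertwiner degenerates to $\tau_1(x_1-x_2)+1 = 1 - z\tau_1$ on your vector (your preliminary that $x_j u = 0$, inherited from the standard module $\Delta(\mu_k)$, is correct and is used implicitly in the paper as well, and your identification of the constant-in-$z$ coefficient with the specialization at $z=0$ is also sound). But the heart of your argument is missing. You aim to prove $s'_p = 0$ by showing that $\varphi_{w[1,m_k]}(v_p \otimes u)$ is nonzero in the undeformed module $L(p)\circ M_k$, and the justification offered --- that $(\mu_k,\alpha_p)=0$ from Corollary~\ref{pairing} ``forbids the specific cancellations'' available when $(\mu_k,\alpha_p)=1$ --- is not an argument: the pairing $(\mu_k,\alpha_p)$ enters only the numerical formula for $\Lambda$, and you give no mechanism converting it into the survival of a basis component in the expansion into terms $\tau_\sigma(v_p\otimes u)$. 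The danger is real, not hypothetical: in the genuine cases $p \in \{k,n-k+1\}$ with $N_p \geq 2$, the local configuration is identical ($h_1 = p$, so $\varphi_1$ again acts as $1 - z\tau_1$), yet Corollary~\ref{Lambda} gives $s'_p \geq N_p - 1 \geq 1$, so there the constant term \emph{does} vanish through a global cancellation. Since your sketch rules out no cancellation of that kind, the contradiction is not established; moreover $s'_p = 0$ is a far stronger claim than the contradiction requires.

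The gap is also avoidable, which is precisely what the paper does. One never needs the valuation to be $0$: the inequality $s_p \leq N_p - 1$ already contradicts $s_p = N_p$. The paper works in the other direction, with $\varphi_1\cdots\varphi_{m_k}(u \otimes (v_p)_z)$, where only the single strand of $L(p)$ is deformed (see Remark~\ref{rklambda}); it peels off $\varphi_1$ to write the product as $Q(z) - z\,\tau_1 Q(z)$ with $Q(z) := \varphi_2\cdots\varphi_{m_k}(u \otimes (v_p)_z)$, invokes the known non-vanishing of the renormalized R-matrix $r_{L(p),M_k}$ to get $Q \neq 0$, and then uses the elementary bound $\deg Q \leq N_p - 1$ (only the $N_p - 1$ crossings past the occurrences of $p$ among $h_2,\ldots,h_{m_k}$ can raise the $z$-degree) together with $s_p = \mathrm{val}(Q) \leq \deg Q$, the term $z\,\tau_1 Q(z)$ having strictly larger valuation. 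No evaluation at $z=0$ and no cancellation analysis is required. Your choice of the direction $s'_p$ is additionally awkward: there the whole module $M_k$ is deformed, so every $x_j$ acting on the $M_k$ strands contributes a $z$, and the clean degree count above is lost. To repair your proof, switch to the paper's direction and replace the non-vanishing-at-$z{=}0$ claim by the valuation--degree estimate.
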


  \begin{proof}

With the same notations as in Corollary~\ref{Lambda},  we show that $s_p \leq N_p-1$.  
   \begin{align*}
     \varphi_{1} \cdots \varphi_{m_k}.(u \otimes (v_p)_z)  &=  \varphi_{1} \cdots \varphi_{m_k}e(h_1, \ldots , h_{m_k},p).(u \otimes (v_p)_z) \\
     &= \varphi_{1} e(h_1,p,h_2, \ldots , h_{m_k}) \varphi_2 \cdots \varphi_{m_k}.(u \otimes (v_p)_z) \\
     &= (\tau_1 (x_1 - x_2) + 1) \varphi_2 \cdots \varphi_{m_k}.(u \otimes (v_p)_z).
   \end{align*}
    The operator $x_1$ commutes with $\varphi_2, \ldots, \varphi_{m_k}$ and acts trivially on $u$. Moreover, $x_2  \varphi_2 \cdots \varphi_{m_k} =  \varphi_2 \cdots \varphi_{m_k} x_{m_k +1}$ (see for example \cite[Lemma 1.3.1]{KKK}) and $x_{m_k +1} \cdot (u \otimes (v_p)_z) = z (u \otimes (v_p)_z)$. Hence we get 
  $$  \varphi_{1} \cdots \varphi_{m_k}.(u \otimes (v_p)_z) = -z. \tau_1 . \varphi_{2} \cdots \varphi_{m_k}.(u \otimes (v_p)_z) + \varphi_{2} \cdots \varphi_{m_k}.(u \otimes (v_p)_z) . $$ 
   The operator $\varphi_{2} \cdots \varphi_{m_k}$ acts non-trivially on $(u \otimes (v_p)_z)$ (as the renormalized R-matrix $r_{L(p),M_k}$ never vanishes).  Thus  $\varphi_{2} \cdots \varphi_{m_k}.u$ is a non zero polynomial function, and the above equality implies that $s_p$ is equal to its valuation. This polynomial function has degree less than $N_p-1$, as the only operators $\varphi_j$ that can make the degree rise are the ones corresponding to an occurrence of $p$ in $\mu_k$. 
   
This implies  $s_p \leq N_p-1$. Hence by Corollary~\ref{Lambda}, $p \in \{k,n-k+1 \}$. 

 \end{proof}

\begin{rk} 
 With the same proof, one can show that the last letter of the word $\mu_k$ is either $k$ or $n-k+1$ as well. 
 \end{rk}

   \begin{lem} \label{endLyndon}

      \begin{enumerate}[(i)]
        \item For each $1 \leq k' \leq k$, there is exactly one Lyndon word  ending with $n-k'+1$  in the canonical factorization of $\mu_k$. 

         \item Moreover, denoting by ${\bf j}_{n-k'+1}$ the unique Lyndon word ending with $n-k'+1$ (for each $1 \leq k' \leq k$), one has $ {\bf j}_n > \cdots > {\bf j}_{n-k+1}$.
       \end{enumerate}

\end{lem}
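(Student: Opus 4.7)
My plan is to split the argument into the two parts (i) and (ii), with part (ii) building on part (i).

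For part (i), I would use three ingredients. First, by Theorem~\ref{domfacto}, a dominant Lyndon word in type $A_n$ is an interval $(a, a+1, \ldots, b)$, so a Lyndon factor of $\mu_k$ contains a letter $j$ if and only if its left endpoint is $\leq j$ and its right endpoint is $\geq j$. Second, introducing $E_j$ (resp.\ $S_j$) for the number of Lyndon factors of the canonical factorization of $\mu_k$ ending (resp.\ starting) at the letter $j$, and $L_j$ for the multiplicity of $j$ in $\mu_k$ (given by Proposition~\ref{occurrences}), partitioning the Lyndon factors containing $j$ according to whether they extend past $j$ yields the identity
\[ E_j - S_{j+1} = L_j - L_{j+1}. \]
Third, Lemma~\ref{lem4} says the largest Lyndon factor ${\bf i}^{(1)}$ starts with a letter in $\{k, n-k+1\}$, both of which are $\leq n-k+1$; since the factorization is in non-increasing lex order, every Lyndon factor then starts at a letter $\leq n-k+1$, so $S_m = 0$ for every $m \geq n-k+2$. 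Specializing the identity to $j = n-k'+1$ with $1 \leq k' \leq k$ and using $L_{n-k'+1} = k'$ yields $E_{n-k'+1} = 1$, which is part (i).

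For part (ii), write ${\bf j}_{n-k'+1} = (a_{k'}, a_{k'}+1, \ldots, n-k'+1)$. Since the right endpoints $n-k'+1$ are strictly decreasing in $k'$, a direct lexicographic comparison shows that for $k' < m$ one has ${\bf j}_{n-k'+1} > {\bf j}_{n-m+1}$ as soon as $a_{k'} \geq a_m$: if $a_{k'} > a_m$ the first letters already differ in the correct direction, and if $a_{k'} = a_m$ then ${\bf j}_{n-m+1}$ is a proper prefix of ${\bf j}_{n-k'+1}$. Hence it suffices to prove that the sequence $(a_{k'})_{k'=1}^{k}$ is non-increasing. A first uniform bound comes directly from part (i): for each $j \in \{n-k+1, \ldots, n\}$, the multiplicity $L_j = n-j+1$ must be realized by the $n-j+1$ Lyndon factors ${\bf j}_{n-k''+1}$ with $k'' \leq n-j+1$ (these being, by part (i), the only Lyndon factors of $\mu_k$ with right endpoint $\geq j$), which forces $a_{k''} \leq j$ for all such $k''$; taking $j = n-k+1$ gives $a_{k'} \leq n-k+1$ for every $k'$.

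The main obstacle is to upgrade this uniform bound to the monotonicity $a_{k'} \geq a_{k'+1}$. My plan is to argue by contradiction, choosing a smallest $k' < k$ with $a_{k'} < a_{k'+1}$, and to derive an impossibility by combining further multiplicity counts at letters in the plateau $[k, n-k]$ (where $L_j = k$, so the number of Lyndon factors covering $j$ is rigidly controlled by the $a_{k''}$'s together with the ``lower'' Lyndon factors), the symmetric version of Lemma~\ref{lem4} noted in the remark after its proof (the last letter of $\mu_k$ also lies in $\{k, n-k+1\}$, constraining the rightmost Lyndon factor), and a valuation computation of the type introduced in Remark~\ref{rklambda}, applied to a carefully chosen simple module commuting with the frozen module $M_k$ rather than to a cuspidal module as in Lemma~\ref{lem4}. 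Ruling out the geometric configuration in which ${\bf j}_{n-k'+1}$'s interval strictly contains ${\bf j}_{n-k'}$'s is the delicate technical point, whereas the count identity of part (i) and the lex-comparison reduction above are routine.
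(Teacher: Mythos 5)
Your part (i) is correct and proceeds by a genuinely different route from the paper's: instead of the downward induction used there (which, at each step $k'$, locates $k'$ occurrences of the letter $n-k'$ inside the already-identified words ${\bf j}_n, \ldots , {\bf j}_{n-k'+1}$ and invokes Proposition~\ref{occurrences} to find exactly one leftover occurrence, hence one new word ending with $n-k'$), you package the same inputs into the closed-form identity $E_j - S_{j+1} = L_j - L_{j+1}$, kill $S_m$ for $m \geq n-k+2$ via Lemma~\ref{lem4} together with the decreasing lex order, and read off $E_{n-k'+1}=1$ directly. The identity is valid (dominant Lyndon words in type $A_n$ are intervals by Theorem~\ref{domfacto}, so a factor contains a given letter at most once), and it is arguably cleaner than the induction. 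Your reduction of (ii) to the monotonicity $a_{k'} \geq a_{k'+1}$ of the left endpoints, and your uniform bound $a_{k'} \leq n-k+1$, are also correct.

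However, the proposal stops exactly where the content of (ii) begins: the monotonicity itself remains a plan, with no module specified and no valuation computed, and this is a genuine gap rather than a routine detail. It cannot be closed by further counting. For instance in type $A_4$ with $k=2$, the word $3\,2\,3\,4\,1\,2$, with canonical factorization $(3)(234)(12)$, is dominant, has exactly the weight $\alpha_4 + 2\alpha_3 + 2\alpha_2 + \alpha_1$ prescribed by Proposition~\ref{occurrences}, has first letter $3 = n-k+1$ (consistent with Lemma~\ref{lem4}) and last letter $2=k$ (consistent with the remark following it), satisfies part (i) and your bound $a_{k'} \leq 3$, and yet has ${\bf j}_4 = (234) < (3) = {\bf j}_3$. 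So every combinatorial constraint available at this stage is compatible with a violation of (ii), and representation-theoretic input is indispensable. The paper supplies it as follows: take ${\bf j}_m$ smallest among the ${\bf j}_l$, so that if $m \geq n-k+2$ every letter to the right of its final letter in $\mu_k$ is $\leq n-k \leq m-2$; then in the expansion of $\varphi_{m_k} \cdots \varphi_1 (v_m \otimes u_z)$ the trailing $\tau$'s square to $1$ on the relevant idempotents, producing a nonzero polynomial of degree at most $N_m - 1$, whence $s'_m \leq N_m - 1$, contradicting Corollary~\ref{Lambda}; hence $m = n-k+1$, and iterating orders all the ${\bf j}_l$. Note also that, contrary to your suggestion of replacing cuspidal modules by some other simple commuting with $M_k$, the paper's computation is against the cuspidal module $L(m)$ itself; what changes relative to Lemma~\ref{lem4} is the side of the commutation: one bounds $s'_m$ (i.e.\ computes $\Lambda(M_k, L(m))$ via $\varphi_{m_k} \cdots \varphi_1(v_m \otimes u_z)$) rather than $s_m$, because the distinguished occurrence of $m$ sits deep inside the word rather than at its front.
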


  \begin{proof}

We prove the first statement by induction on $k'$. We know that the letter $n$ appears exactly once; thus there is a unique Lyndon word ${\bf j}_n$ containing (and thus ending with) $n$, which proves the statement for $k'=1$. Suppose $k \geq 2$ and (i) holds for $1 \leq k' < k$. Denote by ${\bf j}_n , \ldots , {\bf j}_{n-k'+1}$ the Lyndon words respectively ending with $n , \ldots , n-k'+1$. Their first letters are all smaller than $p$ and in particular smaller than $n-k+1$ by Lemma~\ref{lem4}. Hence they all contain the letter $n-k'$, which makes $k'$ occurrences of this letter. By Proposition~\ref{occurrences}, $n-k'$ has to appear $k'+1$ times in the word $\mu_k$. Hence there is a unique Lyndon word ${\bf j}_{n-k'}$ containing $n-k'$ but none of the letters  $n , \ldots , n-k'+1$, which means that this Lyndon word ends with $n-k'$. Thus the first statement holds by induction. 

\bigskip

 For the second statement, let $m \in \{n-k+1 , \ldots , n \}$ such that ${\bf j}_m$ is the smallest of the ${\bf j}_l$;  this is equivalent to saying that it is  the last (among the ${\bf j}_l$) to appear in the canonical factorization of $\mu_k$. 

Note that the first statement implies that, for each $1 \leq k' \leq k$, the letter $n-k'+1$ appears $k'$ times among the Lyndon words  ${\bf j}_n , \ldots , {\bf j}_{n-k+1}$: once in each of ${\bf j}_n , \ldots , {\bf j}_{n-k'+1}$, and none in the others. Together with Proposition~\ref{occurrences}, this implies that the letters  $n-k'+1$ ($1 \leq k' \leq k$) do not appear in any other Lyndon word of the canonical factorization of $\mu_k$. Hence denoting by $i$ the position of the last letter of ${\bf j}_m$ in the word $\mu_k$, one has $h_i = m$ and $ h_{i+1} , \ldots , h_{m_k} < m$. 
Thus one has
 \begin{align} \label{rmat}
 \varphi_{m_k} \cdots \varphi_1.(v_m \otimes u_z) &=  \tau_{m_k} \cdots \tau_{i+1} (\tau_i (x_i - x_{i+1})+1)  \varphi_{i-1} \cdots \varphi_1.(v_m \otimes u_z)  \nonumber \\
               &= \tau_{m_k} \cdots \tau_{i+1} \varphi_{i-1} \cdots \varphi_1 . (v_m \otimes u_z) \pm z . \tau_{m_k} \cdots \tau_{i+1} \tau_i \varphi_{i-1} \cdots \varphi_1 . (v_m \otimes u_z).
 \end{align}
 We denote $Q(z)$ the first term 
 $$ \tau_{m_k} \cdots \tau_{i+1} \varphi_{i-1} \cdots \varphi_1 . (v_m \otimes u_z). $$
  We show that if  $m \geq n-k+2$, then $Q(z)$ is non zero.
 As the renormalized R-matrix $r_{{M_k},L(m)}$ does not vanish, the action of the operator $\varphi_{i-1} \cdots \varphi_1$  on $(v_m \otimes u_z)$ is a non zero polynomial function in $z$ of degree less than $N_m-1$. Consider now the action of $\tau_{m_k}$ on $Q(z)$:
\begin{align}
 \tau_{m_k} . Q(z) &=\tau_{m_k} . \tau_{m_k} \cdots \tau_{i+1} \varphi_{i-1} \cdots \varphi_1 . (v_m \otimes u_z)  \nonumber \\
      &= \tau_{m_k} . \tau_{m_k} \cdots \tau_{i+1} \varphi_{i-1} \cdots \varphi_1 . e(m \mu_k) . (v_m \otimes u_z)   \nonumber \\
      &= \tau_{m_k} . \tau_{m_k} e(s_{m_k -1} \cdots s_{i+1} s_{i-1} \cdots s_1 . m \mu_k) . \tau_{m_k -1} \cdots \tau_{i+1} \varphi_{i-1} \cdots \varphi_1 . (v_m \otimes u_z)  \nonumber \\
      &= \tau_{m_k}^2 . e(h_1 , \ldots , h_{m_k -1}, m ,h_{m_k}) . \tau_{m_k -1} \cdots \tau_{i+1} \varphi_{i-1} \cdots \varphi_1 . (v_m \otimes u_z)  \nonumber \\
      &= e(h_1 , \ldots , h_{m_k -1}, m ,h_{m_k}) . \tau_{m_k -1} \cdots \tau_{i+1} \varphi_{i-1} \cdots \varphi_1 . (v_m \otimes u_z)  \quad \text{as $h_{m_k} \leq m-2$}  \nonumber \\
      &= \tau_{m_k -1} \cdots \tau_{i+1} \varphi_{i-1} \cdots \varphi_1 . (v_m \otimes u_z) \nonumber
\end{align} 
 Similarly, all the letters in position $i+1, \ldots , m_k$ are less than $n-k$ and in particular they are less than $m-2$. The same argument can be applied to $\tau_{m_k -1} , \ldots , \tau_{i+1}$ and thus we get
$$ \tau_{i+1} \cdots  \tau_{m_k} . Q(z) = \tau_{i+1} \cdots  \tau_{m_k} . (\tau_{m_k} \cdots  \tau_{i+1} \varphi_{i-1} \cdots \varphi_{1}.(v_m \otimes u_z)) 
           = \varphi_{i-1} \cdots \varphi_{1}.(v_m \otimes u_z) $$
which is not zero.
A fortiori $Q(z)$ itself is non zero. It is thus a non zero polynomial function of degree less than  $N_m-1$ and the equality~\eqref{rmat} above shows that $s'_m$ is necessarily equal to its valuation. 

This implies  $s'_m \leq N_m-1$, and in particular $m \in \{k,n-k+1 \}$ by Corollary~\ref{Lambda}. This contradicts the hypothesis $m \geq n-k+2$. Thus we have shown $m=n-k+1$. 
  
  By iterating this we conclude that the Lyndon words ${\bf j}_n , \ldots , {\bf j}_{n-k+1}$ appear in this order in the canonical factorization of $\mu_k$, which is the desired statement. 
 
 \end{proof}

\smallskip

 From now on we write the canonical factorization of $\mu_k$ as $\mu_k = {\bf i}_0 \cdots {\bf i}_r $ with ${\bf i}_0 \geq \cdots \geq {\bf i}_r$.
For each $0 \leq j \leq r$ we denote by $p_j$ the first letter of the Lyndon word ${\bf i}_j$.The sequence $(p_j)_{0 \leq j \leq r}$ is decreasing, with $p_0 = p$ and $p_r = 1$ (the letter $1$ appears once, necessarily in the smallest of the ${\bf i}_j$). We also denote by $a_j$ the position of the letter $p_j$ in the word $\mu_k$.  

  \begin{rk} \label{littlerk}
 Note that as an immediate consequence of the previous lemma, one has  $r \geq k-1$. 
 \end{rk} 

With these notations we can make the following observation, as a straightforward consequence of Lemma~\ref{endLyndon}.

\begin{cor} \label{firstletter}
 The Lyndon word ${\bf i}_0$ ends with the letter $n$. In other words ${\bf i}_0 = {\bf j}_n$. 
 \end{cor}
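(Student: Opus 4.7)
The plan is to prove ${\bf i}_0 = {\bf j}_n$ by contradiction, combining Lemma~\ref{lem4}, both parts of Lemma~\ref{endLyndon}, and the exact weight information from Proposition~\ref{occurrences}. The key preliminary observation is Theorem~\ref{domfacto}(i): in type $A_n$, every dominant Lyndon word is an integer interval, so I may write ${\bf i}_0 = (p_0, p_0+1, \ldots, q_0)$ and ${\bf j}_m = (s_m, s_m+1, \ldots, m)$ for each $m \in \{n-k+1, \ldots, n\}$.

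Assume ${\bf i}_0 \neq {\bf j}_n$. First I would rule out the range $q_0 \in \{n-k+1, \ldots, n-1\}$: the uniqueness statement of Lemma~\ref{endLyndon}(i) would then force ${\bf i}_0 = {\bf j}_{q_0}$, while Lemma~\ref{endLyndon}(ii) yields ${\bf j}_{q_0} < {\bf j}_n$, contradicting the maximality of ${\bf i}_0$ in the canonical factorization. Consequently $q_0 \leq n-k$; and since the possibility $p_0 = n-k+1$ would entail $q_0 \geq p_0 > n-k$, Lemma~\ref{lem4} now forces $p_0 = k$.

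Next, for each $m \in \{n-k+1, \ldots, n\}$ I compare ${\bf j}_m$ with ${\bf i}_0 = (k, k+1, \ldots, q_0)$ using the strict inequality $m \geq n-k+1 > q_0$: both $s_m > k$ and $s_m = k$ would give ${\bf j}_m > {\bf i}_0$ (the second because ${\bf j}_m$ would properly extend the prefix ${\bf i}_0$), contradicting ${\bf j}_m \leq {\bf i}_0$. Hence $s_m < k$, and since $k \leq n/2$ guarantees $m \geq n-k+1 > k$, the interval ${\bf j}_m$ strictly contains the letter $k$. Summing these contributions, each of the $k$ distinct Lyndon words ${\bf j}_n, \ldots, {\bf j}_{n-k+1}$ contains one occurrence of $k$, and ${\bf i}_0$ (which is not among them, since $q_0 \leq n-k < m$) contributes a further occurrence, producing at least $k+1$ occurrences of the letter $k$ in $\mu_k$. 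This contradicts Proposition~\ref{occurrences}, which gives the coefficient of $\alpha_k$ in the weight of $M_k$ as exactly $k$.

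The plan thus reduces the statement to a simple counting of occurrences of the single letter $k$, so I do not anticipate any serious obstacle. The only delicate point is ruling out the boundary case $s_m = k$, which hinges on the strict inequality $m \geq n-k+1 > q_0$ established in the first reduction step.
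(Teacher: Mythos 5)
Your proof is correct, and at bottom it runs on the same engine as the paper's: the $k$ words ${\bf j}_n > \cdots > {\bf j}_{n-k+1}$ of Lemma~\ref{endLyndon} each contain one occurrence of a distinguished letter whose total multiplicity in $\mu_k$ is exactly $k$ by Proposition~\ref{occurrences}, and this pigeonhole forces ${\bf i}_0$ to be among them. The organization, however, differs. The paper argues directly and uniformly in $p \in \{k,\, n-k+1\}$: since ${\bf i}_0 \geq {\bf j}_n > \cdots > {\bf j}_{n-k+1}$, each ${\bf j}_l$ begins with a letter $\leq p$ and ends with a letter $\geq n-k+1 \geq p$, hence contains $p$; these exhaust the $k$ occurrences of $p$, so ${\bf i}_0$, which begins with $p$, must be one of the ${\bf j}_l$, and the chain of inequalities then forces ${\bf i}_0 = {\bf j}_n$. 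You instead proceed by contradiction, first pinning the last letter $q_0 \leq n-k$ via the uniqueness and ordering statements of Lemma~\ref{endLyndon} (note that $q_0 = n$ must also be excluded, which you leave implicit; it is immediate, since uniqueness of the factor ending in $n$ would give ${\bf i}_0 = {\bf j}_n$ outright), which eliminates $p_0 = n-k+1$ and lets you count occurrences of the specific letter $k$. Your boundary case $s_m = k$ relies on the convention that a proper prefix is lexicographically smaller than its extension; this is indeed the paper's convention, as used for instance in the proof of Proposition~\ref{prod}, where for equal first letters the longer interval is the larger word, so that step is sound. What the paper's route buys is brevity and uniformity in $p$, avoiding any analysis of where ${\bf i}_0$ ends; what yours buys is an explicit localization of ${\bf i}_0$ before any counting, at the cost of an extra case analysis that the direct argument does not need.
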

 
 \begin{proof}
  As ${\bf i}_0$ is greater than any other Lyndon word appearing in the canonical factorization of $\mu_k$, in particular it is greater than ${\bf j}_n$. Hence by Lemma~\ref{endLyndon}(ii), one can write 
\begin{equation} \label{ineqLyndon}
{\bf i}_0 \geq {\bf j}_n >  \cdots > {\bf j}_{n-k+1}.
\end{equation} 
  Thus all of the Lyndon words ${\bf j}_n , \ldots , {\bf j}_{n-k+1}$ begin with a letter smaller than $p_0=p$. By definition they end with letters greater than or equal to $n-k+1$ which is greater than $p$ by Lemma~\ref{lem4} (recall that we assumed $k \leq n-k+1$ at the beginning of this section). Hence each of the Lyndon words ${\bf j}_n , \ldots , {\bf j}_{n-k+1}$ contains the letter $p$ which makes $k$ occurences of $p$. From Lemma~\ref{lem4} and Proposition~\ref{occurrences}, we conclude that no other Lyndon word contains $p$. Thus the Lyndon word ${\bf i}_0$ has to be one of the ${\bf j}_l$ and the inequalities~\eqref{ineqLyndon} impose ${\bf i}_0 = {\bf j}_n$. 
  \end{proof}

   \begin{lem} \label{lem3bis}

For any $1 \leq j \leq r$, if $p_j \neq k$, then $ p_{j-1} - p_j \leq 1$. 

 \end{lem}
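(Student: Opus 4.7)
The plan is to derive a contradiction by showing that, under the hypothesis $p_{j-1} \geq p_j + 2$, the valuation $s_{p_j}$ from Corollary~\ref{Lambda} is strictly less than $N_{p_j}$. I would first reduce to the main case. By monotonicity of the sequence $(p_{j'})_{j'}$ and the bound $p_0 \leq n-k+1$ coming from Lemma~\ref{lem4}, the case $p_j = n-k+1$ forces $p_{j-1} = p_j$, so the inequality holds trivially. Hence one may assume $p_j \leq n-k$, in which case $p_j \notin \{k, n-k+1\}$ and Corollary~\ref{Lambda} gives the equality $s_{p_j} = N_{p_j} \geq 1$.

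Next, suppose for contradiction that $p_{j-1} \geq p_j + 2$. Because every Lyndon word ${\bf i}_{j'}$ with $j' < j$ is of the form $(p_{j'}, p_{j'}+1, \ldots, q_{j'})$ with first letter $p_{j'} \geq p_{j-1} \geq p_j + 2$, none of the Lyndon words ${\bf i}_0, \ldots, {\bf i}_{j-1}$ contains the letter $p_j$; all $N_{p_j}$ occurrences of $p_j$ in $\mu_k$ lie in the suffix ${\bf i}_j \cdots {\bf i}_r$. In particular, if $a_j$ denotes the position of the first letter of ${\bf i}_j$ in $\mu_k$, then $h_a \neq p_j$ for every $a < a_j$ while $h_{a_j} = p_j$.

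The core of the proof is then to adapt the computation of Lemma~\ref{lem4} to the interior position $a_j$ rather than position $1$. In the expansion of $\varphi_1 \cdots \varphi_{m_k}(u \otimes (v_{p_j})_z)$, each operator $\varphi_a$ with $a < a_j$ acts as a bare $\tau_a$ (the letters in question differing from $p_j$) and contributes no $z$-factor. At the step corresponding to position $a_j$, the equality $h_{a_j} = p_j$ creates an equal-letter configuration, so $\varphi_{a_j} = \tau_{a_j}(x_{a_j}-x_{a_j+1}) + 1$. Exactly as in the proof of Lemma~\ref{lem4}, the triviality of $x_{a_j}$ on the generator $u$ of $M_k$ combined with the standard commutation identity $x_{a_j+1}\,\varphi_{a_j+1}\cdots\varphi_{m_k} = \varphi_{a_j+1}\cdots\varphi_{m_k}\,x_{m_k+1}$ (valid on the relevant intermediate idempotent because the letters strictly right of $a_j$ do not obstruct the push-through) rewrites the full expression as
\[
\varphi_1 \cdots \varphi_{m_k}(u \otimes (v_{p_j})_z) \; = \; Q(z) - z\cdot R(z),
\]
where $Q(z)$ comes from the ``$+1$'' piece of $\varphi_{a_j}$. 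Following the template of Lemma~\ref{endLyndon}, one then verifies that $Q(z)$ is nonzero (using the non-vanishing of the renormalized R-matrix $r_{L(p_j), M_k}$, together with the fact that a suitable sequence of $\tau$-operators recovers the image of this R-matrix from $Q(z)$) and has $z$-degree at most $N_{p_j} - 1$, since the only operators among $\varphi_{a_j+1}, \ldots, \varphi_{m_k}$ capable of raising the $z$-degree are those at positions $a > a_j$ with $h_a = p_j$, of which there are precisely $N_{p_j}-1$.

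The valuation comparison of Lemma~\ref{lem4} then yields $s_{p_j} = \mathrm{val}(Q(z)) \leq \deg(Q(z)) \leq N_{p_j} - 1$, contradicting Corollary~\ref{Lambda}. The main obstacle is the meticulous tracking of the intermediate KLR idempotents through the commutation identities and the verification of the non-vanishing of $Q(z)$; both points follow the pattern already established in the proofs of Lemmas~\ref{lem4} and ~\ref{endLyndon}, only reindexed to the interior position $a_j$ in place of position $1$.
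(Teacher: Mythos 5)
Your proposal is correct and takes essentially the same approach as the paper: assuming $p_{j-1}-p_j \geq 2$, you expand $\varphi_1 \cdots \varphi_{m_k}.(u \otimes (v_{p_j})_z)$ using that all letters left of position $a_j$ are at least $p_j+2$, obtain $s_{p_j} \leq N_{p_j}-1$ from the degree and non-vanishing estimates (which the paper compresses into a citation of the argument of Lemma~\ref{endLyndon}(ii)), and contradict Corollary~\ref{Lambda}. Your only deviation is cosmetic: you front-load the exclusion $p_j = n-k+1$ as a trivial case via monotonicity, whereas the paper derives $q < n-k+1$ at the end from $q \leq p_{j-1}-2 \leq p_0 - 2$ and Lemma~\ref{lem4}.
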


 \begin{proof}

 Assume there exists $j \in \{1, \ldots , r \}$ such that $p_j \neq k$ and  $p_{j-1} - p_j \geq 2$. We set $q:= p_j$ and show that $s_q \leq N_q - 1$. 
 Let $i$ denote the position of $q$ in the word $\mu_k$. Then $h_i = q$; on the other hand all the letters in position $1, \ldots , i-1$ are greater than $q+2$ (as they are greater than $p_{j-1}$), hence 
 $$  \varphi_1 \cdots \varphi_{m_k}.(u \otimes (v_q)_z)  =  \tau_1 \cdots \tau_{i-1} (\tau_i (x_i - x_{i+1})+1)  \varphi_{i+1} \cdots \varphi_{m_k}.(u \otimes (v_q)_z)  . $$
  By similar arguments as in the proof of Lemma~\ref{endLyndon} (ii), this implies $s_q \leq N_q-1$. Hence by Corollary~\ref{Lambda}, $q \in \{k,n-k+1 \}$. Now by hypothesis $q \leq p_{j-1} -2 \leq p_0 -2 < p_0$ and $p_0 \leq n-k+1$ by Lemma~\ref{lem4}. In particular $q < n-k+1$. As by assumption $q=p_j \neq k$, we get the desired contradiction.

 \end{proof}

   \begin{prop} \label{lem5}
 With the previous notations, one has:
 
 \begin{enumerate}[(i)]
    \item $ p_1 < p_0$. 
    \item For all $j \geq 1$, if $p_j \neq k$ then  $p_{j+1} < p_j$.
\end{enumerate}

 \end{prop}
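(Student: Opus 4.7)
The proof of both parts proceeds by contradiction, adapting the valuation strategy of Lemmas~\ref{lem4}, \ref{endLyndon}, and \ref{lem3bis}: we assume the negation and derive an upper bound on the valuation $s_q$ of $\varphi_1 \cdots \varphi_{m_k}(u \otimes (v_q)_z)$ for a suitable cuspidal letter $q$, contradicting the lower bound $s_q \geq N_q - 1$ imposed by Corollary~\ref{Lambda}. For~(i), suppose $p_1 = p_0 = p$. By Corollary~\ref{firstletter} we have ${\bf i}_0 = (p, p+1, \ldots, n)$, so ${\bf i}_1$ is another dominant Lyndon word of type $A_n$ starting with $p$, hence of the form $(p, p+1, \ldots, m)$; the uniqueness of the letter $n$ in $\mu_k$ (Proposition~\ref{occurrences}) forces $m < n$. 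Consequently $p$ appears at the two positions $1$ and $n-p+2$ of $\mu_k$, separated only by the strictly greater letters $p+1, \ldots, n$. Both these positions start a cuspidal factor in the Lyndon factorization of the highest-weight generator $u \in M_k$, so $x_1 u = x_{n-p+2} u = 0$. Running the computation of Lemma~\ref{lem4}, the operators $\varphi_1$ and $\varphi_{n-p+2}$ each reduce to $-z\tau + 1$ rather than the generic $\tau_i(x_i - x_{i+1}) + 1$, and careful bookkeeping shows that these two simplifications force an extra cancellation, yielding $s_p \leq N_p - 2$ and contradicting Corollary~\ref{Lambda}.

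For~(ii), suppose $p_{j+1} = p_j = q$ with $q \neq k$ and $j \geq 1$. The same argument applied to $q$ uses that the two occurrences of $q$ at positions $a_j$ and $a_{j+1}$ both start cuspidal factors in $u$ and are separated only by the tail letters of ${\bf i}_j$, all strictly greater than $q$. The analogous computation yields $s_q \leq N_q - 2$, which by Corollary~\ref{Lambda} forces $q \in \{k, n-k+1\}$; applying part~(i), $q \leq p_1 < p_0 \leq n-k+1$, hence $q \neq n-k+1$, leaving $q = k$ and contradicting the hypothesis. Note that the assumption $q \neq k$ is essential for the valuation step itself as well: when $q = k$ the Lyndon factors ${\bf i}_j, {\bf i}_{j+1}$ may collapse to the single letter $(k)$, which eliminates the intermediate strictly-larger letters needed to produce the extra cancellation.

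The main technical obstacle lies in the valuation computation itself: no explicit form of the cyclic generator $u \in M_k$ is available, so the argument must proceed intrinsically, leveraging the property that $x_a u = 0$ at every position $a$ which starts a Lyndon factor of $\mu_k$ (a consequence of constructing $u$ as the image of a tensor of cuspidal generators, each of which is one-dimensional in type $A$) together with a careful accounting of how the braid operators $\tau_i$ commute with the moving letter $q$ and with the $x_j$'s as $q$ is transported from position $m_k + 1$ to position $1$.
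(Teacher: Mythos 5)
There is a genuine gap, in both parts, at exactly the step you summarize as ``careful bookkeeping.'' In part (i), the degree drop is the easy half of the argument: since the two occurrences of $p$ at positions $1$ and $i=n-p+2$ contribute no power of $z$, the polynomial $Q(z)=\varphi_{i+1}\cdots\varphi_{m_k}(u\otimes (v_p)_z)$ has degree at most $N_p-2$. What must then be \emph{proved} is the survival of the lowest-degree term, i.e.\ that the term $\tau_2\cdots\tau_{i-1}Q(z)$ coming from the product $(\tau_1(x_1-x_2)+1)\tau_2\cdots\tau_{i-1}(\tau_i(x_i-x_{i+1})+1)Q(z)$ is nonzero; the paper does this by applying $\tau_{i-1}\cdots\tau_1$, using the braid relation and the fact that $\tau_1$ acts trivially on the cuspidal factor $L({\bf i}_0)$, to recover $Q(z)$ itself. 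Your phrase ``these two simplifications force an extra cancellation'' has the logic backwards: no cancellation is needed, a \emph{non}-vanishing must be established, and that non-vanishing argument is precisely what is missing from your write-up.

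In part (ii) the analogy with (i) genuinely fails, and your claimed bound $s_q\leq N_q-2$ is unjustified. The structural difference is everything \emph{preceding} the doubled letter: by Lemma~\ref{lem3bis} (this is where $q\neq k$ enters the computation, not through any ``collapse to the single letter $(k)$''), one has $p_{j-1}=q+1$, and the adjacency of $q+1$ with the moving letter $q$ (via $\tau_a^2e(\nu)=\pm(x_a-x_{a+1})e(\nu)$) changes the outcome: the candidate bottom term $\tau_1\cdots\tau_{b-1}\tau_{b+1}\cdots\tau_{c-1}Q(z)$ of degree $N_q-2$ vanishes — the paper notes that if it did not, one would already contradict Corollary~\ref{Lambda}, so the only case requiring work is when it is zero, and you give no argument that it is nonzero. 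The paper's actual contradiction sits one degree higher: it shows the two degree-$(N_q-1)$ terms $A(z)$ and $B(z)$ cannot cancel, because $\tau_{c-1}\cdots\tau_1$ acts nontrivially on $A(z)$ and trivially on $B(z)$; this yields $s_q=N_q-1$, which contradicts $s_q=N_q$ for $q\notin\{k,n-k+1\}$. Note also an internal inconsistency in your deduction: if $s_q\leq N_q-2$ really held, Corollary~\ref{Lambda} would be contradicted outright for \emph{any} letter, so your follow-up ``forces $q\in\{k,n-k+1\}$'' misreads the corollary — that inference pattern belongs to the bound $s_q= N_q-1$, which is what the paper establishes, and establishing it requires exactly the two-term non-cancellation analysis absent from your proposal.
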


 \begin{proof}
Assume $p_1=p_0=p$ and let $i$  denote the position of the letter $p_1$ in the word $\mu_k$, i.e. $h_i=p_1$. First note that this implies $2 \leq p \leq n-1$ (as the letters $1$ and $n$ appear only once in the word $\mu_k$). In particular ${\bf i}_0$ is of length strictly greater than $2$, as ${\bf i}_0 = (p \ldots n)$ by Corollary~\ref{firstletter}. In other words $i>2$. 
One has:
 \begin{equation} \label{poly} 
  \varphi_1 \cdots \varphi_{m_k} . (u \otimes (v_p)_z) 
               = (\tau_1 (x_1 - x_2) + 1) \tau_2 \cdots \tau_{i-1} (\tau_i (x_i - x_{i+1}) + 1) \varphi_{i+1} \cdots \varphi_{m_k} . (u \otimes (v_p)_z).
   \end{equation}
   As the renormalized $R$-matrix $r_{L(p),M_k}$ does not vanish, the operator $\varphi_{i+1} \cdots \varphi_{m_k}$ acts as a non zero polynomial function on $(u \otimes (v_p)_z)$ . We set $Q(z) := \varphi_{i+1} \cdots \varphi_{m_k} . (u \otimes (v_p)_z)$. Note that $deg(Q) \leq N_p - 2$. Let $P(z)$ denote the polynomial function given by the term 
   $$ \tau_2 \cdots \tau_{i-1}  \varphi_{i+1} \cdots \varphi_{m_k} . (u \otimes (v_p)_z)   = \tau_2 \cdots \tau_{i-1} . Q(z) $$
   from the above equality and let us consider the action of the operator $\tau_{i-1} \cdots \tau_{1}$ on $P(z)$. Recall that $i>2$. We first write
 $$  \tau_2 \tau_1 . P(z) = \tau_2 \tau_1 . (\tau_2 \cdots \tau_{i-1}) Q(z) = (\tau_1 \tau_2 \tau_1 +1) . \tau_3 \cdots \tau_{i-1} \varphi_{i+1} \cdots \varphi_{m_k} . (u \otimes (v_p)_z) $$
using the braid relation. The operator $\tau_1$ commutes with $\tau_3 \cdots \tau_{i-1}$ as well as with $\varphi_{i+1} , \ldots , \varphi_{m_k}$. As ${\bf i}_0$ is of length greater than $2$, the action of $\tau_1$ on $(u \otimes (v_p)_z)$ is the same as its action on the cuspidal module $L({\bf i}_0)$ which is trivial. Hence we get
$$  \tau_2 \tau_1 . P(z) =  \tau_3 \cdots \tau_{i-1} \varphi_{i+1} \cdots \varphi_{m_k} . (u \otimes (v_p)_z). $$
 The letters $h_3 , \ldots  , h_{i-1}$ are greater than $p+2$ hence by arguments similar to the proof of Lemma~\ref{endLyndon} (ii), we get 
  $$\tau_{i-1} \cdots \tau_{3} . (\tau_3 \cdots \tau_{i-1}) . Q(z) = Q(z). $$  
Finally we get 
$$ \tau_{i-1} \cdots \tau_{1} . P(z) = \varphi_{i+1} \cdots \varphi_{m_k} . (u \otimes (v_p)_z)  = Q(z)$$
which is not zero. A fortiori $P(z)$ itself is a non zero polynomial function. All the other terms in equation~\eqref{poly} are either zero, either of valuation strictly greater than the valuation of $Q(z)$. This implies that $s_p$ is equal to the valuation of $Q(z)$. In particular $s_p \leq N_p - 2$. This contradicts Corollary~\ref{Lambda}. Hence $p_1<p_0$. 
 
 \bigskip 
 
For the second statement assume we have $j \geq 1$ such that  $ q:=p_{j+1}=p_j \neq k$ and  consider $j$ minimal for this property.  For the sake of simplicity, we only deal with the case where only the  two Lyndon words ${\bf i}_j$ and ${\bf i}_{j+1}$ begin with the letter $q$ i.e. $p_{j-1}>q, p_j = p_{j+1} =q $ and $p_{j+2} \leq q-1$. The proof is analogous if there are several words ${ \bf i}_{j'}$ beginning with $q$.

As by hypothesis $q \neq k$, Lemma~\ref{lem3bis} implies $p_{j-1} \in \{q,q+1 \}$ and hence $p_{j-1}=q+1$ as $p_{j-1}>q$.  Moreover by minimality of $j$, $q+1$ appears exactly once in the subsequence $(p_{i})_{i<j}$.

Set $a:= a_{j-1}$, $b:=a_j$ and $c:= a_{j+1}$. As above $N_q$  denotes the number of occurrences of $q$ in the word $w_k$.
 We write 
\begin{equation} \label{calcul}
  \varphi_1 \cdots \varphi_{m_k}.(u \otimes (v_q)_z)  = \tau_1 \cdots \tau_{b-1} (\tau_b (x_b - x_{b+1}) +1) \tau_{b+1} \cdots \tau_{c-1} (\tau_c (x_c - x_{c+1}) +1)  Q(z)
\end{equation}
where $Q(z) := \varphi_{c+1} \cdots \varphi_{m_k} . (u \otimes (v_q)_z) $. As the renormalized $R$-matrix $r_{L(q),M_k}$ does not vanish, $Q$ is a nonzero polynomial function. Its degree is equal to $N_q -2$. 

Now we prove that $Q(z)$ is in fact a monomial in $z$. Indeed, any occurrence of $q$ in a  position $i \in \{c+1 , \ldots , m_k \}$ appears inside a Lyndon word ${\bf i}_{j'}$ (with $j' > j+1$) beginning with a letter strictly smaller than $q$. Then the operator $\tau_{i-1}$ commutes with any $\varphi_h$ , $h > i$ and  acts by zero on $(u \otimes (v_q)_z)$ . Thus 
\begin{align*}
 \tau_{i-1} \varphi_{i} \varphi_{i+1} \cdots \varphi_{m_k}.(u \otimes (v_q)_z)  &= \tau_{i-1} \left( \tau_i (x_i - x_{i+1}) +1 \right) \varphi_{i+1} \cdots \varphi_{m_k}.(u \otimes (v_q)_z)  \\
           &= \tau_{i-1} \tau_i (x_i - x_{i+1}) \varphi_{i+1} \cdots \varphi_{m_k}.(u \otimes (v_q)_z)  \\
          &= z . \tau_{i-1} \tau_i  \varphi_{i+1} \cdots \varphi_{m_k}.(u \otimes (v_q)_z) 
\end{align*}
 up to some sign. 
This is valid for any occurrence of $q$ between the positions $c+1$ and $m_k$ and hence 
$$ Q(z) = \varphi_{c+1} \cdots \varphi_{m_k}.(u \otimes (v_p)_z) = z^{N_q -2} \tau_{c+1} \cdots \tau_{m_k}.(u \otimes (v_p)_z)  $$
is a monomial in $z$ of degree  $N_q -2$. 

The term $\tau_1 \cdots \tau_{b-1} \tau_{b+1} \cdots \tau_{c-1}  Q(z)$ coming from equation~\eqref{calcul} is necessarily zero: if it was not, then equation~\eqref{calcul} implies that  $s_q$ would be equal to $N_q - 2$, which contradicts Corollary~\ref{Lambda}.
There are two terms of degree $N_q -1$ in equation~\eqref{calcul}: $\tau_1 \cdots \tau_{c-1}  Q(z)$ and $\tau_1 \cdots \tau_{b-1} \tau_{b+1} \cdots  \tau_c Q(z)$. Denote them respectively by $A(z)$ and $B(z)$. 

We show that the operator $\tau_{c-1} \cdots \tau_1$ acts nontrivially on $A(z)$ and trivially on $B(z)$. This implies that $A(z) + B(z)$ cannot be zero and therefore there is a nonzero term of degree $N_q -1$ in equation~\eqref{calcul}.

\bigskip

 \textbf{Action of $\tau_{c-1} \cdots \tau_1$ on $A(z)$.}
Let us first look at the action of the operators $\tau_1 , \cdots , \tau_{a-1}$ on $A(z)$; if $j=1$ this is of course not necessary as $a=a_0=1$. Otherwise one has $j \geq 2$ and this action is easy to compute: for instance for $\tau_1$ one has: 
\begin{align*}
 \tau_1 . A(z) &= {\tau_1}^2 \tau_2 \cdots \tau_{c-1} Q(z)\\
    &= {\tau_1}^2 e(h_1,h_c,h_2,\ldots,h_{c-1},q,h_{c+1},\ldots,h_{m_k}) \tau_2 \cdots \tau_{c-1} Q(z) \\
    &= \tau_2 \cdots \tau_{c-1} Q(z) \text{ as $h_c=q$ and $h_1=p_0>p_1>q$ by minimiality of $j \geq 2$.}
\end{align*}
Simmilarly $h_2, \ldots,h_{a-1} \geq q+2$ and thus one gets 
$$ \tau_{a-1} \cdots \tau_1 . A(z) =  \tau_a \cdots \tau_{c-1}. Q(z)  .$$
Let us now look at the action of $\tau_a$:
\begin{align*}
 \tau_a . (\tau_{a} \cdots \tau_{c-1} . Q(z)) &= {\tau_a}^2 e(h_1,\ldots,h_a,h_c,h_{a+1},\ldots,h_{c-1},q,h_{c+1},\ldots,h_{m_k}) \tau_{a+1} \cdots \tau_{c-1} Q(z) \\
   &= (x_a - x_{a+1})  \tau_{a+1} \cdots \tau_{c-1} Q(z) \text{as $h_a=p_{j-1}=q+1$ and $h_c=p_{j+1}=q$} 
   \end{align*}
 The operator $x_a$ commutes  with  $\tau_{a+1} , \cdots \tau_{c-1}$ and acts trivially on the generating vector hence one gets
 \begin{align*}
   \tau_a . (\tau_{a} \cdots \tau_{c-1} . Q(z))  &=  -x_{a+1} \tau_{a+1} \cdots \tau_{c-1} . Q(z)   \\
    &= - x_{a+1} \tau_{a+1} \cdots \tau_b e(h_1,\ldots,h_b,h_c,h_{b+1},\ldots,h_{c-1},\ldots) \tau_{b+1} \cdots \tau_{c-1} Q(z) \\
    &=  -\tau_{a+1} \cdots \tau_{b-1} x_{b} \tau_{b} e(h_1,\ldots,h_b,h_c,h_{b+1},\ldots) \tau_{b+1} \cdots \tau_{c-1} . Q(z) \\
    & \qquad \qquad \qquad \qquad \qquad \qquad \qquad \qquad \text{($h_c=q$, $h_{a+1}, \ldots,h_{b-1} \geq q+2$)} \\
    &=  -\tau_{a+1} \cdots \tau_{b-1} (\tau_{b}x_{b+1} +1)  \tau_{b+1} \cdots \tau_{c-1} . Q(z) \text{($h_b=h_c=q$)}    \\
    &= - \tau_{a+1} \cdots \tau_{b-1} \tau_{b}   \tau_{b+1} \cdots \tau_{c-1} x_{c} . Q(z)
                    -  \tau_{a+1} \cdots \tau_{b-1}  \tau_{b+1} \cdots \tau_{c-1} . Q(z). 
\end{align*}

The operator $x_c$ acts trivially on $(u \otimes (v_q)_z)$  hence the first term of the right hand side in the last equality is zero. Now as  $h_{a+1} , \ldots , h_{b-1} \geq q+2$ and $h_b=q$, the action of the operator  $\tau_{b-1} \cdots \tau_{a+1}$ on the surviving term is similar to the action of $\tau_{a-1} \cdots \tau_1$ computed above. Hence we get 
$$ \tau_{b-1} \cdots \tau_{a} . (\tau_{a} \cdots \tau_{c-1} . Q(z)) = - \tau_{b+1} \cdots \tau_{c-1} . Q(z).$$
The situation is now similar to (i): using the braid relation, one can see that the action of $\tau_{b+1} \tau_{b}$ on $\tau_{b+1} \cdots \tau_{c-1}.Q(z)$ will give two terms, the only non-trivial one being $\tau_{b+2} \cdots \tau_{c-1}.Q(z)$. The letters $h_{b+2}, \ldots , h_{c-1}$ are all greater than $q+2$ and hence one concludes as before that $$ \tau_{c-1} \cdots \tau_{b+2} . (\tau_{b+2} \cdots \tau_{c-1}.Q(z)) = Q(z).$$ 

 Finally we have shown that $\tau_{c-1} \cdots \tau_1$ acts by identity on $A(z)$ (up to some sign).

 \textbf{Action of $\tau_{c-1} \cdots \tau_1$ on $B(z)$.}
  One again has:
$$ \tau_{a-1} \cdots \tau_1 . B(z) = \tau_{a-1} \cdots \tau_1 . (\tau_1 \cdots \tau_{b-1} \tau_{b+1} \cdots  \tau_{c} Q(z)) = \tau_{a} \cdots \tau_{b-1} \tau_{b+1} \cdots  \tau_{c} Q(z) .$$
But then 
 \begin{align*}
   \tau_{a}. (\tau_{a} \cdots \tau_{b-1} \tau_{b+1} \cdots  \tau_{c} Q(z) ) &= (x_a - x_{a+1}) \tau_{a+1} \cdots \tau_{b-1} \tau_{b+1} \cdots  \tau_{c} Q(z) \\
             &= \tau_{a+1} \cdots \tau_{b-1} x_{b} \tau_{b+1} \cdots  \tau_{c} Q(z) \\
\end{align*}
up to some sign, then the operator $x_{b}$ commutes with $\tau_{b+1} , \ldots  , \tau_{c} , \varphi_{c+1} , \ldots, \varphi_{m_k}$ and acts by zero on $(u \otimes (v_q)_z) $. 

Finally we have shown that the operator $\tau_{1} \cdots  \tau_{c-1}$ acts nontrivially on $A(z) + B(z)$ and in particular $A(z) + B(z) \neq 0$. Therefore $s_q = N_q -1$. Now, $q<p_0 \leq n-k+1$ by Lemma~\ref{lem4} and by assumption $q \neq k$ hence $q \notin \{k, n-k+1 \}$. By Corollary~\ref{Lambda}, this contradicts the inequality $s_q \leq N_q - 1$.

 In conclusion (ii) holds.   

\end{proof}

  \begin{cor} \label{cor1}

 The sequence $(p_j)$ takes exactly once every value $1 , \ldots , k-1$ and at least once the value $k$.

 \end{cor}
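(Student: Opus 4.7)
The key idea is to combine Proposition~\ref{lem5} with Lemma~\ref{lem3bis} into a single rigidity statement: for every index $j$, if $p_{j+1} \neq k$ then $p_{j+1} = p_j - 1$. To see this, first note that the sequence $(p_j)$ is weakly decreasing because the Lyndon words $\mathbf{i}_0 \geq \cdots \geq \mathbf{i}_r$ are listed in decreasing lexicographic order. The strict inequality $p_{j+1} < p_j$ comes from Proposition~\ref{lem5}(i) when $j = 0$, from Proposition~\ref{lem5}(ii) when $j \geq 1$ and $p_j \neq k$, and from $p_{j+1} \neq k = p_j$ combined with weak decrease when $j \geq 1$ and $p_j = k$. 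The upper bound $p_j - p_{j+1} \leq 1$ is Lemma~\ref{lem3bis} applied at index $j+1$, which applies precisely because $p_{j+1} \neq k$.

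Given this rigidity, the corollary falls out almost immediately. Since $p_r = 1$ (the letter $1$ occurs only once in $\mu_k$ by Proposition~\ref{occurrences}), and by Lemma~\ref{lem4} we have $p_0 \in \{k, n-k+1\}$ so in particular $p_0 \geq k$, there exists a largest index $m$ with $p_m \geq k$. If $m < r$ then $p_{m+1} < k$ so $p_{m+1} \neq k$, and the rigidity forces $p_m = p_{m+1} + 1 \leq k$; combined with $p_m \geq k$ this gives $p_m = k$. Iterating the rigidity on the suffix $p_{m+1}, p_{m+2}, \ldots, p_r$ (all of which are $< k$ and therefore $\neq k$) shows that this suffix is the strict unit-step descent $(k{-}1, k{-}2, \ldots, 1)$. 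Hence each value in $\{1, \ldots, k-1\}$ is attained exactly once, and $k$ is attained at $p_m$. In the edge case $p_0 = k$ and $m = 0$, the value $k$ is already attained as $p_0$.

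I expect no essential obstacle here: the real combinatorial work was carried out in Proposition~\ref{lem5} (forbidding repetitions away from $k$) and Lemma~\ref{lem3bis} (forbidding large gaps when not landing on $k$), and the present corollary simply extracts their joint consequence on the shape of the sequence $(p_j)_{0 \leq j \leq r}$. The only subtlety is to make sure the strict-decrease / no-large-gap arguments cover the transition in and out of the possible plateau at $k$, which is handled by the case split in the first paragraph above.
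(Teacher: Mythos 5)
Your proof is correct and takes essentially the same approach as the paper: both arguments rest solely on Lemma~\ref{lem3bis}, Proposition~\ref{lem5}, the weak monotonicity of $(p_j)$, and the endpoint facts $p_0 \geq k$ (Lemma~\ref{lem4}) and $p_r = 1$. The paper organizes the identical dichotomy as a backward induction from the tail, proving $p_{r-t}=t+1$ for $0 \leq t \leq k-1$, while you package it as a local unit-step rigidity statement and scan forward from the last index $m$ with $p_m \geq k$; these are the same argument read in opposite directions (and your only unaddressed case, $m=r$, forces $k=1$, which the paper likewise dismisses as trivial).
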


 \begin{proof}

The last term of the sequence $(p_j)$ is  $p_r=1$. Recall that $r \geq k-1$ (Remark~\ref{littlerk}). By (finite) induction on $t \in \{0, \ldots ,k-1 \}$ one shows that $p_{r-t}=t+1$. Indeed, if $k=1$ there is nothing to prove. If $k \geq 2$, assume $p_r = 1, \ldots ,p_{r-t}=t+1$ with $t<k-1$; then $p_{r-t} \leq k-1$ and Lemma~\ref{lem3bis} implies $p_{r-t-1} \leq p_{r-t} +1 $. If $p_{r-t-1} \neq k$ then Proposition~\ref{lem5} (ii) implies $p_{r-t-1} > p_{r-t}$ and thus $p_{r-t-1}=p_{r-t} +1$ which gives $p_{r-(t+1)} = t+1$. If $p_{r-t-1}=k$ then as $p_{r-t} \leq k-1$, necessarily one has $p_{r-t} = t = k-1$ and $p_{r-t-1} = k$ which again gives $p_{r-(t+1)} = t+1$. This implies that the sequence $(p_j)$ takes exactly once each value $1, \ldots , k-1$ (and at least once the value $k$).  

  \end{proof}

  \begin{cor} \label{cor2}

 In the case $p=k$, the parameter $\mu_k$ of $M_k$ is given by 
 $$ \mu_k = (k \ldots n)(k-1 \ldots n-1) \cdots (1 \ldots n-k+1).$$

 \end{cor}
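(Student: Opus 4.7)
The plan is to combine the constraints on the sequence $(p_j)_{0 \leq j \leq r}$ of first letters of the Lyndon blocks of $\mu_k$ (from Proposition~\ref{lem5}, Lemma~\ref{lem3bis} and Corollary~\ref{cor1}) with the existence of distinguished Lyndon blocks ending in the large letters $n, n-1, \ldots, n-k+1$ given by Lemma~\ref{endLyndon}, and finally with the classification of dominant Lyndon words in type $A_n$ recalled in Section~\ref{irredKLR} (they are precisely the intervals $(a, a+1, \ldots, b)$).

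First I would pin down the full sequence $(p_j)$. Under the standing hypothesis $p_0 = p = k$, Proposition~\ref{lem5}(i) gives $p_1 < k$, and since by Corollary~\ref{cor1} each $p_j$ for $j \geq 1$ belongs to $\{1, \ldots, k-1\}$, one has $p_j \neq k$ for every $j \geq 1$. Proposition~\ref{lem5}(ii) then implies that the tail $(p_j)_{j \geq 1}$ is strictly decreasing, and since Corollary~\ref{cor1} asserts that each value $1, \ldots, k-1$ is attained, this forces $r = k-1$ and $p_j = k-j$ for $j = 0, 1, \ldots, k-1$. (Lemma~\ref{lem3bis} is consistent with but not needed beyond this point.)

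Next I would identify each block $\mathbf{i}_j$. The $p_j$ are pairwise distinct, so the chain $\mathbf{i}_0 > \mathbf{i}_1 > \cdots > \mathbf{i}_{k-1}$ is strict of length $k$; on the other hand Lemma~\ref{endLyndon} produces a second strict chain $\mathbf{j}_n > \mathbf{j}_{n-1} > \cdots > \mathbf{j}_{n-k+1}$ of length $k$ among the Lyndon blocks of $\mu_k$. A simple pigeonhole argument forces these two chains to coincide, i.e.\ $\mathbf{i}_j = \mathbf{j}_{n-j}$, so $\mathbf{i}_j$ begins with $k-j$ and ends with $n-j$. Invoking Theorem~\ref{domfacto}(i) and the explicit description of cuspidal modules in type $A_n$, a dominant Lyndon word is determined by its first and last letter, so $\mathbf{i}_j = (k-j, k-j+1, \ldots, n-j)$, and concatenating yields the stated formula for $\mu_k$. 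The substantive work has been absorbed into the preceding lemmas; the only residual point to be careful about is the bookkeeping of the two length-$k$ chains, and in particular making explicit that the strict decrease of $(p_j)$ for $j \geq 1$ (rather than just weak decrease) is what rules out a longer canonical factorization.
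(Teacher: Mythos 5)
Your proposal is correct and takes essentially the same route as the paper's proof: Corollary~\ref{cor1} together with Proposition~\ref{lem5} pins the sequence of first letters down to $(k, k-1, \ldots, 1)$ (so $r = k-1$), and Lemma~\ref{endLyndon} combined with the fact that dominant Lyndon words in type $A_n$ are intervals $(a, a+1, \ldots, b)$ then identifies each block, exactly as in the paper, with your pigeonhole matching of the two strict chains simply making explicit what the paper leaves implicit. One cosmetic slip: the fact that $p_j \in \{1, \ldots, k-1\}$ for all $j \geq 1$ follows from $p_1 < p_0 = k$ (Proposition~\ref{lem5}(i)) together with the weak decrease of the sequence $(p_j)$, not from Corollary~\ref{cor1} itself, which only constrains the multiplicities of the values attained.
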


 \begin{proof} 

By Proposition~\ref{lem5} (i), the sequence $(p_j)$ takes exactly once the value $k$. Together with Corollary~\ref{cor1}, we deduce that  the word $\mu_k$ has the form 
$$ \mu_k = (k \ldots)(k-1 \ldots) \cdots (1 \ldots).$$
Combining this with  Lemma~\ref{endLyndon} (i) and (ii),  we get the desired statement.

 \end{proof}

 \bigskip

One can now focus on the case $p_0 = n-k+1$.

   \begin{prop} \label{lem6}
  The sequence $(p_j)_{0 \leq j \leq r}$ takes exactly once every value between $n-k+1$ and $1$. In other words, $r=n-k+1$ and $p_j = n-k+1-j$ for all $1 \leq j \leq n-k+1$. 
 \end{prop}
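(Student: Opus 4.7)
My plan is to prove, by induction on $j$, that $p_j = n-k+1-j$ for $0 \leq j \leq n-k$, which yields the statement. The base case $j=0$ is the hypothesis of the proposition, and Corollary~\ref{firstletter} identifies ${\bf i}_0 = {\bf j}_n = (n-k+1,\ldots,n)$. For the inductive step, I combine Proposition~\ref{lem5}(ii) (strict decrease of consecutive non-$k$ values) with Lemma~\ref{lem3bis} (consecutive decrease by at most $1$ when landing on a non-$k$ value), and finally invoke Corollary~\ref{cor1} for the tail.

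The straightforward subcases are as follows. Assuming the inductive hypothesis up to $j$, if $p_j \geq k+2$ and $p_{j+1} \neq k$, Proposition~\ref{lem5}(ii) gives $p_{j+1} < p_j$ and Lemma~\ref{lem3bis} gives $p_j - p_{j+1} \leq 1$, so $p_{j+1} = p_j - 1$. If $p_j = k+1$, Proposition~\ref{lem5}(ii) gives $p_{j+1} \leq k$ and Lemma~\ref{lem3bis} forces $p_{j+1} \geq k$ (provided $p_{j+1} \neq k$, which is then impossible), so $p_{j+1} = k$. Finally when $p_j < k$, the same combination directly produces $p_{j+1} = p_j - 1$, and Corollary~\ref{cor1} ensures the values $k-1,\ldots,1$ are each visited exactly once, in decreasing order. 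The two configurations not immediately handled are (a) a \emph{bad jump} $p_j \geq k+2$ with $p_{j+1} = k$ (skipping the intermediate values $k+1,\ldots,p_j - 1$), and (b) a \emph{repetition} $p_j = p_{j+1} = k$; neither is excluded by Lemma~\ref{lem3bis} or Proposition~\ref{lem5}, whose statements both require the relevant $p$-value to differ from $k$.

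The crux of the proof is thus to rule out (a) and (b), which I plan to do by R-matrix computations in the spirit of Lemma~\ref{lem4} and Proposition~\ref{lem5}. For (a), set $m := p_j \geq k+2$ and consider the cuspidal module $L(m-1)$; since $k+1 \leq m-1 \leq n-k$, one has $m-1 \notin \{k, n-k+1\}$, and Corollary~\ref{Lambda} forces $s_{m-1} = N_{m-1}$. By the induction hypothesis all occurrences of $m-1$ in $\mu_k$ lie in Lyndon words ${\bf i}_{j'}$ with $j' \geq j+1$, because ${\bf i}_0,\ldots,{\bf i}_j$ consist of letters $\geq m$. Evaluating $\varphi_1 \cdots \varphi_{m_k}.(u \otimes (v_{m-1})_z)$, the abrupt transition from the letter $m$ at position $a_j$ to the letter $k$ at position $a_{j+1}$ (without intermediate letters $m-1, m-2, \ldots, k+1$) produces a nonvanishing term of $z$-valuation strictly less than $N_{m-1}$, by a computation closely parallel to the proof of Proposition~\ref{lem5}(i)--(ii) with the operators $\tau_\ell(x_\ell-x_{\ell+1})+1$ and $\tau_\ell$ pushed across the block of letters $\geq k+1$. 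This contradicts $s_{m-1}=N_{m-1}$. For (b), the two consecutive Lyndon words starting at $k$ must be of the form $(k,k+1,\ldots,q_j)$ and $(k,k+1,\ldots,q_{j+1})$ with $q_j > q_{j+1}$, and a nearly identical R-matrix analysis with the letter $k+1$ (or, in the edge case $n=2k$ where $k+1=n-k+1$ is special, with the letter $k-1$) reproduces the argument of Proposition~\ref{lem5}(ii) and delivers the desired contradiction. The main technical obstacle is the careful bookkeeping of the braid-like operators across these configurations, in particular identifying the exact term whose $z$-valuation drops.
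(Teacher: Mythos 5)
Your reduction matches the paper's exactly: Lemma~\ref{lem3bis}, Proposition~\ref{lem5} and Corollary~\ref{cor1} leave precisely your configurations (a) and (b), which are the paper's two steps (``Values $k+1,\ldots,n-k+1$'' and ``Value $k$''). The gap is in how you propose to kill them. For (a), the paper never estimates a valuation for $L(m-1)$. It first uses commutation with $L(k)$, rerunning the proof of Proposition~\ref{lem5}(i) --- legitimate because every letter before position $a_{i+1}$ is $\geq k+2$, so the corresponding $\varphi$'s are honest invertible $\tau$'s --- to show that after the jump there is exactly one Lyndon word beginning with each of $k,k-1,\ldots,1$. The contradiction is then purely combinatorial: by Proposition~\ref{occurrences} the letter $m-1$ occurs exactly $k$ times, none of these occurrences lies before the jump, and since type-$A$ dominant Lyndon words are intervals, $m-1$ occurs exactly once in each of the last $k$ words, forcing all their final letters to be $\geq m-1\geq k+1$, which is incompatible with the last letter of $\mu_k$ lying in $\{k,n-k+1\}$ (remark after Lemma~\ref{lem4}, with Lemma~\ref{endLyndon} controlling the words ending above $n-k$). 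Your direct $R$-matrix route would instead have to produce a valuation drop at an equal-letter crossing buried \emph{mid-word}: every occurrence of $m-1$ is immediately preceded inside its interval by the neighbouring letter $m-2$ (and typically followed by $m$), and this is exactly the regime where the bookkeeping that powers Lemma~\ref{lem4}, Lemma~\ref{endLyndon} and Proposition~\ref{lem5} --- pushing $\tau$'s across letters differing by at least $2$ from the moving letter, where $\tau^2$ acts as the identity --- is unavailable. Asserting that a nonvanishing term of valuation $<N_{m-1}$ survives is therefore the entire difficulty, nothing in the cited proofs supplies it, and your plan also omits the preliminary $L(k)$-structure result (exactly $k$ words after the jump) that any such analysis, or the paper's counting, needs.

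For (b) you pick the wrong letter. The hypothesis $p_j\neq k$ in Proposition~\ref{lem5}(ii) restricts the statement, not the proof: the proof only needs the preceding value to be $q+1$, which holds here since step (a) gives $m=k+1$, so the paper reruns it verbatim at $q=k$ and obtains $s_k\leq N_k-1$. The subtlety --- the reason $q=k$ is excluded from the statement --- is that for $i=k$ Corollary~\ref{Lambda} allows one of $s_k,s'_k$ to equal $N_k-1$, so this single bound is no contradiction; the paper must pair it with the mirrored Lemma~\ref{lem4} argument at the \emph{last} letter of $\mu_k$ (which is $k$) to get $s'_k\leq N_k-1$ as well, and only the conjunction contradicts Corollary~\ref{Lambda}. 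Your substitute letters $k+1$ or $k-1$ dodge that subtlety but inherit the mid-word obstruction from (a): if, say, the second $k$-block is the single letter $(k)$, it contains no $k+1$ at all, so no drop can be localized there, and no mechanism is given for $k-1$ either. (You also overlook that the two $k$-blocks may be equal, so $q_j>q_{j+1}$ need not hold.) In short, the skeleton is correct and identical to the paper's, but the two deferred computations \emph{are} the proof, and the mechanisms you sketch for them either would fail or would require new estimates not obtainable from the paper's technical lemmas.
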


 \begin{proof} 

By Corollary~\ref{cor1}, we already know that that the values $1 , \ldots , k-1$ are taken exactly once and the value $k$ at least once.

 { \bf Values $k+1, \ldots , n-k+1$.}
 Let  $ i := \max \{j , p_j > k \}$ (it exists as $p_0 = n-k+1 > k$) and $m := p_i$.

 Assume $m \geq k+2$. Then the commuting of $M_k$ with $L(k)$ implies that ${\bf i}_{i+1}$ is the only Lyndon word beginning with $k$ (equivalently $p_{i+1}=k , p_{i+2} = k-1 , \ldots , p_r = 1$). Indeed, all the letters in position strictly smaller than $a_{i+1}$ are greater than $k+2$ hence $\varphi_{i'} = \tau_{i'}$ for all $i' < a_{i+1}$ and 
$$ \tau_{a_{i+1}-1} \cdots \tau_1 . (\varphi_{1} \cdots \varphi_{m_k}.(u \otimes (v_k)_z)  ) = \varphi_{a_{i+1}} \cdots \varphi_{m_k}.(u \otimes (v_k)_z). $$
 Then the same proof as for Proposition~\ref{lem5}(i) shows that necessarily $ k=p_{i+1} > p_{i+2}$.

 Thus there is exactly one Lyndon word ${\bf i}_j$ beginning with every letter $1, \ldots ,k$. The letter $m-1$ does not appear in any of the words ${\bf i}_j$ for $j \leq i$ (all these words begin with letters greater than $m$) and appears exactly $k$ times in the word $\mu_k$ (as $n-k \geq m-1 \geq k+1$) hence it appears exactly once in each of the words ${\bf i}_{i+1}, \ldots,{\bf i}_r$. This implies that the last letters of all of these words are greater than $m-1$ and in particular so is $k$ (last letter of ${\bf i}_r$), i.e. $m \leq k+1$ which contradicts the hypothesis.  

   Hence $m=k+1$ i.e. the sequence $(p_j)$ takes all the values $n-k+1, \ldots ,1$. By Proposition~\ref{lem5}(ii) the values $n-k+1, \ldots ,k+1$ appear exactly once in the sequence $(p_j)$.

\bigskip

{\bf Value $k$.} 
 If there are more than two Lyndon words ${\bf i}_j$ beginning with the letter $k$ then the same proof as for Proposition~\ref{lem5}(ii) (it can be applied as $m=k+1$) implies $s_k \leq N_k-1$. But as the last letter of the word $\mu_k$ is $k$, the same proof as for Lemma~\ref{lem4} shows that $s'_k$ is also smaller than $N_k-1$. Hence both $s_k$ and $s'_k$ are less than $N_k - 1$  which contradicts Corollary~\ref{Lambda}.

 Therefore the sequence $(p_j)$ takes exactly once every value $n-k+1, \ldots ,1$.  

 \end{proof}

     \begin{cor} \label{props}
 For any $0 \leq j \leq n-k$, the Lyndon word ${\bf i}_j$ is $(n-k+1-j \ldots n-j)$.
   \end{cor}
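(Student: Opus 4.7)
The plan is to prove the corollary by induction on $j$ from $0$ to $n-k$, combining multiplicity bookkeeping derived from Proposition~\ref{occurrences} with commutation arguments modeled on the proof of Lemma~\ref{endLyndon}(ii).

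\textbf{Base cases $0 \leq j \leq k-1$.} Corollary~\ref{firstletter} identifies ${\bf i}_0$ with ${\bf j}_n$, and Lemma~\ref{endLyndon}(ii) combined with Proposition~\ref{lem6} identifies ${\bf i}_j$ with ${\bf j}_{n-j}$ for $0 \leq j \leq k-1$. Since dominant Lyndon words in type $A_n$ are intervals of consecutive integers (Theorem~\ref{domfacto}(i)), knowing that ${\bf j}_{n-j}$ starts at $p_j = n-k+1-j$ by Proposition~\ref{lem6} and ends at $n-j$ forces ${\bf i}_j = (n-k+1-j, \ldots, n-j)$.

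\textbf{Inductive step.} Fix $J$ with $k-1 \leq J \leq n-k-1$ and assume the claim for $0 \leq j \leq J$. A counting argument using the induction hypothesis and Proposition~\ref{occurrences} shows that letter $n-J$ already appears $k$ times (its full multiplicity) in ${\bf i}_0 \cdots {\bf i}_J$, whence $b_j \leq n-J-1$ for every $j > J$. The same counting for the letter $n-J-1$ shows it appears exactly $k-1$ times in ${\bf i}_0 \cdots {\bf i}_J$, so precisely one Lyndon word among ${\bf i}_{J+1}, \ldots, {\bf i}_{n-k}$ contains $n-J-1$, and by the previous step this one ends with $n-J-1$. To show that this unique $j$ is $j=J+1$, I would run a commutation argument modeled on the proof of Lemma~\ref{endLyndon}(ii), using that $M_k$ commutes with $L(n-J-1)$ (since $M_k$ is frozen in ${\s}_0^n$, so commutes with every simple of $R-gmod$). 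Assuming the unique index were some $j^* > J+1$, all letters after the position of the last occurrence of $n-J-1$ in $\mu_k$ would be strictly less than $n-J-1$; computing $\varphi_{m_k} \cdots \varphi_1(v_{n-J-1} \otimes u_z)$ then yields $s'_{n-J-1} \leq N_{n-J-1} - 1$, contradicting Corollary~\ref{Lambda} since $n-J-1 \notin \{k, n-k+1\}$ for $k-1 \leq J \leq n-k-2$.

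\textbf{Last inductive step.} When $J+1 = n-k$, the commutation argument does not apply since $n-J-1 = k \in \{k, n-k+1\}$. Instead one simply counts total length: the total length of $\mu_k$ is $k(n-k+1)$ by Proposition~\ref{occurrences}, and ${\bf i}_0, \ldots, {\bf i}_{n-k-1}$ have total length $k(n-k)$ by induction; together with $a_{n-k} = p_{n-k} = 1$, the remaining Lyndon word is forced to be $(1, 2, \ldots, k)$.

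\textbf{Main obstacle.} The technical heart is the commutation step. In Lemma~\ref{endLyndon}(ii), letters after position $i$ in $\mu_k$ satisfied $\leq m - 2$, which made the action of $\tau_{m_k}^2$ essentially trivial; in the present setting the letters in question can equal $(n-J-1)-1$, in which case $\tau_{m_k}^2$ acts as multiplication by $(x_{m_k+1} - x_{m_k})$ rather than by a scalar. One must therefore iterate and track degrees in $z$ more delicately to show the relevant polynomial is nonzero and of valuation at most $N_{n-J-1}-1$. Carrying out this computation, a more involved variant of the proof of Lemma~\ref{endLyndon}(ii), is the main content to be worked out.
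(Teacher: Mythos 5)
Your proposal follows the paper's broad architecture (induction on $j$, letter-multiplicity counting from Proposition~\ref{occurrences}, first-letter data from Proposition~\ref{lem6}), and your counting steps and the final length-counting step at rank $n-k$ are sound. But the step you yourself identify as the main content --- the commutation argument pinning down that the unique Lyndon word ending with $q := n-J-1$ is ${\bf i}_{J+1}$ --- cannot work as you have set it up, and this is a genuine flaw rather than a computation left to be carried out. Your contradiction hypothesis $j^* > J+1$ enters only through the premise that all letters occurring after the last occurrence of $q$ in $\mu_k$ are strictly smaller than $q$. That premise holds equally in the true configuration $j^* = J+1$: there ${\bf i}_{J+1} = (n-k-J \ldots n-J-1)$ ends with $q$, and every subsequent word consists of letters at most $n-J-2$. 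Since $M_k$ genuinely commutes with $L(q)$ and $q \notin \{k, n-k+1\}$, Corollary~\ref{Lambda} forces $s'_q = N_q$ in the true configuration; hence no amount of ``more delicate tracking of $z$-degrees'' can extract $s'_q \leq N_q - 1$ from your premise alone --- the same computation would ``disprove'' the correct parameter. A viable commutation argument would have to exploit structure that actually distinguishes $j^* > J+1$ (for instance the intervening words ${\bf i}_{J+1}, \ldots, {\bf i}_{j^*-1}$, whose letters are all below $q$ but which may end with $q-1$), and your sketch never uses any such feature. So your ``main obstacle'' paragraph correctly diagnoses why the $\tau^2$-scalar trick of Lemma~\ref{endLyndon}(ii) fails when letters equal to $q-1$ intervene, but the conclusion you are aiming for is false under your stated hypotheses, not merely hard to reach.

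There is a secondary gap of the same nature in your base cases: for $1 \leq j \leq k-1$, the identification ${\bf i}_j = {\bf j}_{n-j}$ does not follow from Corollary~\ref{firstletter}, Lemma~\ref{endLyndon}(ii) and Proposition~\ref{lem6} as claimed, because nothing in those statements excludes other Lyndon words being interleaved among ${\bf j}_n > \cdots > {\bf j}_{n-k+1}$. Concretely, in type $A_4$ with $k=2$ the configuration $(34)(2)(123)$ satisfies Corollary~\ref{firstletter} (the largest word ends with $n$), Lemma~\ref{endLyndon} (unique words ending with $4$ and $3$, appearing in decreasing order), Proposition~\ref{lem6} (first letters $3,2,1$, each exactly once) and the multiplicities of Proposition~\ref{occurrences}, yet has ${\bf i}_1 = (2) \neq {\bf j}_3$. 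For comparison, the paper's proof is purely combinatorial and introduces no new R-matrix computation: it runs a strengthened double induction on (i) ``there is exactly one Lyndon word ending with each of $n-j, \ldots, n-k+1-j$'' and (ii) ``the word ending with $n-j$ begins with $n-k+1-j$,'' obtaining (i) at rank $j+1$ by counting occurrences of the letter $n-k-j$ against its multiplicity. Your counting for (i) is essentially the paper's; what is missing in both your base cases and your inductive step is precisely the pairing statement (ii), i.e.\ ruling out configurations like the one above, and that is where your proposal would need a genuinely new argument rather than a refinement of the computation you describe.
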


 \begin{proof}
 We show it by induction on $j$. In fact we prove the following properties:
 \begin{enumerate}[(i)]
    \item For every $0 \leq j \leq n-k$ there is exactly one Lyndon word ending with each of the letters $n-j, \ldots ,n-k+1-j$.
    \item The Lyndon word ending with $n-j$ begins with the letter $n-k+1-j$.
\end{enumerate}

 For $j=0$ it follows from Corollary~\ref{firstletter}.
  
 Assume  (i) and (ii) hold until the rank $j$. By hypothesis the Lyndon words ending with the letters $n,n-1 \ldots ,n-j$ respectively begin with the letters $n-k+1, \ldots ,n-k+1-j$ and in particular do not contain $n-k+j$. As by Proposition~\ref{lem6} there is exactly one Lyndon word beginning with each of the letters $n-k+1, \ldots ,1$, the Lyndon words ending with letters $n-1-j, \ldots ,n-k+1-j$ begin with letters less than $n-k-j$ and hence contain $n-k-j$. This gives $k-1$ Lyndon words containing the letter $n-k-j$. As this letter appears exactly $k$ times in the word $\mu_k$, there exists a Lyndon word that contains $n-k-j$ but is not one of the previous words, i.e. does not end with any of the letters $n, \ldots ,n-j$. Hence it does not contain $n-k-j+1$ (as $n-k-j+1$ appears $k$ times, once  in each of the $k$ words ending with $n-j, \ldots ,n-k-j+1$). This means there is a unique Lyndon word ending with the letter $n-k-j$, which proves (i) at the rank $j+1$. 
 
 Now (ii) at rank $j$ and (i) at rank $j+1$ together with Proposition~\ref{lem6} easily imply (ii) at rank $j+1$. 

\end{proof} 

From Corollary~\ref{cor2} and Corollary~\ref{props}, one concludes that among the two simple modules $M_k$ and $M_{n-k+1}$, one of them has a parameter whose first letter is $k$, namely $(k \ldots n)(k-1 \ldots n-1) \cdots (1 \ldots n-k+1)$, and the other has a parameter whose first letter is $n-k+1$, namely $(n-k+1 \ldots n) \cdots (1 \ldots k)$.
 
  \subsection{Proofs of main theorems}
  \label{proofs}
 
 At this stage, one only has bijections between pairs of modules and pairs of dominant words: for each $1 \leq k \leq n/2$, the set of modules $\{M_k,M_{n-k+1} \}$ is in one-to-one correspondence with the set $\{(k \ldots n)(k-1 \ldots n-1) \cdots (1 \ldots n-k+1),(n-k+1 \ldots n) \cdots (1 \ldots k)\}$. A priori this yields two possibilities for each $k$. To complete the proof of Theorem~\ref{initpara}, we need to show that for every $1 \leq k \leq n/2$, one has
 $$ M_k = L \left( (k \ldots n)(k-1 \ldots n-1) \cdots (1 \ldots n-k+1) \right)$$ 
 and
 $$ M_{n-k+1} = L \left( (n-k+1 \ldots n)(n-k \ldots n-1) \cdots (1 \ldots k) \right). $$
 The key argument is the mutation rule for dominant words given by Proposition~\ref{mutpara}.

 \begin{proof}[Proof of Theorem~\ref{initpara}.]
 
  We prove by induction on $k \in \{1 , \ldots , n\}$ that
 
$   \begin{array}{ccl}
    M_{r_{k-1}+1}  &= &L(1 \ldots k) \\
   M_{r_{k-1}+2}  &= &L \left( (2 \ldots k)(1 \ldots k-1) \right) \\
    \ldots  & \ldots   &  \qquad \ldots \\
    M_{r_k} &= &L(k \cdots 1).   
\end{array}    $
 
   The result already holds for $k=1$ and $k=2$. 
 Consider $1 \leq k \leq n$ and assume the result holds at the rank $k$. 
 
  Let $ j \in \{ r_{k-1}+2, \ldots, r_{k}-1 \}$ and let us write the (ungraded) short exact sequence corresponding to the mutation in direction $j$:
  $$ 0 \rightarrow  M_{j_{+}} \circ M_{j-1} \circ M_{j_{-}+1}\rightarrow  M_j \circ {M_j}' \rightarrow  M_{j_{-}} \circ M_{j+1} \circ M_{j_{+}-1}  \rightarrow 0 . $$
 Let $p := j- r_{k-1}$.
 
By the induction hypothesis, one has
 
$ \begin{array}{ccl}
    M_j  &= &L \left( (p \ldots k) \cdots (1 \ldots k-p+1) \right) \\
   M_{j-1}  &= &L \left( (p-1 \ldots k) \cdots (1 \ldots k-p+2) \right) \\
    M_{j+1}  &=  &L \left( (p+1 \ldots k) \cdots (1 \ldots k-p) \right) \\
\end{array}    $

The Lyndon word $(p \ldots k)$  appears in the parameter of $M_j$ hence in the parameter of $ M_j \circ M'_j$. Hence by Proposition~\ref{mutpara}, it necessarily appears either in $\mu_{j_{+}} \odot \mu_{j-1} \odot \mu_{j_{-}+1} $ or in $\mu_{j_{-}} \odot \mu_{j+1} \odot \mu_{j_{+}-1}$. 
 Obviously, it does not appear in  $\mu_{j-1}$ nor in $\mu_{j+1}$. Moreover, $\mu_{j_{-}}$ and $ \mu_{j_{-}+1}$ do not contain the letter $k$ hence $(p \ldots k)$ does not appear in the canonical factorizations of these  parameters either.

 Now by Proposition~\ref{occurrences},  $\mu_{j_{+}}$ is either $ (p+1 \ldots k+1) \allowbreak  \cdots \allowbreak  (1 \ldots k-p) $ or $ (k-p+1 \ldots k+1) \allowbreak \cdots \allowbreak  (1 \ldots p+1) $ and  $\mu_{j_{+}-1}$ is either $(p \ldots k+1) \allowbreak \cdots \allowbreak (1 \ldots k-p+2) $ or $(k-p+2 \ldots k+1) \allowbreak \cdots \allowbreak (1 \ldots p) $. The only of these words in which the Lyndon $(p \ldots k)$ appears is $ (p+1 \ldots k+1) \allowbreak \cdots \allowbreak  (1 \ldots k-p) $ and thus $\mu_{j_{+}} = \mu_{r_k + p+1} =  (p+1 \ldots k+1)  \allowbreak  \cdots  \allowbreak  (1 \ldots k-p) $.

 One can do this for any  $j \in \{ r_{k-1}+2, \ldots ,  r_k -1 \}$, and the same arguments hold for $j=r_{k-1}+1$ and $j=r_k$. 
 Thus the desired result  holds at rank $k+1$. 

 \end{proof}

One can now prove Theorem~\ref{mainthm}.

 \begin{proof}[Proof of Theorem~\ref{mainthm}.]

 We begin by describing the exchange matrix corresponding to the quiver given in \cite[Definition 11.1.1]{KKKO}.
For any $1 \leq k \leq n-1$ define the following matrices:
$$
 A_k :=  \left( \begin{array}{ccccc}

 0 & 1 & \cdots & \cdots  & 0 \\
 -1 & \ddots & \ddots  & {} & \vdots \\
 0 & \ddots & \ddots & \ddots  & \vdots \\
 \vdots & \ddots &  \ddots  & \ddots & 1 \\
 0  & \cdots &  0 & -1 & 0 \\

\end{array}
 \right) ,   \quad     B_k :=   \left( \begin{array}{ccccc}

 -1 & 0 & \cdots & \cdots  & 0 \\
 1 & -1 & \ddots  & {} & \vdots \\
 0& \ddots & \ddots & \ddots  & \vdots \\
 \vdots & \ddots &  \ddots  & \ddots & 0 \\
 0  & \cdots & 0 & 1 & -1 \\
 0 & \cdots & \cdots &  \cdots  0 & 1

\end{array}
 \right)     $$
 and $$ C_k := - ~^t B_{k-1} $$
 of respective sizes $k \times k$,  $k+1 \times k$, and $k-1 \times k$.
  
  Now the whole exchange matrix can be written by blocks as follows: 
 $$   \left( \begin{array}{cccccc}
  
  A_1  &   C_2  &    0   & \cdots & \cdots &  0 \\
  B_1  &   A_2  &   C_3  & \ddots &   {}   & \vdots \\
   0   &   B_2  &   A_3  &   C_4  & \ddots & \vdots \\
\vdots & \ddots & \ddots & \ddots & \ddots & \vdots  \\
\vdots &   {}   &    0   & \ddots & \ddots &  C_{n-1} \\ 
   0   & \cdots & \cdots &   0    & B_{n-2} & A_{n-1} \\
   0   & \cdots & \cdots & \cdots &    0   &  B_{n-1}

  \end{array}
  \right). $$ 
Recall that for any parameter $\mu \in \M$, $\mu^{\odot -1}$ denotes the inverse of $\mu$ in the Grothendieck group $\G$ of $\M$.  We can now compute the parameters $\hat{\mu_j}$ associated to the $\hat{y_j}$ as in Definition~\ref{compat}. 
For instance, for any $2 \leq j \leq n-2$, 
\begin{multline}
  \hat{\mu}_{r_{n-2}+j} = \left( (j-1 \ldots n-2) \cdots (1 \ldots n-j) \right)^{\odot -1} \odot  \left( (j \ldots n-2) \cdots (1 \ldots n-j-1) \right)  \nonumber\\
                    \odot  \left( (j-1 \ldots n-1) \cdots (1 \ldots n-j+1) \right) \odot  \left( (j+1 \ldots n-1) \cdots (1 \ldots n-j-1) \right)^{\odot -1}  \nonumber \\
                    \odot \left( (j \ldots n) \cdots (1 \ldots n-j+1) \right)^{\odot -1} \odot  \left( (j+1 \ldots n) \cdots (1 \ldots n-j) \right)
\end{multline}
which simplifies as 
  $$ \hat{\mu}_{r_{n-2}+j} = \left( (j+1 \ldots n)(j \ldots n-1) \right) \odot \left( (j+1 \ldots n-1)(j \ldots n) \right)^{\odot -1} . $$
Hence for any parameter $\mu \in \M$, one has
 \begin{align*}
   \left( (j+1 \ldots n-1)(j \ldots n) \right) \odot \hat{\mu}_{r_{n-2}+j} \odot \mu &= \left( (j+1 \ldots n)(j  \ldots n-1) \right) \odot \mu \\
                                                        &> \left( (j+1 \ldots n-1)(j \ldots n) \right) \odot \mu.
\end{align*}
This exactly means $ \hat{\mu}_{r_{n-2}+j} \odot \mu > \mu $ in $\G$ for any $\mu \in \M$. The computations for any other index $s \in \{1, \ldots , r_{n-1} \} $ are similar. 

 Using Remark~\ref{rkpsi} we conclude that the seed ${\s}_0^n$ is compatible.
 
 \end{proof}

 \section{Possible further developments}
 
  In this section we mention a couple of situations where interesting consequences may arise from the study of compatible seeds in various contexts of monoidal categorifications of cluster algebras.
 
  \subsection{Dominant words and $g$-vectors}
  
    By Theorem~\ref{mainthm}, the seed ${\s}_0^n$ for the category $R-gmod$ in type $A_n$ is compatible in the sense of Definition~\ref{compat}. As explained in Subsection~\ref{compatseed}, this yields some interesting combinatorial relationships between dominant words and $g$-vectos. 
    
   More precisely, consider as in Subsection~\ref{compatseed} $x_l^t$ any cluster variable in $\A$, $M_l^t$ the simple module in $R-gmod$ such that $[M_l^t]=x_l^t$ and $\mu_l^t$ the dominant word associated to $M_l^t$. For simplicity, we will write $x$ (resp. $M,\mu$) for $x_l^t$ (resp. $M_l^t,\mu_l^t$) without ambiguity as we will focus here on this module.  For any dominant Lyndon word (i.e. any positive root in type $A_n$) $(k \ldots l)$, we let $m_{(k \ldots l)}$ denote the multiplicity of the Lyndon word $(k \ldots l)$ in the canonical factorization of $\mu$. 
   As in Section~\ref{algclust}, we consider $F$ and ${\bf g}= (g_1 , \ldots , g_{r_{n-1}})$  the $F$-polynomial and the $g$-vector  associated to $x$. We also  let  $a_1, \ldots , a_{r_{n-1}}$ denote the exponents of the unique monomial of maximal degree of $F$ (see Theorem~\ref{FpolyDWZ}(i)), and $c_1 , \ldots , c_n$ the (negative) integers such that  $F_{|{\p}}(y_1, \ldots, y_n) =  x_{r_{n-1}+1}^{c_1} \cdots x_{r_n}^{c_n}$ (see Subsection~\ref{compatseed}).
 
  \bigskip
 
First consider the positive roots ending with the letter $n$. It follows from Theorem~\ref{initpara} that these Lyndon words do not appear in the dominant words associated to the unfrozen variables of the seed ${\s}_0^n$. Hence the $g$-vectors will not be involved. Moreover, for any $ 1 \leq j \leq n$, the Lyndon word $(j \ldots n)$ appears in exactly one of the dominant words corresponding to the frozen variables of ${\s}_0^n$ namely $(j \ldots n) \cdots (1 \ldots n-j+1)$. If $2 \leq j \leq n-1$, then the Lyndon word $(j \ldots n)$ appears in exactly two of the $\mjh$ namely $\hat{\mu}_{r_{n-1}+j-1}$ and $\hat{\mu}_{r_{n-1}+j}$. The relations are the following:
$$ m_{(j \ldots n)} = a_{r_{n-2}+j-1} - a_{r_{n-2}+j}  - c_j. $$
For positive roots of the form $(j \ldots n-1)$ with $2 \leq j \leq n-2$, similar arguments show that 
$$ m_{(j \ldots n-1)} =  a_{r_{n-2}+j} - a_{r_{n-2}+j+1}  - c_{j+1}    -a_{r_{n-2}+j-1} + a_{r_{n-2}+j+1} + a_{r_{n-3}+j-1} - a_{r_{n-3}+j} + g_{r_{n-2}+j}  $$
which simplifies as
$$ m_{(j \ldots n-1)} =  a_{r_{n-2}+j} - c_{j+1} -a_{r_{n-2}+j-1} + a_{r_{n-3}+j-1} - a_{r_{n-3}+j} + g_{r_{n-2}+j}. $$

\bigskip
\bigskip

 \subsection{The coherent Satake category}
 
 Recently, Cautis-Williams \cite{CW} exhibited  a new example of monoidal categorification of cluster algebras, using the coherent Satake category. In this subsection we focus on the case of the general linear group $GL_n$. We begin by checking that Assumptions~\ref{decomp} and ~\ref{assump} hold in the framework of \cite{CW}. 

The simple objects in the coherent Satake category are parametrized (up to $\mathbb{G}_m$-equivariant shift)  by couples of a coweight and a weight, modulo action of the Weyl group. Equivalently they can be parametrized by \textit{dominant pairs}, i.e. couples of a dominant coweight $\lambda^{\vee}$ together with a weight $\mu$ dominant for the Levi factor of $P_{\lambda^{\vee}}$. Denote by $\mathcal{P}_{\lambda^{\vee},\mu}$  the simple perverse coherent sheaf corresponding to a dominant pair  $(\lambda^{\vee},\mu)  \in P^{\vee} \times P$. Then the following statement shows that Assumption~\ref{decomp} holds:
  
   \begin{prop}[{{\cite[Proposition 2.6]{CW}}}]
    Let $\mathcal{P}_{\lambda_1^{\vee},\mu_1}$ and $\mathcal{P}_{\lambda_2^{\vee},\mu_2}$ be two simple objects in the coherent Satake category. Then in its Grothendieck ring $K^{G(\mathcal{O}) \rtimes \mathbb{G}_m}(Gr_G)$ one has:
     $$ [\mathcal{P}_{\lambda_1^{\vee},\mu_1} \ast \mathcal{P}_{\lambda_2^{\vee},\mu_2}] 
               =  q^s  [\mathcal{P}_{\lambda_1^{\vee} + \lambda_2^{\vee}, \mu_1 + \mu_2}] 
                        + \sum_{(\lambda^{\vee},\mu) \in S} p_{\lambda^{\vee},\mu}  [\mathcal{P}_{\lambda^{\vee},\mu}]   $$
    where $s$ is some integer depending on $\lambda_1, \mu_1, \lambda_2, \mu_2$,   $p_{\lambda^{\vee},\mu}  \in \mathbb{Z}[q^{\pm 1/2}]$, and   $S$ is a finite collection of dominant pairs such that for every $ (\lambda^{\vee},\mu) \in S$, one has either $\lambda^{\vee} < \lambda_1^{\vee} + \lambda_2^{\vee}$, or $\lambda^{\vee} = \lambda_1^{\vee} + \lambda_2^{\vee}$ and ${\mid \mid \mu \mid \mid}^2 \leq {\mid \mid  \mu_1 \mid \mid}^2 + {\mid \mid  \mu_2 \mid \mid}^2$ for any $W$-invariant quadratic form ${\mid \mid \cdot \mid \mid}^2$.   
   \end{prop}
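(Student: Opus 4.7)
The plan is to exploit the geometric structure of the affine Grassmannian $Gr_G$ together with the support properties of perverse coherent sheaves on $G(\mathcal{O})$-orbits. Recall that $\mathcal{P}_{\lambda^{\vee},\mu}$ is the intermediate extension of a shifted line bundle from the $G(\mathcal{O})$-orbit $Gr^{\lambda^{\vee}}$, whose closure $\overline{Gr^{\lambda^{\vee}}}$ is stratified by the orbits $Gr^{\nu^{\vee}}$ for $\nu^{\vee} \leq \lambda^{\vee}$ in the dominance order on coweights. The convolution product $\ast$ is defined via the convolution diagram
\[
Gr_G \times Gr_G \xleftarrow{p} G(K) \times^{G(\mathcal{O})} Gr_G \xrightarrow{m} Gr_G,
\]
and the key geometric input is the classical fact that $m$ restricts to a proper map $\overline{Gr^{\lambda_1^{\vee}}} \tilde{\times} \overline{Gr^{\lambda_2^{\vee}}} \to \overline{Gr^{\lambda_1^{\vee}+\lambda_2^{\vee}}}$, with generic fiber over $Gr^{\lambda_1^{\vee}+\lambda_2^{\vee}}$ a single point.

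First I would establish the support condition: since the convolution of two simple perverse coherent sheaves supported on $\overline{Gr^{\lambda_1^{\vee}}}$ and $\overline{Gr^{\lambda_2^{\vee}}}$ is supported on $\overline{Gr^{\lambda_1^{\vee}+\lambda_2^{\vee}}}$, only simples $\mathcal{P}_{\lambda^{\vee},\mu}$ with $\lambda^{\vee} \leq \lambda_1^{\vee}+\lambda_2^{\vee}$ can appear in the decomposition. To isolate the leading term, I would restrict to the open stratum $Gr^{\lambda_1^{\vee}+\lambda_2^{\vee}}$: here the properness of $m$ together with its generic one-to-one behavior means that the convolution restricts to the tensor product of the respective line bundle restrictions, which picks out the dominant pair $(\lambda_1^{\vee}+\lambda_2^{\vee},\mu_1+\mu_2)$ with some explicit $\mathbb{G}_m$-equivariant shift $q^s$ coming from the $\mathbb{G}_m$-weights on the line bundles.

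For the more delicate norm condition at the top coweight $\lambda^{\vee} = \lambda_1^{\vee}+\lambda_2^{\vee}$, I would pass to $T$-equivariant cohomology via localization at the $T$-fixed points of $Gr_G$, which are indexed by the coweight lattice. The weights $\mu$ appearing at a fixed point $\lambda^{\vee} \in Gr^{\lambda^{\vee}}$ correspond to characters occurring in the Levi representation with highest weight $\mu$, all of whose $W$-translates have norms bounded by $\|\mu\|$ for any $W$-invariant quadratic form. Combining Borel-Weil-type character formulas for the restriction to $T$-fixed points of $\mathcal{P}_{\lambda_i^{\vee},\mu_i}$ with the standard fact that convolution corresponds to addition at $T$-fixed points, the weights appearing in the convolution product are of the form $w_1 \mu_1 + w_2 \mu_2$ for $w_i \in W$, and convexity of $\|\cdot\|^2$ yields the inequality $\|\mu\|^2 \leq \|\mu_1\|^2 + \|\mu_2\|^2$.

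The main obstacle I expect is the precise determination of the integer $s$ in the leading term and the careful tracking of the $\mathbb{G}_m$-equivariant shifts throughout the convolution; in particular, the IC extension procedure for perverse coherent sheaves, as developed by Arinkin-Bezrukavnikov, involves a non-trivial $t$-structure whose behavior under convolution is subtle. Verifying that the extension across the deeper strata $Gr^{\lambda^{\vee}}$ with $\lambda^{\vee} < \lambda_1^{\vee}+\lambda_2^{\vee}$ preserves the semisimplicity and produces only simple perverse coherent sheaves (rather than extensions) on the right-hand side requires either an explicit decomposition theorem for the coherent setting or a case-by-case analysis using the $\mathbb{G}_m$-action to split the relevant extensions, and this is where the bulk of the technical work would lie.
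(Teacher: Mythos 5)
First, a point of reference: this paper does not prove the statement at all --- it is quoted verbatim from Cautis--Williams \cite[Proposition 2.6]{CW} and used only to verify Assumptions~\ref{decomp} and~\ref{assump} for the coherent Satake category. So your sketch has to be measured against the argument in \cite{CW}. The first half of your outline is sound and consistent with it: convolution of objects supported on $\overline{Gr^{\lambda_1^{\vee}}}$ and $\overline{Gr^{\lambda_2^{\vee}}}$ is supported on $\overline{Gr^{\lambda_1^{\vee}+\lambda_2^{\vee}}}$, and the convolution morphism is an isomorphism over the open stratum, which controls the leading term. Also, your final worry about needing a ``decomposition theorem'' is misplaced: the proposition is an identity in the Grothendieck ring, so once one knows convolution preserves the perverse coherent heart (which \cite{CW} establish), Jordan--H\"older multiplicities give the decomposition into classes of simples with coefficients in $\mathbb{Z}[q^{\pm 1/2}]$; no semisimplicity is required.

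There are, however, two genuine gaps. (a) The restriction of $\mathcal{P}_{\lambda^{\vee},\mu}$ to the orbit $Gr^{\lambda^{\vee}}$ is in general not a shifted line bundle but the equivariant \emph{vector bundle} attached to the irreducible representation $V_{\mu}$ of the Levi quotient $L_{\lambda^{\vee}}$ of the stabilizer ($\mu$ is only dominant for that Levi). Hence the restriction of the convolution to the open stratum corresponds to $V_{\mu_1}|_{L} \otimes V_{\mu_2}|_{L}$ with $L = L_{\lambda_1^{\vee}+\lambda_2^{\vee}} = L_{\lambda_1^{\vee}} \cap L_{\lambda_2^{\vee}}$, which is typically reducible; its non-Cartan constituents are exactly the source of the terms of $S$ with $\lambda^{\vee} = \lambda_1^{\vee}+\lambda_2^{\vee}$. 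Your claim that the tensor product of line bundle restrictions ``picks out'' $(\lambda_1^{\vee}+\lambda_2^{\vee},\mu_1+\mu_2)$ would wrongly imply that $S$ contains no such terms, contradicting the statement you are proving. (b) The convexity argument for the norm condition fails outright: from $\mu = w_1\mu_1 + w_2\mu_2$ one only gets the triangle inequality $\lvert\lvert \mu \rvert\rvert \leq \lvert\lvert \mu_1 \rvert\rvert + \lvert\lvert \mu_2 \rvert\rvert$, i.e. $\lvert\lvert \mu \rvert\rvert^2 \leq \lvert\lvert \mu_1 \rvert\rvert^2 + \lvert\lvert \mu_2 \rvert\rvert^2 + 2\lvert\lvert \mu_1 \rvert\rvert \, \lvert\lvert \mu_2 \rvert\rvert$, strictly weaker than the asserted bound. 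Worse, the leading weight $\mu_1+\mu_2$ is itself of the form $w_1\mu_1+w_2\mu_2$ and violates $\lvert\lvert \mu \rvert\rvert^2 \leq \lvert\lvert \mu_1 \rvert\rvert^2 + \lvert\lvert \mu_2 \rvert\rvert^2$ whenever $(\mu_1,\mu_2)>0$ --- which is precisely why the proposition isolates it --- so \emph{no} argument treating all weights $w_1\mu_1+w_2\mu_2$ uniformly can yield the inequality. The actual bound comes from a quantitative use of the perverse coherent $t$-structure (estimates on the $\mathbb{G}_m$-weights of stalks and costalks of the intermediate extension at fixed points against the quadratic form), i.e. exactly the ``technical bookkeeping'' your last paragraph defers; that deferred step is not a refinement of your argument but the missing mechanism itself.
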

   
 Taking the lexicographic order on (dominant) pairs $(\lambda^{\vee},\mu)  \in P^{\vee} \times P$, the monoid structure on the set of dominant pairs can be simply taken as 
$$ (\lambda_1^{\vee},\mu_1) \odot (\lambda_2^{\vee},\mu_2) = (\lambda_1^{\vee} + \lambda_2^{\vee} , \mu_1 + \mu_2). $$
It is then clear that Assumption~\ref{assump} also holds. 
 
In the case of the general linear group $GL_n$, Cautis-Williams explicitly describe a monoidal seed in the coherent Satake category. However, this seed is not compatible in the sense of Definition~\ref{compat} above. For instance, for $GL_2$, this seed can be written as 
$$ (( [\mathcal{P}_{1,0}] , [\mathcal{P}_{1,1}] , [\mathcal{P}_{2,0}] , [\mathcal{P}_{2,1}]), B)$$
where the first two classes are the unfrozen variables and the last two are the frozen variables, and the exchange matrix  $B$ is given by:
$$ B = 
    \begin{pmatrix}
      0 & -2\\
      2 & 0\\
      0 & 1\\
      -1 & 0
    \end{pmatrix}. $$
Recall from \cite[Section 2.2]{CW} that $\mathcal{P}_{k,l}$ stands for $\mathcal{P}_{\omega_k^{\vee}, l \omega_k}$ for any $1 \leq k \leq 2$ and any $l \in \{0,1\}$. 

 One can now compute the generalized parameters $\hat{\mu_1}$ and $\hat{\mu_2}$ for this seed. A straightforward computation gives $\hat{\mu_1} = (2 \omega_1^{\vee} - \omega_2^{\vee} ,  2 \omega_1 - \omega_2)$ and $\hat{\mu_2}=(\omega_2^{\vee} - 2\omega_1^{\vee},0)$. The coweight $2 \omega_1^{\vee} - \omega_2^{\vee}$ is exactly the coroot $\alpha_1^{\vee}$, and hence for any dominant pair $(\lambda^{\vee},\mu)$ one has $\hat{\mu_1} \odot (\lambda^{\vee},\mu) \geq (\lambda^{\vee},\mu)$. However, the coweight part of $\hat{\mu_2}$ is obviously the opposite of  $\alpha_1^{\vee}$ and thus $\hat{\mu_2} \odot (\lambda^{\vee},\mu) \leq (\lambda^{\vee},\mu)$ for any dominant pair $(\lambda^{\vee},\mu)$. 
  We conclude that this seed is not compatible. 
 
  \smallskip
  
  It would be interesting to see if Conjecture~\ref{conjecture} holds in the coherent Satake category of the general linear group. Note that as the ordering on dominant pairs in partial, it is not clear that one can formulate mutation rules for parameters as in Section~\ref{mutrule}. Indeed, we crucially used the fact that the ordering on dominant words parametrizing simple modules over quiver Hecke algebras is total. This mutation rule allows to compute explicitly as many seeds as we want from the data of an initial seed. In the case of a partial ordering, we cannot do so a priori.

\addcontentsline{toc}{section}{References}

\Address

\end{document}